\title{Mod-two cohomology rings of alternating groups}
\newtheorem{theorem}{Theorem}[section]
\newtheorem{corollary}[theorem]{Corollary}
\newtheorem{lemma}[theorem]{Lemma}
\newtheorem{proposition}[theorem]{Proposition}
\theoremstyle{definition}
\newtheorem{definition}[theorem]{Definition}
\newtheorem{remark}[theorem]{Remark}
\newcommand{\sq}{{Sq}}
\definecolor{darkgreen}{rgb}{0,0.5,0}
\newcommand{\R}{{\mathbb R}}
\newcommand{\tr}{\odot}
\newcommand{\si}{{\mathcal S}}
\newcommand{\alt}{{\mathcal A}}
\newcommand{\fn}{{\rm FN}}
\newcommand{\fna}{{\rm FNA}}
\newcommand{\F}{{\mathbb F}}
\newcommand{\Sh}{\text{Sh}}
\newcommand{\refT}[1]{Theorem~\ref{T:#1}}
\newcommand{\refP}[1]{Proposition~\ref{P:#1}}
\newcommand{\refF}[1]{Figure~\ref{F:#1}}
\newcommand{\Conf}[2]{\textrm{Conf}_{#1}(\mathbb{R}^{#2})}
\newcommand{\UConf}[2]{\overline{\textrm{Conf}}_{#1}(\mathbb{R}^{#2})}
\newcommand{\Ind}[2]{\textrm{Ind}_{#1}(\mathbb{R}^{#2})}
\newcommand{\UInd}[2]{\overline{\textrm{OrInd}}_{#1}(\mathbb{R}^{#2})}
\newcommand\Item[1][]{%
  \ifx\relax#1\relax  \item \else \item[#1] \fi
  \abovedisplayskip=0pt\abovedisplayshortskip=0pt~\vspace*{-\baselineskip}}
\begin{document}

\author[C. Giusti]{Chad Giusti}
\address{Department of Mathematical Sciences, University of Delaware}
\email{cgiusti@udel.edu}
\author[D. Sinha]{Dev Sinha}
\address{Mathematics Department, 
University of Oregon}
\email{dps@uoregon.edu}

\begin{abstract}
We  calculate the direct sum of the mod-two cohomology of all alternating groups, with both cup and transfer product structures, 
which in particular determines the additive structure and ring structure of the cohomology of individual groups.   
We show  that there are no nilpotent elements in the 
cohomology rings of individual alternating groups.
We  calculate the action of the Steenrod algebra and discuss individual component rings.  
A  range of techniques are developed, including  an almost Hopf ring structure associated to the embeddings
of products of alternating groups and Fox-Neuwirth resolutions, which are new techniques.  We also extend understanding of 
the Gysin sequence relating the cohomology
of alternating groups to that of symmetric groups and calculation of restriction to elementary abelian subgroups.
\end{abstract}

\subjclass{20J06, 20B30}
\maketitle

% !TEX root = AltGroupsModTwo.V3.tex

\section{Introduction}

Alternating groups are a fundamental series of simple groups whose cohomology has been mysterious, even additively,
with only a few calculations for small alternating groups \cite{AMM90, AdMi94, King10} which have been carried out.  
%for over fifty years after the mod-$p$ cohomology of symmetric groups was determined additively by Nakaoka \cite{Naka61}.
%While some remarkable but necessarily ad-hoc  methods have 
%yielded the mod-two cohomology of many sporadic simple groups, the cohomology of 
%alternating groups and linear groups over finite fields at their characteristic have resisted calculation.  
We present the mod-two cohomology of alternating groups in Theorem~\ref{T:presentation} by 
giving generators and relations using cup product as well as a restriction coproduct and transfer
product associated to the standard inclusion $\alt_n \times \alt_m \to \alt_{n+m}$.   We give an explicit additive basis for mod-two cohomology compatible
with this presentation, and also address cup product structure for individual alternating groups.

We show that our product and coproduct structures on the direct sum of cohomology of series of groups such as alternating groups comprise an ``almost 
Hopf semi-ring'' structure (lacking one of two bialgebra structures -- see Definition~\ref{D:distributivity}) and we use this structure along with a canonical involution
to propagate cohomology and establish relations.
Such a suite of product and coproduct
structures forms a Hopf semi-ring in the setting of symmetric groups,
as first developed by Strickland and Turner \cite{StTu97}.  Their definition generalizes the induction product in representation theory
developed by Atiyah \cite{Atiy66} and Zelevinsky  \cite{Zele81}.  Taking advantage of such additional structure on  a direct sum 
of (generalized) cohomology rings has 
been fruitful in a number of settings, and there are connections between our definitions and homology of configuration spaces more
broadly \cite{Chur12,Knud16} as well as conjecturally in the study of Hilbert schemes \cite{LeSo01}.

We determined this Hopf semi-ring structure for symmetric groups in \cite{GSS12}, which in turn allows us here to 
calculate the Gysin sequence relating cohomology of alternating and symmetric groups in order to find additive bases.
Determining multiplicative structure is much more technically challenging than in the symmetric group setting, 
requiring a range of new techniques.  
We use resolutions based on geometric ideas of  Fox and Neuwirth \cite{FoNe62, GiSi12} to produce generating cohomology classes.  
We closely analyze the failure of the second bialgebra structure and develop a notion of charge to salvage product-coproduct compatibility.
We further develop the involution which switches charge, integrating it into our multiplicative structures.  
%We develop an ``almost Hopf ring'' structure for more general series of groups, % with embeddings of products, 
%which by Theorem~\ref{bialg} is very close a Hopf ring structure for alternating groups.  
%It seems likely that other series will deviate further from satisfying the Hopf ring axioms.
And most substantially
we show that  restriction to the cohomology elementary abelian $2$-subgroups is injective, and calculate these restriction maps  to
establish a full set of relations.  Our detection result
implies that there are no nilpotent elements in mod-two cohomology of alternating groups, which
 resolves a long-standing open question.
%Throughout, our understanding of the cohomology of symmetric groups as a Hopf ring \cite{GSS12} provides essential input.

%Another technique we develop is
%cochain-level models based on ideas of  Fox and Neuwirth \cite{FoNe62, GiSi12}.  We combine
%these novel techniques with new calculations of the  Gysin sequence 
%relating alternating and symmetric groups and of restriction maps to elementary abelian subgroups.  
%The Gysin sequence analysis alone suffices to resolve additive structure, the first such determination.

Once the technicalities are overcome, we can can draw  
parallels between the presentations of  cohomology for alternating and symmetric groups, given in
 Theorems  \ref{T:presentation} and \ref{T:symmgenandrel}.
The generators predominantly map to (sums of) one  
another in the Gysin sequence.  There is a notion of level for generators, namely the $\ell$ in $\gamma_{\ell,m}$, and
while the description of cohomology for symmetric groups is uniform, that of alternating
groups is irregular for small levels. 
At  level three or greater, there are two Hopf ring generators for alternating
groups for each generator for symmetric groups.  These two generators map to one another under the involution and annihilate each other under cup product.  
This structure is unstable, in that only the sum of such pairs lift to larger alternating groups.  At level two, there are still two sets of generators, but
instead of annihilating each other under cup product there are exceptional relations which start at $\alt_4$ and
then propagate in a sense, as determined by coproduct structure.
Finally, at level one only one set of generators occurs, but we need a separate set of generators for each component.
In comparison with symmetric groups that set lacks the degree-one generator, which is the Euler class in the Gysin sequence, a
central character in the story. 

Transfer products are relatively simple, with relations mostly 
governed by the notion of charge.  In particular, the transfer product of  classes fixed under involution vanishes.
Cup products are complicated, with complexity driven by both basic relations and Hopf ring structure.
The presence of basic relations is
in contrast to the setting of symmetric groups, where all cup product
relations follow from Hopf ring distributivity along with the fact that cup product vanishes between
cohomology of different symmetric groups.

Much as  ring structure determines additive structure, our description using two products determines cup product ring structure alone,
for example yielding an elementary algorithm for finding ring generators.  We carry out a calculation for $\alt_8$, 
giving an isomorphism with the computer calculation of \cite{King10} and discussing discrepancy with \cite{AdMi94}.  We also explain why 
techniques which yield more 
global results for symmetric groups do not apply in this setting.  
We  calculate Steenrod operations on our generators, which determines the global structure.

We develop our four distinct techniques for understanding the cohomology of alternating groups 
in the next four sections, before applying them for  calculations in the last five sections.  In comparison with symmetric groups, alternating
groups present substantial technical challenges at each step of calculation.

We thank Alejandro Adem and Paolo Salvatore for helpful comments throughout our time working on this project, and two referees for careful readings
and useful comments.
%\pagebreak

%%% Local Variables:
%%% TeX-master: "AltGroupsModTwo.V3.tex"
%%% End: % Introduction
\tableofcontents
% !TEX root = AltGroupsModTwo.V3.tex

\section{Product and coproduct structures on series of groups}\label{almosthopf}

\begin{definition}
A product series of finite groups is a collection $\{ G_i \}_{i \geq 0}$ with embeddings $e_{n,m} : G_n \times G_m \hookrightarrow G_{n+m}$
which are associative and commutative up to conjugation, and with $G_0$ the trivial group.
\end{definition}

Examples include symmetric groups, general linear groups over finite fields,  series of Coxeter groups, as well as alternating groups.  
We consider alternating groups in a slightly different way.  The mod-two cohomology of an odd alternating group is isomorphic to that of the preceding even group, so  we 
 set $G_i = \alt_{2i}$ to define a product series.  For the rest of this paper, when we refer to $\alt_n$ we assume $n$ is even.  
For work on cohomology of alternating groups at odd primes, % and thus considering odd alternating groups,
commutativity up to conjugation does not hold. 
The lack of commutativity on cohomology  is controlled by a standard involution, which also plays a substantial
role in the mod-two setting, as we develop in Section~\ref{involution}.

\begin{definition}\label{D:distributivity}
An almost-Hopf semi-ring is a graded vector space $V$ with two associative, commutative products $\odot$ and $\cdot$, and a cocommutative
coproduct $\Delta$
so that $(\cdot, \Delta)$ defines a bialgebra, and $\cdot$ distributes over $\odot$ with respect to the coproduct.  Explicitly, distributivity
means the following diagram commutes, where $\mu_\odot$ and $\mu_\cdot$ are the bilinear maps which correspond to the multiplications 
and
$\tau$ is the twist map that exchanges the middle two factors:
$$\xymatrix{
V^{\otimes 3}  \ar[d]^-{id \otimes \mu_\odot } \ar[rr]^-{\tau \circ (\Delta \otimes id)}
& & V^{\otimes 4}    \ar[r]^-{\mu_\cdot \otimes \mu_\cdot} &  V \otimes V \ar[d]_-{\mu_\odot} \\
V \otimes V  \ar[rrr]^-{\mu_\cdot} & & &  V. \\
}$$

%$$\xymatrix{
%V\otimes V \ar[dd]^-{\odot}
%&V\otimes V\otimes V \otimes V\ar[l]_-{\cdot \otimes \cdot}\\
%&V \otimes V \otimes V \ar[d]^-{\text{id}\otimes\odot}\ar[u]_-{(\text{id}\otimes T \otimes\text{id})\circ(\Delta\otimes\text{id}\otimes\text{id})}
%\\
%.V&V\otimes V\ar[l]^-{\cdot}
%}$$
\end{definition}

In formulas, distributivity means 
$$a \cdot (b \odot c) = \sum_{\Delta a = \sum a'_i \otimes a''_i}  (a'_i \cdot b) \odot (a''_i \cdot c).$$

We say ``semi-ring'', also called ``rig'' (ring without negatives), because we do not require an antipode for the additive product.
Our calculations are all mod-two, so we have no signs in formulae.   To interpret these basic definitions in general, we assume that signs are encorporated
in definitions, for example as part of the definition of the twist map above.

Distributivity facilitates inductive calculations, especially in the graded setting, and implies 
that there are bases of the form $p_1 \odot p_2 \odot \cdots \odot p_i$ where each $p_i$ is a product with respect to the $\cdot$ 
multiplication.  We call such a Hopf monomial basis, and we call the $p_i$ the constituent $\cdot$-monomials.

A Hopf semi-ring is a semi-ring object in the category of coalgebras, which entails all of the above and a Hopf algebra structure for $(\odot, \Delta)$.
(We do not know  a categorical definition for almost Hopf semi-rings.)
Strickland and Turner \cite{StTu97} show that
the generalized cohomology of symmetric groups forms a Hopf semi-ring, which inspires the following. %which we used extensively in \cite{GSS12}.  
%General product series of groups only have an almost Hopf semi-ring structure.  
Recall that if $H$ is a finite index subgroup of $G$ the the induced map of the inclusion is a covering map, as seen
clearly through the model $BH = EG/H \to BG = EG/G$.

\begin{definition}\label{basic}
Let $\{ G_i \}_{i \geq 0}$ be a product series of finite groups.  Define two products and a coproduct on $\bigoplus_i H^*(BG_i)$ as follows:
\begin{itemize}
\item $\odot$ is the transfer map in cohomology associated to the cover $Be_{i,j}  : BG_i \times BG_j \to BG_{i + j}$.  
We call this the transfer or induction product.  The unit is given by the unit in the cohomology of $BG_0$.
\item $\cdot$ is the cup product, defined to be zero between different summands.  The unit is the direct sum of all unit classes in the cohomology of 
$BG_i$.
\item $\Delta_{i,j}$ is the natural map associated to the cover $Be_{i,j}$, and $\Delta=\bigoplus \Delta_{i,j}$, which defines a coproduct.
The counit is the projection onto the cohomology of $BG_0$.
\end{itemize}
\end{definition}

Though we only calculate mod-two cohomology in this paper, we record the following in general.  For fields other than $\F_2$ the
tensor product, transposition maps, etc. are defined in the category of graded vector space, equipped with appropriate signs.

\begin{theorem}\label{hopfring}
The direct sum of cohomology with field coefficients of a product series of finite groups, $\bigoplus_i H^*(BG_i, k)$, forms an almost Hopf semi-ring.
\end{theorem}

\begin{proof}
Recall for example from Theorem~II.1.9 of \cite{AdMi94} 
that conjugation by $G$ on  the standard simplicial model of $BG$ induces the trivial map on group cohomology.  Thus coassociativity
and cocommutativity of the coproduct $\Delta$ are  immediate
from the assumption of associativity and commutativity of product maps up to conjugation.

As transfer maps commute with isomorphism of covering spaces, associativity and 
commutativity respectively of the transfer product follow from the fact that the 
conjugation isomorphisms between the two copies of $G_n \times G_m \times G_p$ (respectively $G_n \times G_m$) in $G_{n + m + p}$ (respectively
$G_{n+m}$)  define isomorphisms of covering spaces of  $BG_{n + m + p}$ (respectively $BG_{n+m}$).

Because the coproduct is induced from a map of spaces, it forms a bialgebra with cup product.

For distributivity, consider the following diagram.

$$\xymatrix{
BG_i \times BG_j   \ar[d]^-{Be_{i,j}} \ar[rr]^-{D_{BG_i} \times D_{BG_j} } 
 & & BG_i \times BG_i \times BG_j \times  BG_j     \ar[rr]^-{(e_{i,j} \times id) \circ \tau}  & & BG_{i+j}  \times BG_i \times BG_j \ar[d]_-{id \times Be_{i,j}} \\
BG_{i+j}  \ar[rrrr]^-{D_{BG_{i+j}}}  & & & & BG_{i+j} \times BG_{i+j},
}$$
%$$\xymatrix{
%BG_i \times BG_j \ar[dd]\ar[r]%^-{(\text{id}\times T \times \text{id})\circ(\Delta \times \Delta)}
%&BG_i \times BG_i \times BG_j \times BG_j \ar[d]\\ &BG_{i+j} \times BG_i \times BG_j\ar[d] \\
%BG_{i+j} \ar[r]^-{\Delta}&BG_{i+j} \times BG_{i+j}
%}$$
where in general $D_X$ denotes the diagonal map on $X$.  The vertical maps are covering maps.  Moreover,
the composite of the top horizontal maps and the bottom horizontal map define a pull-back of covering maps.
Taking cohomology and applying natural maps horizontally and transfer maps vertically 
yields a commutative diagram because transfers commute with natural maps for pull-backs.  
Taking cohomology yields the distributivity diagram of Definition~\ref{D:distributivity} (reflected across a vertical line). 
%(Notice for example that $V^{\otimes 3}$ is in the upper left corner of the distributivity diagram and corresponds to 
%$BG_{i+j} \times BG_i \times BG_j$ in the upper right corner of this diagram.  Note as well that $D_X$ induces the cup product $\cdot$ on cohomology.)
\end{proof}

Because transfers exist in generalized cohomology theories, these structures translate to that setting, though as usual coproduct structures can be 
problematic when the ground ring is not a field. 
Strickland and Turner developed these structures to study Morava $E$-theory of symmetric groups \cite{StTu97}. 
    
We use extensively the fact that transfer maps for covering spaces commute with natural maps for pull-backs,
especially for covering maps are between classifying spaces of finite groups.   So in the next proposition we record some standard facts about such pull-backs.

\begin{proposition}\label{pullback}
For a finite group $G$ and subgroups $H, K$, choose models for $BH$ and $BK$ as $EG/H$ and $EG/K$ so that their maps to 
$BG$ are covering maps.  Then a model for the (homtopy) pull-back 
$$\xymatrix{
PB  \ar[r]  \ar[d] & BK \ar[d]^{B\iota_K}\\
BH \ar[r]_{B\iota_H}  &BG
}$$
is given by $PB = (EG \times G) / (H \times K)$ where $H$ acts by $h \cdot (e,g) = (he, hg)$ and $K$ acts by $(e,g) \cdot k = (e, gk^{-1})$.

The maps from the pull-back are by defined by identifying for example $BK = (EG \times G)/ (G \times K)$ where $G$ acts diagonally.

%The fiber of $f$ is $G/K$ as an $H$-set.

 The components of the pull-back are indexed by double-cosets $H \backslash G / K$, and the component indexed by $H g K$ is 
 $B(H \cap g K g^{-1})$.
\end{proposition}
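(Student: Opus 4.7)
The plan is to verify directly that the stated quotient is a pullback model, and then read off the component decomposition from the standard double-coset analysis. Fix conventions so that $G$ acts on $EG$ on the left, $BG = G\backslash EG$, $BH = H\backslash EG$, $BK = K\backslash EG$, with $B\iota_H$ and $B\iota_K$ the evident quotient maps. The first step is to introduce the candidate map
\[
\Phi \co EG \times G \to BH \times BK, \qquad \Phi(e,g) = (He,\, Kg^{-1}e),
\]
and check that its image lies in $BH \times_{BG} BK$, since both components map to $Ge \in BG$, and that $\Phi$ is $H\times K$-invariant under the two stated actions.

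The second step is to verify that the induced map $\bar\Phi$ from $PB$ to the pullback is a bijection. Injectivity unwinds $He = He'$ and $Kg^{-1}e = K(g')^{-1}e'$: the first produces $h \in H$ with $e' = he$, and the second then forces $g' = hgk^{-1}$ for some $k \in K$ by freeness of the $G$-action on $EG$, which is exactly the $H\times K$-orbit relation under the stated action. Surjectivity uses that a pair $(Hx, Ky)$ with $Gx = Gy$ determines a unique $g \in G$ with $x = gy$, and then $\Phi(x,g) = (Hx, Ky)$. That $\bar\Phi$ is a homeomorphism follows because both sides arise as quotients of $EG \times G$ by the same equivalence relation. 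The stated identification of the two projection maps then arises by enlarging the quotient: the $H$-action extends to a diagonal $G$-action and $(EG\times G)/G \cong EG$ via $(e,g)\mapsto g^{-1}e$, yielding $(EG\times G)/(G\times K) \cong BK$, and symmetrically for $BH$.

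The third step handles components. The projection $EG \times G \to G$ is equivariant for the residual $H\times K$-action $(h,k)\cdot g = hgk^{-1}$, whose orbit space is tautologically $H\backslash G/K$, so since $EG$ is connected this realizes $\pi_0(PB) \cong H\backslash G/K$. For a representative $g$, the stabilizer $\{(h,k) : hgk^{-1}=g\}$ in $H \times K$ projects isomorphically onto $H \cap gKg^{-1}$ via $(h,k) \mapsto h$, and the fiber of $PB$ over $HgK$ is the orbit space of $EG \times\{g\}$ under this stabilizer, which acts only on the first factor through the inclusion $H\cap gKg^{-1} \hookrightarrow H$; the quotient is therefore $B(H\cap gKg^{-1})$. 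The only real obstacle throughout is the careful bookkeeping of left-versus-right conventions so that $\Phi$ matches the stated quotient on the nose; no deeper ideas are required beyond orbit-stabilizer and the standard double-coset decomposition.
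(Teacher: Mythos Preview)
The paper does not supply a proof of this proposition; it is introduced with ``We record some standard facts about such pull-backs here'' and left unproved. Your direct verification is correct: the invariance of $\Phi$ under both actions checks out, the bijectivity argument using freeness of the $G$-action on $EG$ is sound, and the component analysis via the equivariant projection to $G$ and orbit--stabilizer is the standard double-coset argument. This is exactly the kind of elementary unwinding one would expect for a result the authors regard as folklore.
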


For reference, we give a second proof of the result first established by Strickland and Turner \cite{StTu97}.  

\begin{theorem}\label{intersectionprop}
The cohomology of symmetric groups with field coefficients  is a Hopf semi-ring, extending the almost Hopf semi-ring structure 
of Theorem~\ref{hopfring}.
\end{theorem}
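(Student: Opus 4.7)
The plan is to upgrade the almost Hopf ring structure of Theorem~\ref{hopfring} to a Hopf ring by establishing two additional pieces of structure: that $\Delta$ is an algebra map with respect to $\odot$, so that $(\odot, \Delta)$ forms a bialgebra; and that this bialgebra admits an antipode, giving the involution. I expect the first item to contain all the geometric content and the second to be essentially automatic.

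For the bialgebra identity $\Delta(a \odot b) = \Delta(a) \odot \Delta(b)$ (interpreted with the standard twist on the right), I would start from the pull-back square
$$\xymatrix{
PB \ar[r] \ar[d] & B\si_n \times B\si_m \ar[d]^{Be_{n,m}} \\
B\si_p \times B\si_q \ar[r]_{Be_{p,q}} & B\si_{n+m}
}$$
and invoke the same principle used in the proof of Theorem~\ref{hopfring}, namely that transfer commutes with natural maps for pull-backs of covering maps. This rewrites the composite $\Delta_{p,q} \circ \odot_{n,m}$ as a sum of contributions indexed by the connected components of $PB$. Proposition~\ref{pullback} identifies these components with double cosets $(\si_n \times \si_m) \backslash \si_{n+m} / (\si_p \times \si_q)$, each contributing the classifying space of the corresponding intersection $H \cap gKg^{-1}$.

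The combinatorial input special to symmetric groups then takes over: these double cosets are enumerated by $2 \times 2$ non-negative integer matrices with row sums $(n,m)$ and column sums $(p,q)$, and the intersection attached to such a matrix is again a product of symmetric groups $\si_\alpha \times \si_\beta \times \si_\gamma \times \si_\delta$ indexed by the matrix entries. Summing the resulting tensor-product contributions reproduces the convolution on the right side of the Hopf ring identity. This is precisely the input that will fail for alternating groups, where parity prevents the intersections from being products of alternating groups, and our later work will need workarounds for exactly this reason.

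For the antipode, the grading of $V = \bigoplus_n H^*(B\si_n; k)$ by the subscript $n$ makes $(V, \odot, \Delta)$ into a connected graded bialgebra, since $H^*(B\si_0; k) = k$. The antipode then exists automatically via the standard degree-lowering recursion, supplying the required involution. The main obstacle I foresee is the careful bookkeeping of the double coset decomposition: enumerating the cosets, identifying the intersections with the correct product subgroups, and verifying that the twist and tensor factorings line up with the Hopf ring identity as stated.
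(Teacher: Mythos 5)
Your proposal is correct and follows essentially the same route as the paper: both reduce the Hopf ring claim to the bialgebra identity for $(\odot,\Delta)$, establish it from the pull-back square of Proposition~\ref{pullback}, and use the fact that for symmetric groups the double-coset intersections are exactly the products $\si_p\times\si_q\times\si_r\times\si_s$ with the prescribed row and column sums. Your explicit treatment of the antipode via connectedness of the component grading is a detail the paper leaves implicit, but it is standard and correct.
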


\begin{proof}
After Theorem~\ref{hopfring} we need only check that $\odot$ and $\Delta$ form a bialgebra.  
For symmetric groups, the intersection of $\si_n \times \si_m$ with conjugates of $\si_i \times \si_j$ in $\si_d$ (where 
$d = n+m = i+j$)  are all possible 
$\si_p \times \si_q \times \si_r \times \si_s$ with $p+q = n$, $r+s = m$, $p + r = i$ and $q + s = j$.
By Proposition~\ref{pullback}, the following diagram is thus a pull-back square of covering spaces
$$\xymatrix{
\bigsqcup B\si_p \times B\si_q \times B\si_r \times B\si_s \ar[r]  \ar[d]
&B\si_{n}  \times B\si_m \ar[d] \\
B\si_i \times B\si_j \ar[r]
&B\si_{d}.
}$$
 Starting at $H^*(B\si_i \times B\si_j)$ and mapping to $H^*(B\si_n \times B\si_m)$ by
composing restriction and transfer in
two ways, which agree because this is a pull-back, establishes the result.
\end{proof}

The analogous product $\odot$ and coproduct $\Delta$ in the cohomology of alternating groups do not form a bialgebra, but we 
control the discrepancy in Theorem~\ref{bialg} and find their behavior just as good for purposes of calculation.
There is also an antipode for the transfer product in the cohomology of symmetric groups, 
as can be seen through a divided powers structure \cite{GSS19}, but we
do not require it.  

These almost Hopf semi-ring structures have substantial connections to other product structures in the literature, and 
we expect applications.
When one applies the Strickland-Turner result to $K$-theory of symmetric groups, which by the Atiyah-Segal theorem is the completion
of their representation ring, the transfer product coincides with induction
product, and the coproduct is given by restriction.  These were first studied together as a Hopf algebra by Zelevinsky \cite{Zele81}, 
but the full Hopf semi-ring structure was not utilized.  Given the substantial impact of Hopf semi-ring structure on group cohomology, we expect 
it would be fruitful to more fully develop in representation theory, especially in areas such as the theory support varieties which are at the 
interface.  Nick Proudfoot has also conjectured that this Hopf semi-ring structure compatible
with product structure on the direct sum of cohomology of Hilbert schemes \cite{LeSo01}.

In \cite{GSS12} we show that the transfer product structure for cohomology of symmetric groups, which as we use below is modeled by configurations 
in $\R^\infty$, is defined for other configuration spaces.  
This structure is part of recent descriptions of rational homology of unordered configurations by Knudsen 
\cite{Knud16}.
Transfer product with unit classes for cup products can also be used to split (co)homological
stability isomorphisms \cite{Chur12}.  

 For general product series of groups including alternating groups, 
the coproduct and transfer product   do not define a bialgebra.  
Nonetheless, the two products  bind the cohomology of the $G_i$, 
and distributivity provides control up to computability of the coproduct.  As in the Hopf semi-ring setting,
 have inductive computability of coproduct for alternating groups,
as  transfer product and coproduct 
are close to forming a bialgebra as described in Theorem~\ref{bialg}, whose proof is a modification of that of 
Theorem~\ref{intersectionprop}.  Even in cases where coproduct is not likely inductively computable, such as general linear 
groups over finite fields, we expect  almost Hopf semi-ring structure to be useful.

%%% Local Variables:
%%% TeX-master: "AltGroupsModTwo.V3.tex"
%%% End: % Product and coproduct structures on series of groups
% !TEX root = AltGroupsModTwo.V3.tex

\section{Relationships between the cohomology of alternating and symmetric groups}\label{SymmGroupSection}

\subsection{The cohomology of symmetric groups}  

Our foundation is a thorough understanding of cohomology of symmetric groups, the focus  of
 \cite{GSS12} whose main result  is the following.  
 
 %\pagebreak

\begin{theorem}\label{T:symmgenandrel}
As a Hopf semi-ring, $\bigoplus_{n \geq 0} H^{*}( B\si_{2n}; \F_{2})$ is generated under the products $\cdot$ and $\odot$ from
Definition~\ref{basic} by 
classes $\gamma_{\ell, m} \in H^{m(2^{\ell} - 1)}(B \si_{ m 2^{ \ell}})$,
with $\ell, m \geq 0$, where   $\gamma_{\ell,0} = 1_0$ is the unit for transfer product
and $\gamma_{0,m} = 1_m$ is the unit for cup product on component $m$.
The coproduct of $\gamma_{\ell, m}$ is given by
$$\Delta \gamma_{\ell, m} = \sum_{i+j = m} {\gamma_{\ell, i}} \otimes {\gamma_{\ell, j}}.$$
Relations between transfer products of these generators are given by 
$$\gamma_{\ell, n} \tr \gamma_{\ell, m} = \binom{n+m}{n} \gamma_{\ell, n+m},$$
which implies that the $\gamma_{\ell, 2^k}$ constitute a set of Hopf semi-ring generators.
Cup products of generators
on different components are zero, and there are no other relations between cup products of  these generators.
\end{theorem}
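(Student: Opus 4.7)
The plan is to construct the generators $\gamma_{\ell,m}$ geometrically, verify the coproduct and transfer-product relations by direct analysis of pullback diagrams, and then establish completeness of the presentation by combining Nakaoka's additive calculation \cite{Naka61} with detection on elementary abelian subgroups. The geometric starting point is the Fox-Neuwirth cellular model of \cite{FoNe62, GiSi12}: $B\si_n$ is homotopy equivalent to $\Conf{n}{\infty}/\si_n$ and admits a cell decomposition indexed by certain combinatorial trees. In this model I would define $\gamma_{\ell,m}$ as the cocycle dual to the cell consisting of $m$ parallel copies of the complete binary tree of depth $\ell$; the dimension and component counts then match the claim.

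For the coproduct, I would apply Proposition~\ref{pullback} to the inclusion $\si_{i 2^\ell} \times \si_{j 2^\ell} \into \si_{m 2^\ell}$: the double-coset components are indexed by ways to partition the $m$ trees between the two factors, and only the ``clean'' partitions contribute nontrivially, yielding $\Delta\gamma_{\ell,m} = \sum_{i+j=m} \gamma_{\ell,i} \otimes \gamma_{\ell,j}$. For the transfer-product relation, the same pullback analysis read in the opposite direction produces the $\binom{n+m}{n}$ coefficient from the combinatorial count of ways to interleave the two tree configurations inside the larger symmetric group. The vanishing of cup products between different components is immediate from the direct-sum decomposition of cohomology of a disconnected classifying space.

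For completeness, distributivity (Definition~\ref{D:distributivity}) reduces the spanning question to checking that transfer products of $\cdot$-monomials in the $\gamma_{\ell,m}$ form an additive basis; I would verify this by a Poincar\'e series comparison against Nakaoka's polynomial description, component by component and degree by degree. To rule out further relations, I would appeal to the fact that $H^*(B\si_n;\F_2)$ is polynomial, hence nilpotent-free, so by Quillen's detection theorem restriction to the family of elementary abelian subgroups is injective; computing these restrictions for each $\gamma_{\ell,m}$ via the standard embedding $(\Z/2)^\ell \wr \si_m \subset \si_{m 2^\ell}$ and Dickson invariants then shows that no further algebraic identities hold beyond those already listed.

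The main obstacle will be bridging the Hopf ring monomial basis and Nakaoka's polynomial basis, since the two are organized around rather different principles: Nakaoka's generators arise from iterated Dyer-Lashof operations on a fundamental class, while the $\gamma_{\ell,m}$ are defined by specific Fox-Neuwirth cells. Identifying $\gamma_{\ell,2^k}$ with an explicit iterated Dyer-Lashof operation on a degree-zero class -- and then propagating the identification along distributivity and the coproduct -- is the cleanest route, though a direct generating-function enumeration of Hopf monomials would also suffice. Once this translation is in place, the remainder is a matter of careful geometric and combinatorial bookkeeping, much of which is carried along by the Hopf ring structure itself.
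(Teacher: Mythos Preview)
This theorem is not proved in the present paper: it is quoted as the main result of \cite{GSS12}, and the paper immediately says ``We will regularly refer to this and other results from Sections~5~and~6 of \cite{GSS12}.'' So there is no proof here against which to compare your proposal.

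That said, your outline is broadly consonant with what \cite{GSS12} does, as one can infer from remarks scattered through this paper. In particular: the $\gamma_{\ell,m}$ are defined in \cite{GSS12} as linear duals of $q_I^{*m}$ (with $I$ a string of $\ell$ ones) in the Nakaoka basis, and Theorem~4.9 of \cite{GSS12} shows they are represented by the Fox--Neuwirth cocycle with $m$ blocks of $2^\ell-1$ repeated ones --- so your Fox--Neuwirth construction and your suggested identification with Dyer--Lashof classes are both on target. The coproduct is computed at the cochain level (see the symmetric-group analogue of Theorem~\ref{T:cellular_coprod} here), not via double-coset decomposition as you suggest; and the transfer-product relation is likewise verified via the cochain-level shuffle formula (the analogue of Theorem~\ref{T:cellular_transfer_product}), which yields $\binom{n+m}{n}$ directly. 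For completeness and the absence of further cup relations, \cite{GSS12} uses restriction to the $V_n$ and Madsen--Milgram detection rather than a bare Poincar\'e-series count, though the spirit is the same as your plan.

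One caution: your claim that ``$H^*(B\si_n;\F_2)$ is polynomial, hence nilpotent-free'' is a consequence of the theorem you are proving (or of Quillen's explicit computation), not an independent input, so invoking it to rule out relations is circular unless you ground it elsewhere first.
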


%We will prefer the smaller Hopf generating set of the $\gamma_{\ell, 2^k}$.
We will regularly refer to this and other results from Sections~5~and~6  of  \cite{GSS12}.  Just as in a ring one can always
reduce to monomials, in a Hopf (semi-)ring, one can reduce to Hopf monomials, formed by taking $\odot$ products of $\cdot$-products
of generators.
In Section~6 of \cite{GSS12} we give a convenient graphical representation of the Hopf monomial basis
which we call skyline diagrams, which give unique representations in this setting.  
We will primarily argue using Hopf ring monomial language, but we 
indicate as we go along how our calculations would look using skyline diagrams, and illustrate such in figures. 

In forthcoming work with Guerra and Salvatore, we identify a divided powers structure associated to transfer product, which streamlines
the description above as the free divided powers Hopf ring on classes $\gamma_\ell$, which correspond to our $\gamma_{\ell, 1}$, and
the divided powers $\gamma_{\ell[m]}$ correspond to our $\gamma_{\ell,m}$.

%For example, the following notion
%describes the size of the ``grounding blocks'' of a diagram.

%The scale is the smallest symmetric group which occurs in the image of a non-trival iterated coproduct.

\subsection{Restriction and transfer maps}

As alternating groups are subgroups of symmetric groups, there are both restriction maps and transfers relating their cohomology.

\begin{definition}
If $H$ is a subgroup of $G$ with inclusion map $\iota$ understood 
we let $res$ denote the natural restriction map ${B \iota}^*$ on cohomology
and let $tr$ denote the transfer map ${B \iota}^!$ on cohomology.  

We let $\iota_n$ denote the standard inclusion of $\alt_n$ in $\si_n$.  
\end{definition}

The almost-Hopf semi-ring structures developed in the previous section are for the most part compatible with these maps.

\begin{proposition}\label{transfercompatible}
Restriction maps $res: H^*(B\si_n) \to H^*(B\alt_n)$ preserve coproducts, and transfer maps $tr: H^*(B\alt_n) \to H^*(B\si_n)$ preserve transfer products.
\end{proposition}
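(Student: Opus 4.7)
The plan is to reduce both claims to standard functoriality applied to a single commutative square of classifying spaces, namely
$$\xymatrix{
B\alt_i \times B\alt_j \ar[r] \ar[d] & B\alt_{i+j} \ar[d] \\
B\si_i \times B\si_j \ar[r] & B\si_{i+j},
}$$
induced by the inclusions $\alt_i \times \alt_j \hookrightarrow \alt_{i+j}$, $\si_i \times \si_j \hookrightarrow \si_{i+j}$, and $\alt_n \hookrightarrow \si_n$. All four edges are finite covers. The square commutes, although it is not a pullback, since a pair of odd permutations multiplies to an even one.

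For the first statement, I would apply $H^*(-;\F_2)$ and Künneth to the square. The restriction maps are the pullbacks along the vertical edges, and the coproducts $\Delta$ on the alternating and symmetric sides are the pullbacks along the horizontal edges followed by Künneth. By functoriality of pullback together with naturality of Künneth, both composites $\Delta \circ res$ and $(res \otimes res) \circ \Delta$ from $H^*(B\si_{i+j})$ to $H^*(B\alt_i) \otimes H^*(B\alt_j)$ compute the pullback along the common composite $B\alt_i \times B\alt_j \to B\si_{i+j}$ and so agree.

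For the second statement, I would apply transfer along every edge of the same square. The maps $tr$ are the transfers along the vertical edges, and the transfer products $\odot$ are the transfers along the horizontal edges. Since transfer is functorial under composition of finite covers, both composites $tr \circ \odot$ and $\odot \circ (tr \otimes tr)$ from $H^*(B\alt_i) \otimes H^*(B\alt_j)$ to $H^*(B\si_{i+j})$ equal the transfer along the common composite cover $B\alt_i \times B\alt_j \to B\si_{i+j}$, and hence agree.

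Both arguments are routine diagram chases relying only on standard properties of pullback and transfer, and no serious technical obstacle arises; the main content is the observation that both compatibilities come from the same commutative square of finite covers, the first via pullback on the edges and the second via transfer. The asymmetry in the proposition (that restriction is not claimed to preserve the transfer product, nor transfer the coproduct) is consistent with the square failing to be a pullback, which would otherwise allow the base-change identities used in the proof of Theorem~\ref{intersectionprop}.
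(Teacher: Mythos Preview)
Your proposal is correct and essentially identical to the paper's proof: both use the same commuting square of covers and invoke functoriality of pullback for the first claim and functoriality of transfer under composition of covers for the second. Your observation that the square is not a pullback also appears in the paper, as a remark immediately following the proof.
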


\begin{proof}

Consider the commuting square of covering maps
$$\xymatrix{
B\alt_i \times B\alt_j \ar[r]^-{Be_{i,j}}  \ar[d]_-{ B \; \iota_i \times \iota_j }
&B\alt_{i+j} \ar[d]^-{ B \iota_{i+j}}\\
B\si_i \times B\si_j \ar[r]_-{Be_{i,j}}
&B\si_{i+j}.
}$$
That natural maps in cohomology commute for squares of spaces gives the first result, and that 
 transfer maps  in a square of covering maps commute gives the second result. \end{proof}

\begin{remark} This diagram does not define a pull-back of covering spaces.  
Instead, the space $B\alt_i \times B\alt_j$ is a double cover of the pullback, 
reflecting the fact that $\alt_i \times \alt_j$ is of index two in  $\alt_{i+j} \cap (\si_i \times \si_j)$.
 We use this fact in Proposition~\ref{transferneutral} to prove the vanishing of transfer products of classes restricted from 
the cohomology of symmetric groups. This fact is also related to the failure of $\Delta$ and $\odot$ to form a bialgebra.
\end{remark}

Transfer maps do not preserve cup products in general, though we will see from the main calculation of Theorem~\ref{T:presentation} that 
they do preserve cup products for classes ``of uniform charge.''

Restriction maps of course preserve cup products, since they are defined by maps of spaces.
While restriction maps do not preserve transfer products, there is some compatibility.

\begin{proposition}\label{restrictiontransfer}
$$res(\alpha) \odot \beta = res(\alpha \odot tr(\beta)).$$
\end{proposition}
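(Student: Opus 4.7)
The plan is to obtain the identity from the base-change compatibility of transfer with restriction, applied to a carefully chosen pull-back of covers. I would consider the commutative square
$$\xymatrix{
B\alt_n \times B\alt_m \ar[r]^-{Be^\alt_{n,m}} \ar[d]_-{B\iota_n \times id}
& B\alt_{n+m} \ar[d]^-{B\iota_{n+m}}\\
B\si_n \times B\alt_m \ar[r]_-{B\iota}
& B\si_{n+m},
}$$
with $\iota : \si_n \times \alt_m \hookrightarrow \si_{n+m}$ the standard inclusion.

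The key preliminary step is to verify that this square is an honest pull-back of covering maps, in contrast with the analogous square for $B\si_n \times B\si_m$ discussed in the Remark following Proposition~\ref{transfercompatible}. Applying Proposition~\ref{pullback} with $G = \si_{n+m}$, $H = \si_n \times \alt_m$, and $K = \alt_{n+m}$, the components of the pull-back have the form $B((\si_n \times \alt_m) \cap g\alt_{n+m}g^{-1})$. One computes directly that $(\si_n \times \alt_m) \cap \alt_{n+m} = \alt_n \times \alt_m$, since the $\alt_m$-factor already has fixed parity, forcing the $\si_n$-factor to be even. Moreover $\si_n \times \alt_m$ contains odd permutations, so it meets both cosets of $\alt_{n+m}$, leaving a single double coset and hence a single pull-back component. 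Fixing the parity of the $m$-factor is exactly what eliminates the index-two obstruction recorded in the remark.

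Granted the pull-back, the base-change identity
$$(B\iota_{n+m})^* \circ (B\iota)^! = (Be^\alt_{n,m})^! \circ (B\iota_n \times id)^*$$
evaluated on $\alpha \times \beta \in H^*(B\si_n \times B\alt_m)$ gives the claim. The right-hand side yields $(Be^\alt_{n,m})^!(res(\alpha) \times \beta) = res(\alpha) \odot \beta$ directly from the definition. For the left, I factor $\iota$ through $\si_n \times \si_m$: transfer is functorial under composition of covers and compatible with the external cross product in each variable, so $(B\iota)^!(\alpha \times \beta) = \alpha \odot tr(\beta)$, and applying $(B\iota_{n+m})^*$ produces $res(\alpha \odot tr(\beta))$. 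The only nontrivial content is the pull-back verification above; everything after that is formal manipulation of naturality and functoriality of transfer.
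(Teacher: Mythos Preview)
Your proof is correct and takes essentially the same approach as the paper: both use the pull-back square with $B\si_n \times B\alt_m$ in the lower-left corner and apply base-change compatibility of transfer with restriction. The paper draws the same square with the left vertical map factored explicitly through $B\si_i \times B\si_j$, and simply asserts the pull-back rather than verifying it via Proposition~\ref{pullback} as you do, but the argument is otherwise identical.
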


\begin{proof}
Apply restriction maps horizontally and transfer maps vertically to the following diagram of covering maps, which is a pullback.
$$\xymatrix{
B\si_i \times B\alt_j \ar[d]_-{id \times B \iota_j}
&B\alt_i \times B\alt_j \ar[l]^-{B \iota_i \times id} \ar[dd]^-{Be_{i,j}}\\
B\si_i \times B\si_j\ar[d]_-{Be_{i,j}}\\
B\si_{i+j}
&B\alt_{i+j}\ar[l]^-{B \iota_{i+j}}\\
}$$
\end{proof}

\subsection{Cohomology of double covers}\label{doublecovers}

The standard restriction and transfer maps between the cohomology of alternating and symmetric groups give rise to a short exact Gysin sequence 
whose analysis forms the backbone of  our calculations.    We record  basic results about such Gysin sequences
here, since they govern our passage from understanding of cohomology of
symmetric groups to that of alternating groups.

Let $p: E \to B$ be a two-fold covering map, which we consider as a principal $C_2$-bundle.  
Since $C_2 \cong O(1)$, we can apply the Gysin sequence for the associated line bundle, which reads
\begin{equation*}%\label{longexact}
\cdots \overset{\cdot \; e}\to H^k(B) \overset{\text{res}}\longrightarrow H^k(E) \overset{\text{tr}}\to H^k(B) \overset{\cdot \; e}\longrightarrow 
H^{k+1}(B) \overset{\text{res}}\longrightarrow \cdots,
\end{equation*}
where $e$ is the Euler class of the line bundle.

Decomposing into short exact sequences yields
\begin{equation}\label{shortexact}
0 \to H^{*}(B )/ e \overset{p^*}\longrightarrow H^{*}(E) \overset{\text{tr}}\longrightarrow {\rm Ann}(e) \to 0,
\end{equation}
where ${\rm Ann}(e)$ is the annihilator ideal.

As a principal $C_2$ bundle, the cohomology of $H^{*}(E)$ has a $C_2$ action, which we  denote by $x \mapsto \overline{x}$.  
Moreover, the restriction and transfer maps are equivariant,
with the cohomology of the base having trivial action.  %We will see how   

Over $k = {\mathbb F}_2$ a finitely-generated $C_2$-representation is a direct sum of  the  trivial  representation $k$
and  the regular representations $\rho$.
Moreover, a short exact sequence $0 \to A \to B \to C \to 0$ decomposes as direct sum of isomorphisms, $k \to k$ or $\rho \to \rho$, and
the exact sequence $0 \to k \to \rho \to k \to 0$.
By decomposing the Gysin  sequence in this way  we determine $H^*(E)$ as a $C_2$-representation.

\begin{definition}
A Gysin decompsition for $H^*(B)$ in a Gysin sequence as in Equation~(\ref{shortexact}) 
is a linearly independent set $\mathcal{G}$ which is the union of two possibly overlapping subsets 
$\mathcal{G} = \mathcal{G}_{quot} \cup \mathcal{G}_{ann}$,
where
\begin{itemize}
\item $\mathcal{G}_{quot}$ maps to a basis of  $H^{*}(B )/ e$, 
\item $\mathcal{G}_{ann}$ is a basis for the annihilator ideal ${\rm Ann}(e)$
\item  $\mathcal{G}_{quot} \cap \mathcal{G}_{ann}$ maps to a basis for
the image of the annihilator ideal in  $H^{*}(B )/ e$.
\end{itemize}
\end{definition}

\begin{definition}
Given a set $\mathcal{G}$ which is a union $\mathcal{G} = \mathcal{G}_1 \cup \mathcal{G}_2$, let 
$\mathcal{B}_o = \mathcal{G} \backslash (\mathcal{G}_{1} \cap \mathcal{G}_{2})$ and $\mathcal{B}_+ = \mathcal{G}_{1} \cap \mathcal{G}_{2}$.
Define $V_\mathcal{G} = (\bigoplus_{\mathcal{B}_o } k) \oplus  (\bigoplus_{\mathcal{B}_+} \rho).$

We call the two summands of $V_{\mathcal{G}}$ the $k$ and the $\rho$ summands.  
\end{definition}

A basis for $V_{\mathcal{G}}$ as a vector space is thus $\mathcal{B}_o  \cup \mathcal{B}_+  \cup \mathcal{B}_-$, where 
$\mathcal{B}_-$ is the image of $\mathcal{B}_+$ under involution.

\begin{proposition}\label{gysinsplit}
If  $\mathcal{G} =  \mathcal{G}_{quot} \cup \mathcal{G}_{ann}$ is a Gysin decompsition for $H^*(B)$  then as 
a $C_2$ representation, $H^*(E) \cong V_{\mathcal G}$. 
\end{proposition}

\begin{proof}
For  $y \in \mathcal{G}_{quot} \cap \mathcal{G}_{ann}$, 
we let $x \in H^*(E)$ be a class which maps to it under transfer.  At the cochain level $x + \overline{x}$
is the image of $y$ under $p^*$, which is non-zero since $y \in \mathcal{G}_{quot}$.  
Thus $\overline{x} \neq x$.  Letting $\langle A \rangle$ be the span of a set $A$  we define
 $ S_y$, a subsequence of the Gysin sequence, to be 
 $ 0 \to \langle y \rangle \to \langle x, \overline{x} \rangle \to \langle y  \rangle \to 0$ with $y \mapsto x + \overline{x}$.  

For $y \in \mathcal{G}_{quot}  \backslash (\mathcal{G}_{quot} \cap \mathcal{G}_{ann})$ we define the subsequence $ S_y$ to be $ 0 \to \langle y \rangle
\to \langle p_*(y) \rangle \to 0 \to 0$
and for $y \in \mathcal{G}_{ann}  \backslash (\mathcal{G}_{quot} \cap \mathcal{G}_{ann})$ we set 
$S_y = 0 \to 0 \to \langle x \rangle  \to \langle  y \rangle  \to 0$ where $x$ maps
to $y$ under transfer.  Since 
$\mathcal{G}_{quot}$  and $\mathcal{G}_{ann}$ are bases for its first and last terms, the 
sequences  $\bigoplus_{y \in  \mathcal{G}} S_y$ span  the Gysin sequence, splitting it by the linear independence of $\mathcal{G}$.  
This splitting yields
the stated presentation of $H^*(E)$.
\end{proof}

\subsection{The Gysin sequence for alternating and symmetric groups}\label{gysinsection}

We apply the ideas of the previous section to the Gysin sequence relating the cohomology of alternating and symmetric groups, which reads
\begin{equation*}%\label{shortexact}
0 \to H^{*}(B \si_{n})/ e \overset{\text{res}}\longrightarrow H^{*}(B \alt_{n}) \overset{\text{tr}}\longrightarrow {\rm Ann}(e) \to 0,
\end{equation*}
where ${\rm Ann}(e)$ is the annihilator ideal.

%Because it is induced by a map of spaces, the restriction map preserves cup products, but we will see in Proposition~\ref{transferneutral}
%that it does not preserve transfer products.  On the other hand, by Proposition~\ref{transfercompatible} the transfer map preserves transfer products,but not cup products.  

%To calculate this exact sequence, we appeal to our understanding of the cohomology of symmetric groups, using the Hopf semi-ring structure.  
%While this gives a clean description as a Hopf semi-ring, from which it is straightforward to produce an additive basis with multiplication rules,
%Understanding  generators and relations for cohomology of individual symmetric groups under cup product alone 
%is straightforward from this presentation, but is combinatorially involved.  
%Fortunately, it is manageable to use the Hopf semi-ring description for symmetric groups to produce a similar description 
%for alternating groups, and we lay the foundations for that approach in this section.

When $n=2$ the associated line bundle in the Gysin sequence is the tautological line bundle over $B\si_2 = \R P^\infty$, and  in the
notation of Theorem~\ref{T:symmgenandrel}
the Euler class $e$ is $\gamma_{1,1}$, 
the generator of the cohomology.   For general $n$, the Euler class must 
restrict to this, so $e = \gamma_{1,1} \odot 1_{n-2}$.  We record how it multiplies, which is immediate from Hopf semi-ring distributivity,
or can be read off from Theorem~6.8 of \cite{GSS12}.  Recall that  gathered monomials are those expressed with the minimal number of $\odot$ products,
and that these form a basis.

\begin{lemma}\label{eMultiply}
The cup product of $e = \gamma_{1,1} \odot 1_{n-2}$ with a 
gathered Hopf ring monomial produces
a linear combination of monomials where each occurrence of ${\gamma_{1,m}}^k$  (for $k \geq 0$, $m>0$) as a constituent
cup monomial is replaced by ${\gamma_{1,1}}^{k+1} \odot {\gamma_{1,m-1}}^{k}$.
\end{lemma}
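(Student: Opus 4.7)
The plan is to apply the Hopf-ring distributivity axiom of \refD{distributivity} to the product $e \cdot M = (\gamma_{1,1} \odot 1_{n-1}) \cdot M$, which expands as
$$e \cdot M \;=\; \sum_{\Delta M = \sum M' \otimes M''} (\gamma_{1,1} \cdot M') \odot (1_{n-1} \cdot M''),$$
and then to identify the coproduct pieces of $\Delta M$ that contribute nontrivially. Vanishing of cup products across distinct components forces $M'$ to lie on $B\si_2$; since $H^*(B\si_2) \cong \F_2[\gamma_{1,1}]$ one has $M' = \gamma_{1,1}^j$ for some $j \geq 0$, so $\gamma_{1,1} \cdot M' = \gamma_{1,1}^{j+1}$, while $1_{n-1} \cdot M'' = M''$ because $1_{n-1}$ is a cup unit on the appropriate component.

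Writing $M = \bigodot_\alpha p_\alpha$ as a Hopf monomial in constituent cup monomials $p_\alpha$ on components $c_\alpha$ and using that $\Delta$ is a $\odot$-ring homomorphism (Theorem~\ref{intersectionprop}), one has $\Delta M = \bigodot_\alpha \Delta p_\alpha$. Pieces of $\Delta M$ with first tensor factor on component $2$ thus arise from selecting a $(c'_\alpha, c_\alpha - c'_\alpha)$-piece of each $\Delta p_\alpha$ such that $\sum_\alpha c'_\alpha = 2$. The central step is to show that the only nonzero contributions come from taking the $(2, c_\alpha - 2)$-piece from exactly one $p_\alpha$ and the $(0, c_\beta)$-piece from each other $p_\beta$, and moreover that the chosen $p_\alpha$ must be a pure cup power $\gamma_{1, m_\alpha}^{k_\alpha}$, in which case the selected piece equals $\gamma_{1,1}^{k_\alpha} \otimes \gamma_{1, m_\alpha - 1}^{k_\alpha}$.

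This reduces to the generator-level computation: the coproduct formula $\Delta \gamma_{\ell, m} = \sum_{i+j=m} \gamma_{\ell, i} \otimes \gamma_{\ell, j}$ places its $(i', j')$-piece on $(B\si_{i' 2^\ell}, B\si_{j' 2^\ell})$, so for $\ell \geq 2$ and $m \geq 1$ the restriction to $B\si_2 \times B\si_{c_\alpha - 2}$ is zero, and by multiplicativity of restriction any cup monomial containing such a factor has vanishing $(2, c_\alpha - 2)$-piece. Assembling, each admissible $\alpha$ contributes
$$\gamma_{1,1}^{k_\alpha + 1} \odot \gamma_{1, m_\alpha - 1}^{k_\alpha} \odot \bigodot_{\beta \neq \alpha} p_\beta,$$
which is precisely $M$ with the factor $\gamma_{1, m_\alpha}^{k_\alpha}$ replaced by $\gamma_{1,1}^{k_\alpha + 1} \odot \gamma_{1, m_\alpha - 1}^{k_\alpha}$. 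The main obstacle is the combinatorial bookkeeping across all constituents simultaneously and the careful verification that the vanishing criterion for level-$\ell$ generators with $\ell \geq 2$ rules out all stray cross-terms.
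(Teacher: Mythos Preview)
Your proof is correct and follows the same approach as the paper, which simply states the lemma is ``immediate from Hopf ring distributivity.'' You have supplied the detailed verification: applying distributivity with $a = M$ and $b \odot c = \gamma_{1,1} \odot 1_{n-1}$, using the $(\odot,\Delta)$-bialgebra structure of Theorem~\ref{intersectionprop} to compute $\Delta M$ from the $\Delta p_\alpha$, and then observing that the component constraint (first tensor factor on $B\si_2$) together with the coproduct formula for $\gamma_{\ell,m}$ forces exactly the replacement described. This is precisely what the paper leaves implicit.
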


\begin{figure}[htp]
\begin{center}
\begin{tikzpicture}[line cap=round,line join=round,x=1.0cm,y=1.0cm, scale=1]
\draw [line width=1pt, color=black] (0,0) -- (1,0) -- (1,1) -- (0,1) -- cycle;
\draw [line width=1pt, color=black] (1,0) -- (3,0);
%\draw [line width=1pt, dash pattern = on 3pt off 3pt, color=black] (1,1.75) -- (1,2.75);
%\draw [line width=1pt, dash pattern = on 3pt off 3pt, color=black] (2,1.75) -- (2,2.75);
%\draw [line width=1pt, dash pattern = on 3pt off 3pt, color=black] (3,1.75) -- (3,2.75);

\draw [line width=1pt, color=black] (2.9,0.7) -- (2.92,0.7) -- (2.92,0.72) -- (2.9, 0.72) -- cycle;

\draw [line width=1pt, color=black] (3.5,0) -- (4.5,0) -- (4.5,1) -- (3.5,1) -- cycle;
\draw [line width=1pt, color=black] (3.5,1) -- (4.5,1) -- (4.5,2) -- (3.5,2) -- cycle;
\draw [line width=1pt, color=black] (3.5,2) -- (4.5,2) -- (4.5,3) -- (3.5,3) -- cycle;
\draw [line width=1pt, color=black] (4.5,0) -- (5.5,0) -- (5.5,1) -- (4.5,1) -- cycle;
\draw [line width=1pt, color=black] (5.5,0) -- (6.5,0);

\draw [line width=1pt, color=black] (6.3,0.9) -- (6.8,0.9);
\draw [line width=1pt, color=black] (6.3,1.1) -- (6.8,1.1);

\draw [line width=1pt, color=black] (7,0) -- (8,0) -- (8,1) -- (7,1) -- cycle;
\draw [line width=1pt, color=black] (7,1) -- (8,1) -- (8,2) -- (7,2) -- cycle;
\draw [line width=1pt, color=black] (7,2) -- (8,2) -- (8,3) -- (7,3) -- cycle;
\draw [line width=1pt, color=black] (7,3) -- (8,3) -- (8,4) -- (7,4) -- cycle;
\draw [line width=1pt, color=black] (8,0) -- (9,0) -- (9,1) -- (8,1) -- cycle;
\draw [line width=1pt, color=black] (9,0) -- (10,0);

\draw [line width=1pt, color=black] (9.5,1) -- (9.9, 1);
\draw [line width=1pt, color=black] (9.7,0.8) -- (9.7, 1.2);

\draw [line width=1pt, color=black] (10.5,0) -- (11.5,0) -- (11.5,1) -- (10.5,1) -- cycle;
\draw [line width=1pt, color=black] (10.5,1) -- (11.5,1) -- (11.5,2) -- (10.5,2) -- cycle;
\draw [line width=1pt, color=black] (10.5,2) -- (11.5,2) -- (11.5,3) -- (10.5,3) -- cycle;
\draw [line width=1pt, color=black] (11.5,0) -- (12.5,0) -- (12.5,1) -- (11.5,1) -- cycle;
\draw [line width=1pt, color=black] (11.5,1) -- (12.5,1) -- (12.5,2) -- (11.5,2) -- cycle;
\draw [line width=1pt, color=black] (12.5,0) -- (13.5,0);

%\draw [line width=1pt, color=black] (4,0) -- (6,0) -- (6,1.5) -- (4,1.5) -- cycle;
%\draw [line width=1pt, color=black] (4,1.5) -- (6,1.5) -- (6,3) -- (4,3) -- cycle;
%\draw [line width=1pt, color=black] (4,3) -- (6,3) -- (6,4) -- (4,4) -- cycle;
%\draw [line width=1pt, dash pattern = on 3pt off 3 pt, color=black] (5,3) -- (5,4);
\end{tikzpicture}
\caption{$(\gamma_{1,1}\odot 1_4) \cdot (\gamma_{1,1}^3 \odot \gamma_{1,1} \odot 1_2) = \gamma_{1,1}^4 \odot \gamma_{1,1} \odot 1_2 + 
\gamma_{1,1}^3 \odot \gamma_{1,1}^2 \odot 1_2$}
\label{F:product}
\label{emult}
\end{center}
\end{figure}

The resulting monomials where there was already a ${\gamma_{1,2k+1}}^{k+1}$ present as a $\odot$-factor would be zero because
$\odot$ is exterior, as happens
for the ``last term'' of the product in Figure \ref{F:product}.

We next recall and extend some definitions from Section~7 of \cite{GSS12}.

\begin{definition}\label{1decomposition}
The level-one sub-Hopf ring of the cohomology of symmetric groups is that generated by the classes $\gamma_{1,m}$ and $1_m$.

The scale greater than one Hopf ring monomials are 
the Hopf ring monomials for which every cup-monomial is a non-trivial product of some $\gamma_{\ell, m}$ for $\ell > 1$ 
(with positive $\cdot$-exponents, so in particular no $\odot$-factors of $1_m$).

The $1$-decomposition of a Hopf ring monomial is its  expression as $m_1 \odot m_2$ where $m_1$ is a level-one monomial and $m_2$ is scale
greater than one.  %A Hopf ring monomial is level-one nontrivial if $m_1$ is non-trivial in this decomposition.
\end{definition}

So a level-one Hopf ring monomial will have $m_2 = 1_0$, and a scale greater than one Hopf ring monomial will have $m_1 = 1_0$.
For example, in Figure~\ref{F:product} all monomials are level-one, and in Figure \ref{F:alpha}  $\alpha$ is scale greater than one.  
In Figure~\ref{F:beta},
$\beta$ has $1$-decomposition $m_1 =  {\gamma_{1,2}}^2 \odot \gamma_{1,1}
\odot 1_2$ and
$m_2 = {\gamma_{2,1}}^2 \gamma_{1,2}$.  

 The  $1$-decomposition is unique if the monomials in question are gathered.  In skyline diagrams, the decomposition has $m_1$ comprised of 
columns with only $1 \times 1$ blocks and empty spaces, while every column of $m_2$ has at least one larger block.

\begin{proposition} \label{levelone}
The level-one Hopf ring is  isomorphic to the Hopf ring of symmetric polynomials in
one variable, so the component rings are all
polynomial.
\end{proposition}

\begin{proof}
This is immediate through the presentation for the Hopf ring of symmetric polynomials given in Proposition~2.8 of \cite{GSS12}.
The isomorphism realized through restriction to subgroups of the form
${\si_2}^k$ by Theorem~7.8 of \cite{GSS12}. 

Explicitly, the isomorphism sends $\gamma_{1,m}$ to the symmetric polynomial which is the product of all of the $m$ variables 
(on the $m$th component), and more generally sends a skyline diagram build from $1 \times 1$ blocks (essentially a Young diagram) to the
symmetrization of the monomial whose powers of the variables are the heights of the columns of the diagram.
\end{proof}

  For example, the multiplication depicted
in Figure~\ref{emult} corresponds to the product of the symmetrization of $x_1$ -- that is $(x_1 + x_2 + x_3)$ -- 
and the symmetrization of ${x_1}^3 x_2$.

%Recall that the scale of a Hopf ring monomial in the cohomology of symmetric groups is the minimum width which occurs in the
%coproduct of the monomial. 
\begin{definition}\label{GysinDef}

Let  $\mathcal{G}_{ann}$  be the set of gathered Hopf ring monomials of scale  greater than one.

Let $\mathcal{G}_{quot}$ denote the set of gathered Hopf ring monomials for which the constituent $\cdot$-monomial
of the form ${\gamma_{1,m}}^k$ with the largest power of $k$ has $m > 1$ or $k=0$.

\end{definition}

\begin{figure}[htp]
\begin{subfigure}{0.4\textwidth}
\begin{center}
\begin{tikzpicture}[line cap=round,line join=round,x=1.0cm,y=1.0cm, scale=1]
\draw [line width=1pt, color=black] (0,0) -- (4,0) -- (4,1.75) -- (0,1.75) -- cycle;
\draw [line width=1pt, color=black] (0,1.75) -- (4,1.75) -- (4,2.75) -- (0,2.75) -- cycle;
\draw [line width=1pt, dash pattern = on 3pt off 3pt, color=black] (1,1.75) -- (1,2.75);
\draw [line width=1pt, dash pattern = on 3pt off 3pt, color=black] (2,1.75) -- (2,2.75);
\draw [line width=1pt, dash pattern = on 3pt off 3pt, color=black] (3,1.75) -- (3,2.75);

\draw [line width=1pt, color=black] (4,0) -- (6,0) -- (6,1.5) -- (4,1.5) -- cycle;
\draw [line width=1pt, color=black] (4,1.5) -- (6,1.5) -- (6,3) -- (4,3) -- cycle;
\draw [line width=1pt, color=black] (4,3) -- (6,3) -- (6,4) -- (4,4) -- cycle;
\draw [line width=1pt, dash pattern = on 3pt off 3 pt, color=black] (5,3) -- (5,4);
\end{tikzpicture}
\caption{$\alpha = \gamma_{3,1}\gamma_{1,4} \odot {\gamma_{2,1}}^2\gamma_{1,2}$}
\label{F:alpha}
\end{center}
\end{subfigure}
\hfill
\begin{subfigure}{0.4\textwidth}
\begin{center}
\begin{tikzpicture}[line cap=round,line join=round,x=1.0cm,y=1.0cm, scale=1]
\draw [line width=1pt, color=black] (0,0) -- (2,0) -- (2,1.5) -- (0,1.5) -- cycle;
\draw [line width=1pt, color=black] (0,1.5) -- (2,1.5) -- (2,3) -- (0,3) -- cycle;
\draw [line width=1pt, color=black] (0,3) -- (2,3) -- (2,4) -- (0,4) -- cycle;
\draw [line width=1pt, dash pattern = on 3pt off 3pt, color=black] (1,3) -- (1,4);

\draw [line width=1pt, color=black] (2,0) -- (4,0) -- (4,1) -- (2,1) -- cycle;
\draw [line width=1pt, color=black] (2,1) -- (4,1) -- (4,2) -- (2,2) -- cycle;

\draw [line width=1pt, dash pattern = on 3pt off 3pt, color=black] (3,0) -- (3,2);
\draw [line width=1pt, color=black] (4,0) -- (5,0) -- (5,1) -- (4,1) -- cycle;

\draw [line width=1pt, color=black] (5,0) -- (6,0);

\end{tikzpicture}
\caption{$\beta = {\gamma_{2,1}}^2 \gamma_{1,2} \odot {\gamma_{1,2}}^2 \odot \gamma_{1,1} \odot 1_2$}
\label{F:beta}
\end{center}
\end{subfigure}
\caption{Skyline diagrams for classes in $H^{*}(B\si_{12})$.}
\end{figure}

%The monomials in  $\mathcal{G}_{ann}$  have all constituent $\cdot$-monomials with a factor of at least one $\gamma_{\ell,m}$ with $\ell > 1$.
%Their skyline diagrams for are those with no columns comprised entirely 
%of $1 \times 1$  blocks, as well as no ``empty spaces.''
Our choices of representatives satisfy $\mathcal{G}_{ann} \subset \mathcal{G}_{quot}$.
The skyline diagrams for classes in $\mathcal{G}_{quot}$ have their tallest pure $1 \times 1$-block column of width
at least  two.  For example, in Figure \ref{F:alpha}, $\alpha \in \mathcal{G}_{ann}$ and thus $ \mathcal{G}_{quot}$, while 
in Figure~\ref{F:beta} $\beta \in  \mathcal{G}_{quot}$.

\begin{theorem}\label{AnnIdeal}
The set $\mathcal{G} = \mathcal{G}_{quot} \cup \mathcal{G}_{ann}$ is a Gysin decomposition  for the cohomology of $B\si_n$ in 
the Gysin sequence for $\alt_n$ as an index-two subgroup.
\end{theorem}

\begin{proof}
The elements of $\mathcal{G}_{ann}$ have  coproduct where no terms are supported on $B\si_2$, so
they will be annihilated by $e$ by Hopf ring distributivity. 

Conversely, consider the $1$-decomposition of a Hopf ring monomial not in $\mathcal{G}_{ann}$, $m = m_1 \odot m_2$, which will thus 
have the level-one factor $m_1 \neq 1_0$. 
Using again that the coproduct of $m_2$ has no terms supported on $B \si_2$, the
 product of $m_1 \odot m_2$ with $e$ will be $((\gamma_{1,1} \odot 1_{k-1}) \cdot m_1) \odot m_2$, where $k$ is the width of $m_1$. 
 Because the component rings of the level-one Hopf subring are polynomial by Proposition~\ref{levelone},   the product 
with $\gamma_{1,1} \odot 1_{k-1}$ is injective on the collection of monomials $m_1$. 
 Thus multiplication by $e$ is injective on the span of monomials not in $\mathcal{G}_{ann}$.
%isomorphic have a constituent monomial of the form ${\gamma_{1,m}}^k$ with $k \geq 0$.  
%By Lemma~\ref{eMultiply}, the product of $e$ with a class
%with such constituent monomials is non-zero, as in particular the term with the greatest $k$ will give rise to a non-zero term.
Since  $e$ annihilates $\mathcal{G}_{ann}$ but  multiplies injectively on its complement in the Hopf monomial basis,  
 $\mathcal{G}_{ann}$ forms a basis for the annihilator ideal as claimed.

To show that $\mathcal{G}_{quot}$ spans the quotient by $e$  we 
induct on the difference $\delta(h)$ between the largest power of $\gamma_{1,1}$ which occurs in a monomial $h$ 
and the second largest power of $\gamma_{1,m}$ for $m \geq 1$ which occur in $h$.   
When $\delta(h)$  is zero or negative, a Hopf monomial is in $\mathcal{G}_{quot}$.

Consider a gathered Hopf ring monomial $h$ for which  ${\gamma_{1,1}}^k$ is the largest power of $\gamma_{1,m}$ which occurs.
Let $h'$ be defined by 
%replacing $\gamma_{1,1}^k$  by $\gamma_{1,1}^{k-1}$ in $h$,  or more generally 
replacing  ${\gamma_{1,1}}^k \odot {\gamma_{1,m}}^{k-1}$, with $m \geq 0$,
by ${\gamma_{1,m+1}}^{k-1}$.  
By Lemma~\ref{eMultiply} the  product of $e$ and $h'$ has $h$ as one term, and other terms with strictly smaller $\delta$.  Inductively, $h$ 
can be written as a sum of monomials in $\mathcal{G}_{quot}$.   (For example, referring to  Figure~\ref{F:product} the monomial 
$h = \gamma_{1,1}^4 \odot \gamma_{1,1} \odot 1_2$ has $\delta(h) = 3$.  When we multiply $h' = \gamma_{1,1}^3 \odot \gamma_{1,1} \odot 1_2$
by $e = \gamma_{1,1} \odot 1_4$ we get $h$ plus a term $\gamma_{1,1}^3 \odot \gamma_{1,1}^2 \odot 1_2$ whose $\delta$ is one.)

To show independence of $\mathcal{G}_{quot}$, we apply Lemma~\ref{eMultiply}.  
Any product of a Hopf monomial $m$ with 
$e$ is either zero, if $m$ has scale greater than one,  or produces at least one term which is not in $\mathcal{G}_{quot}$, namely the one
for which the $\gamma_{1,1}$ factor of $e$ is matched with the highest power of some $\gamma_{1,q}$ which occurs as a constituent $\cdot$-monomial.
This term, which has the greatest
power for $\gamma_{1,1}$ as a $\cdot$-monomial, uniquely determines $m$ among products of monomials with $e$.
Thus the only way for a multiple of $e$ to be in the span of 
$\mathcal{G}_{quot}$ is to be zero, establishing independence of $\mathcal{G}_{quot}$ in the
quotient by $e$.

Observe that $\mathcal{G}_{ann} \subset \mathcal{G}_{quot}$, so it is immediate that their intersection spans the image of the annihilator ideal
in the quotient, as needed to be a Gysin decomposition.
\end{proof}

The bases $\mathcal{G}_{ann}$ and $\mathcal{G}_{quot}$ are readily enumerable.
 To our knowledge, this is the first determination of  the cohomology of alternating groups as vector spaces, and we also determine
 $C_2$ action.
 The calculation trails such knowledge of symmetric groups by Nakaoka \cite{Naka61} by over fifty years, but was relatively short work
 using Hopf ring structure.  
 
\begin{corollary}\label{C:Annzero}
The annihilator ideal $Ann(e)$ is zero when $n = 4k + 2$.
\end{corollary}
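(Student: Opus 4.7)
The plan is to read this off directly from the characterization in Theorem~\ref{AnnIdeal}: that theorem identifies a basis $\mathcal{G}_a$ of $\text{Ann}(e) \subset H^*(B\si_n)$ with the Hopf ring monomials of scale greater than one, so it suffices to show no such monomial is supported on $B\si_{4k+2}$. The argument is a component/parity check on the constituent cup monomials.

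First I would fix a Hopf ring monomial $p_1 \odot p_2 \odot \cdots \odot p_i$ of scale at least two, supported in principle on $B\si_n$. Each constituent cup monomial $p_j$ is a product of Hopf ring generators of the form $\gamma_{\ell, m}$, and since the cup product of classes on different symmetric groups is zero, $p_j$ is supported on a single component $B\si_{n_j}$ with $n = \sum_j n_j$. By definition of scale, every $p_j$ has scale at least two, so $p_j$ contains a factor $\gamma_{\ell, m}$ with $\ell \geq 2$. Since $\gamma_{\ell, m} \in H^*(B\si_{m \cdot 2^\ell})$, this forces $n_j = m \cdot 2^\ell$, and in particular $4 \mid n_j$.

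Summing, $n = \sum_j n_j \equiv 0 \pmod{4}$. For $n = 4k+2$ this is a contradiction, so no Hopf ring monomial of scale greater than one is supported on $B\si_{4k+2}$. Consequently the basis $\mathcal{G}_a$ of $\text{Ann}(e)$ on this component is empty, and $\text{Ann}(e) = 0$.

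There is essentially no obstacle here beyond the bookkeeping: all the substantive work (linking the annihilator to scale via Hopf ring distributivity and Lemma~\ref{eMultiply}) is already done in Theorem~\ref{AnnIdeal}, and the corollary reduces to the trivial divisibility fact that $m \cdot 2^\ell \equiv 0 \pmod{4}$ whenever $\ell \geq 2$.
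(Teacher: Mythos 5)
Your argument is correct and is precisely the (implicit) reasoning behind the paper's corollary: by Theorem~\ref{AnnIdeal} the annihilator is spanned by Hopf monomials of scale greater than one, and every constituent cup monomial of such a class contains some $\gamma_{\ell,m}$ with $\ell \geq 2$, hence lives on a component $m\cdot 2^\ell \equiv 0 \pmod 4$, so no such monomial exists on $B\si_{4k+2}$. Nothing further is needed.
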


At this point, we could describe the cohomology of alternating groups $\alt_{4k+2}$ as quotients of the corresponding cohomology of 
symmetric groups by Euler classes.  The cohomology of $\alt_{4k}$ is much more interesting, and it will be straightforward to understand the cohomology
of $\alt_{4k+2}$ from our  description of the general case.

 \subsection{The standard involution and an extension of almost-Hopf semi-ring structure}\label{involution}
 
We further develop the standard involution on the cohomology of $\alt_n$ coming from the its
 embedding in $\si_n$ as a normal subgroup of index two.

\begin{definition}
Denote by $\overline{x}$ the image of $x \in H^*(B\alt_n)$ under the action of conjugation by any element of $\si_n$ not in $\alt_n$, 
or equivalently by the non-trivial deck transformation of $B\alt_n$ as a cover of $B\si_n$
\end{definition}

The  following are immediate on the level of cochains.
 
 \begin{proposition}\label{P:transfer}
Restriction and transfer from and to the cohomology of symmetric groups are invariant under the standard involution, in that
$\overline{{\text res} ( y)} = {\text res} ( y)$ and ${\text tr}(\overline{x}) = {\text tr}(x)$.  Moreover, ${\text res} ({\text tr} ( x)) = x + \overline{x}$.
 \end{proposition}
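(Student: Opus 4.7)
The plan is to derive both equalities directly from the second characterization of the involution given in the definition, namely that $\overline{(\cdot)}$ is induced by the non-trivial deck transformation $\tau : B\alt_n \to B\alt_n$ of the double cover $B\iota_n : B\alt_n \to B\si_n$. Once that identification is in hand, both statements are consequences of standard functorial properties of $\text{res}$ and $\text{tr}$.

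For the restriction assertion, the argument is essentially a single commutative triangle. Since $\tau$ is a deck transformation of the cover $B\iota_n$, we have $B\iota_n \circ \tau = B\iota_n$. Applying $H^*$ yields $\tau^* \circ (B\iota_n)^* = (B\iota_n)^*$, which, interpreted in the present notation, is exactly $\overline{\text{res}(y)} = \text{res}(y)$.

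For the transfer assertion, I would invoke the general principle that for a finite cover $p : \tilde X \to X$ with deck transformation $\tau$, one has $p^! \circ \tau^* = p^!$. This is visible on cochains: the transfer sends $\alpha$ to the cochain $\sigma \mapsto \sum_i \alpha(\tilde\sigma_i)$, where $\{\tilde\sigma_i\}$ is the set of lifts of a chain $\sigma$; because $\tau$ permutes this set, replacing $\alpha$ by $\tau^*\alpha$ only reorders the summands. Specializing to $p = B\iota_n$ gives $\text{tr}(\overline x) = \text{tr}(x)$. Alternatively, one can appeal to a standard reference such as the transfer discussion in Adem–Milgram to avoid writing a chain-level argument.

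There is no genuine obstacle: once the involution is recast as pullback along a deck transformation, the restriction statement is pure naturality, and the transfer statement is the elementary observation that summing over a fiber is invariant under permutations of that fiber. The only mild judgment call is whether to present the cochain-level argument for transfer invariance or simply to cite it; given the expository style of this section, a brief cochain-level sketch seems appropriate.
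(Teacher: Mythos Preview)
Your proposal is correct and follows exactly the approach the paper indicates: the paper simply states that the deck-transformation description of the involution makes the proposition immediate, and you have spelled out precisely why---$B\iota_n \circ \tau = B\iota_n$ gives the restriction statement, and invariance of the transfer sum under permutation of lifts gives the transfer statement.
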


We  call classes in the image of restriction neutral, since involution fixes them, and informally at the moment 
call those which have non-zero image under 
transfer charged.  We make charge more precise in two ways later, in which case the involution will reverse charge.

To make full use of this involution, we understand its interplay with  product and coproduct structures.
Let $\iota(x)$ denote the involution in homomorphism notation.

\begin{proposition}\label{P:involutiontransfer}
\begin{enumerate}
\item $ \overline{x \odot y} = \overline{x} \odot y$.
\item  $\Delta(x)$ is invariant under $\iota \otimes \iota$.
\item $\Delta(\overline{x}) = \left(\iota \otimes id \right) (\Delta(x))$ (which equals $\left(id \otimes \iota\right) \left(\Delta(x)\right)$ by the previous).
\end{enumerate}
\end{proposition}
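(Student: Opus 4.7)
The plan is to derive each of the three identities from a commuting square of spaces by applying naturality of the appropriate cohomological operation. The preliminary observation is that since $\overline{x}$ is induced by conjugation by \emph{any} element of $\si_n \setminus \alt_n$, and conjugation by an element of $\alt_n$ acts trivially on $H^*(B\alt_n)$ (Theorem~II.1.9 of \cite{AdMi94}), I have freedom in choosing the representative odd permutation used to realize the involution on $B\alt_{i+j}$ via the embedding $e_{i,j} : \alt_i \times \alt_j \hookrightarrow \alt_{i+j}$. Three natural choices present themselves: a transposition $\tau_1 \in \si_i \setminus \alt_i$ (viewed inside $\si_{i+j}$), a transposition $\tau_2 \in \si_j \setminus \alt_j$, and their product $\tau_1 \tau_2$. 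Conjugation by $\tau_1$ normalizes $\alt_i \times \alt_j$ and acts as $\iota \otimes \mathrm{id}$ there while realizing the involution on $\alt_{i+j}$; conjugation by $\tau_2$ similarly realizes $\mathrm{id} \otimes \iota$ and the involution; and $\tau_1 \tau_2$ lies in $\alt_{i+j}$ (the product of two odd permutations is even), so it realizes $\iota \otimes \iota$ on the product while inducing the identity on $H^*(B\alt_{i+j})$.

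I would first prove the third bullet. Using $\tau_1$ as above, the commuting square
$$\xymatrix{
B\alt_i \times B\alt_j \ar[r]^-{Be_{i,j}} \ar[d] & B\alt_{i+j} \ar[d] \\
B\alt_i \times B\alt_j \ar[r]^-{Be_{i,j}} & B\alt_{i+j}
}$$
with vertical maps induced by conjugation by $\tau_1$ yields $\Delta(\overline{x}) = (\iota \otimes \mathrm{id})(\Delta(x))$ upon applying cohomology, since $\Delta = Be_{i,j}^*$ and the vertical maps realize $\iota \otimes \mathrm{id}$ and $\iota$ respectively. The second bullet follows from the analogous square with vertical maps induced by $\tau_1 \tau_2$: the left vertical realizes $\iota \otimes \iota$, while the right is conjugation by an element of $\alt_{i+j}$ and so is the identity on $H^*(B\alt_{i+j})$.

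The first bullet uses the same square as the third bullet but with transfers. Since the vertical maps are homeomorphisms, the square is trivially a pull-back of covering spaces, so the principle that transfers commute with natural maps for pull-backs of covers (emphasized in the paragraph preceding Proposition~\ref{pullback} and used throughout) gives $\iota \circ \mathrm{tr} = \mathrm{tr} \circ (\iota \otimes \mathrm{id})$, where $\mathrm{tr}$ is the transfer associated to $Be_{i,j}$ and hence computes $\odot$. This is precisely $\overline{x \odot y} = \overline{x} \odot y$.

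The main (modest) obstacle is the bookkeeping around the chosen odd permutations: verifying that conjugation by $\tau_1$ preserves $\alt_i \times \alt_j$ inside $\si_{i+j}$ and restricts on it to the self-map $\iota \otimes \mathrm{id}$, and likewise for $\tau_2$ and $\tau_1 \tau_2$. These are direct checks on how the chosen transpositions act on generators of the product subgroup. Once they are in hand, the three claims are immediate consequences of the three commuting squares together with standard naturality and transfer--pull-back compatibility.
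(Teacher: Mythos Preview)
Your proposal is correct and follows essentially the same approach as the paper: the paper's proof also chooses odd permutations in $\si_i \times \mathrm{id}$ or $\mathrm{id} \times \si_j$ to realize the involution on $\alt_{i+j}$ while acting as $\iota \otimes \mathrm{id}$ or $\mathrm{id} \otimes \iota$ on the product subgroup, and packages your three commuting squares as a single statement of equivariance (up to homotopy) of $Be_{i,j}$ with respect to the projection $C_2 \times C_2 \to C_2$ by the diagonal. Your version is more explicit---spelling out each square and invoking the pull-back/transfer compatibility for the first bullet---but the underlying argument is the same.
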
 

\begin{proof}
 Use the standard simplicial model for the inclusion $Be_{n,m}$ of $B(\alt_n \times \alt_m)$ in $B\alt_{n+m}$.  Conjugation on $B\alt_{n+m}$
 by any elements not  in $A_{n+m}$, in particular such elements which are in $\si_n \times id$ or $id \times \si_m$, yield the standard
 conjugation action.   
The inclusion $Be_{n,m}$  is thus equivariant up to homotopy with respect to the projection of 
$C_2 \times C_2$ to $C_2$ by quotienting by the diagonal subgroup.  This equivariance yields all
of the stated equalities in cohomology.
\end{proof}

A consequence of part (1) of Proposition~\ref{P:involutiontransfer} is that $(x + \overline{x}) \odot (y + \overline{y}) = 0$.  
By exactness of the Gysin sequence,
each of these factors is 
in the image of the restriction map from the cohomology of symmetric groups.  More generally we have the following.

\begin{proposition}\label{transferneutral}
A transfer product in the cohomology of alternating groups of two  classes restricted from the cohomology of symmetric groups  is zero.  
\end{proposition}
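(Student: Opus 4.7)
The plan is to factor the transfer product through the pullback square identified in the Remark after Proposition~\ref{transfercompatible}, where the extra two-fold cover will annihilate the transfer in characteristic two.

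First, I would apply Proposition~\ref{pullback} to identify the pullback of $B\iota_{n+m}$ along $Be_{n,m}\colon B\si_n\times B\si_m\to B\si_{n+m}$. Since $\alt_{n+m}$ is normal of index two in $\si_{n+m}$ and $\si_n\times\si_m$ contains odd permutations, the product $(\si_n\times\si_m)\cdot\alt_{n+m}$ exhausts $\si_{n+m}$, so there is a single $(\si_n\times\si_m)$-$\alt_{n+m}$ double coset and the pullback is the connected space $BH'$, with
\[
H' \;:=\; (\si_n\times\si_m)\cap\alt_{n+m} \;=\; \bigl\{(\sigma,\tau)\in\si_n\times\si_m : \sgn(\sigma)=\sgn(\tau)\bigr\}.
\]
The inclusion $\alt_n\times\alt_m\hookrightarrow H'$ is of index two (exactly the pairs of odd permutations are the nontrivial coset), so on classifying spaces this yields a two-fold cover $p\colon B\alt_n\times B\alt_m\to BH'$. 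I can then factor the induction map as $Be^{\alt}_{n,m}=q\circ p$, where $q\colon BH'\to B\alt_{n+m}$ is inclusion, and by functoriality of transfer $tr_{Be^{\alt}_{n,m}}=tr_q\circ tr_p$.

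Second, for $x=res(\alpha)$ and $y=res(\beta)$ with $\alpha\in H^*(B\si_n)$ and $\beta\in H^*(B\si_m)$, the cross product $x\times y$ equals the pullback of $\alpha\times\beta$ along $B\alt_n\times B\alt_m\to B\si_n\times B\si_m$. Because this map factors through $BH'$ as $r\circ p$ (with $r\colon BH'\to B\si_n\times B\si_m$ the map induced by inclusion), I can write $x\times y=p^*(\xi)$ where $\xi:=r^*(\alpha\times\beta)\in H^*(BH')$.

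Third, combining the factorization with the projection formula gives
\[
x\odot y \;=\; tr_q\bigl(tr_p(p^*\xi)\bigr) \;=\; tr_q\bigl(\xi\cdot tr_p(1)\bigr),
\]
and since $p$ is a two-fold cover and we are working over $\F_2$, we have $tr_p(1)=2\cdot 1=0$, so $x\odot y=0$. The main obstacle is nothing deep but rather the bookkeeping of the two factorizations: checking that the inclusion $Be^{\alt}_{n,m}$ and the restriction $B\iota_n\times B\iota_m$ both pass through the same intermediate classifying space $BH'$ so that the projection formula for $p$ can be applied to the class $\xi$.
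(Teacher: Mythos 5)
Your proof is correct and is essentially the paper's argument: the paper also factors the composite $res$ then $tr$ through the intermediate subgroup $(\si_n\times\si_m)\cap\alt_{n+m}$, in which $\alt_n\times\alt_m$ has index two, and kills it via Lemma~\ref{L:pullpushzero} (whose content, $tr_p\circ p^*=[K':K]=2=0$ mod two, is exactly your projection-formula step). The double-coset identification of the pullback is harmless extra context but not needed for the argument.
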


To prove this we first record the following for later use.

\begin{lemma}\label{L:pullpushzero}
Let $K$ be a subgroup of finite groups $G$ and $H$.  Suppose $K \subset K'$ of even index, with $K'$ also a subgroup of $G$ and $H$.
Then the composite $H^*(BG) \overset{res}{\to} H^*(BK) \overset{tr}{\to} H^*(BH)$ is zero on mod-two cohomology.
\end{lemma}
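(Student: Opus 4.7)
The plan is to factor the composite $\mathrm{tr} \circ \mathrm{res}$ through the intermediate subgroup $K'$, and then to identify the resulting middle piece as restriction followed by transfer along the finite cover $BK \to BK'$, which is multiplication by the index of that cover. Since the index is assumed even, multiplication by it is zero on mod-two cohomology.

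More concretely, functoriality of restriction applied to the chain $K \subset K' \subset G$ gives $\mathrm{res}_K^G = \mathrm{res}_K^{K'} \circ \mathrm{res}_{K'}^G$, and transitivity of transfer applied to the chain $K \subset K' \subset H$ gives $\mathrm{tr}_H^K = \mathrm{tr}_H^{K'} \circ \mathrm{tr}_{K'}^K$. Substituting both into the composite of the lemma yields
\[
\mathrm{tr}_H^K \circ \mathrm{res}_K^G \;=\; \mathrm{tr}_H^{K'} \circ \bigl( \mathrm{tr}_{K'}^K \circ \mathrm{res}_K^{K'} \bigr) \circ \mathrm{res}_{K'}^G.
\]
The parenthesized endomorphism of $H^*(BK')$ is the composite of restriction and then transfer for the covering map $BK \to BK'$, which is a finite cover of degree $[K':K]$. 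By the standard ``degree formula'' for a finite cover $p$, namely $p_! \circ p^* = d \cdot \mathrm{id}$ where $d$ is the degree, this composite equals multiplication by $[K':K]$. Because $[K':K]$ is even and the coefficients are $\F_2$, this multiplication is the zero map, and the whole composite vanishes.

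The only tools invoked are the transitivity of transfer and the degree formula for restriction followed by transfer on a finite cover, both of which are classical (see, for instance, Adem--Milgram). No substantive obstacle is expected; the subtlety, such as it is, is simply the bookkeeping of arranging $K'$ to sit between $K$ and both $G$ and $H$ so that both factorizations can be performed simultaneously, which is precisely what the hypothesis provides.
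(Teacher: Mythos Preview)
Your proof is correct and follows essentially the same approach as the paper: both factor the composite through $H^*(BK')$ using transitivity of restriction and of transfer, and then observe that the middle piece $\mathrm{tr}_{K'}^K \circ \mathrm{res}_K^{K'}$ is multiplication by the even index $[K':K]$, hence zero mod two.
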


\begin{proof}
By assumption, $BK'$ forms an intermediate cover between $BK$ and both $BG$ and $BH$.  Thus both maps in 
the composite $H^*(BG) \to H^*(BK) \overset{tr}{\to} H^*(BH)$
factor through the cohomology of $BK$ to give
$$H^*(BG) \to H^*(BK') \to H^*(BK) \overset{tr}{\to} H^*(BK') \overset{tr}{\to} H^*(BH).$$
Because $K \subset K'$ of even index, the middle composite %$H^*(BK') \to H^*(BK) \overset{tr}{\to} H^*(BK')$ 
is zero on mod-two cohomology.
\end{proof}

\begin{proof}[Proof of Proposition~\ref{transferneutral}]
By definition, we consider the composite 
$$H^*\left(B(\si_i \times \si_j)\right) \overset{res}{\longrightarrow} H^* \left( B(\alt_i \times \alt_j) \right) \overset{tr}{\longrightarrow} H^* (B\alt_{i+j}).$$
As noted in the proof of Proposition \ref{transfercompatible}, 
$\alt_i \times \alt_j$ is an index two subgroup of the intersection $\si_i \times \si_j \cap \alt_{i+j}$ in $\si_{i+j}$.   
Lemma~\ref{L:pullpushzero}  applies to give the result.
\end{proof}

\begin{corollary}
$x \odot res(y) = \overline{x} \odot res(y)$.
\end{corollary}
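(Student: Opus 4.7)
The plan is to rewrite the claimed identity as $(x + \overline{x}) \odot res(y) = 0$ (working mod two) and then exhibit each factor as a neutral class so that Proposition~\ref{transferneutral} applies.

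First I would verify that $x + \overline{x}$ lies in the image of the restriction map $res\co H^*(B\si_n) \to H^*(B\alt_n)$. Indeed, Proposition~\ref{P:transfer} gives $tr(\overline{x}) = tr(x)$, so $tr(x + \overline{x}) = 2\,tr(x) = 0$ in mod-two cohomology. By exactness of the Gysin sequence discussed in Section~\ref{gysinsection}, the kernel of $tr$ at $H^*(B\alt_n)$ coincides with the image of $res$, so $x + \overline{x} = res(z)$ for some $z \in H^*(B\si_n)$.

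Next, $res(y)$ is by construction in the image of restriction from $H^*(B\si_m)$. So both tensor factors of the transfer product $(x + \overline{x}) \odot res(y)$ are restricted from the cohomology of symmetric groups, and Proposition~\ref{transferneutral} immediately yields $(x + \overline{x}) \odot res(y) = 0$. Distributing and rearranging in mod-two coefficients gives the desired equality $x \odot res(y) = \overline{x} \odot res(y)$.

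There is no real obstacle here: the content of the statement is entirely packaged in the two facts that (i) involution-averaging produces a neutral class by exactness of the Gysin sequence plus the invariance $tr \circ \iota = tr$, and (ii) transfer products of two neutral classes vanish by Proposition~\ref{transferneutral}. The only thing to be careful about is working mod two throughout so that $tr(x) + tr(x) = 0$; this is consistent with the standing hypothesis of the paper.
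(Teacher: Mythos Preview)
Your proof is correct and follows exactly the approach the paper intends: the corollary is placed immediately after Proposition~\ref{transferneutral} with no explicit proof, and the paragraph preceding that proposition already notes that $x + \overline{x}$ lies in the image of restriction by exactness of the Gysin sequence. Your argument spells out precisely this reasoning.
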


Proposition~\ref{transferneutral} has significant consequences for the global structure of the cohomology of alternating groups, 
and in particular the inverse system it forms.
For symmetric groups, one can lift classes in this inverse system by taking transfer products with cup unit classes.
For alternating groups, transfer products of neutral classes with such unit classes 
will result in zero, and the transfer product of a charged class with a unit class yields a lift  of the
sum of the class and its conjugate.  That is, charged classes are inherently unstable, 
and the stability of neutral classes is not realized by transfer 
product structure as it is for symmetric groups.
%This gives rise to  a need for more almost-Hopf semi-ring generators generators,  in comparison to the symmetric group situation.

For cup products, the fact that the diagonal map $B\alt_i \to B\alt_i \times B\alt_i$ is equivariant with respect
to the involution on $B\alt_i$ and the diagonal involution on $B\alt_i \times B\alt_i$ gives the following.

\begin{proposition}\label{P:cupinvolution}
$\overline{x \cdot y} = \overline{x} \cdot \overline{y}$
\end{proposition}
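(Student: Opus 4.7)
The plan is to realize the cup product geometrically via the diagonal map on $B\alt_n$ and verify that this diagonal is equivariant with respect to the involution, then apply naturality of cohomology.

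First I would fix a model for the involution: on the simplicial model of $B\alt_n$ coming from $EG/\alt_n$ (where $G = \si_n$), the involution $\iota$ is induced by the non-trivial deck transformation of the double cover $B\alt_n \to B\si_n$, which is in turn realized by conjugation $c_\sigma : g \mapsto \sigma g \sigma^{-1}$ for any fixed $\sigma \in \si_n \setminus \alt_n$. Next, I would observe that for any group automorphism $\varphi$ of $\alt_n$, the diagonal $D : B\alt_n \to B\alt_n \times B\alt_n$ is strictly equivariant with respect to $B\varphi$ on the source and $B\varphi \times B\varphi$ on the target, because $\varphi$ is a homomorphism and the diagonal of a product of groups commutes with any diagonal action by automorphisms. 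Applying this to $\varphi = c_\sigma$, the diagonal is equivariant for $\iota$ on the source and $\iota \times \iota$ on the target.

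Now I would run this through cohomology. The cup product is by definition $x \cdot y = D^*(x \times y)$, where $x \times y$ denotes the external cross product. Equivariance of $D$ gives a commutative square of spaces, which on mod-two cohomology yields the identity
\[
\iota\bigl(D^*(x \times y)\bigr) \;=\; D^*\bigl((\iota \times \iota)^*(x \times y)\bigr) \;=\; D^*\bigl(\iota(x) \times \iota(y)\bigr),
\]
where in the last step I use naturality of the external cross product under the automorphism $(c_\sigma, c_\sigma)$. Unwinding the definition of cup product, the right-hand side is $\overline{x} \cdot \overline{y}$ and the left-hand side is $\overline{x \cdot y}$, which is exactly the claimed equality.

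There is not really a main obstacle here: the argument is a direct naturality/equivariance computation, formally parallel to the proof of Proposition~\ref{P:involutiontransfer} but with the diagonal $D$ playing the role that the inclusion $B(\alt_n \times \alt_m) \hookrightarrow B\alt_{n+m}$ played there. The only point that requires a moment of care is choosing the conjugating element consistently on the source and the two factors of the target; since we use the same $\sigma$ in both slots this is automatic, and since any two choices of $\sigma \in \si_n \setminus \alt_n$ induce the same map on cohomology (by Theorem~II.1.9 of \cite{AdMi94}, as invoked in the proof of Theorem~\ref{hopfring}), the resulting statement is independent of the choice.
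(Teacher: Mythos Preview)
Your proof is correct and follows exactly the paper's approach: the paper's justification is the single sentence that the diagonal $B\alt_i \to B\alt_i \times B\alt_i$ is equivariant with respect to the involution on the source and the diagonal involution on the target, and you have simply spelled out the details of this argument.
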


\bigskip

These results lead to a coherent extension of almost Hopf semi-ring structure.  

\begin{definition}\label{extended}
Define $\widetilde{B\alt_0}$ to be $S^0 = \{ +, -\}$. For $n>0$ define the product $\tilde{\mu_{0,n}} : 
\widetilde{B\alt_0} \times B\alt_n \to B\alt_n$ to be the involution on $\{-\} \times B\alt_n$, and the identity map on  $\{+\} \times B\alt_n$.
Define the product $\widetilde{B\alt_0} \times  \widetilde{B\alt_0} \to \widetilde{B\alt_0}$ by the usual law for signed multiplication.

Let $1^+$ and $1^-$ be the corresponding generators of $H^0(\widetilde{B\alt_0})$.  Thus $1^+$ is the unit for transfer product,
and $1^- \odot x = \overline{x}$ so in particular $1^- \odot 1^- = 1^+$. The unit
for cup product on the zeroth component is $1_0 = 1^+ + 1^-$, which is pulled back from the cohomology of symmetric groups.

Let $H^*(B\alt_\bullet) = H^*(\widetilde{B\alt_0}) \oplus \bigoplus_{\substack{m \geq 1}} H^*(B\alt_{2m}).$ 
Extend the definitions of $\odot$ and $\Delta$ by using $\tilde{\mu_{0,n}}$ in place of the usual  $B \alt_0 \times B \alt_n = B\alt_n$.
Define the counit through
projection onto the cohomology of $\widetilde{B\alt_0}$  followed by projection onto the $1^+$ summand.

\end{definition}

\begin{proposition}\label{ConjugationHopfRing}
With maps  as above, $H^*(B\alt_\bullet)$ forms an almost Hopf semi-ring, extending the almost Hopf semi-ring structure on 
 $\bigoplus_{\substack{m \geq 0}} H^*(B\alt_{2m}).$ 
\end{proposition}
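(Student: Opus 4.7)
The plan is to copy the strategy of Theorem~\ref{hopfring}, since each of the new operations in Definition~\ref{extended} is again induced by an actual map of spaces---the group multiplication $S^0\times S^0\to S^0$ on $\Z/2$ and the action maps $B\alt'_0\times B\alt_{2m}\to B\alt_{2m}$. Every time Theorem~\ref{hopfring} appealed to properties of covering spaces or to the commutation of transfer with pullbacks, the analogous step here will go through verbatim, with the new compatibilities controlled by the involution supplied by Propositions~\ref{P:involutiontransfer} and~\ref{P:cupinvolution}.

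First I would pin down the extended operations explicitly. Regarding $S^0=\Z/2$ as a discrete topological group puts the standard Hopf-algebra structure on $H^*(B\alt'_0;\F_2)$: pointwise cup product ($1^\pm\cdot 1^\pm=1^\pm$, $1^+\cdot 1^-=0$), convolution $\odot$ ($1^-\odot 1^-=1^+$, $1^+\odot 1^-=1^-$), and coproduct $\Delta 1^+=1^+\otimes 1^++1^-\otimes 1^-$, $\Delta 1^-=1^+\otimes 1^-+1^-\otimes 1^+$. For the mixed operations, the map $B\alt'_0\times B\alt_{2m}\to B\alt_{2m}$ of Definition~\ref{extended} is a $2$-fold cover whose two sheets are the identity and the involution $\iota$; directly computing transfer and pullback gives $1^+\odot x=x$, $1^-\odot x=\overline{x}$ (matching Definition~\ref{extended}), and $\Delta_{0',n}x=1^+\otimes x+1^-\otimes \overline{x}$.

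With these formulas in hand the easy axioms fall out. Associativity and commutativity of $\odot$ across the full sum reduce to $\iota^2=\mathrm{id}$ and the identity $\overline{x\odot y}=\overline{x}\odot y$ of Proposition~\ref{P:involutiontransfer}. The Hopf-algebra axioms for $(\cdot,\Delta)$ on $H^*(B\alt'_0)$ follow from a direct calculation, and the bialgebra property of $(\cdot,\Delta)$ on the whole extended sum then follows as in Theorem~\ref{hopfring}, since the new coproduct components are pullbacks along maps of spaces and cup product respects pullback.

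The main obstacle is distributivity. For triples drawn entirely from $\bigoplus_{m\geq 1}H^*(B\alt_{2m})$ it is given by Theorem~\ref{hopfring}; the substantive new work is the case in which at least one of $a$, $b$, $c$ lies in $H^*(B\alt'_0)$. I would handle these by cases, using the explicit formulas above together with Proposition~\ref{P:cupinvolution} ($\overline{x\cdot y}=\overline{x}\cdot\overline{y}$) and the coproduct-involution compatibilities of Proposition~\ref{P:involutiontransfer}. The main case is $a\in H^*(B\alt_{2m})$, $b=1^-$, $c\in H^*(B\alt_{2m})$: only the $\Delta_{0',n}a=1^+\otimes a+1^-\otimes \overline{a}$ summand of $\Delta a$ contributes nonzero cups with $1^-$, so the right-hand side of the distributivity relation collapses to $1^-\odot(\overline{a}\cdot c)=\overline{\overline{a}\cdot c}$, which by Proposition~\ref{P:cupinvolution} equals $a\cdot\overline{c}=a\cdot(1^-\odot c)$, as required. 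The remaining cases are parallel, and can alternatively be handled uniformly by adapting the pullback-square argument of Theorem~\ref{hopfring} to the cover above.
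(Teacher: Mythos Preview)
Your proof is correct and follows essentially the same route as the paper's: you invoke Proposition~\ref{P:involutiontransfer} for associativity and commutativity of the extended $\odot$, observe that the bialgebra property of $(\cdot,\Delta)$ is automatic because the new coproduct components are pullbacks along maps of spaces, and use Proposition~\ref{P:cupinvolution} together with $1^+\cdot 1^-=0$ to check distributivity in the new cases. The paper's proof is simply a terser version of this, citing the same two propositions without spelling out the explicit formulas for the structure on $H^*(B\alt'_0)$ or the case analysis; your formula $\Delta_{0',n}x=1^+\otimes x+1^-\otimes\overline{x}$ is exactly what the paper records immediately after the proof.
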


\begin{proof}
 Proposition~\ref{P:involutiontransfer} implies that the transfer product with $1^-$ is associative and commutative.
Bialgebra structure of cup product and coproduct is still
immediate because the coproduct is induced by a map of spaces.  
Proposition~\ref{P:cupinvolution} along with the fact that $1^- \cdot 1^+ = 0$ extends Hopf distributivity to apply to transfer products with $1^-$.
\end{proof}

Conversely,  this extended almost Hopf semi-ring structure encodes Propositions~\ref{P:involutiontransfer}  and \ref{P:cupinvolution}.
We can also check compatibility with our other results. Propositions~\ref{transfercompatible} and \ref{restrictiontransfer} extend  by 
Proposition~\ref{P:transfer}, through the fact that $tr(1^-) = tr(1^+)$.   The transfer product of $1_0$ with
the restriction of a class from the cohomology of symmetric groups is zero since such restrictions are  invariant under involution, 
thus extending Proposition~\ref{transferneutral}.

The coproduct of $x$ will now include the terms $1^- \otimes \overline{x} + \overline{x} \otimes 1^-$, making the statement of 
our next major result,
Theorem~\ref{bialg},  more uniform.
%There is now a Gysin sequence for $n=0$, since $BA'_0 \to B\si_0$ is two-sheeted cover, though
%the analysis is different because the covering is now trivial.  (We could also extend to $n=1$, though we do not consider odd $n$ in this paper.) 

\subsection{Coproduct of a transfer product}

Recall that analysis of the Gysin sequence, in particular Proposition~\ref{gysinsplit} and Theorem~\ref{AnnIdeal}, allow 
us to understand  the cohomology of $\alt_n$ as a $C_2$-representation under the conjugation.  
This presentation over $C_2$ is key to the interplay between transfer product and coproduct.

\begin{definition}
A polarized basis for a  $C_2$-representation is a basis $B = \{ B_+, B_-, B_o \}$ %(called  positive, negative and neutral) 
where the $C_2$-action interchanges $B_+$ and $B_-$ and fixes $B_o$.  
The positive projection, denoted $\rho^+(x)$ by abuse omitting $B$ from notation, is that onto the span of $B_+$.

For  $V$ with polarized basis, any
tensor power has an induced polarized basis where the neutral sub-basis is given by the tensor products of  
$B_o$ and by convention the positive sub-basis is given by products where the first non-neutral vector is positive.

Let $\mathcal{B} =  \{ \mathcal{B}_+, \mathcal{B}_-, \mathcal{B}_o\}$ be a choice of polarized basis  of $H^*(B \alt_n)$
arising from its decomposition through Proposition~\ref{gysinsplit} and Theorem~\ref{AnnIdeal}.
\end{definition}

Thus $\mathcal{B}_o$ corresponds to $\mathcal{G}_{quot} \backslash \mathcal{G}_{ann}$, and 
$\mathcal{B}_+$ and $\mathcal{B}_-$ correspond to $\mathcal{G}_{ann}$.  We will  refine our choices for these bases below, but name
them now as any choice will suffice for the following arguments.

\begin{theorem}\label{bialg}
The coproduct $\Delta(\alpha \odot \beta)$ is given by 
$$\Delta(\alpha \odot \beta) = (\mu_\odot \otimes \mu_\odot) \left( \tau \circ \rho^+\left(\Delta(\alpha) \otimes \Delta(\beta) \right) \right),$$
where $\rho^+$ is defined through the  polarized basis on $H^*(B\alt_\bullet)^{\otimes 4}$ induced by 
$\mathcal{B} =  \{ \mathcal{B}_+, \mathcal{B}_-, \mathcal{B}_o\}$,
and where $\tau$ is the standard transposition of second and third factors of the tensor product. \end{theorem}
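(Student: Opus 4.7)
The plan is to generalize the proof of Theorem~\ref{intersectionprop}, where the analogous bialgebra identity for symmetric groups was obtained from a pullback square of covering spaces. I would set up the pullback
$$\xymatrix{
P \ar[r] \ar[d] & B\alt_n \times B\alt_m \ar[d]^{Be_{n,m}} \\
B\alt_i \times B\alt_j \ar[r]_-{Be_{i,j}} & B\alt_{n+m}
}$$
and use Proposition~\ref{pullback} to identify the components of $P$ with double cosets $(\alt_i \times \alt_j) \backslash \alt_{n+m} / (\alt_n \times \alt_m)$, each of which contributes a classifying space $B(\alt_i \times \alt_j \cap g(\alt_n \times \alt_m) g^{-1})$. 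Applying restriction horizontally and transfer vertically gives a formula for $\Delta(\alpha \odot \beta)$ as a sum over these components, and the remaining work is matching that sum to the right-hand side of the theorem.

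First I would analyze the $\alt$-double cosets by comparing them with the $\si$-double cosets used in the proof of Theorem~\ref{intersectionprop}. Those cosets are indexed by matrices $\begin{pmatrix} p & q \\ r & s \end{pmatrix}$ with $p+q=i$, $r+s=j$, $p+r=n$, $q+s=m$, and yield stabilizers $\si_p \times \si_q \times \si_r \times \si_s$. Upon restriction to $\alt_{n+m}$, the stabilizer becomes $(\si_p \times \si_q \times \si_r \times \si_s) \cap \alt_{n+m}$, which contains $\alt_p \times \alt_q \times \alt_r \times \alt_s$ with generically index eight, mirroring the index-two phenomenon from the remark after Proposition~\ref{transfercompatible} that accounts for the failure of distributivity. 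I would track precisely how each $\si$-coset either splits into two $\alt$-cosets interchanged by the standard involution or, in degenerate cases, contributes a single $\alt$-coset fixed by the involution.

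Next I would match the sum over pullback components with $(\mu_\odot \otimes \mu_\odot) \circ \tau \circ \rho^+ \circ (\Delta \otimes \Delta)$. The positive polarized basis of $H^*(B\alt_\bullet)^{\otimes 4}$ selects one representative from each orbit of the $C_2$-action on non-neutral tensor factors induced by conjugation. The claim is that these orbit representatives biject with the $\alt$-components of the pullback: the charged orbits of size two correspond to $\si$-cosets that split into two $\alt$-cosets, and the neutral fixed orbits correspond to the degenerate $\si$-cosets whose $\alt$-intersection contributes a single component. In each case, the standard transfer-restriction argument for pullbacks of covering maps, together with Propositions~\ref{transfercompatible}, \ref{restrictiontransfer}, and \ref{P:involutiontransfer}, produces the corresponding tensor summand on the right-hand side of the formula. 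Compatibility with the units $1^\pm$ from Definition~\ref{extended} follows from Proposition~\ref{transferneutral} and the invariance of neutral classes under involution.

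The main obstacle will be the bijection between pullback components and orbit representatives. One must identify exactly which $\alt$-components arise from each $\si$-component, verify that the involution exchanges the two members of a split pair as expected, and confirm that the orientation choice implicit in ``the first non-neutral vector is positive'' matches the canonical decomposition of tensor factors coming from $\Delta \alpha$ followed by $\Delta \beta$ and twisted by $\tau$. Degenerate cases with small $p, q, r, s$ are numerous and will require checking whether particular $\si$-cosets split or remain intact, with each case contributing correctly to the polarized sum.
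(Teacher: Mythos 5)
Your setup is the right one---the same pullback square that drives Theorem~\ref{intersectionprop}, with components indexed by double cosets via Proposition~\ref{pullback}---but the mechanism you propose for where the polarization $\rho^+$ comes from is not the correct one, and it would fail. You assert that each $\si$-double coset either splits into two $\alt$-double cosets interchanged by the involution (accounting for charged orbit pairs) or contributes a single degenerate component (accounting for neutral classes). A count of orders shows this is not what happens: generically each $\si$-double coset indexed by $(p,q,r,s)$ yields exactly \emph{one} $\alt$-double coset, whose stabilizer $H_{p,q,r,s}=(\alt_n\times\alt_m)\cap g(\alt_i\times\alt_j)g^{-1}$ is the subgroup of $\si_p\times\si_q\times\si_r\times\si_s$ on which all four signs agree; it contains $\alt_p\times\alt_q\times\alt_r\times\alt_s$ with index \emph{two}, not eight, and it is not $(\si_p\times\si_q\times\si_r\times\si_s)\cap\alt_{n+m}$ as you write. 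The polarization therefore does not arise from a bijection between pullback components and $C_2$-orbits of basis elements. It arises \emph{within each single component}: the iterated coproduct $\tau\circ(\Delta\otimes\Delta)$ lives on $B(\alt_p\times\alt_q\times\alt_r\times\alt_s)$, whereas the pullback component is $BH_{p,q,r,s}$, and one must pass through the double cover $f_{p,q,r,s}\colon B(\alt_p\times\alt_q\times\alt_r\times\alt_s)\to BH_{p,q,r,s}$ by a transfer, which identifies a class with its conjugate---this is exactly what $\rho^+$ encodes.

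This reveals the second, larger gap: once you work on $BH_{p,q,r,s}$ you must control $H^*(BH_{p,q,r,s})$ as a target of $g^*$ and a source for $h^!$ down to $B\alt_n\times B\alt_m$. The paper does this with the Borel spectral sequence of the $C_2$-action, finding that besides the classes coming from conjugate pairs there is a whole tower $y\cdot e^q$ with $y$ neutral and $e$ the Euler class of the cover, and it must be shown that all of these die under $h^!$. That step is the technical heart of the argument and uses Lemma~\ref{L:pullpushzero} applied to the inclusions of $H_{p,q,r,s}$ into $\si_p\times\si_q\times\si_r\times\si_s$ and into $\alt_n\times\alt_m$, whose intersection contains $H_{p,q,r,s}$ with even index. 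Your proposal has no step playing this role; "the standard transfer--restriction argument ... produces the corresponding tensor summand" cannot be justified until the ambiguity by these Euler-class classes is killed. Without both corrections---the right identification of the component stabilizers and the vanishing of the neutral-times-Euler-class part under $h^!$---the comparison with $\mu_\odot\otimes\mu_\odot\circ\tau\circ\rho^+\circ(\Delta\otimes\Delta)$ cannot be completed.
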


Recalling that $\mu_\odot$ is the multiplication 
map for the transfer product, 
this differs from the usual statement that $(\odot, \Delta)$ form a bialgebra only by the polarization $\rho^+$.
We could express the equality in Theorem~\ref{bialg} succinctly as  $\Delta (\alpha \odot \beta) = \Delta \alpha \odot_{\rho^+} \Delta \beta$.
%though we will not need such notation in the rest of this paper.

Before proving this theorem, we prove a simpler related result which helps complete the picture of the relationship between
the cohomology of symmetric and alternating groups.  Recall  the Serre spectral sequence for the fibration $BH \to BG \to B(G/H)$ when $H$ is
a normal subgroup of $G$.

%ecall the Borel spectral sequence for the cohomology of the quotient of $X$ by a free action of $G$.  
%This is the Leray-Serre spectral sequence for the fibration $X \to X/G \to BG$, using the fact that $X/G \simeq X \times_G EG$,
%so  $E_2^{p,q} = H^p\left(BG; {\mathcal H}^q(X)\right).$

\begin{proposition}
The Serre spectral sequence for $\alt_n \to \si_n \to C_2$ collapses at $E_2$.  
\end{proposition}

We may also view this spectral sequence as the Borel spectral sequence for the homotopy orbits of the  $C_2$ action $B\alt_n$ by conjugation, 
with homotopy quotient $B\si_n$.  While we already know the cohomology
of all of the spaces in this fibre sequence,  this spectral sequence will be generalized  in proving Theorem~\ref{bialg}.

\begin{proof}
We have $E_2^{p,q} = H^p\left(BC_2; {\mathcal H}^q(B\alt_n)\right)$.
%Over $k = {\mathbb F}_2$ there are only two $C_2$-modules to consider, namely the trivial module and the regular representation. 
The cohomology of $BC_2$ with trivial coefficients is that of $\R P^\infty$, while as usual the cohomology of the regular representation is concentrated
in degree zero, of rank one.  

%As every conjugate pair $x^\pm  \in \mathcal{B}_\pm$ gives a copy of the regular representation, 
%they give rise to a single class which we call $x \in E^2_{0,*}$
%which then restricts to $x^+ + x^-$, consistent with previous definitions of these classes.  For every $y \in  \mathcal{B}_o$ we have a corresponding 
%$y \in E^2_{0,*}$ and more generally $y \cdot e^q \in E_2^{q,*}.$  

By Proposition~\ref{gysinsplit} and Theorem~\ref{AnnIdeal} there is a copy of the regular representation in the cohomology of $\alt_n$ for every
element of $\mathcal{G}_{ann}$ -- that is, 
scale greater than one Hopf monomial.  This gives rise to a single cohomology class in $E_2^{0,*}$.  To a Hopf ring monomial $h$ in the complement 
of $\mathcal{G}_{ann}$ in the Hopf monomial basis for $H^*(B\si_n)$,
we associate a class $h'$ in $E_2^{\delta(h),q}$.  Here, as in the proof of Theorem~\ref{AnnIdeal},  
$\delta(h)$ is the difference between the largest power of $\gamma_{1,1}$ 
which occurs as a constituent $\cdot$-monomial 
and the second  largest power of any $\gamma_{1.m}$ which so occurs, when the former is greater, or zero otherwise.  
When  $\delta(h)=0$, $h' = h$.  Otherwise, the class $h'$ is obtained by removing the largest power of
$\gamma_{1,1}$ from the monomial and then replacing the second largest ${\gamma_{1,m}}^k$ with ${\gamma_{1,m+1}}^k$, 
in order to obtain an element of $\mathcal{G}_{quot}$.  
This gives an additive isomorphism between the cohomology of $\si_n$ and the $E_2$-term of the spectral sequence.  
\end{proof}

\begin{proof}[Proof of Theorem~\ref{bialg}]
Recalling  the proof of Theorem~\ref{intersectionprop}, 
let $p+q = n$, $r + s = m$, $p + r = i$, and $q + s = j$, and set $d= i+j = n+m$. Let  $H_{p,q,r,s}$ be the intersection of $\alt_n \times \alt_m$ and 
the conjugate of $\alt_i \times \alt_j$ in $\alt_d$, which contains $\alt_p \times \alt_q \times \alt_r \times \alt_s$ as a subgroup of index two.

Consider the  diagram
$$\xymatrix{
\bigsqcup B\alt_p  \times B\alt_q \times B\alt_r \times B\alt_s 
\ar[d]_-{f = \bigsqcup f_{p,q,r,s} }  \ar[rd]^-{\qquad\bigsqcup  (B \; \iota_{p,r} \times \iota_{q,s}) \circ \tau} \\
\bigsqcup BH_{p,q,r,s} \ar[d]_-{h} \ar[r]^-{g = \bigsqcup g_{p,q,r,s}} &  B\alt_i \times B\alt_j \ar[d]^-{B \iota_{i,j}}\\ 
B\alt_{n} \times B\alt_m \ar[r]_-{B \iota_{n,m}} &B\alt_{i+j},
}$$
where all maps on classifying spaces are induced by  inclusions of subgroups, with some already named inclusions indicated.  
By Proposition~\ref{pullback}, 
the lower square is a pull back.   Thus the coproduct of a $\odot$-product, which is given by ${B \iota_{n,m}^*} \circ {B \iota_{i,j}}^! $,
is equal to $h^! \circ g^*$.

The failure of $(\odot, \Delta)$ to be a bialgebra is given by the existence of the $f_{p,q,r,s}$, so their analysis plays a key role.
As $f_{p,q,r,s}$ is a double cover, we use the Borel spectral sequence.  
By Proposition~\ref{gysinsplit} and Theorem~\ref{AnnIdeal} and the K\"unneth theorem,
we understand the cohomology of $B\alt_p \times B\alt_q \times B\alt_r \times B\alt_s$ as a module over
$k[C_2]$. %, which like that above has two kinds of classes at the $E_2$-page.  

There is a class in $E^{0, *}_2$ for every pair $x_1 \otimes x_2 \otimes x_3 \otimes x_4$ and 
$\overline{x_1} \otimes \overline{x_2} \otimes \overline{x_3} \otimes \overline{x_4}$ in $\mathcal{B}^{\otimes 4} \backslash {\mathcal{B}_o}^{\otimes 4}$.
(Here and elsewhere we use notation for the cohomology of different $\alt_n$ as if they were all the same vector space.)
The other classes in $E^{p,q}_2$ are given by $(y \cdot e^p)$ where $y \in {\mathcal{B}_o}^{\otimes 4}$ and $e \in E^{1,0}_2$ is a generator for the
cohomology of $BC_2$.  

We claim that this second set of classes are associated gradeds of classes which map to zero under $h^!$.
That is, choices for these classes with
all possible indeterminacies map to zero under $h^!$.  At the spectral sequence level
they are pulled back from the Borel spectral sequence for 
$$B (\alt_p \times \alt_q \times \alt_r \times \alt_s) \to B (\si_p \times \si_q \times \si_r \times \si_s) \to B (C_2 \times C_2 \times C_2 \times C_2),$$
where the map of base spaces is induced by the diagonal embedding $C_2 \to (C_2)^4$.
Thus these classes are associated gradeds of classes pulled back from $B (\si_p \times \si_q \times \si_r \times \si_s)$.  
Apply Lemma~\ref{L:pullpushzero} to the inclusions of  $H_{p,q,r,s}$ in both $\si_p \times \si_q \times \si_r \times \si_s$ and $\alt_n \times \alt_m$,
whose intersection in $\si_n \times \si_m$ contains $H_{p,q,r,s}$ with even index, to show that classes in  $H_{p,q,r,s}$  pulled back from 
$B (\si_p \times \si_q \times \si_r \times \si_s)$ map to zero under $h^!$.

To prove the theorem,  consider 
$\alpha = ( (\Delta_{p,r} \otimes \Delta_{q,s})\circ \tau) (x \otimes y) \in H^*(B\alt_p \times B\alt_q \times B\alt_r \times B\alt_s)$.   
As in the Gysin sequence for alternating and symmetric groups, its 
polarization will transfer under $f^!$ to a class $\beta$ with ${f_{p,q,r,s}}^*(\beta) = \alpha$ modulo ${\mathcal{B}_o}^{\otimes 4}$ .
Thus by the analysis above, its polarization transfers under $f^!$ to something whose difference from $g_{p,q,r,s}^* (x \otimes y)$ is in the kernel of $h^!$.
Therefore $h^! \circ g_{p,q,r,s}^* (x \otimes y)$ is equal to $ (h \circ f_{p,q,r,s})^!$ applied to the polarization of $\Delta_{p,r} \times \Delta_{q,s} (x \otimes y)$,
as claimed.
\end{proof}

\subsection{Strategy for constructing an almost Hopf semi-ring presentation}\label{mainstrategy}

%The Gysin sequence and involution developed in this section are key structures, as while the Gysin sequence determines additive 
%structure, the organization as a $C_2$-module 
%with respect to involution is in particular essential for expressing relations and explicitly controlling the failure of being a Hopf semi-ring.
We outline our  strategy for moving from our additive Gysin sequence description of the mod-two cohomology of symmetric 
groups to a multiplicative description.  We lay  groundwork  in the next two sections and prove the main results after that.

Recall Proposition~\ref{gysinsplit} which shows how identifying overlapping bases, $\mathcal{G}_{ann}$ and 
$\mathcal{G}_{quot}$, for the  the annihilator ideal of and quotient by the Euler class in a Gysin
sequence determines the sequence, as a representation over $C_2$.  
Recall Theorem~\ref{AnnIdeal} which identifies such bases  for the Gysin sequence relating cohomology of alternating and symmetric groups.
We understand how to express any such basis element in the cohomology of symmetric group as a Hopf ring monomial.  
So our plan is to construct lifts with respect to the transfer map or take images under restriction of relevant Hopf ring generators,
and track their (almost Hopf ring) products in the Gysin sequence appropriately, as follows.

%\begin{strategy}
\begin{enumerate}
\item  In Section~\ref{generators} we 
find $\gamma_{\ell,2^k}^+ \in H^*(B\alt_n)$ whose image under transfer is 
$\gamma_{\ell,2^k}$, using cochain representatives developed in Section~\ref{FN}.

\item We then argue using subgroup restriction, as developed in Section~\ref{restriction_section}, that 
cup monomials in $\gamma_{\ell,2^k}^+$ transfer to corresponding monomials in $\gamma_{\ell,2^k}$.   \label{steptwo}

\item 
For consistency with notation needed in Step~(\ref{stepfour}), let $\gamma_{1,2^k;2^k}$ be the restriction of $\gamma_{1,2^k}$.
Since any $x \in \mathcal{G}_{ann}$  is a  Hopf ring monomial in  
 $\gamma_{\ell, 2^k}$ for $\ell \geq 2$
 along with  $\gamma_{1,k}$ with $k \geq 2$, Step~(\ref{steptwo}) and elementary facts about restriction and transfer
 imply that there is a corresponding monomial in $\gamma_{\ell,2^k}^+$ and $\gamma_{1,2^k;2^k}$, which we call $x^+$,
 whose image under transfer is $x$.
Define $x^- = \overline{x^+}$, which also transfers to $x$.  
%Noting that $\mathcal{G}_{ann} \subset \mathcal{G}_{quot}$, 
%$x$ must restrict to $x^+ + x^-$.  
We will denote the set of $x^+$ above by $ \mathcal{B}_+$, a specific choice for basis whose existence has already been established,
and the set of $x^-$ by $\mathcal{B}_-$.
% by construction then span the $\rho$ summand of $H^*(B\alt_{2n})$, as given by Theorem~\ref{AnnIdeal} 
%and Proposition~\ref{gysinsplit}.
 \label{stepthree} 

\item  Next we use the $1$-decomposition of $y \in \mathcal{G}_{quot} \backslash \mathcal{G}_{ann}$, as
the transfer product of a polynomial 
in $\gamma_{1,k} \odot 1_{m-k}$ with $k \geq 2$ and an element of $\mathcal{G}_{ann}$ from a smaller alternating group.
By Proposition~\ref{restrictiontransfer} and Step~(\ref{stepthree}), its restriction $y^o$ will  be the transfer product of a 
cup product polynomial in the restrictions of  $\gamma_{1,k} \odot 1_{m-k}$,
which we call $\gamma_{1,k;m}$, and an element of $\mathcal{B}_+$ from a smaller alternating group. 
Denote the set of $y^o$ by $ \mathcal{B}_o$.  \label{stepfour}

\item  By Theorem~\ref{AnnIdeal} 
and Proposition~\ref{gysinsplit}, the union of $\mathcal{B}_+$, $\mathcal{B}_-$ and $\mathcal{B}_o$ 
form an  additive basis for $H^*(B\alt_n)$, respecting its Gysin decomposition.  So the $\gamma_{\ell,2^k}^+$
and $\gamma_{1,k;m}$, along with $1^-$ generate $H^*(B\alt_\bullet)$ as an almost Hopf semi-ring.

\item With additive basis built from specified classes through Hopf ring products, 
we make calculations of restrictions to elementary abelian subgroups in Section~\ref{detection_section},
showing that the direct sum of restrictions is injective.
Cup and transfer products exhibit  diffrent behavior on $ \mathcal{B}_+, \mathcal{B}_-$ and $\mathcal{B}_o$.
%which we refer to by their ``charge.''   
We will  see that transfer products of classes of  the same charge are naturally positive and of opposite charge are negative.
Cup products between  classes of the same charge behave mostly like corresponding cup products for 
symmetric groups, while cup products between classes of opposite charge will ``mostly'' be zero.

\item Finally in Section~\ref{presentation} we  establish relations, as well as coproduct calculations as needed to apply Hopf semi-ring distributivity,
giving the final presentation as an almost Hopf semi-ring.  The last two sections address Steenrod structure and the determination of cup product structure for individual alternating groups from the 
almost Hopf semi-ring presentation.

\end{enumerate}
%\end{strategy}

%A consequence of the last detection, by Quillen's theorem, is that there are no nilpotent elements in these cohomology rings.

%%% Local Variables:
%%% TeX-master: "AltGroupsModTwo.V3.tex"
%%% End: % Cohomology of symmetric groups, the Gysin sequence and the standard involution
% !TEX root = AltGroupsModTwo.V3.tex

\section{Fox-Neuwirth  models}\label{FN}

At key points, starting with the definition of our almost Hopf semi-ring generators, we require cochain-level calculations.
Rather than the cobar construction for group rings, we prefer cochain models based on the geometry of configuration
spaces, due to Fox and Neuwirth.   We first briefly recall these for  symmetric groups, as developed
in \cite{GiSi12}.

We choose the classifying space for the symmetric group $\si_{n}$
as the space of $n$ distinct points in $\R^{\infty}$, which we call $\UConf{n}{\infty}$, which
is the quotient of the labeled configuration space $\Conf{n}{\infty}$ by the symmetric group action permuting labels.
The finite-dimensional approximations $\UConf{n}{d}$ are manifolds with a beautiful cellular decomposition.

The points in any configuration in $\UConf{n}{d}$ are ordered by the dictionary order of their
coordinates.  If we consider the $i$th and $i+1$st points under this ordering, they share some $a_{i}$
of their first coordinates.  That is, $a_{i} = 0$ if their first coordinates are distinct, $a_{i} = 1$ if they share
their first coordinate but have distinct second coordinates, and so forth.  

\begin{definition}
Let $\Gamma = [a_{1}, \cdots, a_{n-1}]$ be a sequence of non-negative integers, and let $|\Gamma| 
= \sum a_{i}$.
Define $\UConf{\Gamma}{d}$ to be the collection of all configurations such that the $i$th and $i+1$st points
in the dictionary order in the configuration share their first $a_{i}$ coordinates but not their $(a_{i} + 1)$st.  We say
such points respect $\Gamma$.
\end{definition}

\begin{theorem}[after Fox-Neuwirth]\label{T:cells}
For any $\Gamma$ the subspace $\UConf{\Gamma}{d}$ is homeomorphic to a Euclidean ball of dimension $nd - |\Gamma|$. 
The images of the $\UConf{\Gamma}{d}$ are the interiors of cells in a 
CW structure on the one-point compactification  $\UConf{n}{d}^{+}$.
\end{theorem}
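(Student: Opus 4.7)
The plan is to give an explicit coordinate parametrization of each cell $\UConf{\Gamma}{d}$ as a product of real lines and positive half-lines, and then compactify those factors to produce characteristic maps that assemble into a CW structure on $\UConf{n}{d}^+$.

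First I would set up coordinates on the open cell. List the points $p_1, \ldots, p_n$ of a configuration in $\UConf{\Gamma}{d}$ in dictionary order. Record the coordinates of $p_1$ as $d$ free real parameters; then inductively, for each $i$, decompose $p_{i+1}$ into its first $a_i$ coordinates (copied from $p_i$), its $(a_i+1)$st coordinate (equal to that of $p_i$ plus a gap $g_i > 0$, with positivity forced by dictionary order once the first $a_i$ coordinates agree), and its remaining $d - a_i - 1$ coordinates (unconstrained in $\R$). This recipe is a continuous bijection
$$\R^{nd - |\Gamma| - (n-1)} \times \R_{>0}^{n-1} \longrightarrow \UConf{\Gamma}{d}$$
with continuous inverse (inverse computed by reading off coordinates of the ordered points), and the left-hand side is an open Euclidean ball of dimension $nd - |\Gamma|$.

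Next I would assemble the CW structure on $\UConf{n}{d}^+$. The cells $\UConf{\Gamma}{d}$, as $\Gamma$ ranges over all $(n-1)$-tuples with entries in $\{0, 1, \ldots, d-1\}$, partition $\UConf{n}{d}$, since the dictionary ordering and the number of shared leading coordinates of consecutive points are uniquely determined by the configuration; the basepoint serves as the $0$-cell at infinity. I would exhibit characteristic maps by composing each $\R$-factor with a homeomorphism onto $(-1, 1)$ and each $\R_{>0}$-factor with a homeomorphism $g \mapsto g/(g+1)$ onto $(0, 1)$, so that the open cell becomes the interior of the cube $[-1,1]^{nd - |\Gamma| - (n-1)} \times [0,1]^{n-1}$, which is a closed disk of the correct dimension. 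I would extend to the boundary by sending any cube point with a free coordinate at $\pm 1$ or a $g_i$ at $1$ to the basepoint (these correspond to configurations escaping to infinity), and sending a cube point where some $g_i = 0$ and all other coordinates remain finite to the limiting configuration, which lies in a Fox-Neuwirth cell $\UConf{\Gamma'}{d}$ with some $a'_j > a_j$ and hence $|\Gamma'| > |\Gamma|$, of strictly lower dimension.

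The hard part will be verifying continuity of this extension at boundary points of mixed type, where some gaps tend to zero while other coordinates simultaneously diverge. The key observation is that divergence of any coordinate forces the full configuration to leave every compact subset of $\UConf{n}{d}$, so the limit is the basepoint regardless of what the other parameters are doing; this makes the prescription on "divergent" faces override the prescription on "collision" faces consistently on their intersections. Once continuity is verified, the interior of each cube maps homeomorphically to its open cell, the boundary lands in the union of the basepoint and strictly lower-dimensional Fox-Neuwirth cells, and closure-finiteness together with the weak topology are automatic because the complex is finite for each fixed $n$ and $d$, so the axioms of a CW structure are satisfied.
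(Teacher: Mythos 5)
The paper itself does not prove this statement: it is recalled from Fox--Neuwirth and from \cite{GiSi12}, so there is no internal proof to compare against. Your parametrization of $\UConf{\Gamma}{d}$ as $\R^{nd-|\Gamma|-(n-1)}\times\R_{>0}^{n-1}$ is the standard argument and correctly establishes the first claim, including the dimension count and the continuity of the inverse (the dictionary order is locally constant on the stratum).

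The CW-structure half has two genuine gaps. First, your characteristic map is mis-specified on the faces $g_i=0$: you send every such cube point ``with all other coordinates finite'' to ``the limiting configuration, which lies in a Fox--Neuwirth cell,'' but when $g_i=0$ and the remaining free coordinates of $p_{i+1}$ happen to coincide with those of $p_i$, the limit is a degenerate configuration with fewer than $n$ distinct points, which is not a point of $\UConf{n}{d}$ at all. Those points must be sent to the basepoint, and continuity there requires the (true, but unstated) observation that configurations approaching a collision leave every compact subset of $\UConf{n}{d}$. Second, your justification that the limit lies in a strictly lower-dimensional cell --- ``which lies in a Fox--Neuwirth cell $\UConf{\Gamma'}{d}$ with some $a'_j>a_j$ and hence $|\Gamma'|>|\Gamma|$'' --- is not correct as stated. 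When the $(a_i+1)$st coordinates of $p_i$ and $p_{i+1}$ become equal, the dictionary order of the limiting configuration is generally a nontrivial shuffle of the two adjacent families of points (this is exactly the phenomenon the paper emphasizes in the proof of Theorem~\ref{T:fnagood} and in Figure~\ref{F:boundary}), so $\Gamma'$ is not obtained from $\Gamma$ by increasing one entry, and some entries of $\Gamma'$ can be smaller than the corresponding entries of $\Gamma$. One needs a separate argument --- e.g.\ a semicontinuity statement for $\sum_i a_i$ under limits, using that the shared-prefix length of a non-adjacent pair is the minimum of the $a_k$ along the intervening chain --- to conclude $|\Gamma'|>|\Gamma|$ and hence that attaching maps land in lower skeleta. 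Both gaps are fixable, but as written the proof of the second claim is incomplete.
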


\begin{definition}
Let $(FN^{d}_{n})_{*}$ be the cellular chain complex associated to the cell structure defined by the 
$\UConf{\Gamma}{d}$, which by the above computes the homology of  $\UConf{n}{d}^{+}$.
\end{definition}

  These chain complexes model the cochains of the classifying spaces of symmetric groups as follows.  
  Assume that $d$ is even, in which case $\UConf{n}{d}$ is an orientable manifold of dimension $nd$.
 Alexander Duality
implies that the homology of its one-point compactification in degree $nd - i$ is isomorphic to its cohomology 
in degree $i$.  If $i < d$, the group $(FN^{d}_{n})_{nd - i}$ and differential is independent of $d$, so we may set the following.

\begin{definition}
Let ${FN_{n}}^{*}$ be the cochain complex which in degree $*= i$ is $(FN^{d}_{n})_{nd - i}$ for $d > i$ and with differential defined 
through the cellular chain structure on $(FN^{d}_{n})_{*}$.
\end{definition}

%Our discussion so far yields the following.

We show in \cite{GSS12} that he cohomology of ${FN_{n}}^{*}$ is that of $B \si_{n}$.
While the chain groups $(FN_{n})^{i}$ are simple, spanned by sequences of non-negative integers which add up to $i$, the boundary maps are 
complicated.   A main result of \cite{GiSi12} is explicit calculation of the differential.  

We now develop the alternating group analogues .
For alternating groups, our cochains will be a double cover of the cochains for
symmetric groups, and we use orientation or ``charge'' to express this.

\begin{definition}
A  {charged} sequence of non-negative integers is a finite sequence of non-negative integers along with a choice of sign. 
Given a sequence of non-negative integers $\Gamma$ we write $\Gamma^+$ for the  {positively charged} sequence 
associated to $\Gamma$ and $\Gamma^{-}$ for the  {negatively charged} sequence.  For convenience, we write  $\Gamma^o = \Gamma^+ + \Gamma^-$
in  chain groups.

Let ${\fna_{n}}^{i}$ be spanned by charged sequences of $(n-1)$ non-negative integers which sum to $i$.

\end{definition}

Identifying the differential requires a few combinatorial definitions. 

\begin{definition} 
%\begin{itemize}
 The $\ell$-blocks of a sequence $\Gamma = [a_1, \dots, a_{n-1}]$ are the ordered collection of possibly empty
subsequences $[a_i, a_{i+1}, \dots, a_{i+k}] \subset \Gamma$ such that  $a_{i-1}$ and $a_{i+k+1}$
are consecutive in the subset of entries which are less than or equal to $\ell$.
By convention,  set $a_0 = a_n = -\infty$, so  in particular an empty sequence always has a single empty $\ell$-block for any $\ell$.   

Denote by $\Gamma\langle {i}\rangle$ the sequence obtained from $\Gamma$ by adding one to its $i$th entry.
%\end{itemize}
\end{definition}

Sometimes we call a zero-block simply a block.
For example, the blocks of $\Gamma_{\rm ex} = [3,0,1,2,0,0,4,4]$ are 
$([3], [1, 2], \emptyset, [4, 4])$, while $\Gamma_{\rm ex}\langle 2\rangle$ has one-blocks $([3], \emptyset, [2], \emptyset, [4,4])$.

 In the cell $\UConf{\Gamma}{d}$, $\ell$-blocks of length $k$ correspond to collections of $k+1$ adjacent points in a configuration which 
 share more than their first $\ell$ coordinates.

\begin{definition}
  Let $\Gamma = [a_1, \dots, a_{n-1}]$ and fix $1 \leq i \leq n-1$. The sequence $\Gamma\langle i\rangle$ has a non-empty $a_i$-block of the form $\Lambda_i = [a_{i-r}, \dots, a_i + 1, \dots, a_{i+s}]$ for some $r, s \geq 0$.

  Partition the $(a_i+1)$-blocks of $\Lambda_i$ into the $p > 0$ of them appearing in the possibly empty subseqence $[a_{i-r}, \dots, a_{i-1}]$ 
  and the $q > 0$ of them in $[a_{i+1}, \dots, a_{i+s}]$. 
  Write $\Sh(\Gamma, i)$ for the collection of $(p,q)$-shuffles with action on $\Gamma\langle i \rangle$ by permuting the $a_i+1$ blocks of $\Lambda_i$ and leaving the remainder of the sequence fixed.

  Denote by $\Sh_+(\Gamma,i)$ and $\Sh_-(\Gamma,i)$ the subsets of even and odd shuffles in $\Sh(\Gamma, i)$, respectively.
\end{definition}

Continuing with $\Gamma_{\rm ex}$,  we find $\Lambda_2 = [3, 1, 1, 2]$ has 1-blocks 
$([3], \emptyset, [2])$, partitioned into $([3])$ and $(\emptyset, [2])$. 
The set $\Sh(\Gamma_{\rm ex}, 2)$ consists of the three $(1,2)$-shuffles, %$\{e, (1 2), (1 3 2)\}$, 
which act on $\Gamma_{\rm ex}\langle 2 \rangle$ by sending it to $[3, 1, 1, 2, 0, 0, 4, 4]$, $[1, 3, 1, 2, 0, 0, 4, 4]$, and $[1, 2, 1, 3, 0, 0, 4, 4]$.
%\begin{eqnarray*}
%  e \cdot \Gamma \langle 2 \rangle & = & [3, 1, 1, 2, 0, 0, 4, 4]\\
% (1 2) \cdot \Gamma \langle 2 \rangle & = & [1, 3, 1, 2, 0, 0, 4, 4]\\
% (1 3 2) \cdot \Gamma \langle 2 \rangle & = & [1, 2, 1, 3, 0, 0, 4, 4].\\
 % \end{eqnarray*}
 \begin{definition}\label{D:fnadiff}

Let $\Gamma$ be a sequence of non-negative integers. The mod-two differential in $\fna_n^*$ is given by 
\begin{equation*}
\delta(\Gamma^\pm) = \sum_{i=1}^{n-1} \delta_{i} (\Gamma^\pm) \;\;
\text{where} \;\;  \delta_{i} (\Gamma^\pm) =  
        \sum_{\sigma \in \Sh_+(\Gamma,i)} \sigma \cdot \Gamma\langle{i}\rangle^\pm + \sum_{\sigma \in \Sh_-(\Gamma,i)} \sigma \cdot \Gamma\langle{i}\rangle^\mp
\end{equation*}
\end{definition}

%For clarity, when we are referring to sequences corresponding to basis elements in $\fna_{n}^*$, we will use square brackets rather than parenthesis. 

This gives our cochain model for classifying spaces of alternating groups.

\begin{theorem}\label{T:fnagood}
The cohomology of ${\fna_{n}}^{*}$ is that of $B \alt_{n}$.
\end{theorem}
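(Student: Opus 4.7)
The strategy is to adapt the proof of the symmetric group case from \cite{GiSi12} by working in the connected double cover of $\UConf{n}{d}$ corresponding to the subgroup $\alt_n \subset \si_n$. Define $\UInd{n}{d} := \Conf{n}{d}/\alt_n$, which is a two-fold cover of $\UConf{n}{d}$ and provides a finite-dimensional approximation of $B\alt_n$ in the limit $d \to \infty$.

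First, I would pull back the Fox--Neuwirth cell decomposition on $\UConf{n}{d}^+$ of \refT{cells} along the covering map to obtain a cell decomposition on $\UInd{n}{d}^+$. Since each open cell $\UConf{\Gamma}{d}$ is homeomorphic to a Euclidean ball, hence simply connected, its preimage in $\UInd{n}{d}$ splits as two open cells of the same dimension, which I would label $\UConf{\Gamma}{d}^+$ and $\UConf{\Gamma}{d}^-$ according to the two classes of point-orderings modulo $\alt_n$. After Alexander duality, which applies because $d$ is even so that $\UConf{n}{d}$ and hence its cover $\UInd{n}{d}$ are orientable manifolds, this identifies the cellular cochain groups in the appropriate range with the graded $\F_2$-module underlying ${\fna_n}^*$, and stabilizing as $d \to \infty$ matches the grading.

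Second, I would identify the differential. In $\UConf{n}{d}^+$, the boundary of a cell $\UConf{\Gamma\langle i\rangle}{d}$ was computed in \cite{GiSi12} as a sum over the $(p,q)$-shuffles of the $(a_i+1)$-blocks of $\Lambda_i$, corresponding to the inequivalent mergers of adjacent points across the degenerating coordinate. In the double cover, each shuffle $\sigma \in \Sh(\Gamma,i)$ corresponds to an honest permutation of point labels, and its sign in $\si_n/\alt_n$ agrees with the parity of $\sigma$ as a shuffle. This parity dictates whether the lifted boundary remains in the given sheet or switches sheets, so that the boundary of $\UConf{\Gamma\langle i\rangle}{d}^\pm$ collects the shuffles in $\Sh_+(\Gamma,i)$ with charge preserved and those in $\Sh_-(\Gamma,i)$ with charge reversed, matching \refD{fnadiff} after dualization.

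The main obstacle will be the second step: rigorously matching the combinatorial parity of a $(p,q)$-shuffle of equal-coordinate blocks to the parity of the induced permutation of point labels under the $\alt_n$-quotient, and confirming that this parity is exactly what determines the sheet of the cover on which the boundary face lies. This requires choosing compatible liftings of cells along shared codimension-one faces and tracing how the degeneration of a block of points permutes their underlying labels; once this combinatorial identification is made, the rest of the argument follows the symmetric group case essentially verbatim.
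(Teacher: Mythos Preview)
Your proposal is correct and follows essentially the same strategy as the paper: pass to a double cover of $\UConf{n}{d}$, split each Fox--Neuwirth cell into two sheets, and track how the shuffle parity in the boundary formula governs sheet-switching. The identification you flag as the main obstacle---matching the combinatorial parity of a block shuffle to the parity of the induced permutation of point labels---is exactly the content of the paper's boundary analysis.

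There is one technical difference worth noting. You model the cover as $\Conf{n}{d}/\alt_n$ directly, so that the preimages of Fox--Neuwirth cells are honest open cells and the CW structure on the one-point compactification is immediate. The paper instead restricts to the open subspace $\Ind{n}{d}\subset\Conf{n}{d}$ of \emph{linearly independent} configurations before quotienting by $\alt_n$, and uses the induced orientation on the span of the configuration to label the two sheets. The price is that the resulting strata $\UInd{\Gamma\pm}{d}$ are not cells but cells minus a subvariety of codimension $d-n+1$, forcing the paper to argue that the filtration behaves ``as if cellular'' in the relevant range. Your version avoids this wrinkle; the paper's version, on the other hand, gives a concrete geometric invariant (orientation of span) rather than an abstract choice of sheet over each cell, which it then exploits in the intersection-theoretic description of Proposition~\ref{intersection} and in the explicit models for coproduct and transfer product. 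Either route establishes the theorem.
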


We  start proving this theorem by developing preferred classifying spaces.

\begin{definition}
Let $\Ind{n}{d}$ be the subset of $\Conf{n}{d}$ of configurations which are linearly independent.  
The span of such configurations has a canonical orientation, coming from the configuration as an ordered basis.

Let $\UInd{n}{d}$ be the quotient of $\Ind{n}{d}$ by the alternating group action on the labels, which thus
preserves the canonical orientation. 
\end{definition}

Thus $\UInd{n}{d}$ is the space of unlabeled configurations with an orientation on their span.   
Because $\Ind{n}{d}$ is $(d-n-1)$-connected,  $\UInd{n}{\infty}$ models $B\alt_{n}$.

\begin{definition}
Let $\Gamma = [a_{1}, \cdots, a_{n-1}]$ be a sequence of non-negative integers.
Define $\UInd{\Gamma+}{d}$ (respectively $\UInd{\Gamma-}{d}$)
to be the collection of all configurations with the following two properties
\begin{itemize}
\item the $i$th and $i+1$st points
in the dictionary order in the configuration share their first $a_{i}$ coordinates but not their $a_{i} + 1$st;
\item the orientation of the span 
defined by the dictionary order agrees (respectively, does not agree) with the
orientation of the span as a point in $\UInd{n}{d}$.
\end{itemize}
\end{definition}

\begin{proof}[Proof of \refT{fnagood}]
The subspace $\UInd{\Gamma\pm}{d}$ is not the interior of a cell, but is the complement within a cell %of dimension $nd -|\Gamma|$ 
of the subvariety of non-linearly independent configurations, which is of codimension
$d-n+1$.  Taking closures in the one-point compactification we obtain pseudo-cells.  The long exact sequences 
of pairs of these codimension $i$ pseudo-cells will behave as if they were cellular in and around degree 
$i$ as long as $d > i + n$.  More precisely, there will be maps of pairs between the skeleta defined by 
pseudo-cells,  with linearly dependent configurations removed,
and the skeleta defined by cells with all configurations.  The maps between these pairs will be isomorphisms in all of the degrees defining the cellular
chain complex.  Moreover, the set of pseudo-cells and their boundary behavior will be 
independent of $d$ as long as $d > i$.   

So as long as $d > i + n$, the $i$th cohomology of $\UInd{n}{d}$ agrees with that of $B \alt_{n}$ and is computed by the 
incidence of the $\UInd{\Gamma\pm}{d}$ as if they were cellular. 
The spectral sequences associated to filtration by $|\Gamma|$ form directed system
which stabilizes and thus converges in the limit.  The stable terms yield the chain groups
${\fna_{n}}^{i}$ as the $E_{0}^{i,0}$.

For the boundary maps, observe that each pseudo-cell is bounded by a family of such for which some pair of points 
consecutive in the dictionary ordering agree in one more dimension. Suppose that the $i$th and $(i+1)$st points in the pseudo-cell share their 
first $a_i$ coordinates. We know that the $i$th point has a smaller $(a_i+1)$st coordinate than the $(i+1)$st point, but we do not know 
the relationship between their $(a_i+2)$nd coordinates. Further, it is possible for either of these points to share $(a_1+1)$ coordinates 
with other points, in which case the ordering on this whole collection of points in the boundary is only partially determined by the data in the pseudo-cell. 
See, for example, \refF{boundary}. Thus, every shuffle of these two ordered families of points appears in the boundary of the pseudo-cell, 
with the orientation of the bounding pseudo-cell changing under odd shuffles.  
\end{proof}

\begin{center}
\begin{figure}
\psfrag{1}{\small{1}}
\psfrag{2}{\small{2}}
\psfrag{3}{\small{3}}
\psfrag{4}{\small{4}}
\psfrag{5}{\small{5}}
\psfrag{6}{\small{6}}
\begin{tabular}{c}

\xymatrix{
\includegraphics[width=3cm]{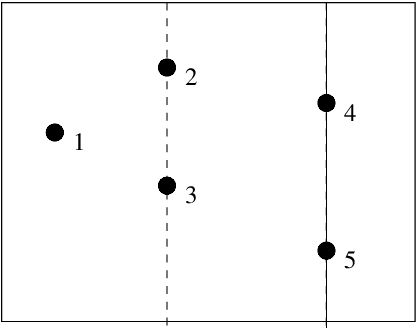}\ar[d]\ar@/^1pc/[dr]\ar@/^2pc/[drr]&\\
\includegraphics[width=2cm]{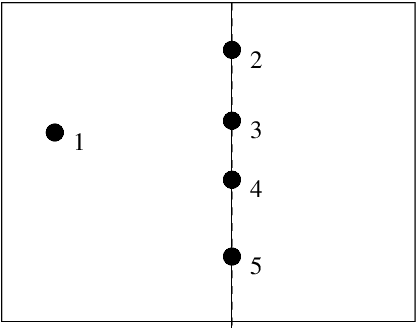}&\includegraphics[width=2cm]{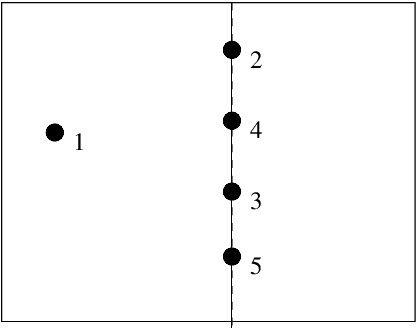}&\includegraphics[width=2cm]{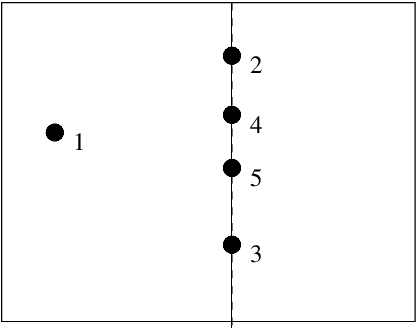} \raisebox{.75cm}{$\qquad\cdots$}\\
}

\end{tabular}
\caption{The boundary of a pseudocell in $\UInd{\Gamma\pm}{d}$ involves shuffles of ordered families of points.}

\label{F:boundary}
\end{figure}
\end{center}

For a sequence of non-negative integers $x$ we use the notation $x^o$ for the sum  $x^+ + x^-$ in this cellular chain complex.
The two-sheeted covering of $B\alt_n$ over $B\si_n$
has the effect of splitting the cells which correspond to cycles in $\fn_n^*$. For example, in ${\fn_4}^3$, we have
\begin{eqnarray*}
\delta([1,0,2])& =& 2[2,0,2] + [1,1,2] + [1,2,1] + [2,1,1]\\
\delta([2,0,1])& =& 2[2,0,2] + [1,1,2] + [1,2,1] + [2,1,1]
\end{eqnarray*}
so their sum is a mod-two cycle which by the symmetric group analogue of Theorem~\ref{T:cellular_transfer_product} below
 represents ${\gamma_{1,1}}^2 \odot \gamma_{1,1}$. 
While in ${\fna_4}^3$,
\begin{eqnarray*}
\delta([2,0,1]^\pm)& =& [3,0,1]^\pm + [3,0,1]^\mp + [2,1,1]^\pm + [1,2,1]^\pm + [1,1,2]^\pm + [2,0,2]^\pm + [2,0,2]^\mp\\
&=& [3,0,1]^o + [2,1,1]^\pm + [1,2,1]^\pm + [1,1,2]^\pm + [2,0,2]^o
\end{eqnarray*}
Thus $[2,0,1]^o + [1,0,2]^o$ is a cycle, pulled back from the cycle $[2,0,1] + [1,0,2] \in {\fn_4}^3$.
 However it must be trivial in cohomology as it represents the restriction of ${\gamma_{1,1}}^2 \odot \gamma_{1,1}$, which is
 divisible by the Euler class.  Explicitly we see
\begin{eqnarray*}
\delta([1,0,1]^\pm)& =& [2,0,1]^o + 4[1,1,1]^\pm + 2[1,1,1]^\mp + [1,0,2]^o.
\end{eqnarray*}

The distribution of even and odd permutations in a cell's boundary is generally not symmetric. 
Thus we  require  computations of the number of even and odd shuffles of $(1,\ldots,p, p+1, \ldots q)$, which we denote $|\Sh_\pm(p, q)|$. 
The results are elementary and will be stated without proofs, 
which can use either bijective arguments 
or the basic fact that $|\Sh_\pm(p,q)| = |\Sh_\pm(p-1,q)| + |\Sh_{\pm(-1)^p}(p, q-1)|$. 
The following are some of the key computations using such results.

\begin{lemma}\label{FNlemma}
  Suppose $\Gamma^\pm = [a_1, \dots, a_{n-1}]^\pm$ and $[a_{i-r}, \dots, a_{i-1}]$ and $[a_{i+1}, \dots, a_{i+s}]$ are $a_i$-blocks of $\Gamma$.

  If $a_j = a_i+1$ for every $j \in \{i-r, \dots, i-1, i+1, \dots i+s\}$, then $$\delta_i(\Gamma^\pm) = |\Sh_+(\Gamma,i)| \Gamma\langle i \rangle^\pm + |\Sh_-(\Gamma,i)| \Gamma\langle i \rangle^\mp.$$ 
  In particular, 
  $$ \delta_{i}(\Gamma^\pm) = 
  \begin{cases}
  \Gamma^o\langle i\rangle, & {\rm if} \; r = s = 0\\
  0, &  {\rm if} \; r = s > 0\\
  \Gamma\langle i \rangle^\pm, &  {\rm if} \; \ell > 1, r = 2^\ell-1, \; {\rm and} \; s < r.
  \end{cases}
  $$
 \end{lemma}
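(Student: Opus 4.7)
The plan is to derive the first formula by observing that the hypothesis forces the shuffle action to be trivial, and then to deduce the three ``in particular'' cases by counting $(p,q)$-shuffles of the appropriate sizes modulo two.

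Under the hypothesis that $a_j = a_i+1$ for every $j \in \{i-r,\dots,i-1,i+1,\dots,i+s\}$, every entry of $\Lambda_i$ is at most $a_i+1$, so every $(a_i+1)$-block of $\Lambda_i$ is empty. Thus each $\sigma \in \Sh(\Gamma,i)$ merely permutes empty blocks and acts as the identity on $\Gamma\langle i\rangle$. Substituting $\sigma \cdot \Gamma\langle i\rangle = \Gamma\langle i\rangle$ into Definition~\ref{D:fnadiff} collapses each sum to a scalar multiple, giving
$$ \delta_i(\Gamma^\pm) = |\Sh_+(\Gamma, i)|\, \Gamma\langle i \rangle^\pm + |\Sh_-(\Gamma, i)|\, \Gamma\langle i \rangle^\mp. $$

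For the ``in particular'' cases, I would use the stated convention that an empty sequence carries one empty $(a_i+1)$-block, so that $[a_{i-r},\dots,a_{i-1}]$ contributes $p = r+1$ empty blocks and $[a_{i+1},\dots,a_{i+s}]$ contributes $q = s+1$, making $\Sh(\Gamma,i)$ the standard set of $(p,q)$-shuffles. When $r = s = 0$, $(p,q) = (1,1)$ and the two shuffles split as one even and one odd, yielding $\Gamma\langle i\rangle^+ + \Gamma\langle i\rangle^- = \Gamma^o\langle i\rangle$. When $r = s > 0$ I would show both $|\Sh_\pm(p,p)|$ are even for $p \geq 2$ by combining the unsigned total $\binom{2p}{p}$ with the signed sum $\sum_\sigma \sgn(\sigma)$, which vanishes for $p$ odd and equals $\binom{p}{p/2}$ for $p$ even; Kummer's theorem then forces both $|\Sh_\pm|$ to be even whenever $s_2(p) \geq 2$, and the remaining case is handled as below. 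When $\ell > 1$, $r = 2^\ell-1$, $s < r$, we have $p = 2^\ell$ and $q < p$; Lucas's theorem makes $\binom{p+q}{p}$ odd, so exactly one of $|\Sh_\pm|$ is odd, and the signed sum evaluates to $\binom{2^{\ell-1}+\lfloor q/2\rfloor}{2^{\ell-1}}$, which is positive and odd again by Lucas. The odd count therefore lies in $\Sh_+$, giving $\delta_i(\Gamma^\pm) = \Gamma\langle i\rangle^\pm$.

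The main obstacle is the sub-case of Case 2 where $p$ is a power of $2$: here both $\binom{2p}{p}$ and $\binom{p}{p/2}$ have $2$-adic valuation exactly one, so neither alone is divisible by $4$. I would resolve this by halving, writing $\binom{2p}{p}/2 = \binom{2p-1}{p-1}$ and $\binom{p}{p/2}/2 = \binom{p-1}{p/2-1}$, and applying Lucas's theorem to confirm both halves are odd when $p = 2^k$; then the sum and difference $|\Sh_\pm(p,p)|$ are each even, as required.
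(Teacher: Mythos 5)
Your reduction of $\delta_i(\Gamma^\pm)$ to the two shuffle counts is exactly the paper's argument: under the hypothesis every entry of $\Lambda_i$ equals $a_i+1$, so its $r+s+2$ $(a_i+1)$-blocks are all empty, $\Sh(\Gamma,i)=\Sh(r+1,s+1)$ acts trivially on $\Gamma\langle i\rangle$, and the differential collapses to $|\Sh_+(\Gamma,i)|\,\Gamma\langle i\rangle^\pm+|\Sh_-(\Gamma,i)|\,\Gamma\langle i\rangle^\mp$. The paper states the resulting parities of $|\Sh_\pm(p,q)|$ without proof, so your counting is supplementary detail; the $r=s=0$ case and the $r=s>0$ case are correct, including the halving step for $p=2^k$, where $\binom{2p-1}{p-1}$ and $\binom{p-1}{p/2-1}$ are indeed both odd by Lucas so that the sum and difference of the halves are even.

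The third case, however, has a genuine gap at its last step. From $|\Sh_+|+|\Sh_-|=\binom{2^\ell+q}{2^\ell}$ being odd you correctly conclude that exactly one of the two counts is odd, but knowing in addition that $|\Sh_+|-|\Sh_-|$ is odd and positive does not determine which one: the configuration $|\Sh_+|=4$, $|\Sh_-|=1$ also has odd total and odd positive difference, yet there the odd count lies in $\Sh_-$. Modulo $2$ the sum and the difference carry the same single bit of information, so the system you have written down is underdetermined. What is actually needed is a mod-$4$ statement, for instance $\binom{2^\ell+q}{2^\ell}\equiv\binom{2^{\ell-1}+\lfloor q/2\rfloor}{2^{\ell-1}}\pmod 4$, which makes $|\Sh_+|=\tfrac12(T+S)$ odd; Lucas's theorem, being a mod-$2$ tool, cannot supply this on its own. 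Alternatively one can run the recursion the paper itself offers, $|\Sh_\pm(p,q)|=|\Sh_\pm(p-1,q)|+|\Sh_{\pm(-1)^p}(p,q-1)|$: since $p=2^\ell$ is even this gives $|\Sh_\pm(2^\ell,q)|\equiv|\Sh_\pm(2^\ell,0)|+\sum_{j=1}^{q}|\Sh_\pm(2^\ell-1,j)|\pmod 2$, and it remains to check that both $|\Sh_+(2^\ell-1,j)|$ and $|\Sh_-(2^\ell-1,j)|$ are even for $1\le j\le 2^\ell-1$. Either route closes the argument; as written, the sentence ``the odd count therefore lies in $\Sh_+$'' is a non sequitur even though the conclusion is true.
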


  \begin{lemma}\label{FNlemma2}
Let $\Gamma^\pm$ be as in Lemma~\ref{FNlemma}.  

If $a_j > a_i+1$ for all $j \in \{i-r, \dots, i-1\}$, and $a_j = a_i+1$ for $j \in \{i+1, \dots, i+s\}$, with $r > 1$ and $s > 0$, then
$$\delta_i(\Gamma^\pm) = \sum_{j=0}^{r+1} [a_1, \dots, a_{i-r-1}, \hat{a}_{(i-r, j)}, \dots, \hat{a}_{(i+1, j)} ,a_{i+s+1}, \dots, a_{n-1}]^{\pm(-1)^j},$$
where $\hat{a}_{(k, j)} = [a_{i-r}, \dots, a_{i-1}]$ if $k = i-r+j$ and $\hat{a}_{(k,j)} = a_i+1$ otherwise.
\end{lemma}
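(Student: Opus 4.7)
The plan is to apply Definition~\ref{D:fnadiff} directly, translating the hypotheses of Lemma~\ref{FNlemma2} into an explicit enumeration of the shuffles and their parities. The first task is to pin down the block structure of $\Lambda_i$, the $a_i$-block of $\Gamma\langle i\rangle$. Because $a_j > a_i+1$ for $j \in \{i-r,\ldots,i-1\}$ and $a_j = a_i+1$ for $j \in \{i+1,\ldots,i+s\}$, the entries of $\Lambda_i$ that are $\leq a_i+1$ (i.e.\ the separators for the $(a_i+1)$-block decomposition) are exactly the $s+1$ entries of value $a_i+1$ sitting at positions $i,i+1,\ldots,i+s$. Consequently the $(a_i+1)$-blocks of $\Lambda_i$ consist of the single non-empty block $B = [a_{i-r},\ldots,a_{i-1}]$ of length $r$ appearing to the left of position $i$, together with $s+1$ empty blocks appearing at or after position $i+1$.

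With the block structure pinned down, the partition in the definition of $\Sh(\Gamma,i)$ gives $p = 1$ (the block $B$) and $q = s+1$ (the empty blocks on the right), so the $(p,q)$-shuffles are in bijection with the $s+2$ possible slot positions for $B$ among the empty blocks. I would next write down explicitly the shuffled sequence for each slot position $j$: after reassembling blocks with separators of value $a_i+1$ between them, placing $B$ in slot $j$ (indexed $0,\ldots,s+1$) yields a sequence in which $\Lambda_i$ is replaced by $j$ copies of $a_i+1$, followed by $[a_{i-r},\ldots,a_{i-1}]$, followed by $s+1-j$ copies of $a_i+1$. Since $p=1$, a routine sign computation identifies the parity of this shuffle as $(-1)^j$, so it lies in $\Sh_{+}(\Gamma,i)$ exactly when $j$ is even and in $\Sh_{-}(\Gamma,i)$ otherwise.

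Finally, plugging these shuffles and parities into the formula
$\delta_i(\Gamma^\pm) = \sum_{\sigma\in\Sh_+(\Gamma,i)} \sigma\cdot\Gamma\langle i\rangle^\pm + \sum_{\sigma\in\Sh_-(\Gamma,i)} \sigma\cdot\Gamma\langle i\rangle^\mp$ from Definition~\ref{D:fnadiff} produces the sum indexed by $j=0,\ldots,s+1$ in which each term carries charge $\pm(-1)^j$, matching the stated formula once the indexing of slot positions is aligned with the indexing in the statement. The main obstacle, such as it is, is purely bookkeeping: keeping the positions of $B$, the separators, and the surrounding entries $a_1,\ldots,a_{i-r-1},a_{i+s+1},\ldots,a_{n-1}$ straight, and verifying that the parity convention for a $(1,s+1)$-shuffle agrees with $(-1)^j$ as written. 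There is no essential difficulty beyond Lemma~\ref{FNlemma}, which handled the analogous but more symmetric cases; the present lemma simply records what happens when the left and right sides of $\Lambda_i$ are asymmetric in the manner prescribed.
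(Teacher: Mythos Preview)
Your approach is correct and is essentially the same as the paper's: unwind Definition~\ref{D:fnadiff}, identify the $(a_i+1)$-block decomposition of $\Lambda_i$, enumerate the resulting shuffles, and read off the alternating charges. Your identification $p=1$, $q=s+1$, hence $\Sh(\Gamma,i)=\Sh(1,s+1)$ with $s+2$ terms indexed $j=0,\ldots,s+1$ and parity $(-1)^j$, is exactly the content of the computation.

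One discrepancy worth flagging: the paper's one-line sketch records the shuffle set as $\Sh(r+1,1)$ and speaks of placing $[a_{i+1},\ldots,a_{i+s}]$ among $r+1$ copies of $a_i+1$, and the displayed sum in the lemma runs to $r+1$. Given the hypotheses as stated (left entries $>a_i+1$, right entries $=a_i+1$), your reading $\Sh(1,s+1)$ is the correct one; the paper's sketch appears to have the left/right roles transposed. The worked example has $r=s=2$, so both conventions yield the same number of terms and the same output, which is presumably why the slip went unnoticed. Your caveat about ``aligning the indexing with the statement'' is therefore warranted, and you might state explicitly that the upper summation index should be $s+1$ rather than $r+1$.
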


  For example, 
\begin{align*}
  \delta_5([2,0,2, 3, 0, 1, 1,0,1]^o) =&\; [2,0,2,3,1,1,1,0,1]^o + [2,0,1,2,3,1,1,0,1]^o \\
  &+ [2,0,1,1,2,3,1,0,1]^o + [2,0,1,1,1,2,3,0,1]^o.
  \end{align*}

The proofs are straightforward calculation.  For Lemma~\ref{FNlemma}, $\Lambda_i$ has $r+s+2$ empty $(a_i+1)$-blocks, 
and $\Sh(\Gamma, i) = \Sh(r+1,s+1)$, 
  each of which fixes the sequence $\Gamma\langle i\rangle$. 
For Lemma~\ref{FNlemma2}, $\Sh(\Gamma, i) = \Sh(r+1, 1)$, with action on $\Lambda_i$ resulting in all possible placements of the sequence 
  $[a_{i+1}, \dots, a_{i+s}]$ in a sequence of $r+1$ $(a_i+1)$s with alternating charges. 

%By choosing models which exhibit the double-cover, we 

Turning to more general results, we have the following. %, which gives a cochain-level model for  the isomorphisms of Theorems~\ref{T:cells} and \ref{T:fnagood}, 

\begin{theorem}\label{T:gysinmodel}
The restriction map sends 
$\Gamma$ to $\Gamma^+ + \Gamma^-$ and the transfer
map sends $\Gamma^\pm$ to $\Gamma$.
\end{theorem}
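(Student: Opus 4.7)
The plan is to work geometrically, using the fact that the covering map $B\alt_n \to B\si_n$ is realized (up to codimension $d-n+1$) by the map $p\co \UInd{n}{d} \to \Ind{n}{d}/\si_n \subset \UConf{n}{d}$ that forgets the orientation on the span. Over a configuration in $\UConf{\Gamma}{d}$ lying in the linearly independent locus, the two preimages under $p$ are distinguished precisely by whether the span orientation agrees or disagrees with the one induced by the dictionary order, so
$p^{-1}(\UConf{\Gamma}{d}) = \UInd{\Gamma+}{d} \sqcup \UInd{\Gamma-}{d}$. Thus the stratifications used to define $\fn_n^*$ and $\fna_n^*$ are compatible with $p$, with each base ``cell'' covered by exactly two ``cells'' of the same type upstairs.

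Under Alexander duality, which is what provides the isomorphisms in Theorems \ref{T:cells} and \ref{T:fnagood}, the pullback $p^*$ and transfer $p^!$ on cohomology correspond to the umkehr and pushforward on the homology of the one-point compactifications. On chain-level representatives, the umkehr of a cell is the formal sum of its preimage cells, so $\UConf{\Gamma}{d}^+ \mapsto \UInd{\Gamma+}{d}^+ + \UInd{\Gamma-}{d}^+$, giving $\text{res}(\Gamma) = \Gamma^+ + \Gamma^-$; the pushforward sends each preimage cell to its image, so $\UInd{\Gamma\pm}{d}^+ \mapsto \UConf{\Gamma}{d}^+$, giving $\text{tr}(\Gamma^\pm) = \Gamma$. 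Both formulas are valid mod two independent of any orientation convention.

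The principal technical obstacle is that the strata $\UConf{\Gamma}{d}$ and $\UInd{\Gamma\pm}{d}$ are not honest cells but rather complements of codimension-$(d-n+1)$ subvarieties within cells of the compactifications, so the cellular statements above do not directly apply. This is handled exactly as in the proof of Theorem \ref{T:fnagood}: filter by $|\Gamma|$, observe that the associated spectral sequences stabilize below any fixed cohomological degree once $d$ is sufficiently large, and take the limit. Both $p^+$ and its umkehr preserve this filtration, since $p$ preserves linear independence and the dictionary-order combinatorics of a configuration, so the cell-level identifications pass to the limit. A small secondary check, which is immediate from the definitions of $\UInd{\Gamma\pm}{d}$, is that the umkehr pulls back the dictionary-order orientation on a cell consistently to the two charge-labeled lifts; this ensures that the sum $\Gamma^+ + \Gamma^-$ and not some twisted combination is what appears.
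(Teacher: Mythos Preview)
Your proposal is correct and follows essentially the same approach as the paper. The paper's proof is terser: it replaces the model for $B\si_n$ by $\Ind{n}{\infty}/\si_n$ (using $\Ind{n}{\infty}$ as $E\si_n$), observes that the same filtration-by-$|\Gamma|$ argument as in Theorem~\ref{T:fnagood} recovers $FN_n^*$ from this model, and then notes that in these compatible models the covering map $\UInd{n}{d} \to \Ind{n}{d}/\si_n$ is filtration-preserving (``cellular''), so restriction and transfer are the evident maps on the limiting cochain complexes. Your version makes explicit the Alexander-duality translation of restriction and transfer into umkehr and pushforward on the homology of compactifications, which the paper leaves implicit, but the underlying geometric content is the same.
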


We will see in the next section that at the level of cohomology  there does not seem description of the Gysin sequence 
through adding or dropping 
labels alone, especially in the cohomology of $\alt_4$.
Most Hopf ring generators do behave as the cochains do in Theorem~\ref{T:gysinmodel}, but the irregular $\alt_4$ behavior propagates throughout.  

\begin{proof}
We may use $\Ind{n}{\infty}$, which is a subspace of our usual $\Conf{n}{\infty}$,  as a model for $E \si_{n}$.  The analysis of finite-dimensional
approximations proceeds as in the proof of \refT{fnagood}, with the filtration by $\Ind{\Gamma}{d}/\si_{n}$ yielding a chain complex as the limit of associated spectral
sequences.  The limiting
cochain complex is exactly ${FN_{n}}^{*}$.  With this model for the classifying spaces of symmetric groups, the restriction and transfer maps are ``cellular'' - that is, they are induced by filtration preserving maps which thus produce maps on limiting chain complexes - and are as stated.
\end{proof}

Finally, we  evaluate Fox-Neuwirth cochains directly by realizing duality through elementary chain-level intersection theory, as for example
developed in \cite{FMS17}.

Briefly,  let $X$ be a manifold and $W$ a codimension-$d$ manifold with an immersion $i$ to $X$.
We say that a smooth chain is transverse to an immersion when it is transverse in the usual sense when restricted to every pair 
of a face (including the interior of the simplex) and a codimension one 
subface as a manifold with boundary. 
Define the function $\tau_W$ 
on a smooth chain $\sigma: \Delta^d \to X$ transverse to $i$ as the cardinality mod-two 
of the pull-back of $i$ and $\sigma$.  When $i$ and $\sigma$ are embeddings, this counts intersection mod-two.
 
 Chains transverse to a fixed, finite set of immersions 
forms a subcomplex quasi-isomorphic to the singular chain complex.  
We view $\tau_W$ as an element of the dual cochains, which while being  ``partially defined'' can be used for many applications
in cohomology.

%This sub chain complex is quasi-isomorphic to the full chain complex, so the transverse cochain complex computes cohomology.   
Standard constructions in homology and cohomology theory are geometric in this model.
For example, if $f : Y \to X$ is transverse to $i$, then the natural map on cochains is given by $f^\# \tau_W = \tau_{f^{-1} W}$.  
The K\"unneth map is  given by geometric product.  Coboundary is given by  a
Stokes formula  $\delta \tau_W = \tau_{\partial W}$, which essentially follows classification of one-manifolds applied 
to $\sigma^{-1}(W)$ where $\sigma : \Delta^{d+1} \to X$.  Poincar\'e duality is almost a tautology, as we 
can allow for $W$ to have corners, in which for example case a triangulation of $X$ gives rise to 
cochains as well as chains. % and the Stokes formula shows that the cochain and
%chain complexes agree up to the standard change of degree.  

\begin{proposition}\label{intersection}
The isomorphism of \refT{fnagood} is realized by sending the chain $\Gamma^\pm$ to the (partially defined) cochain $\tau_{\UInd{\Gamma+}{d}}$.  
\end{proposition}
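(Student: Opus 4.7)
The plan is to apply directly the intersection-theoretic model for cochains recalled just before the statement. Since $d$ is even, $\UInd{n}{d}$ is an oriented manifold, and after removing the codimension-$(d-n+1)$ singular locus of linearly dependent configurations the closures of the strata $\UInd{\Gamma\pm}{d}$ form a cell-like decomposition, each stratum carrying a canonical co-orientation determined by the dictionary ordering together with the $\pm$ sign. In the stable range $d > |\Gamma| + n$ the singular locus does not interact with chains of the relevant dimension, so $\tau_{\UInd{\Gamma\pm}{d}}$ is defined on smooth chains transverse to the finite collection of closed strata in question, and such chains form a subcomplex quasi-isomorphic to singular chains.

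First I would verify that under the identification of \refT{fnagood} the generator $\Gamma^\pm$ of $\fna_n^*$ corresponds to the cellular cochain dual to the stratum $\UInd{\Gamma\pm}{d}$. The identification in the proof of \refT{fnagood} proceeds via a spectral sequence whose $E_0^{i,0}$ is the codimension-$i$ cellular cochain group on the nose; the dual basis element to $\UInd{\Gamma\pm}{d}$ evaluates to $1$ on this stratum and $0$ on all other strata of the same codimension. This is precisely what $\tau_{\UInd{\Gamma\pm}{d}}$ computes on a representative geometric chain passing once transversally through the interior of $\UInd{\Gamma\pm}{d}$ and missing the other codimension-$|\Gamma|$ strata, which is possible for dimension reasons.

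Second I would check compatibility of differentials through the Stokes-type formula $\delta \tau_W = \tau_{\partial W}$. The topological boundary of $\overline{\UInd{\Gamma\pm}{d}}$ consists of strata in which some consecutive pair of points in the dictionary ordering shares one more coordinate, producing sequences of the form $\Gamma\langle i\rangle$. As explained in the proof of \refT{fnagood} and illustrated in \refF{boundary}, the partial ordering among points sharing a block of $(a_i+1)$-coordinates is not determined by the boundary data, which forces every shuffle of the two ordered sub-blocks to appear in the boundary. Comparing the canonical orientation on $\UInd{\Gamma\langle i\rangle\pm}{d}$ with the orientation induced on it as a component of $\partial \overline{\UInd{\Gamma\pm}{d}}$ records the parity of the shuffle, yielding exactly the signs appearing in the differential of \refD{fnadiff}.

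The main obstacle is this orientation bookkeeping on boundary strata. One must check locally, in a neighborhood of a generic point of $\UInd{\Gamma\langle i\rangle\pm}{d}$ where the ambient stratification reduces to that of a single Fox–Neuwirth cell in the symmetric-group setting, that permuting the ordered sub-blocks of the $(a_i+1)$-block by a shuffle $\sigma$ reverses the canonical frame-orientation exactly by $\sgn(\sigma)$. This is a direct computation with the standard basis coming from the dictionary ordering, and its consistency across different $i$ and across boundary incidences relies only on the fact that each shuffle acts on a single block in isolation.
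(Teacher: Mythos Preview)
Your proposal is correct and follows the same route as the paper: show that the Stokes formula $\delta\tau_W=\tau_{\partial W}$ makes $\Gamma^\pm\mapsto\tau_{\UInd{\Gamma\pm}{d}}$ a chain map, and identify the boundary strata with the shuffle terms of \refD{fnadiff}. The paper's version is much shorter because it simply notes that the $\tau$-cochains form a subcomplex via Stokes and then invokes \refT{fnagood} wholesale---the boundary and shuffle-parity analysis you carry out in your second and third paragraphs was already performed in the proof of \refT{fnagood}, so you are redoing work rather than adding to it.
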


\begin{proof}
 In our setting, the inclusions of the $\UInd{\Gamma+}{d}$ extend to proper immersions.  The Stokes formula then shows that 
these cochains  form a subcomplex of the transverse cochain complex.    By Theorem~\ref{T:fnagood}, this subcomplex 
computes the appropriate cohomology.  
\end{proof}

%Note that a chain mapping transversally to the
%extension of  $\UInd{\Gamma+}{d}$ to a closed cell would only intersect the interior $\UInd{\Gamma+}{d}$ itself.

\begin{theorem}\label{T:cellular_coprod}
The coproduct $\Delta$ is cellular in $FNA^\ast_\ast$, given by sending 
$$[a_1, \cdots, a_n]^\pm  \mapsto 
\sum [a_1, \cdots, a_{i-1}]^+  \otimes [a_{i+1}, \cdots, a_{n}]\pm + [a_1, \cdots, a_{i-1}]^-  \otimes [a_{i+1}, \cdots, a_{n}]\mp,$$
where the sum is over all $i$ such that $a_i = 0$.
\end{theorem}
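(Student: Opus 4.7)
The plan is to realize the coproduct geometrically and apply the intersection-theoretic model of Proposition~\ref{intersection}. First I would model the map $B\iota_{i,j}\colon B\alt_i \times B\alt_j \to B\alt_{i+j}$ on the preferred spaces as the juxtaposition map $J_{i,j}\colon \UInd{i}{d} \times \UInd{j}{d} \to \UInd{i+j}{d}$ that sends a pair of configurations $(C_1,C_2)$ to $C_1 \cup (C_2 + Ne_1)$ for $N$ larger than the first coordinate of any point in $C_1$. After restricting to the open subset where the union is linearly independent (a codimension we allow to be consumed in the limit, just as in the proof of \refT{fnagood}), this map is covered by the corresponding juxtaposition on ordered configurations and therefore represents the standard inclusion of classifying spaces up to homotopy. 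The induced orientation on the span of $C_1 \cup (C_2 + Ne_1)$ as an ordered list of vectors is, by a direct determinant computation, the product of the orientations of the spans of $C_1$ and of $C_2 + Ne_1$, with the latter agreeing with the orientation of the span of $C_2$.

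Next I would pull back the (partially defined) cochain $\tau_{\UInd{\Gamma+}{d}}$ along $J_{i,j}$ and use the naturality identity $J_{i,j}^{\#}\tau_{\UInd{\Gamma+}{d}} = \tau_{J_{i,j}^{-1}(\UInd{\Gamma+}{d})}$. The main combinatorial step is to identify the preimage. By construction of $J_{i,j}$, in the dictionary ordering of the juxtaposed configuration the first $i$ points come from $C_1$ and the last $j$ from $C_2 + Ne_1$, so for the juxtaposition to respect $\Gamma = [a_1,\ldots,a_{n-1}]$ we must have $a_i = 0$ at the position separating the two blocks. The remaining entries $[a_1,\ldots,a_{i-1}]$ and $[a_{i+1},\ldots,a_{n-1}]$ then record precisely the incidence patterns of $C_1$ and $C_2$, so the preimage decomposes as a disjoint union, over positions $i$ with $a_i=0$ and over choices of charges for the two factors, of products of the corresponding cells in $\UInd{\cdot}{d} \times \UInd{\cdot}{d}$.

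To finish I would track the charges. Because the orientation of the joined configuration is the product of the two factor orientations, the product cell $\UInd{\Lambda_1 \epsilon_1}{d} \times \UInd{\Lambda_2 \epsilon_2}{d}$ is contained in the preimage of $\UInd{\Gamma+}{d}$ precisely when $\epsilon_1 \epsilon_2 = +$, and in the preimage of $\UInd{\Gamma-}{d}$ when $\epsilon_1 \epsilon_2 = -$. Converting back to cochains under the K\"unneth identification $\tau_{W_1 \times W_2} = \tau_{W_1} \otimes \tau_{W_2}$ gives exactly the stated formula, where the $\pm$ on the right tensor factor tracks the original charge and the sum over $+$ and $-$ on the left factor records the two ways the product of charges can give the required total. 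The contribution from each position $i$ lands in the $H^*(B\alt_i) \otimes H^*(B\alt_{n-i})$ summand of the coproduct, as required.

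The main obstacle I anticipate is verifying that the juxtaposition map is sufficiently transverse to the cell filtration to justify the naive preimage calculation, and in particular checking that no extra contributions appear from the codimension-$(d{-}n{+}1)$ degenerate locus excised in the definition of $\UInd{\cdot}{d}$. This is handled by the same stabilization-in-$d$ argument used in the proof of \refT{fnagood}: for $d$ large enough relative to the cohomological degree, the degenerate locus cannot meet our cells in codimension low enough to affect the computation, so the cellular formula derived above converges to the correct coproduct in the limit.
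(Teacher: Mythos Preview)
Your proposal is correct and follows essentially the same approach as the paper: both realize the coproduct via a stacking/juxtaposition map on the $\UInd{\cdot}{d}$ models, pull back the Fox-Neuwirth cell $\tau_{\UInd{\Gamma+}{d}}$ via the preimage identity $f^{\#}\tau_W=\tau_{f^{-1}W}$, and read off the decomposition into products of cells indexed by the positions where $a_i=0$, with the charge bookkeeping following from multiplicativity of the orientation. The only minor differences are that the paper uses fixed homeomorphisms of $\R$ with $(0,1)$ and $(2,3)$ (giving a globally defined open embedding, hence automatically transverse) rather than a configuration-dependent translation by $Ne_1$, and it guarantees linear independence by perturbing coordinates beyond $|\Gamma|$ rather than by your restriction-and-stabilize argument; both variants are easily seen to be equivalent.
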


\begin{proof}
We use the fact that for $f : X \to Y$ transverse to $W$ we have
$f^{\#} \tau_W = \tau_{f^{-1} W}$.
Consider the cochain $\tau_W$ where $W$ is the immersion of the cell labelled by  $\Gamma = {[a_1, \cdots, a_n]}$, and let 
$Y = \UInd{n}{d} \times \UInd{m}{d}$, $X = \UInd{n+m}{d}$,  and 
$f$ be defined by ``stacking'' configurations.  More explicitly, define $f$ using homeomorphisms
of $\R$ with $(0,1)$ and $(2,3)$ to produce a configuration of $n+m$ points from two given configurations 
by taking a union of $n$ points whose first coordinate is in $(0,1)$ and $m$ points whose first
coordinate is in $(2,3)$.  To  guarantee linear independence, we may fix  modifications of coordinates beyond $|\Gamma|$.

Since $f$ is an inclusion of a codimension zero manifold, it is transverse to any submanifold.  The image of $f$
only contains points whose $n$th and $n+1$st points differ in first coordinate, so the pullback of cochains associated to $\Gamma$  
with $a_{n} \neq 0$ will be zero.  
For $\Gamma$ with $a_n = 0$ 
the preimage of  $\UInd{\Gamma^+}{d}$ in $Y$ will be the union of $\UInd{[a_1, \cdots, a_{n-1}]^\pm}{d} \times \UInd{[a_{n+1}, \cdots, a_{n+m-1}]^\pm}{d}$.
As the K\"unneth map is given by product of submanifolds, we obtain the result.  The statement for $\UInd{\Gamma^-}{d}$ is similar.
\end{proof}

\begin{definition}
Let $ \UInd{n,m}{d}$ be the space of configurations which are bi-colored  with $n$ points of the first color and $m$ points of the second, each
collection oriented.
\end{definition}

\begin{theorem}\label{T:cellular_transfer_product}
The transfer product is modeled at the Fox-Neuwirth cochain level by sending $\Gamma^\pm  \otimes \Lambda^\pm$ to the sum over sequences whose
$0$-blocks are shuffles of the $0$-blocks of $\Gamma$ and $\Lambda$, with charge which is the product of their charges.
\end{theorem}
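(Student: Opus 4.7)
The plan is to compute the transfer product at the Fox-Neuwirth cochain level using the intersection cochain framework of Proposition~\ref{intersection}, extending the geometric method of Theorem~\ref{T:cellular_coprod} from pullback under the stacking map to pushforward under a covering projection. In this framework, for a finite covering $p : E \to B$ of manifolds, the transfer $p_{!}$ sends $\tau_W$ to $\tau_{p(W)}$ counted with mod-two multiplicities, provided $p|_W$ is generic with respect to the chains being evaluated.

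First I would identify a model for the covering $B(\alt_i \times \alt_j) \to B\alt_{i+j}$ inside oriented configuration spaces. Taking $\UInd{i+j}{\infty}$ as the base, the relevant cover $E$ parametrizes configurations of $i+j$ points in $\R^\infty$ together with an ordered partition into subsets of sizes $i$ and $j$, each equipped with its own orientation on its span. The space $\UInd{i}{\infty} \times \UInd{j}{\infty}$ is homotopy equivalent to $E$ via the stacking map from the proof of Theorem~\ref{T:cellular_coprod}, with inverse induced by dissolving the stacking and allowing the two sub-configurations to interleave freely in first coordinate. Under this equivalence, the product cell $W = \UInd{\Gamma+}{d} \times \UInd{\Lambda+}{d}$ and its signed variants are identified with subvarieties of $E$ of codimension $|\Gamma|+|\Lambda|$, and the covering projection $p: E \to \UInd{i+j}{d}$ simply forgets the partition.

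Second, I would compute $p(W)$ combinatorially. A point of $p(W)$ is a configuration of $i+j$ points admitting a partition into an ``$i$-half'' respecting $\Gamma^\pm$ and a ``$j$-half'' respecting $\Lambda^\pm$. Within any $0$-block of the combined configuration -- a maximal group of adjacent points sharing a first coordinate -- all points must lie in the same half, since two points from different halves generically have distinct first coordinates. Hence the sequence of $0$-blocks of any cell $\UInd{\Xi\pm}{d}$ appearing in $p(W)$ is a shuffle of the $0$-blocks of $\Gamma$ and $\Lambda$, and conversely each such shuffled $\Xi$ determines a unique partition recovering a point of $W$. The mod-two multiplicity of $p|_W$ onto each such cell is one. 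The charge of the image cell is determined by orientation: the dictionary-order orientation of the combined span agrees with the concatenation of the two sub-orientations up to a sign from the interleaving permutation, and in mod-two characteristic this sign combines with the individual charges to yield simply their product.

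The main obstacle will be carefully verifying transversality and the mod-two multiplicities, in particular that distinct shuffles produce geometrically distinct cells in the image of $p|_W$ and that $p|_W$ is a generic immersion onto each. Once these technical points are settled, summing $\tau_{\UInd{\Xi\pm}{d}}$ over all shuffles $\Xi$ with the product charge reproduces the claimed formula for $\Gamma^\pm \odot \Lambda^\pm$.
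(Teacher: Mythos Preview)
Your proposal is essentially the paper's argument. The paper uses the same bi-colored oriented configuration space (your $E$, denoted there $\UInd{n,m}{d}$) with projections $p_1 \times p_2$ to $\UInd{n}{d} \times \UInd{m}{d}$ and covering map $\phi$ to $\UInd{n+m}{d}$, then evaluates the transfer on a transverse chain by summing over its lifts---bi-colorings together with compatible orientation pairs---which is the dual viewpoint to your pushforward of the pulled-back cell, and reaches the same shuffle-of-$0$-blocks description with an equally brief treatment of the charge.
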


\begin{proof}
We model the transfer product using the maps 
$$  \UInd{n}{d} \times \UInd{m}{d}  \overset{p_1 \times p_2}{\longleftarrow} \UInd{n,m}{d}  \overset{\phi}{\longrightarrow} \UInd{n+m}{d},$$
where $p_1$ (respectively $p_2$)  projects onto the first (respectively second) colored subset,
and where $\phi$ forgets colors altogether and takes the direct sum orientation.  As $d$ goes to $\infty$, the 
product $p_1 \times p_2$ models the equivalence $B(\alt_n \times \alt_m) \simeq B \alt_n \times B \alt_m$,
and  $\phi$ is a covering map model for $B(\alt_n \times \alt_m) \to B\alt_{n+m}$.

Take a chain  $\sigma$ on $\UInd{n+m}{d}$ which is transverse to all Fox-Neuwirth cells.  
The transfer of $\Gamma^\pm  \otimes \Lambda^\pm$
is defined by evaluating $\Gamma^\pm$ and $\Lambda^\pm$ on  $p_1 (\tilde{\sigma})$ and $p_2 (\tilde{\sigma})$ as $\tilde{\sigma}$ ranges
over lifts of $\sigma$,  which correspond to bi-colorings of underlying configurations along with compatible pairs of orientations.
Under our transversality assumption on $\sigma$,
the configurations in $p_1 (\tilde{\sigma})$ and $p_2 (\tilde{\sigma})$ will respect $\Gamma$ and $\Lambda$ if and only if those in $\sigma$
satisfy some sequence which is a shuffle of the zero-blocks $\Gamma$ and $\Lambda$.  Moreover,  the orientations can then be chosen compatibly 
if and only if the orientation is the product of orientations, which establishes the result.
\end{proof}

We obtain the following refinement of Proposition~\ref{transferneutral}, as the transfer product of
cochains which are invariant under conjugation will cancel in pairs.

\begin{corollary}\label{C:transfercancel}
The transfer product of two Fox-Neuwirth cochains which each are invariant under conjugation is zero.
\end{corollary}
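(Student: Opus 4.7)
The plan is to reduce the statement directly to the cochain-level formula of Theorem~\ref{T:cellular_transfer_product} by expanding invariant cochains in the basis $\{\Gamma^+,\Gamma^-\}$ and then tracking charges. First I would note that the involution on $\fna_n^*$ sends $\Gamma^+ \leftrightarrow \Gamma^-$, so any cochain fixed by conjugation is a linear combination of the elements $\Gamma^o = \Gamma^+ + \Gamma^-$. By bilinearity of $\odot$ it suffices to show that $\Gamma^o \odot \Lambda^o = 0$ for all sequences $\Gamma$ and $\Lambda$.

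Next I would expand
\[
\Gamma^o \odot \Lambda^o = \Gamma^+ \odot \Lambda^+ + \Gamma^+ \odot \Lambda^- + \Gamma^- \odot \Lambda^+ + \Gamma^- \odot \Lambda^-
\]
and apply Theorem~\ref{T:cellular_transfer_product}. Each of the four terms is indexed by the same set of shuffles of the zero-blocks of $\Gamma$ and $\Lambda$; the only difference is the charge assigned to each shuffle, which is the product of the input charges. Fixing any shuffle $\Sigma$ in this common indexing set, the four summands contribute $\Sigma^+,\Sigma^-,\Sigma^-,\Sigma^+$ respectively. In mod-two cochains this is $2\Sigma^+ + 2\Sigma^- = 0$, and since every contributing shuffle cancels in pairs the total sum vanishes.

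Because the argument is purely bookkeeping with the explicit formula of Theorem~\ref{T:cellular_transfer_product}, I do not expect any real obstacle; the only subtlety is ensuring that the indexing sets of shuffles for the four charge-combinations genuinely coincide, which is immediate from the fact that the shuffle structure depends only on the underlying uncharged sequences $\Gamma$ and $\Lambda$. This gives the refinement of Proposition~\ref{transferneutral} claimed in the corollary.
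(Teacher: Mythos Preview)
Your proposal is correct and is essentially the same argument the paper has in mind: the paper deduces the corollary in one line from Theorem~\ref{T:cellular_transfer_product} by noting that ``the transfer product of cochains which are invariant under conjugation will cancel in pairs,'' and you have simply made that pairwise cancellation explicit by expanding $\Gamma^o\odot\Lambda^o$ into its four charged summands and observing that each shuffle contributes $2\Sigma^+ + 2\Sigma^- = 0$. Your remark that the shuffle indexing set depends only on the underlying uncharged sequences is exactly the reason the four expansions line up, and is the only point that needs checking.
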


\medskip 

While Theorems~\ref{T:cellular_coprod} and \ref{T:cellular_transfer_product} establish that Fox-Neuwirth cochains can be used to calculate 
coproducts and transfer products, 
they cannot be used for cup products, as  claimed without proof in  \cite{GiSi12}.
In Appendix~\ref{appendixa} we bring in knowledge of cup coproduct on the Cohen-Lada-May approach to homology
for two cases of finding cochain representatives for cup products.

%%% Local Variables:
%%% TeX-master: "AltGroupsModTwo.V3.tex"
%%% End: % Fox-Neuwirth cell models
% !TEX root = AltGroupsModTwo.V3.tex

\section{Restriction to elementary abelian subgroups}\label{restriction_section}

Restriction to elementary abelian subgroups  is a primary tool in group cohomology, giving maps to classical rings of invariants.
See for example  \cite{AdMi94}, where starting in the third chapter such techniques are developed and used extensively. 

For symmetric groups, the starting point is rings of invariants of polynomial algebras by general linear groups over finite fields, which 
were first studied by Dickson (see for example Chapter~6 of \cite{AdMi94}).  One must take these Dickson algebras, which remarkably are polynomial, 
and look at  symmetric invariants of tensor powers.  Such symmetric polynomials in multiple variables are complicated.  
Previous investigators of cohomology of symmetric groups proceeded by computing these invariants, with Feshbach \cite{Fesh02} being the 
most successful.

While our Hopf semi-ring approach to symmetric groups in \cite{GSS12} did not proceed through such invariant theory, for reference
we did connect with that approach.   We make use of that connection here.

\begin{definition}\label{defvn}
For $n>1$ let $V_{n}^+ \cong (C_2)^{n}$ denote the subgroup of $\alt_{2^{n}}$ defined by having $(C_2)^{n}$ act on itself.  
Let $V_n^-$ be the conjugate of $V_n^+$
by any element of $\si_{2^n}$ not in $\alt_{2^n}$.  %Let $\iota_n^\pm$ denote the corresponding inclusion homomorphisms.
\end{definition}

The low-dimensional cases are exceptional.  As $\alt_2$ is trivial,  $V_1^\pm$ must be as well.  
The case of $\alt_4$  
is exceptional in that there is only one elementary abelian 2-subgroup, so $V_2^+ = V_2^-$, which we just call $V_2$.  
The cohomology of $\alt_4$, whose calculation is worked out in detail in the first section of Chapter III of \cite{AdMi94},
significantly complicates the cohomology of all alternating groups.

The  Weyl group for $V_2$ in $\alt_4$ is not $GL_2(\F_2)$, which has order $6$, but is instead
a cyclic group of order $3$.  Indeed $V_2$ is normal in $\alt_4$, and the quotient has order three.  
The invariant theory approach to cohomology is quite effective.  The following is Theorem~III.1.3 of \cite{AdMi94}.

\begin{theorem}\label{A4Invariant} 
$H^*(B\alt_4) \cong H^*(V_2)^{C_3} \cong \F_2[x_1, x_2]^{C_3},$
where $C_3$ acts by cyclicly permuting $x_1, x_2$ and $x_1 + x_2$.  
This ring of invariants has a generator in degree two, namely $a = {x_1}^2 + x_1 x_2 + {x_2}^2$, 
and two in degree three, namely $b_+ = {x_1}^3 + {x_1}^2 x_2 + {x_2}^3$ and $b_- = {x_1}^3 + {x_1} {x_2}^2 + {x_2}^3$.
 There is a relation, namely ${b_+}^2 + b_+ b_- + {b_-}^2 + a^3 = 0$.
 \end{theorem}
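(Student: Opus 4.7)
The plan is to reduce the cohomology computation to an invariant theory calculation on $V_2$, then identify the generators and relations explicitly. First I would establish the isomorphism $H^*(B\alt_4;\F_2) \cong H^*(V_2)^{C_3}$ via the Lyndon--Hochschild--Serre spectral sequence for the extension
\[
1 \to V_2 \to \alt_4 \to C_3 \to 1,
\]
using the fact that $V_2 \cong (C_2)^2$ is a normal Sylow $2$-subgroup of $\alt_4$ (since $|\alt_4|=12 = 4\cdot 3$). The $E_2$-page is $H^p(C_3;H^q(V_2;\F_2))$, and since $|C_3|=3$ is invertible in $\F_2$, the higher cohomology of $C_3$ vanishes on any $\F_2$-module, so the spectral sequence collapses onto $E_2^{0,*} = H^*(V_2;\F_2)^{C_3}$.

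Next I would identify the $C_3$-action on $H^*(V_2;\F_2) = \F_2[x_1,x_2]$. The quotient $\alt_4/V_2 \cong C_3$ acts on $V_2$ (the Klein four-group) by cyclically permuting its three non-identity elements, which dually cycles $x_1 \to x_2 \to x_1+x_2 \to x_1$ in degree one. I would then produce the claimed invariants by summing $C_3$-orbits of monomials: the orbit of $x_1 x_2$ yields $a = x_1^2 + x_1 x_2 + x_2^2$, while the orbits of $x_1^2 x_2$ and $x_1 x_2^2$ yield $b_+$ and $b_-$ respectively. A direct expansion verifies
\[
b_+^2 + b_+ b_- + b_-^2 \;=\; x_1^6 + x_1^5 x_2 + x_1^3 x_2^3 + x_1 x_2^5 + x_2^6 \;=\; a^3,
\]
so the claimed relation holds.

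Finally, to show that $a,b_+,b_-$ together with this single relation present the full invariant ring, I would compare Poincar\'e series. Because $\mathrm{char}(\F_2)$ is coprime to $|C_3|$, Molien's formula (applied over $\overline{\F_2}$) computes
\[
P\bigl(H^*(V_2)^{C_3};t\bigr) \;=\; \tfrac{1}{3}\!\left[\tfrac{1}{(1-t)^2} + \tfrac{2}{1+t+t^2}\right] \;=\; \tfrac{1-t+t^2}{(1-t)(1-t^3)}.
\]
On the other hand, assuming $a,b_+,b_-$ are algebraically independent modulo the single degree-$6$ relation, the candidate ring has Poincar\'e series
\[
\tfrac{1-t^6}{(1-t^2)(1-t^3)^2} \;=\; \tfrac{1+t^3}{(1-t^2)(1-t^3)} \;=\; \tfrac{1-t+t^2}{(1-t)(1-t^3)},
\]
which matches. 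The natural surjection from the candidate ring onto $H^*(V_2)^{C_3}$ (well-defined because the relation holds) is therefore an isomorphism.

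The main obstacle is ruling out additional relations beyond the single degree-six relation. The Poincar\'e series comparison is the most efficient route; an alternative would be to observe that $\F_2[x_1,x_2]$ is a free module over the subring $\F_2[a,b_+]$ (a regular sequence/system of parameters for the $C_3$-action, giving a Cohen--Macaulay invariant ring by the Hochster--Eagon theorem in coprime characteristic) and then exhibit $\{1, b_-\}$ as a module basis over $\F_2[a,b_+]$, forcing the relation to express $b_-^2$ in the chosen basis.
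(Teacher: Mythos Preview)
Your argument is correct. The paper does not give its own proof of this theorem; it simply states the result and cites Chapter~III of Adem--Milgram \cite{AdMi94} for the details. Your approach---collapse of the Lyndon--Hochschild--Serre spectral sequence for $1 \to V_2 \to \alt_4 \to C_3 \to 1$ using that $|C_3|$ is prime to $2$, followed by explicit invariant theory and a Molien series comparison---is the standard one and is essentially what that reference does.

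One small point worth tightening in your main line of argument: you assert that the map from the candidate presentation $\F_2[a,b_+,b_-]/(b_+^2 + b_+ b_- + b_-^2 + a^3)$ to $H^*(V_2)^{C_3}$ is a \emph{surjection}, but the Poincar\'e series match alone does not give this---you need to know that $a,b_+,b_-$ actually generate the invariant ring. Your alternative paragraph (system of parameters plus Hochster--Eagon) handles this, but a quicker fix in the main argument is to invoke Noether's degree bound: in coprime characteristic the invariant ring of a finite group $G$ is generated in degrees at most $|G|=3$, and you have already checked that $a$ spans the degree-$2$ invariants and $b_+,b_-$ span the degree-$3$ invariants (degree $1$ being zero). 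With surjectivity in hand, the Poincar\'e series comparison then forces injectivity and completes the proof.
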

 
 This relation in the ring of invariants stands in contrast to the Dickson invariant setting, and will
propagate throughout the cohomology of alternating groups.

We realize the generators of this cohomology as Fox-Neuwirth cochains.  

\begin{definition}
\begin{itemize}
\item Let  $\gamma_{1,2;2}$  be the class represented by $[1, 0, 1]^o$.
\item Let $\gamma_{2,1}^+$ to be represented by $[1,1,1]^+ + [2, 0, 1]^o$.
\item Let $\gamma_{2,1}^-$ to be represented by $[1,1,1]^- + [2,0,1]^o$.  
\end{itemize}
\end{definition}

By Theorem~\ref{T:gysinmodel} and Theorem~4.9 of \cite{GSS12}, $\gamma_{1,2;2}$ is the restriction of $\gamma_{1,2} \in H^2(B\si_4)$.
Thus by Theorem~7.8 of \cite{GSS12}, it restricts to $a$ in the cohomology $V_2$.

That $\gamma_{2,1}^\pm$ are cocycles is straightforward.  Again using Theorem~\ref{T:gysinmodel} and Theorem~4.9 of \cite{GSS12} we 
see that $\gamma_{2,1}^\pm$ both transfer to $\gamma_{2,1}$, which restricts to their sum.  Thus they span $H^3(B\alt_4)$.    Because they
are non-trivially conjugated, they must restrict to $b_+$ and $b_-$.  Summarizing we have the following.

\begin{proposition}\label{first_gamma2_relation}
The cohomology of $\alt_4$ is generated by $\gamma_{1,2;2}$, $\gamma_{2,1}^+$ and $\gamma_{2,1}^-$ with the relation
$$\gamma_{2,1}^+ \cdot \gamma_{2,1}^- = (\gamma_{2,1}^+)^2 + (\gamma_{2,1}^-)^2 + (\gamma_{1,2;2})^3.$$
\end{proposition}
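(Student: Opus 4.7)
The plan is to derive the proposition as a direct consequence of Theorem~\ref{A4Invariant} together with the identifications of Fox-Neuwirth representatives already assembled in the paragraphs immediately preceding the statement. Since $V_2$ is normal in $\alt_4$ with quotient $C_3$, the Lyndon-Hochschild-Serre spectral sequence (as in Theorem~\ref{A4Invariant}) makes the restriction map $\mathrm{res}: H^*(B\alt_4) \xrightarrow{\sim} H^*(BV_2)^{C_3} \cong \F_2[x_1,x_2]^{C_3}$ an isomorphism. The strategy is therefore to pull the known defining relation $b_+^2 + b_+ b_- + b_-^2 + a^3 = 0$ back to $H^*(B\alt_4)$ along this isomorphism.

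First, I would record the restriction data. By Theorem~\ref{T:gysinmodel}, the cochain $[1,0,1]^+ + [1,0,1]^- = [1,0,1]^o$ is the image under restriction of the Fox-Neuwirth representative of $\gamma_{1,2} \in H^2(B\si_4)$, so $\gamma_{1,2;2}$ maps to the restriction of $\gamma_{1,2}$ to $V_2$, which by Theorem~7.8 of \cite{GSS12} is $a$. Similarly, the cochains $[1,1,1]^\pm + [2,0,1]^o$ both transfer to the Fox-Neuwirth representative of $\gamma_{2,1}$, and $\gamma_{2,1}$ restricts to $\gamma_{2,1}^+ + \gamma_{2,1}^-$. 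Consequently $\mathrm{res}(\gamma_{2,1}^+ + \gamma_{2,1}^-) = \mathrm{res}(\gamma_{2,1}) = b_+ + b_-$. Since $\gamma_{2,1}^+$ and $\gamma_{2,1}^-$ are exchanged by the standard involution (visible at the Fox-Neuwirth level by the sign flip) while $b_+$ and $b_-$ are exchanged by the analogous involution on $H^*(BV_2)$, the pair $\{\gamma_{2,1}^\pm\}$ must map bijectively to $\{b_\pm\}$.

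Second, I would transport the relation. Since $\mathrm{res}$ is an injective ring homomorphism, applying it to $(\gamma_{2,1}^+)^2 + \gamma_{2,1}^+ \gamma_{2,1}^- + (\gamma_{2,1}^-)^2 + (\gamma_{1,2;2})^3$ yields $b_+^2 + b_+ b_- + b_-^2 + a^3$, which vanishes by Theorem~\ref{A4Invariant}. Rewriting over $\F_2$ gives the stated relation. The generation claim follows because $a, b_+, b_-$ generate the invariant ring, and the fact that this is the \emph{only} relation transports through the isomorphism as well, since $\F_2[A, B_+, B_-]/(B_+^2 + B_+ B_- + B_-^2 + A^3)$ maps isomorphically to $\F_2[x_1,x_2]^{C_3}$ under $A \mapsto a$, $B_\pm \mapsto b_\pm$.

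The main obstacle is really the bookkeeping around the restriction identifications, and nearly all of that work has already been done in the text leading up to the proposition. The one subtlety is that one must not commit to a particular matching of $\gamma_{2,1}^+$ with $b_+$ versus $b_-$; fortunately the relation is symmetric in $b_+, b_-$, so the set-level identification of $\{\gamma_{2,1}^\pm\}$ with $\{b_\pm\}$ is enough.
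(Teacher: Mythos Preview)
Your proposal is correct and follows essentially the same approach as the paper: the argument in the text immediately preceding the proposition identifies $\gamma_{1,2;2}$ with $a$ and the pair $\{\gamma_{2,1}^\pm\}$ with $\{b_\pm\}$ under the isomorphism $H^*(B\alt_4)\cong \F_2[x_1,x_2]^{C_3}$ of Theorem~\ref{A4Invariant}, and the proposition is then just a restatement of that theorem's relation. Your observation that the symmetry of the relation in $b_+,b_-$ obviates the need to pin down which $\gamma_{2,1}^\pm$ corresponds to which $b_\pm$ is exactly the right way to handle that bookkeeping.
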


In contrast to this relation, we will see that Hopf semi-ring generators defined on $\alt_{2^n}$ for $n > 2$ have mixed-charge 
products which are zero.  %The Gysin sequence restricted to Hopf semi-ring monomials in generators $\gamma_{\ell,m}$ for $\ell>2$ is simple.  
Accordingly, the Gysin sequence
for $\alt_4$, and thus its contributions to the Gysin sequence for all alternating groups,  is relatively complicated.  For concreteness
we record the following.

\begin{proposition}\label{A4Gysin}
A class which maps to ${\gamma_{1,2}}^p {\gamma_{2,1}}^n   \in \mathcal{B}_a $  under transfer is 
${\gamma_{1,2;2}}^p \left( \sum_{\substack{i < \frac{n}{2}}} \binom{n}{i} {\gamma_{2,1}^+}^i {\gamma_{2,1}^-}^{n-i} \right).$
\end{proposition}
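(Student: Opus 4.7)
My strategy is to combine the projection formula for cup product with the double-cover identity $\text{res}\circ\text{tr} = 1 + \iota$, where $\iota$ is the standard involution. Since $\gamma_{1,2;2} = \text{res}(\gamma_{1,2})$, the standard projection formula $\text{tr}(\text{res}(a)\cdot b) = a\cdot \text{tr}(b)$ gives
$$\text{tr}(\gamma_{1,2;2}^p \cdot X) = \gamma_{1,2}^p \cdot \text{tr}(X)$$
for any $X \in H^*(B\alt_4)$. This reduces the proposition to showing that
$$S := \sum_{i < n/2}\binom{n}{i}(\gamma_{2,1}^+)^i(\gamma_{2,1}^-)^{n-i}$$
transfers to $\gamma_{2,1}^n$.

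To detect $\text{tr}(S)$, I apply restriction: $\text{res}(\text{tr}(S)) = S + \overline{S}$. By Proposition~\ref{P:cupinvolution} the involution is a ring homomorphism exchanging $\gamma_{2,1}^+$ and $\gamma_{2,1}^-$, so
$$\overline{S} = \sum_{j > n/2}\binom{n}{j}(\gamma_{2,1}^+)^j(\gamma_{2,1}^-)^{n-j}.$$
Thus $S + \overline{S}$ recovers the full binomial expansion $\sum_{i=0}^n \binom{n}{i}(\gamma_{2,1}^+)^i(\gamma_{2,1}^-)^{n-i}$ except possibly the middle term at $i = n/2$ when $n$ is even. For $n = 2m$ with $m \geq 1$, Kummer's theorem gives $\binom{2m}{m} \equiv 0 \pmod 2$, killing that term. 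Hence
$$S + \overline{S} = (\gamma_{2,1}^+ + \gamma_{2,1}^-)^n = \text{res}(\gamma_{2,1})^n = \text{res}(\gamma_{2,1}^n),$$
so $\text{tr}(S) - \gamma_{2,1}^n \in \ker(\text{res}) = e\cdot H^*(B\si_4)$ by the Gysin sequence.

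To upgrade this to an equality, I note that both $\text{tr}(S)$ (as an image of transfer) and $\gamma_{2,1}^n$ (as a scale $> 1$ Hopf monomial, by Theorem~\ref{AnnIdeal}) lie in $\text{Ann}(e)$. Since $\mathcal{G}_a \subset \mathcal{G}_q$ (Theorems~\ref{AnnIdeal} and \ref{GysinQuotient}), the composite $\text{Ann}(e) \hookrightarrow H^*(B\si_4) \twoheadrightarrow H^*(B\si_4)/(e\cdot H^*)$ is injective, i.e.\ $\text{Ann}(e) \cap e\cdot H^*(B\si_4) = 0$. This forces $\text{tr}(S) - \gamma_{2,1}^n = 0$, completing the proof.

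The main obstacle is the parity analysis in the middle step: when $n$ is even, the argument depends on the vanishing mod two of the middle coefficient $\binom{n}{n/2}$, which is exactly Kummer's theorem applied to $\binom{2m}{m}$. Otherwise the argument is a clean assembly of the projection formula, the res-tr identity for mod-two double covers, Proposition~\ref{P:cupinvolution}, and the compatibility $\mathcal{G}_a \subset \mathcal{G}_q$ of the Hopf-ring-adapted bases appearing in the Gysin decomposition.
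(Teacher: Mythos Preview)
Your proof is correct and takes essentially the same approach as the paper: both reduce via the projection formula and then compute $S + \overline{S}$ as the binomial expansion $(\gamma_{2,1}^+ + \gamma_{2,1}^-)^n = \text{res}(\gamma_{2,1}^n)$. The only difference is that the paper works directly in the $V_2$-invariant model of Theorem~\ref{A4Invariant} (where transfer is literally symmetrization under $x_1 \leftrightarrow x_2$, so the identification is immediate), whereas you stay abstract and close the argument with $\text{Ann}(e)\cap e\cdot H^*(B\si_4)=0$ via $\mathcal{G}_a\subset\mathcal{G}_q$.
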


\begin{proof}
As mentioned above, while $V_2$ is a subgroup of both $\alt_4$ and $\si_4$ its Weyl group is different in each case, namely $C_3$ in $\alt_4$ 
and $GL_2(\F_2)$ for $\si_4$.  The $GL_2(\F_2)$-invariants 
sit inside the $C_3$ invariants.  This inclusion of invariants represents the restriction homomorphism, as the Euler class
$\gamma_{1,1} \odot 1_2$ also generates kernel of the restriction from $\si_4$ to $V_2$.  The transfer map takes a $C_3$ invariant polynomial to 
its symmetrization - that is, the sum of a that polynomial and its conjugate defined by interchanging $x_1$ and $x_2$.  

Given this model for the transfer map, the binomial theorem  implies that 
$\sum_{i < \frac{n}{2}} \binom{n}{i} {\gamma_{2,1}^+}^i {\gamma_{2,1}^-}^{n-i}$ transfers to ${\gamma_{2,1}}^n$. 
The fact that symmetrization of a product of polynomials $f \cdot g$ when $g$ is already symmetric  is the product of the symmetrization of $f$ with $g$
gives the result in general.
\end{proof}

\begin{remark}\label{NoGoodBasis}
The maps in the Gysin sequence on these generators are given by adding or removing labels, but unlike for Hopf semi-ring generators of greater levels
this is not the case for multiples of generators.
Thus it seems that there is no basis for $H^d(B\alt_4)$ which is readily compatible with the Gysin sequence for $d \geq 9$.   For example, when $d=9$
the restriction of ${\gamma_{2,1}}^3$ is ${\gamma_{2,1}^+}^3 + {\gamma_{2,1}^+}^2 {\gamma_{2,1}^-} + 
{\gamma_{2,1}^+} {\gamma_{2,1}^-}^2 + {\gamma_{2,1}^-}^3$ while that of ${\gamma_{1,2}}^3 {\gamma_{2,1}}$ 
is ${\gamma_{1,2;2}}^3 \gamma_{2,1}^+ + {\gamma_{1,2;2}}^3 \gamma_{2,1}^-$.  But $H^9$ is of rank four, so we must use relations among the six
terms in these restrictions.  Accounting for transfer maps as in Proposition~\ref{A4Gysin} points to no consistently good choices.
\end{remark}

%More generally, we have the following building blocks for Fox-Neuwirth cochains that generate the cohomology of alternating groups.

When $n>2$, the behavior of restriction to $V_n^+$ becomes regular, and more  parallel to the symmetric group setting.
The invariants to which the cohomology of alternating groups restrict are again
 the Dickson algebras $\F_{2}[x_{1}, \ldots, x_{n}]^{GL_{n}(\F_{2})}$, which are
polynomial on generators $d_{k,\ell}$ in dimensions $2^{k}(2^{\ell} - 1)$ where $k + \ell = n$.
%But there is a second conjugacy class of such groups when $n > 2$.
Because the index of the normalizer of $V_n^+$ in $\alt_{2^n}$ is twice that of its image $V_n$ in $\si_{2^n}$, 
 there are twice as many conjugates by using elements in $\si_{2^n}$, so $V_n^-$ will not be conjugate to $V_n^+$.
%Let  $V_n^-$ denote the conjugate of $V_n^+$ by say the permutation $(12) \in \si_{2^n}$.  
But because $V_n^+$ and $V_n^-$ are both conjugate to $V_n$ when included in $\si_{2^n}$ we have the following.
 
 \begin{proposition}\label{restrictions}
 Restriction maps to $V_n^\pm$ and $V_n$
 satisfy the following commutative diagram

\begin{equation*}
\xymatrix{
H^*(BS_{2^n}) \ar[r]^{\text{res}}\ar[d]^{\text{res}}&H^*(B\alt_{2^n} \ar[d]^{\text{res}\oplus\text{res}})\\
H^*(BV_n) \ar[r] &H^*(BV_n^+) \oplus H^*(BV_n^-),
}
\end{equation*}
where the bottom arrow is the diagonal map.
\end{proposition}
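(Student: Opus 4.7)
The plan is to reduce the commutativity of the diagram to the standard fact, already used repeatedly in the paper, that conjugation by an element of a group $G$ induces the identity on $H^*(BG)$.

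First I would unpack the three restriction maps appearing in the diagram. The composite $H^*(B\si_{2^n}) \to H^*(BV_n)$ is induced by the inclusion $V_n \hookrightarrow \si_{2^n}$, where by definition $V_n$ is just $V_n^+$ viewed inside $\si_{2^n}$. The composite $H^*(B\si_{2^n}) \to H^*(B\alt_{2^n}) \to H^*(BV_n^+)$ is induced by the factorization $V_n^+ \hookrightarrow \alt_{2^n} \hookrightarrow \si_{2^n}$, which as an inclusion of subgroups of $\si_{2^n}$ is identical to $V_n \hookrightarrow \si_{2^n}$. Hence functoriality of $H^*$ immediately gives commutativity of the upper-left triangle projecting onto the $H^*(BV_n^+)$ factor.

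Next I would handle the $H^*(BV_n^-)$ factor, where the identification of $H^*(BV_n^-)$ with $H^*(BV_n)$ is meant to be made through the conjugation isomorphism $c_g \co V_n^+ \to V_n^-$ with $V_n^- = g V_n^+ g^{-1}$ for some fixed $g \in \si_{2^n} \setminus \alt_{2^n}$. Under this identification, the composite $H^*(B\si_{2^n}) \to H^*(B\alt_{2^n}) \to H^*(BV_n^-)$ is induced by the inclusion $V_n^+ \xrightarrow{c_g} V_n^- \hookrightarrow \si_{2^n}$, which agrees as a group homomorphism $V_n^+ \to \si_{2^n}$ with the composite of $V_n^+ \hookrightarrow \si_{2^n}$ followed by inner conjugation of $\si_{2^n}$ by $g$. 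Since conjugation by any element of $\si_{2^n}$ induces the identity on $H^*(B\si_{2^n})$ (Theorem~II.1.9 of \cite{AdMi94}, as cited in the proof of \refT{hopfring}), this composite on cohomology equals the direct restriction $H^*(B\si_{2^n}) \to H^*(BV_n)$, which is the $V_n^-$-component of the diagonal.

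The main substantive point is really just bookkeeping: making sure the identification of $H^*(BV_n^\pm)$ with $H^*(BV_n)$ used to define the diagonal map is the one induced by conjugation inside $\si_{2^n}$, so that the conjugation-triviality statement applies. No obstacle beyond this is anticipated; the argument is otherwise purely formal from the functoriality of cohomology and the observation that the two inclusions of the abstract group $(C_2)^n$ into $\si_{2^n}$, as $V_n^+$ and as $V_n^-$, are $\si_{2^n}$-conjugate even though they are not $\alt_{2^n}$-conjugate.
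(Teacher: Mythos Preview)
Your proposal is correct and is precisely the argument the paper has in mind: the proposition is stated immediately after the sentence ``But because $V_n^+$ and $V_n^-$ are both conjugate to $V_n$ when included in $\si_{2^n}$ we have the following,'' and no further proof is given. Your write-up simply unpacks that one-line justification, invoking the same conjugation-triviality fact (Theorem~II.1.9 of \cite{AdMi94}) that the paper cites elsewhere.
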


%Implicitly, we will also be restricting to the products of $V_n$ for $n \geq 2$,  through the coproduct.  
In the symmetric group setting restriction to products
of $V_n$, including $V_1$, detect cohomology.  Because  $(V_1)^n$ is not a subgroup of $\alt_n$ we restrict to the analogous 
subgroup for alternating groups.

\begin{definition}
\begin{itemize}
\item Let $\alt_I = \alt_{2^{i_1}} \times \cdots \times \alt_{2^{i_q}} \subset \alt_{2m}$, where $|I| = \sum 2^{i_k}$ is equal to $2m$.  
\item Let $AV_{1,m}$ be the subgroup of $\alt_{2m}$ obtained by intersecting with $(V_1)^{m}$  in $\si_{2m}$, which 
is the collection products of even numbers of the two-cycles $\tau_{2i} = (\{2i - 1\} \{2i \})$.
\end{itemize}
\end{definition}

The full set of maximal elementary abelian subgroups of alternating groups is described in \cite{Quan03}.   

We extend our use 
of geometric models for classifying spaces to that of $BAV_{1,m}$, and its inclusion in $B\alt_{2m}$.

\begin{definition}\label{defBAV}
We  model  $EAV_{1,2m}$ as $(S^\infty)^m$ by having each $\sigma$ act 
  by $-1$ on each factor corresponding to a two-cycle  $\tau_{2i}$ which occurs in $\sigma$.
This model then includes into our model $\Ind{2m}{\infty} $ for $E\alt_{2m}$ by sending 
$$(v_1, \cdots, v_m) \mapsto (x_1 - \varepsilon v_1, x_1 + \varepsilon v_1, x_2 - \varepsilon v_2, \cdots, x_m + \varepsilon v_m).$$  
Here we pick some  $x_i \in \R^\infty$ 
which are linearly independent and do not share their first coordinates and choose 
$\varepsilon$ so that collections of points on spheres of radius $\varepsilon$ about $x_i$ also have these two properties.

\end{definition}

In Section~\ref{detection_section}, we show that restriction to only these subgroups along with $V_n$ when appropriate is injective on cohomology.  
To apply that result requires calculations such as the following.

\begin{theorem}\label{AV_vanish}
The restriction to $AV_{1,m}$ of a Fox-Neuwirth cocyle whose constituent cochains each have two consecutive non-zero
terms is zero.
\end{theorem}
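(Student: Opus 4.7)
The plan is to reduce to a cochain-level statement: each individual Fox-Neuwirth cochain $\Gamma^\pm$ whose underlying sequence contains two consecutive non-zero entries restricts to zero on $BAV_{1,m}$. The cocycle case then follows by linearity, since the hypothesis is imposed on each constituent cochain separately.

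First I would set up the geometric model. Applying Proposition~\ref{pullback} to the inclusions $(\si_2)^m \hookrightarrow \si_{2m}$ and $\alt_{2m} \hookrightarrow \si_{2m}$, the single relevant double coset contributes $AV_{1,m} = (\si_2)^m \cap \alt_{2m}$, yielding the pullback square of covering spaces
$$\xymatrix{ BAV_{1,m} \ar[r] \ar[d] & B\alt_{2m} \ar[d] \\ B(\si_2)^m \ar[r] & B\si_{2m}. }$$
Model the bottom map by the stacking inclusion $(\UConf{2}{d})^m \hookrightarrow \UConf{2m}{d}$ of the sort used in the proof of Theorem~\ref{T:cellular_coprod} (placing the $k$th pair in first-coordinate range $(2k-2,2k-1)$). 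Pulling back along $\UInd{2m}{d} \to \UConf{2m}{d}$ then provides a concrete map $g \colon BAV_{1,m} \to \UInd{2m}{d}$ realizing the inclusion $BAV_{1,m} \hookrightarrow B\alt_{2m}$.

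Next I would analyze the combinatorics of the image. A stacked configuration consists of $m$ disjoint two-point sub-configurations placed in $m$ distinct first-coordinate intervals. In the resulting dictionary ordering, the $(2k)$th and $(2k+1)$st points always differ in their first coordinate for $k = 1, \ldots, m-1$, so the Fox-Neuwirth sequence $[b_1, \ldots, b_{2m-1}]$ of such a configuration satisfies $b_{2k} = 0$ at every even position. Consequently, if $\Gamma$ has any non-zero entry at an even position, then no configuration in the image of $g$ respects $\Gamma$, and hence $g^{-1}(\UInd{\Gamma^\pm}{d}) = \varnothing$. A sequence $\Gamma$ with two consecutive non-zero entries $a_i, a_{i+1}$ necessarily has one of $i, i+1$ even, placing it in this case.

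Finally, I would invoke Proposition~\ref{intersection}: the cochain associated to $\Gamma^\pm$ is $\tau_{\UInd{\Gamma^+}{d}}$, and its restriction along $g$ is $g^{\#}\tau_{\UInd{\Gamma^+}{d}} = \tau_{g^{-1}(\UInd{\Gamma^+}{d})} = 0$ in the case of interest. Summing over the constituent cochains of the given cocycle yields the result. The main obstacle will be verifying that $g$ can be arranged to be transverse to all of the relevant Fox-Neuwirth strata, so that Proposition~\ref{intersection} computes the restriction on cochains directly; once this transversality is in place, the combinatorial vanishing at even positions closes the argument.
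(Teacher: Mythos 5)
Your proposal is correct and follows essentially the same route as the paper's proof: both build a configuration-space model of $B AV_{1,m} \to B\alt_{2m}$ whose image consists of $m$ pairs separated in the first coordinate, so that image configurations respect only sequences of the form $[a_1, 0, a_2, 0, \cdots, 0, a_m]$, and a sequence with two consecutive non-zero entries necessarily has a non-zero entry in an even position and therefore pulls back to zero. The only differences are cosmetic --- the paper constructs the model directly as $(S^\infty)^m$ sent to $(x_1 - \varepsilon v_1, x_1+\varepsilon v_1, \dots)$ rather than via the pullback square and stacking map, and your closing transversality concern is vacuous here since a map whose image misses the relevant strata is automatically transverse to them.
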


\begin{proof}
Passing to the quotient from the map given by Definition~\ref{defBAV} ,
  the map on classifying spaces induced by inclusion of $AV_{1,2m}$ as a subgroup of $\alt_{2m}$ by the composite
 $$(S^\infty)^m / AV_{1,2m} \to \Ind{2m}{\infty} / AV_{1,2m} \to  \Ind{2m}{\infty} / \alt_{2m}.$$

In this model, the restriction to $C^*(BAV_{1,2m})$ of a cochain on $B\alt_{2m}$ evaluates a chain by composing with this composite,
whose image consists of points which all respect sequences of the form $\Gamma = [a_1, 0, a_2, 0, \cdots, 0, a_m]$.  So 
a Fox-Neuwirth cochain with two consecutive non-zero terms will restrict to zero.
\end{proof}

%\begin{remark}
%For our Hopf semi-ring presentation in \cite{GSS12} it
%sufficed to build on the known product and cup coproduct structures on the homology of symmetric groups, presented as the Dyer-Lashof algebra
%as in \cite{CLM76}.  
%But we use invariant theory to study Steenrod action.  While this action could have also been determined from known work in homology, 
%the Steenrod action on rings of Dickson invariants are simple to understand and so provide a more useful starting point.
%\end{remark}

%%% Local Variables:
%%% TeX-master: "AltGroupsModTwo.V3.tex"
%%% End: % Restriction to elementary abelian subgroups
% !TEX root = AltGroupsModTwo.V3.tex

\section{Generators, with respect to both products}\label{generators}

\subsection{Hopf ring generators for the $\rho$ summand of $H^*(B\alt_{2n})$}\label{Ba}

Recall Theorem~\ref{AnnIdeal}  and Proposition~\ref{gysinsplit} which decompose the cohomology of $\alt_{2n}$,
into trivial and $\rho$ summands as $C_2$-representations, indexed by bases for the quotient by and annihilator ideal
of the Euler class in the Gysin sequence.  We establish multiplicative generators organized by this decomposition, starting
with explicit cochain models and some subgroup detection results before we can establish the main results.

Consider the Hopf ring generator $\gamma_{\ell,2^{k}}$, with $\ell + k = n$ and $\ell > 1$, 
which is in $H^{2^{n}- 2^k}(BS_{2^n})$. By Theorem~4.9 of \cite{GSS12} it s represented by the Fox-Neuwirth cochain 
$$\alpha_{\ell, 2^k} = [1,1,\dots,1, 0, 1, 1,\dots, 1, 0,\dots,0, 1,\dots 1, 1], $$ with $2^k$ groups of $2^\ell - 1$ consecutive ones, each
group separated by a single zero.  This  restricts to the sum  
$\alpha^+_{\ell, 2^k} + \alpha^-_{\ell, 2^k}$ in $FNA^*_{2^n}$.  
The $\alpha^\pm_{\ell, 2^k}$ have non-zero boundary, but we will complete each to a cocycle, 
and extend these to a family of generators $\gamma^+_{\ell,m} \in H^{m(2^\ell-1)}(B\alt_{m\cdot 2^\ell})$.

 \begin{definition}\label{gammaFNcochains}
Let $\alpha_{\ell,m}^\pm \in FNA_{m2^\ell}^{m (2^\ell - 1)}$ 
be the positive (resp. negative) Fox-Neuwirth cochain with $m$ blocks, each  a sequence of $2^\ell-1$ ones, separated by zeros.

Let $\beta_{\ell, m}(i, j)^o$ be the sum of positive and negative Fox-Neuwirth 
cochains each with $m+1$ blocks so that:
 \begin{itemize}
\item the  $i$th block is a singleton two;
\item the $j$th block is a sequence of $2^\ell - 3$ ones;
\item  all other blocks are sequences of $2^\ell - 1$ ones.
\end{itemize} 
Let $\beta_{\ell, m}^o$ denote the sum $\sum \beta_{\ell, m}(i, j)^o$  over all $i,j$ with $1 \leq i < j \leq m+1$.

Let $\Gamma_{\ell, m}^+ = \alpha^+_{\ell, m} + \beta_{\ell,m}^o,$
and let $\Gamma_{\ell, m}^- = \alpha^-_{\ell, m} + \beta_{\ell,m}^o.$ 

\end{definition}

\begin{proposition}\label{P:cocycle_reps}
 The $\Gamma_{\ell, m}^\pm$ are cocycles which represent distinct nonzero classes $\gamma_{\ell,m}^\pm \in H^{m(2^\ell-1)}(B\alt_{m\cdot 2^\ell})$. 
 When $m = 2^k$, these transfer to $\gamma_{\ell, 2^k}$ in the cohomology of symmetric groups. %which by abuse we refer to also as $\gamma_{\ell, 2^{k}}^\pm$.
\end{proposition}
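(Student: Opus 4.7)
The plan is to verify the three assertions—transfer, distinctness, and cocycle—in increasing order of technical difficulty, using the Fox-Neuwirth cochain calculus developed in the previous section.

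The transfer identity is immediate from Theorem~\ref{T:gysinmodel}: the cellular transfer sends each $\Gamma^\pm$ to the uncharged $\Gamma$, so each $\beta_{\ell,m}(i,j)^o = \beta_{\ell,m}(i,j)^+ + \beta_{\ell,m}(i,j)^-$ transfers to $2\beta_{\ell,m}(i,j) \equiv 0 \pmod 2$, giving $\mathrm{tr}(\gamma^\pm_{\ell,m}) = \alpha_{\ell,m}$; for $m = 2^k$ this represents $\gamma_{\ell, 2^k}$ by Theorem~4.9 of \cite{GSS12}. For distinctness and non-triviality, $\gamma_{\ell,m}^+ + \gamma_{\ell,m}^-$ is the restriction of $\alpha_{\ell,m}$; since $\alpha_{\ell,m}$ has scale $\ell \geq 2$ it is not a multiple of the Euler class $e = \gamma_{1,1}\odot 1$, so the Gysin sequence forces its restriction to be nonzero. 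Hence $\gamma_{\ell,m}^+ \neq \gamma_{\ell,m}^-$ and both are nonzero.

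For the cocycle claim, first observe that $\alpha^+_{\ell,m} + \alpha^-_{\ell,m}$ is the restriction of the symmetric-group cocycle $\alpha_{\ell,m}$, so $\delta(\alpha^+_{\ell,m}) = \delta(\alpha^-_{\ell,m})$. I would compute this common boundary via Lemma~\ref{FNlemma}: at positions with $a_i = 0$ (between full blocks, $r = s = 2^\ell - 1$) the counts $|\Sh_\pm(2^\ell, 2^\ell)|$ are each even, since both $\binom{2^{\ell+1}}{2^\ell}$ and $\binom{2^\ell}{2^{\ell-1}}$ equal $2$ times an odd integer by Kummer's theorem, forcing even individual shuffle counts, so the contribution vanishes mod two; at positions with $a_i = 1$ (inside a block, $r = s = 0$) the lemma yields $\alpha\langle i\rangle^o$. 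Thus $\delta(\alpha^+_{\ell,m}) = \sum_{a_i = 1} \alpha\langle i\rangle^o$, one term for each of the $m(2^\ell - 1)$ interior positions.

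The matching computation for $\delta(\beta^o_{\ell,m})$ is the main obstacle. Each $\beta_{\ell,m}(i,j)^o$ has boundary contributions from incrementing the isolated $2$, a $1$ within some block, or a $0$ between adjacent blocks. The key observation is that only $0$-increments reduce the block count from $m+1$ to $m$, and among those, only increments between the $2$-singleton of block $i$ and the short block at $j = i+1$ produce a merged block of length $2^\ell - 1$; for such an increment, Lemma~\ref{FNlemma2}'s alternating shuffle formula distributes the $2$ through all $2^\ell - 1$ positions of the merged block, producing the full set of $\alpha\langle k\rangle^o$ with the $2$ in the $i$th block. Summing over $i = 1, \ldots, m$ yields exactly the required $m(2^\ell - 1)$ terms matching $\delta(\alpha^+_{\ell,m})$. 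All remaining contributions—the $2$-to-$3$ increments, the $1$-to-$2$ increments within blocks, and the $0$-increments of other types—must cancel among themselves in pairs, which would require a case-by-case shuffle analysis grouped by the local shape near the increment position. This organized cancellation is the combinatorial heart of the proof.
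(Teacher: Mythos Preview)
Your overall structure mirrors the paper's proof: transfer via Theorem~\ref{T:gysinmodel}, distinctness via the Gysin sequence, and the cocycle check by computing $\delta(\alpha^+_{\ell,m})$ and matching it against $\delta(\beta^o_{\ell,m})$. The transfer and distinctness arguments are fine; in fact your distinctness argument (via $\mathcal{G}_a \subset \mathcal{G}_q$) is more general than what the paper writes down, which only treats $m = 2^k$ explicitly.

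The gap is in the cocycle verification. Two points. First, the Kummer detour for the $a_i = 0$ positions of $\alpha^+_{\ell,m}$ is unnecessary and the argument you give is not quite complete (knowing $|\Sh_+| + |\Sh_-| \equiv 2 \pmod 4$ does not by itself force both to be even); the case $r = s > 0$ of Lemma~\ref{FNlemma} already gives $\delta_i = 0$ directly. Second, and more seriously, your invocation of Lemma~\ref{FNlemma2} for the merging increment between the $[2]$-singleton and the short block does not match that lemma's hypotheses (you have $r = 1$, the lemma requires $r > 1$), and the main cancellation---what you call ``the combinatorial heart of the proof''---is left as an unexecuted case analysis.

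The paper supplies exactly the missing organization: rather than cancelling ``remaining contributions'' in pairs by local shape, it computes $\delta(\beta_{\ell,m}(j,j+1)^o)$ explicitly as $\sum_i \alpha^o_{\ell,m}\langle i\rangle$ over the $j$th block range plus a single residual term $\beta_{\ell,m}(j,j+1)^o\langle j\cdot 2^\ell\rangle$, and then shows that the boundaries of the non-adjacent $\beta_{\ell,m}(j,p)^o$ for $p > j+1$ \emph{telescope} to produce precisely that residual term. This telescoping structure is the key idea you are missing; without it, the ``case-by-case shuffle analysis'' you anticipate has no evident organizing principle.
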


\begin{proof}
Applying Lemma~\ref{FNlemma}, we have that 
$$\delta(\alpha^+_{\ell, m}) = \sum_{\{i \neq j\cdot 2^\ell\}} \alpha_{\ell,m}^o\langle i\rangle.$$

Lemma~\ref{FNlemma} also implies that 
$$\delta(\beta_{\ell, m}(j,j+1)^o) = \sum_{i=(j-1)\cdot2^\ell+1}^{j\cdot2^\ell-1} \alpha^o_{\ell, m}\langle i \rangle + \beta_{\ell, m}(j, j+1)^o\langle j2^\ell\rangle,$$
as the only terms with non-zero coefficients consist of shuffling the singleton two into the small block of consecutive ones, and concatenating the small 
block of ones with its neighboring larger block. When the two distinguished blocks are separated, the coboundaries that arise consist of similar 
collections of non-zero terms, and these cancel in pairs as the distinguished blocks vary, producing telescoping sums so that
$$\delta\left(\sum_{p = j+2}^{m+1}\beta_{\ell, m}(j,p)^o\right) = \beta_{\ell, m}(j, j+1)^o\langle j2^\ell\rangle.$$
Summing across all $\delta(\beta_{\ell, m}(j, p)^o)$, we obtain precisely $\delta(\alpha_{\ell, m}^o)$, and so $\Gamma^\pm_{\ell,m}$ is a cocycle. 

By Theorem~\ref{T:gysinmodel}, $\gamma^\pm_{\ell,2^k}$ each transfer to $\gamma_{\ell, 2^k}$ in the 
cohomology of symmetric groups, since  the $\beta_{\ell, 2^k}(i,j)^o$ vanish under transfer.  The cohomology classes they represent 
must be distinct, since their sum is the
restriction of $\gamma_{\ell, 2^k}$, which is non-zero.
\end{proof}

%By abuse, we will also denote the cohomology classes represented by the $\gamma_{\ell,m}^\pm$ using the same notation.
\begin{definition} \label{gamma1k}
For $k \geq 2$ let $\gamma_{1,k;m} \in H^k(\alt_{2m})$%, or by abuse $\gamma_{1,k}^o$ when $m$ is understood,
be the image of $\gamma_{1,k} \odot 1_{m-k}$ under restriction.  
\end{definition}

\begin{proposition}\label{P:cup_monos_vn}
For $n \geq 3$, cup  monomials of $\gamma_{\ell, 2^{k}}^+$ and $\gamma_{1, 2^n; 2^n}$  map injectively under the restriction to $V_n^+$ and,
except for powers of $\gamma_{1, 2^n;2^n}$ alone,  map to zero in $V_n^-$.
\end{proposition}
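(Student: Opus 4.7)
The plan is to identify the image of restriction as lying in a Dickson algebra, deduce rigidity from a degree count, pin down each generator using symmetric-group input together with a Fox-Neuwirth polarization, and conclude by polynomial independence.

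For $n \geq 3$ the group $GL_n(\F_2) = SL_n(\F_2)$ is simple and nonabelian, hence perfect, so its sign character in the faithful action on the $2^n$ elements of $V_n$ is trivial; this places $GL_n(\F_2)$ inside $\alt_{2^n}$. Thus the Weyl group of $V_n^+$ in $\alt_{2^n}$ is the full $GL_n(\F_2)$, and the restriction $\rho^+ \colon H^*(B\alt_{2^n}) \to H^*(BV_n^+)$ lands in the Dickson algebra $\F_2[x_1, \dots, x_n]^{GL_n(\F_2)}$, which is polynomial on generators $d_{k,\ell}$ of degree $2^k(2^\ell - 1)$ for $k + \ell = n$, $\ell \geq 1$; similarly for $\rho^-$. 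A degree count shows each $d_{k, \ell}$ is the unique nonzero Dickson element of its degree: the minimum generator degree is $2^{n-1}$, so the minimum degree of a decomposable Dickson polynomial is $2^n$, which exceeds the maximum generator degree $2^n - 1$.

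By Proposition~\ref{restrictions} combined with the restriction of $\gamma_{\ell, 2^k}$ to $V_n$ computed in Theorem~7.8 of~\cite{GSS12}, we have $\rho^+(\gamma_{\ell, 2^k}^+ + \gamma_{\ell, 2^k}^-) = d_{k, \ell}$, since this sum equals the restriction of $\gamma_{\ell, 2^k}$ from $B\si_{2^n}$. Combined with rigidity, this forces $\{\rho^+(\gamma_{\ell, 2^k}^+), \rho^+(\gamma_{\ell, 2^k}^-)\} = \{0, d_{k, \ell}\}$. Using the identification $H^*(BV_n^-) \cong H^*(BV_n^+)$ induced by conjugation by an element $\tau \in \si_{2^n} \setminus \alt_{2^n}$, one checks that $\rho^- = \rho^+ \circ \iota$, giving the symmetric assignment $\{\rho^-(\gamma_{\ell, 2^k}^+), \rho^-(\gamma_{\ell, 2^k}^-)\} = \{0, d_{k, \ell}\}$ with the roles interchanged. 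To commit to the assignment $\rho^+(\gamma_{\ell, 2^k}^+) = d_{k, \ell}$ and $\rho^-(\gamma_{\ell, 2^k}^+) = 0$, I use the Fox-Neuwirth cocycle $\alpha_{\ell, 2^k}^+ + \beta_{\ell, 2^k}^o$ of Definition~\ref{gammaFNcochains}: model $BV_n^+ \hookrightarrow B\alt_{2^n}$ by a chain-level geometric map into $\UInd{2^n}{\infty}$ (analogous to the setup in the proof of Theorem~\ref{AV_vanish}), arranged so the image orientation matches the positive-charge convention on the cells $\UInd{\Gamma^+}{d}$; the neutral piece $\beta_{\ell, 2^k}^o$ restricts equally to $V_n^+$ and $V_n^-$, while $\alpha_{\ell, 2^k}^+$ pulls back nontrivially to $V_n^+$ and trivially to its oppositely oriented conjugate $V_n^-$. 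For $\gamma_{1, 2^n; 2^n}$, defined as the restriction of a class from the ambient symmetric group, Proposition~\ref{restrictions} gives $\rho^+(\gamma_{1, 2^n; 2^n}) = \rho^-(\gamma_{1, 2^n; 2^n})$, equal to the corresponding Dickson generator.

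A cup monomial $\gamma_{1, 2^n; 2^n}^b \cdot \prod_i (\gamma_{\ell_i, 2^{k_i}}^+)^{a_i}$ restricts under $\rho^+$ to a product of Dickson generators; since distinct Hopf monomials give distinct Dickson monomials and the Dickson algebra is polynomial, these restrictions are linearly independent, establishing injectivity into $H^*(BV_n^+)$. Under $\rho^-$, any factor with $a_i \geq 1$ vanishes and annihilates the product, leaving only pure powers of $\gamma_{1, 2^n; 2^n}$ with nonzero image, as claimed. The main obstacle is the Fox-Neuwirth orientation calculation used to commit the polarization: degree rigidity and the restriction from symmetric groups determine the restrictions only up to an interchange of $+$ and $-$, and matching the correct cohomology-class label with the cochain-level charge and the choice of $V_n^-$ as a conjugate requires careful geometric bookkeeping of orientations on the orbit of $V_n^+$ acting on a generic configuration in $\UInd{2^n}{\infty}$.
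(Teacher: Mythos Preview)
Your approach is essentially the same as the paper's: both use the commutative square of Proposition~\ref{restrictions} together with the symmetric-group restriction to Dickson generators from \cite{GSS12}, then invoke the involution swapping $\gamma_{\ell,2^k}^\pm$ and $V_n^\pm$ along with degree rigidity in the Dickson algebra (the paper's phrase is ``there are no other non-zero classes in this degree'') to force each $\gamma_{\ell,2^k}^+$ to hit a Dickson generator on one of $V_n^\pm$ and zero on the other, and finally conclude injectivity from the polynomial structure of the Dickson algebra.

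The differences are minor but worth noting. You add an explicit argument that the Weyl group of $V_n^+$ in $\alt_{2^n}$ is the full $GL_n(\F_2)$, which the paper takes for granted. More interestingly, you correctly isolate the sign-commitment issue: the involution and degree arguments alone only give $\{\rho^+(\gamma_{\ell,2^k}^+),\rho^-(\gamma_{\ell,2^k}^+)\}=\{0,d_{k,\ell}\}$, and one must still check that all the $\gamma_{\ell,2^k}^+$ land nontrivially on the \emph{same} $V_n^\epsilon$. The paper's proof simply asserts this match; you attempt to verify it via a Fox-Neuwirth orientation argument and rightly flag it as the delicate point. Your Fox-Neuwirth sketch is reasonable in spirit but, as you acknowledge, not fully carried out; the paper's treatment is terser and effectively treats the alignment as a labeling convention. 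Either way, the substance of the argument is identical.
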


\begin{proof}
%\CG{Fix -- ripped this out without reformatting.}

Consider the restriction maps in Proposition~\ref{restrictions}.  
As shown in Section~7 of \cite{GSS12}, the image of $\gamma_{\ell, 2^{k}}$ under the vertical restriction map is the Dickson generator 
in degree $2^{n} - 2^k$.  
Because $V_n^+$ and $V_n^-$ are both conjugate in $\si_{2^n}$, the restriction to the lower right corner is the corresponding Dickson generator
on each factor.  %Now let $\ell \geq 2$.  
Following the diagram in the other direction, this says that the restriction of $\gamma_{\ell, 2^{k}}^+  + \gamma_{\ell, 2^{k}}^-$ for $\ell \geq 2$ must map to 
this direct sum of Dickson generators.  Since involution switches both $\gamma_{\ell, 2^{k}}^+$ and  $\gamma_{\ell, 2^{k}}^-$ as well as $V_n^+$
and $V_n^-$, and there are no other non-zero classes in this degree, $\gamma_{\ell, 2^{k}}^+$ must map to the Dickson generator in  $V_n^+$ 
and zero $V_n^-$ or vice versa.  Because  $\Gamma_{\ell, 2^{k}}^+$ and $\Gamma_{\ell, 2^{k}}^-$ differ only by the $\alpha_{\ell, 2^k}^{\pm}$ terms, and
these can only pair at the chain level with the $V_n^\pm$ with the same sign, in fact $\gamma_{\ell, 2^{k}}^+$ must map to the Dickson generator in  $V_n^+$
and to zero on $V_n^-$,
and similarly $\gamma_{\ell, 2^{k}}^-$ must map to the Dickson generator in  $V_n^-$ and to zero on $V_n^+$.

Proposition~\ref{restrictions} also implies that  $\gamma_{1, 2^n;2^n}$ restricts to the lowest-degree Dickson generator on both
 $V_n^+$ and $V_n^-$.  Thus polynomials in $\{\gamma_{\ell, 2^{k}}^+, \gamma_{1, 2^n;2^n}\}$ 
restrict to polynomials in the corresponding Dickson classes on
$V_n^+$, forming a polynomial ring which maps injectively.  
\end{proof}

Recall from Definition~\ref{GysinDef} the basis $\mathcal{G}_{ann}$ for the annihilator ideal of the Euler class in the cohomology
of symmetric groups. We now construct
lifts of these classes to the cohomology of alternating groups, thus by Theorem~\ref{AnnIdeal} accounting for of the annihilator ideal
portion of the Gysin sequence.  We do so through a filtration of the the cohomology of symmetric groups.

\begin{definition}
Define the $\odot$-partition of a Hopf ring monomial in $H^*(B\si_{2n})$ to be the partition of $n$ by the widths 
(that is, component numbers divided by two)
of the constituent cup monomials.
\end{definition}

While $\odot$-partitions of gathered representatives
give a direct sum decomposition of the cohomology of symmetric groups, we instead consider the filtration given by
 partition refinement, which is preserved by cup multiplication because of Hopf ring distributivity.
%We now establish the desired result by induction on  $\odot$-partitions.

For symmetric groups, the subgroups $V_n$ perfectly detect decomposibility with respect to transfer product.  We have seen that the $\gamma_{\ell, 2^k}$ 
restrict injectively to generators of the Dickson algebra, and now conversely we have the following.

\begin{theorem}\label{decomptozeroinVn}
Transfer product decomposibles in $H^*(BS_{2^n})$ restrict to zero in the cohomology of $V_n$.
\end{theorem}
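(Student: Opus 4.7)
A transfer-product decomposable in $H^*(B\si_{2^n};\F_2)$ has the form $\alpha\odot\beta = \mathrm{tr}^{\si_{2^n}}_{\si_{2i}\times\si_{2j}}(\alpha\times\beta)$ with $i,j\geq 1$ and $i+j=2^{n-1}$. The plan is to compute $\mathrm{res}_{V_n}$ of such a class via the pullback/double-coset machinery of Proposition~\ref{pullback}. Taking $G=\si_{2^n}$, $H=\si_{2i}\times\si_{2j}$, $K=V_n$, the relevant pullback is $\bigsqcup_g B(H\cap gKg^{-1})$, indexed by $H\backslash G/K$. Since transfers commute with pullbacks of covers, $\mathrm{res}_{V_n}\circ\mathrm{tr}^{\si_{2^n}}_H$ decomposes as a sum over double cosets of transfers $\mathrm{tr}^{V_n}_{W_g}$ applied to appropriate restrictions of $\alpha\times\beta$, where $W_g := V_n\cap g^{-1}(\si_{2i}\times\si_{2j})g$.

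The heart of the argument is to show each $W_g$ is a proper subgroup of $V_n$. By construction (Definition~\ref{defvn}), $V_n\cong(C_2)^n$ is embedded in $\si_{2^n}$ as the regular representation, so every nonidentity element---and hence every element of any $\si_{2^n}$-conjugate of $V_n$---acts without fixed points on $\{1,\ldots,2^n\}$, producing a single orbit of size $2^n$. If $g^{-1}V_n g \subseteq \si_{2i}\times\si_{2j}$ for some $g$, this conjugate would preserve the block partition of sizes $2i, 2j$, forcing every orbit into a single block; but $2i,2j<2^n$, contradiction. Hence $W_g$ is strictly contained in $V_n$ for every $g$.

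It remains to observe that for any proper subgroup $W<V_n$ the transfer $\mathrm{tr}^{V_n}_W:H^*(BW;\F_2)\to H^*(BV_n;\F_2)$ vanishes. Pick a complement $V_n=W\oplus W'$ with $|W'|=2^k$, $k\geq 1$. Then $BW\to BV_n$ is modeled by the product cover $\mathrm{id}_{BW}\times(EW'\to BW')$, so by K\"unneth the induced transfer is $\mathrm{id}\otimes\mathrm{tr}^{BW'}_{EW'}$; the second factor sends $1$ to $|W'|\cdot 1$, which vanishes in $\F_2$ since $k\geq 1$, and is trivially zero in positive degrees because $EW'$ is contractible. So every term of the double-coset sum is zero, giving $\mathrm{res}_{V_n}(\alpha\odot\beta)=0$. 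The main obstacle is the bookkeeping of the double-coset formula together with its conjugation actions on $\alpha\times\beta$; the orbit-counting step and the elementary-abelian transfer vanishing are both immediate once the setup is in place.
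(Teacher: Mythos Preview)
Your proof is correct and takes a genuinely different route from the paper's.  The paper argues by duality with homology: the image of $H_*(BV_n)$ in $H_*(B\si_{2^n})$ consists of Dyer--Lashof generators, which by results quoted from \cite{BMMS86} and \cite{GSS12} are primitive for the coproduct dual to $\odot$; hence $\odot$-decomposables pair to zero with that image and so restrict to zero.  Your argument instead works directly with the Mackey double-coset formula, using that the regular action of $V_n$ is transitive to force every intersection $W_g$ to be proper, and then the easy fact that $\mathrm{tr}^{V_n}_W$ vanishes in $\F_2$ for any proper $W<V_n$ since $V_n$ is elementary abelian.

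Your approach is both more elementary (no homology input, no Dyer--Lashof algebra) and more portable.  In fact, the paper remarks after Theorem~\ref{T:transfer_vn} that it could not find a unified proof covering both the symmetric and alternating cases: the homology argument for $\si_{2^n}$ does not transfer to $\alt_{2^n}$, and the involution-on-double-cosets argument used for $\alt_{2^n}$ fails for $\si_{2^n}$.  Your argument applies verbatim to $V_n^+\subset \alt_{2^n}$ as well, since $V_n^+$ is still transitive and elementary abelian, so it supplies exactly the unified proof the authors were missing.

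One small cosmetic point: you write the decomposable as coming from $\si_{2i}\times\si_{2j}$, but in principle $\odot$-decomposables can arise from any $\si_a\times\si_b$ with $a+b=2^n$ and $a,b\geq 1$.  Your transitivity argument covers this unchanged (indeed if $a$ is odd the setwise stabiliser is trivial), so this is only a matter of stating the hypothesis in full generality.
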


\begin{proof}
This follows immediately from Theorem~7.8 of \cite{GSS12}, which implies through the definition of scale-$n$ quotient that   
decomposibles with respect to $\odot$-product, which have smaller scale,  restrict to zero in $V_n$. 

More directly, %we show in Theorem~7.5 of \cite{GSS12} that 
the image in homology of $V_n$ is exactly Dyer-Lashof generators in $H_*(BS_{2^n})$.    
Using work of Bruner-May-McClure-Steinberger, namely Theorem~1.5 of \cite{BMMS86}, we show in
Theorem~4.13 of \cite{GSS12} that these Dyer-Lashof generators
are primitive with respect to the coproduct dual to the transfer product.  As $\odot$-decomposibles evaluate to zero on the image of homology of $V_n$,
 $\odot$-decomposibles restrict to zero in the cohomology of $V_n$.
\end{proof}

Recall our main strategy, outlined in Section~\ref{mainstrategy}, to produce an additive basis through the Gysin sequence.  We now carry through 
Step~\ref{steptwo} of that strategy.

\begin{theorem}\label{T:Ba}
The $\rho$ summand of $H^*(B\alt_{2n})$, as given by Theorem~\ref{AnnIdeal} 
and Proposition~\ref{gysinsplit}, is contained in the almost-Hopf semi-ring generated by all $\gamma_{\ell, 2^{k}}^+$ and $\gamma_{1, 2^n;2^n}$, 
along with $1^-$
\end{theorem}

\begin{proof}
We  compute how chosen transfer and cup products of these generators map under the transfer map
in the Gysin sequence in order to see that their images generate $\mathcal{G}_{ann}$.  Since 
transfer maps do not preserve cup products,  we argue by detection in $V_n^\pm$.

By Proposition~\ref{P:cup_monos_vn} a cup-monomial $m^+$ in $\{\gamma_{\ell, 2^{k}}^+, \gamma_{1, 2^n;2^n}\}$ 
will map to the corresponding Dickson monomial in the cohomology of $V_n^+$,
and by conjugation the corresponding monomial $m^-$ in $\{\gamma_{\ell, 2^{k}}^-, \gamma_{1, 2^n;2^n}\}$ will map to the same monomial in $V_n^-$.  
By Proposition~\ref{restrictions}  the image of $m^+$ under transfer, which restricts back to $m^+ + m^-$, must map to that same Dickson monomial 
in the cohomology of $V_n$.  
Since cup-monomials in the $\gamma_{\ell, 2^k}$ restrict isomorphically to the Dickson invariants in the cohomology of $V_n$, and all 
$\odot$-decomposibles map to zero by Theorem~\ref{decomptozeroinVn}, 
the transfer of $m^+$ must equal the monomial $m \in H^*(B\si_{2^n})$ obtained by removing decorations,
modulo $\odot$-decomposibles.  

Inductively applying Proposition~\ref{transfercompatible}, a Hopf ring monomial $h = m_1 \odot m_2 \odot \cdots \odot m_i \in \mathcal{G}_{ann}$ 
will be the image under transfer of the alternating group monomial
$h^+ = m_1^+ \odot m_2^+ \odot \cdots \odot m_i^+$, modulo terms with finer $\odot$-partitions.  The induction  reduces to 
$B\si_4$ and $B\alt_4$ where the elements of $\mathcal{G}_{ann}$ are the transfer image of products of our
$\gamma_{2,1}^\pm$ and $\gamma_{1,2;2}$ in $\alt_4$ as explicitly shown in  Proposition~\ref{A4Gysin}.
The pair $h^+$ and $h^- = 1^- \odot h^+$, which are Hopf ring monomials in the stated generators, thus inductively account for 
$h \in \mathcal{G}_{ann}$  in 
the Gysin sequence. %, which means they generate the $\rho$ summand.
\end{proof}

We record the following for further reference.

\begin{definition}
Define $\mathcal{B}_+$ to be the collection of $h^+$ in the proof of Theorem~\ref{T:Ba} above, and $\mathcal{B}_-$ be its image under 
conjugation.
\end{definition}

While we argue by filtration here, %after establishing full detection results using other subgroups in the next section it will follow that 
 it follows from Theorem~\ref{T:presentation} that cup-monomials in the $\{\gamma_{\ell, 2^{k}}^+, \gamma_{1, 2^n;2^n}\}$ for $\ell > 2$
%other than products of $\gamma_{2,2^k}^+$ and $ \gamma_{1, 2^n}^o$, 
transfer to exactly  the corresponding monomials in $\mathcal{G}_{ann}$.   Proposition~\ref{A4Gysin} shows this is not the case for $\ell = 2$.

\subsection{Hopf ring generators for the $k$-summand of $H^*(B\alt_{2n})$ }

We next account for subset of $\mathcal{G}_{quot}$ generated by the $\gamma_{1,m}$,
 before moving on to $\mathcal{G}_{quot} \backslash \mathcal{G}_{ann}$ in general.   
%Applying Proposition~7.2 of \cite{GSS12},
%on a fixed component $\si_{2n}$ the subspace of the cohomology of a symmetric group  generated by the $\gamma_{1,m}$ under both products 
%is polynomial, generated by the collection of $\gamma_{1,k} \odot 1_{m-k}$.  Indeed, this subset
%of $H^*(B \si_{2m})$ restricts isomorphically to the ring of symmetric polynomials in the cohomology of $H^*(B\si_2 \times \cdots \times B\si_2)$,
%a fact we will adapt for alternating groups.  
%
Recall  $\gamma_{1,k;m} \in H^k(\alt_{2m})$ from Definition~\ref{gamma1k}.
%Thus our previous $\gamma_{1,2^n}^o$ is $\gamma_{1,2^n; 2^n}$.  
%Since restriction is a ring map, these classes generate the image of the scale-one subset
%$\mathcal{G}_{quot}$ on $\alt_m$ under cup product.  

\begin{theorem}\label{T:Bo}
The $k$-summand of $H^*(B\alt_{2n})$, as given by Theorem~\ref{AnnIdeal} 
and Proposition~\ref{gysinsplit}, is contained in the almost-Hopf semi-ring generated by 
the classes $\gamma_{\ell, 2^{k}}^+$, $\gamma_{1, k; m}$, and $1_m$.
\end{theorem}

\begin{proof}
Recall from Definition~\ref{1decomposition}  the $1$-decomposition 
$m = m_1 \odot m_2$ where $m_1 \in H^*(B\si_r)$ is level one and $m_2 \in H^*(B\si_s)$ is the maximal width $\odot$-summand with
 scale greater than one. Moreover, for $m \in \mathcal{G}_{quot}$, the 
 constituent $\cdot$-monomial in $m_1$ of the form ${\gamma_{1,k}}^p$ with the largest power $p$ has $k \geq 2$ or $p = 0$.  
 We claim that such $m_1$
 is in the sub-ring of $H^*(B\si_r)$ generated by the $\gamma_{1,k} \oplus 1_{r-2k}$ for $k \geq 2$.  For this, we can  again
 appeal to the isomorphism of Proposition~\ref{levelone}
 of the level-one subring the ring of symmetric polynomials in one variable, generated by all $\gamma_{1,k} \oplus 1_{r-2k}$.   
 By Theorem~\ref{AnnIdeal}, $\mathcal{G}_{quot}$ spans the quotient by $\gamma_{1,1} \oplus 1_{r-2}$, so these $m_1$ must consist precisely 
 of (cup) monomials in the remaining generators.  By definition and the face that  restriction preserves cup products, the restriction
 of $m_1$ is a polynomial in $\gamma_{1, k; r}$.
 % or explicitly see that  the decomposition of the skyline
 %diagram of $m_1$ by rows yields a product of these generators whose leading term is $m_1$ and all other terms which are lower in the dictionary order
 %by cup product powers.  
 %Thus $m_1$ is the image
 %under restriction of a polynomial in the $\gamma_{1, k; m}$.
 
 % We know that $m_1$ is a cup-monomial in $\gamma_{1,k;d}$ for $d$ the width of $m_1$ (this includes $1_d$ as a trivial cup-monomial).  
By Theorem~\ref{T:Ba},  $m_2$ is the image under transfer of a sum of Hopf semi-ring monomials in $\gamma_{\ell, 2^{k}}^{\pm}$ 
and  $\gamma_{1, 2^n;2^n}$.   Applying Proposition~\ref{restrictiontransfer}, we have that the image under restriction of $m_1 \odot m_2$ 
is the transfer product of the restriction of $m_1$ with that  which transfers to $m_2$.
\end{proof}

\begin{definition}
Define $\mathcal{B}_o$ to be the collection of $m_1 \odot m_2$ constructed in the proof of Theorem~\ref{T:Bo}, our preferred basis for 
the $k$-summand of $H^*(B\alt_{2n})$.
\end{definition}

Theorems \ref{T:Ba} and \ref{T:Bo} imply the following.

\begin{corollary}\label{Generators} 
$\gamma_{\ell,2^k}^+$ and $\gamma_{1,k;m}$ along with the $1_m$ and $1^-$ are almost Hopf semi-ring  generators for $H^*(B\alt_\bullet)$.
\end{corollary}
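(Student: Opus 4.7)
The plan is to assemble the corollary directly from Theorems \ref{T:Ba} and \ref{T:Bo} together with the Gysin sequence decomposition of Section~\ref{gysinsection}, so the proof amounts to bookkeeping rather than substantive new work. First I would invoke the short exact sequence
$$0 \to H^*(B\si_n)/e \overset{\text{res}}\to H^*(B\alt_n) \overset{\text{tr}}\to \mathrm{Ann}(e) \to 0,$$
which by Theorems~\ref{AnnIdeal} and \ref{GysinQuotient} has $\mathcal{G}_q$ and $\mathcal{G}_a$ as representative bases for the outer terms. The strategy of Section~\ref{mainstrategy} then yields $\mathcal{B}_+ \cup \mathcal{B}_- \cup \mathcal{B}_o$ as an additive basis for $H^*(B\alt_n)$: each $x \in \mathcal{G}_a$ lifts under transfer to a conjugate pair $x^+ \in \mathcal{B}_+$ and $x^- \in \mathcal{B}_-$ whose sum is $\mathrm{res}(x)$, while each $y \in \mathcal{G}_q \setminus \mathcal{G}_a$ restricts to $y^o \in \mathcal{B}_o$.

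Next, I would apply Theorem~\ref{T:Ba} to express every element of $\mathcal{B}_+ \cup \mathcal{B}_-$ as an almost Hopf ring polynomial in the classes $\gamma_{\ell,2^k}^+$ and $\gamma_{1,2^n;2^n}$ together with $1^-$, and Theorem~\ref{T:Bo} to express every element of $\mathcal{B}_o$ as an almost Hopf ring polynomial in the classes $\gamma_{\ell,2^k}^+$, $\gamma_{1,k;m}$, and $1_m$. Combining these, every additive basis element lies in the almost Hopf ring generated by $\gamma_{\ell,2^k}^+$, $\gamma_{1,2^n;2^n}$, $\gamma_{1,k;m}$, $1_m$, and $1^-$. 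The key observation is that $\gamma_{1,2^n;2^n}$ is the special case of $\gamma_{1,k;m}$ with $k = m = 2^n$, so no additional generators beyond those appearing in the statement need to be adjoined; similarly the $\alt_4$ classes $\gamma_{2,1}^\pm$ and $\gamma_{1,2;2}$, which enter implicitly through the inductive base of Theorem~\ref{T:Ba} via Proposition~\ref{A4Gysin}, are already the $\ell=2,\ k=0$ and $k=m=2$ cases of the listed families.

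I do not expect any real obstacle here; the purpose of the corollary is to package the generator count afforded by the preceding two theorems. The only point I would take care to state cleanly is the identification of $\gamma_{1,2^n;2^n}$ as a $\gamma_{1,k;m}$, ensuring that the generating set in Theorem~\ref{T:Ba} is genuinely contained in the one displayed in the corollary, so that the union of generators really does collapse to the list $\{\gamma_{\ell,2^k}^+,\ \gamma_{1,k;m},\ 1_m,\ 1^-\}$.
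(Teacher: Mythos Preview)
Your proposal is correct and follows exactly the paper's approach: the corollary is stated immediately after Theorem~\ref{T:Bo} with the one-line justification ``Along with Theorem~\ref{T:Ba} this implies the following,'' and your proof simply unpacks that sentence, including the observation that $\gamma_{1,2^n;2^n}$ is the $k=m=2^n$ instance of $\gamma_{1,k;m}$.
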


We contrast the situation at level onewith the cohomology of symmetric groups, where we have Hopf semi-ring generators $\gamma_{1, k} \in H^k(S_{2k})$
and then all of our level-one cup ring generators on different components are the transfer products of these with unit classes.
By Proposition~\ref{transferneutral}, classes pulled back from the cohomology of symmetric groups such as the $\gamma_{1,k;m}$ and the unit classes
 annihilate each other under transfer product, so in fact the 
$\gamma_{1,k; m}$ are $\odot$-indecomposible.  Needing such ``extra'' Hopf semi-ring generators is only a minor nuisance.

%%% Local Variables:
%%% TeX-master: "AltGroupsModTwo.V3.tex"
%%% End: % Generators, with respect to both products
% !TEX root = AltGroupsModTwo.V3.tex

\section{Detection by subgroups}\label{detection_section}

As often the case in group cohomology we manage relations by restricting to cohomology of subgroups,
showing that restriction to those defined in Section~\ref{restriction_section} is injective.  Our proof is primarily through calculation,
using the Hopf semi-ring generators constructed in the previous section.

\begin{theorem}\label{detection}
The mod-two cohomology of $\alt_{2m}$ for $m$ not a power of two 
is detected by the  subgroups  $\alt_I$, over all $I$ with $|I| = m$,
 and $AV_{1,m}$.  The cohomology of $\alt_{2^n}$ is detected by $\alt_I$, $AV_{1,2^{n-1}}$, and $V_n^{\pm}$.
\end{theorem}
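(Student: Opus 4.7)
\noindent The plan is induction on $m$, with the base case $m=2$ covered by Theorem~\ref{A4Invariant}, where $V_2 = V_2^{\pm}$ alone detects $H^*(B\alt_4)$. For the inductive step, take $x \in H^*(B\alt_{2m})$ nonzero and expand it in the Hopf monomial basis $\mathcal{B}_+ \sqcup \mathcal{B}_- \sqcup \mathcal{B}_o$ of Section~\ref{mainstrategy}, distinguishing the $\odot$-decomposable summands (those whose $\odot$-partition has at least two parts) from the $\odot$-indecomposable ones (cup monomials on $\alt_{2m}$ itself).

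For the $\odot$-decomposable summands, I would apply iterated coproducts $\Delta^{(I)}$ along partitions $I$ of $m$ into powers of $2$. Since each $\Delta^{(I)}$ computes the restriction to $\alt_I \subset \alt_{2m}$ up to the almost-bialgebra correction of Theorem~\ref{bialg}, a choice of $I$ matching (or refining) the $\odot$-partition of a leading summand yields a nonzero image in $H^*(B\alt_I)$; applying the inductive hypothesis to each tensor factor $H^*(B\alt_{2^{i_k}})$ detects this image via listed subgroups within that factor, whose product sits inside $\alt_I$ and so detects $x$ via $\alt_I$. For the $\odot$-indecomposable summands, I rely on the explicit Fox-Neuwirth cocycle representatives of Definition~\ref{gammaFNcochains}: any cup monomial involving a factor $\gamma_{\ell, 2^k}^{\pm}$ with $\ell \geq 3$, or with $\ell = 2$ and $k \geq 1$, has constituent cochains containing consecutive non-zero entries and so restricts to zero on $AV_{1, m}$ by Theorem~\ref{AV_vanish}. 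When $m$ is not a power of $2$, the only surviving $\odot$-indecomposable cup monomials are polynomials in $\{\gamma_{1, k; m}\}_{k \geq 2}$, which embed injectively as the symmetric-polynomial subring $\F_2[e_2, \ldots, e_m] \subset H^*(AV_{1, m})$. When $m = 2^{n-1}$, the $\odot$-indecomposable part may additionally contain cup monomials in $\gamma_{\ell, 2^k}^{\pm}$ and $\gamma_{1, 2^n; 2^n}$ which --- after reducing mixed-charge products using the relations of Theorem~\ref{T:presentation} ($\gamma^+_{\ell, 2^k} \cdot \gamma^-_{\ell, 2^k} = 0$ for $\ell \geq 3$, and the $\alt_4$-propagated relations at $\ell = 2$) --- embed injectively into Dickson invariants on $V_n^+$ (for pure-positive monomials) and $V_n^-$ (for pure-negative), by Proposition~\ref{P:cup_monos_vn}.

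The main obstacle will be coordinating the $\odot$-decomposable and $\odot$-indecomposable contributions, since cup monomials like $\gamma_{1, k; m}$ are $\odot$-indecomposable yet have nonzero coproducts --- so $\Delta^{(I)}$ picks up both types and may allow unwanted cancellations --- while $\odot$-decomposable classes such as $\gamma_{2, 1}^{\pm} \odot 1_{m-2}$ can contribute nontrivially to $AV_{1, m}$. I would address this with a double filtration: first by $\odot$-partition refinement (for the coproduct reduction), and then by scale within $\odot$-indecomposable summands, ensuring that contributions to different listed subgroups are separated. Verifying that the almost-bialgebra correction in Theorem~\ref{bialg} preserves the leading term of the iterated coproduct, and carefully tracking the scale-$1$/scale-$\geq 2$ interplay induced by the exceptional $\alt_4$ relations, are the technically delicate points.
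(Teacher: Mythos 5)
Your three-way decomposition --- charged $\odot$-indecomposable cup monomials detected on $V_n^{\pm}$, $\odot$-decomposables detected by restriction to the $\alt_I$ via iterated coproducts, and scale-one classes detected on $AV_{1,m}$ --- is exactly the paper's, and the supporting facts you cite (Theorem~\ref{AV_vanish}, Proposition~\ref{P:cup_monos_vn}, the ``unique leading term'' of $\Delta_I$ on a decomposable basis element) are the right ones. Two points, however, are genuine gaps rather than deferred technicalities. First, a circularity: you propose to reduce mixed-charge cup products using the relations $\gamma^+_{\ell,2^k}\cdot\gamma^-_{k,n}=0$ of Theorem~\ref{T:presentation}, but those relations are themselves proved in Section~\ref{cuprelations} \emph{by applying} Theorem~\ref{detection}. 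The paper avoids this by never leaving the additive basis built in Theorems~\ref{T:Ba} and~\ref{T:Bo}: its $\odot$-indecomposable charged elements are already pure-charge monomials in the $\gamma^+_{\ell,2^k}$ (resp.\ $\gamma^-$) and $\gamma_{1,2^n;2^n}$, so Proposition~\ref{P:cup_monos_vn} applies directly and no mixed-charge reduction is ever needed.

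Second, the ``main obstacle'' you name --- that $\Delta_I$ sees both decomposable and indecomposable summands, and that decomposables like $\gamma_{2,1}^{+}\odot 1_{m-2}$ can hit $AV_{1,m}$ --- is where the entire proof lives, and your ``double filtration'' does not resolve it; in particular a filtration by scale in the \emph{source} does not prevent $\Delta_I(\gamma_{1,k;m})$ from cancelling against $\Delta_I$ of a genuine transfer product in the \emph{target}. The paper's resolution is a specific elimination order making the restriction matrix block upper-triangular: restrict to $V_n^{\pm}$ first, where all transfer decomposables vanish (Theorem~\ref{T:transfer_vn}, a nontrivial double-coset/involution argument you do not mention and which is not a formal consequence of the symmetric-group statement) and all scale-one classes vanish (Proposition~\ref{P:scale_one_vn}), killing the charged indecomposable part outright; then restrict to the $\alt_I$ and quotient the target by its scale-one subspace, where decomposables still inject (Theorem~\ref{T:transfer_ai}) while scale-one classes land entirely in the quotiented part (Proposition~\ref{P:scale_one_ai}); finally the surviving scale-one part injects into $AV_{1,m}$ (Theorem~\ref{T:scale_one_v1n}). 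Without fixing such an order and supplying the vanishing statements that make it triangular, the cancellations you worry about are not ruled out. Your induction on $m$ for the decomposable part is harmless but unnecessary: once $\Delta_I(h)$ is shown to contain a unique tensor product of basis cup-monomials, nonvanishing in $H^*(B\alt_I)$ is immediate without descending further into subgroups of the factors.
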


Starting with the embedding of the cohomology of $\alt_4$ in that of $V_2$ in Theorem~\ref{A4Invariant}, we inductively deduce the following.

\begin{corollary}
The mod-two cohomology of alternating groups is detected on elementary abelian subgroups.  Because the cohomology of elementary abelian 
groups is polynomial, there are thus no nilpotent elements
in these cohomology rings.
\end{corollary}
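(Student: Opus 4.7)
The plan is to proceed by induction on $m$, with the base case $m=2$ handled by Theorem~\ref{A4Invariant} (which embeds $H^*(B\alt_4)$ into $H^*(BV_2)$). Assuming detection holds for all $\alt_{2m'}$ with $m'<m$, take a nonzero $x\in H^*(B\alt_{2m})$ and expand it in the Hopf monomial basis built from $\gamma_{\ell,2^k}^\pm$, $\gamma_{1,k;m}$, $1^-$, and unit classes (Corollary~\ref{Generators}). I would separate the argument into a ``coproduct-detects'' case and a ``primitive'' case.

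For the coproduct case, suppose $x$ contains a Hopf monomial of $\odot$-arity at least two. Applying the almost-bialgebra coproduct of Theorem~\ref{bialg}, together with the coproduct formulas for the generators $\gamma_{\ell,2^k}^\pm$ (which by distributivity split across $\odot$-factors as in the symmetric case), produces a nonzero component in some $H^*(B\alt_{2i})\otimes H^*(B\alt_{2j})$ with $i,j>0$. Distinct $\odot$-partitions of Hopf monomials contribute to distinct summands under iterated coproduct, so cancellation is controlled. Iterating until all tensor factors have ``width'' a power of two, we obtain a nonzero restriction of $x$ to some $B\alt_I$; the inductive hypothesis applied factorwise, combined with the fact that subgroups of each $\alt_{2m_i}$ embed into $\alt_{2m}$, then detects $x$.

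For the primitive case, all terms of $x$ are cup monomials on the single component $\alt_{2m}$. If $m$ is not a power of two, no scale $>1$ cup monomial exists here (since $\gamma_{\ell,2^k}^\pm\in H^*(B\alt_{2^{k+\ell}})$), so $x$ is a polynomial in the $\gamma_{1,k;m}$, which are restrictions from $H^*(B\si_{2m})$. By Theorem~\ref{AV_vanish}, any cochain representative of $x$ consists only of sequences with no two consecutive nonzero entries, and restriction to $AV_{1,m}$ lands in the subalgebra of symmetric polynomials on $H^*(B(V_1)^m)$, where injectivity is the symmetric-group result used in \cite{GSS12}. If $m=2^n$, a primitive $x$ may also involve the scale-$n$ generators $\gamma_{\ell,2^k}^\pm$. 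I would decompose $x=x_++x_-+x_o$ into its positive, negative (conjugate of positive), and neutral parts, and apply Proposition~\ref{P:cup_monos_vn}: cup monomials in $\gamma_{\ell,2^k}^+$ and $\gamma_{1,2^n;2^n}$ restrict injectively to $V_n^+$ and vanish in $V_n^-$ (except for pure $\gamma_{1,2^n;2^n}$-powers), with the symmetric statement for conjugates. The combined map to $H^*(BV_n^+)\oplus H^*(BV_n^-)\oplus H^*(BAV_{1,2^{n-1}})$ then detects $x$, using $AV_{1,2^{n-1}}$ to pick up the residual neutral part pulled from symmetric groups.

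The main obstacle is the primitive case on $\alt_{2^n}$, where mixed-charge cup monomials are governed by the $\alt_4$ relation of Proposition~\ref{first_gamma2_relation}, which propagates under Hopf ring distributivity and can obscure the ``pure'' polarized structure enjoyed by the higher generators. One must verify that restrictions of scale-$1$ classes (Dickson generators of lower internal degree on both $V_n^+$ and $V_n^-$) cannot cancel against restrictions of scale-$n$ classes in the detection map; separating contributions by internal degree and by the polarization $\rho^+$ from Theorem~\ref{bialg} will be where most of the technical care is needed. Ensuring that the telescoping $\alt_4$ relations manifest only through coproduct-decomposable terms, hence are already caught by the inductive $\alt_I$-step, is the crucial compatibility check.
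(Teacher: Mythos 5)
Your overall shape --- induction on $m$ with base case $\alt_4\hookrightarrow V_2$, splitting a class into a $\odot$-decomposable part, a charged cup-monomial part on $V_n^\pm$, and a neutral scale-one part on $AV_{1,m}$ --- is the right one; in fact the paper's proof of this corollary is just the inductive deduction from Theorem~\ref{detection}, whose proof is organized exactly around these three families. But your attempt contains a genuine gap, and it is the one you yourself flag at the end: you never establish that the contributions of the three families to the combined restriction map cannot cancel. The paper resolves this by making the ``matrix'' of restrictions block upper-triangular \emph{in a specific order}: one first restricts to $V_n^\pm$, where $\odot$-decomposables vanish (Theorem~\ref{T:transfer_vn}, a nontrivial double-coset involution argument you neither supply nor cite) and scale-one generators $\gamma_{1,k;2^n}$ with $k<2^n$ vanish (Proposition~\ref{P:scale_one_vn}), so only the charged cup-monomial part survives and is detected by Proposition~\ref{P:cup_monos_vn}; only then does one use the coproduct to detect $\odot$-decomposables modulo scale-one (Theorem~\ref{T:transfer_ai}), and finally $AV_{1,m}$ for scale-one. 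Your ordering runs the coproduct step first, and there the claim that ``distinct $\odot$-partitions contribute to distinct summands, so cancellation is controlled'' is not sufficient: a non-$\odot$-decomposable cup monomial such as a power of $\gamma_{\ell,m}^+$ with $m>1$ has nontrivial restriction to every $\alt_I$, landing precisely among the tensor-products-of-cup-monomials that serve as the distinguished detecting terms for $\odot$-decomposables, so cancellation between rows is a live possibility that your argument does not exclude.

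Two smaller points. First, for the neutral part you appeal to Theorem~\ref{AV_vanish} and ``the symmetric-group result,'' but the symmetric-group statement gives injectivity of the scale-one subring into $H^*(B(V_1)^m)$, whereas $AV_{1,m}\cong (C_2)^{m-1}$ is a proper subgroup of $(V_1)^m$; the further restriction requires the explicit computation $\sigma_i\mapsto \sigma_i+\sigma_{i-1}\sigma_1$ of Theorem~\ref{T:scale_one_v1n}, which is what you should invoke. Second, since the statement at issue is the corollary rather than Theorem~\ref{detection} itself, the efficient proof is simply to cite that theorem, observe that $AV_{1,m}$ and $V_n^\pm$ are elementary abelian while the $\alt_I$ are products of smaller alternating groups handled by induction and the K\"unneth isomorphism, and conclude the absence of nilpotents from injectivity into a product of polynomial (hence reduced) rings; reproving the detection theorem inside the corollary is where your argument takes on obligations it does not discharge.
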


This theorem and corollary are analogues of similar statements for symmetric groups originally due to Madsen and Milgram \cite{MaMi79}.  
The analogue of
Theorem~\ref{detection} for symmetric groups only involves the coproduct and restriction to $V_n$, as restriction to $(V_1)^n$ occurs inductively.

The proof will be through theorems we establish below.  We proceed through  analysis of the restriction of our additive basis
to each family of subgroups.  The ``matrix'' defined through these restrictions is represented in
the following table.  
%In our setting with $\widetilde{B\alt_0} = S^0$, we define primitive classes as those with $\Delta(x) = 1^+ \otimes x + x \otimes 1^+ + 1^- \otimes \overline{x}  + \overline{x} \otimes 1^-$.

\begin{center}
\begin{tabular}{|  c | c | c | c |}%\label{restrictionmatrix}
\hline
& $V_n^{\pm}$ & $\coprod \alt_{I} $ & $AV_{1,n}$ \\ \hline
Cup monomials in & Injective  & Follows from (\ref{coprod_charged}) and    & Mostly 0 by  \\ 
$\gamma_{\ell, 2^k}^\pm$ and $\gamma_{1,2^k; 2^k}$   & by Theorem \ref{P:cup_monos_vn} & (\ref{coprod_neutral})  of Theorem \ref{T:presentation} & Theorem~\ref{cupmonotoAV} \\ \hline
$\odot$-decomposibles & 0 by  & Injective  & -- \\
 in   $\mathcal{B}_\pm$  and $\mathcal{B}_o$ & Theorem \ref{T:transfer_vn} & by Theorem \ref{T:transfer_ai} & \\ \hline
 Level one in $\mathcal{B}_o$ & 0 by  & $\bigotimes$ Level one & Injective  \\
  & Proposition \ref{P:scale_one_vn} &  by Proposition \ref{P:scale_one_ai}  & by Theorem \ref{T:scale_one_v1n}. \\ \hline
\end{tabular}
\end{center}
\label{matrix}

\medskip

We will  show in Theorem~\ref{T:transfer_vn} and Propositions~\ref{P:scale_one_vn} and \ref{P:scale_one_ai} 
that the matrix representing all of these restrictions is effectively block upper-triangular, 
after quotienting the cohomology of $\alt_I$  by its level-one subset.  We  show that the homomorphisms
corresponding to the diagonal blocks are injective in Theorems~\ref{P:cup_monos_vn}, ~\ref{T:transfer_ai} and \ref{T:scale_one_v1n},
and moreover in Theorem~\ref{T:transfer_ai}  that
restriction to $\alt_I$  continues to be injective after quotienting by the level-one subset.    
%So the matrix representing these restrictions will be effectively
%block upper-triangular and injective on those blocks, which will prove the result.

%We start with the left column.

\begin{theorem}\label{T:transfer_vn}
The restriction to $V_n^\pm$ of transfer product decomposibles are zero.
\end{theorem}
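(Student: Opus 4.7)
The plan follows the template of the symmetric-group analog, Theorem~\ref{decomptozeroinVn}, combining a pullback decomposition with parity analysis and reduction to the symmetric case. First, apply Proposition~\ref{pullback} to the pair of inclusions $V_n^+ \subset \alt_{2^n}$ and $\alt_{2i} \times \alt_{2j} \subset \alt_{2^n}$ to write
$$res_{V_n^+}(x \odot y) = \sum_g tr_{V_n^+ \leftarrow M_g}\bigl(res_{M_g}(x \times y)\bigr),$$
a sum over double cosets $g \in (\alt_{2i} \times \alt_{2j}) \backslash \alt_{2^n} / V_n^+$ where $M_g = V_n^+ \cap g(\alt_{2i} \times \alt_{2j})g^{-1}$.

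The core structural fact is that $V_n^+$ acts regularly on $\{1,\ldots,2^n\}$, so any nontrivial element of a conjugate $gV_n^+g^{-1}$ preserving the partition $\{1,\ldots,2i\} \mid \{2i+1,\ldots,2^n\}$ must act on each half as a fixed-point-free involution with $i$, respectively $j$, disjoint 2-cycles. When $i$ is odd (equivalently $j$, since $i+j = 2^{n-1}$ is even for $n \geq 2$), such an element is an odd permutation on each side and so fails to lie in $\alt_{2i} \times \alt_{2j}$; hence $M_g$ is trivial, and the corresponding summand vanishes for positive-degree $x$ and $y$. When $i, j$ are both even, $M_g$ coincides with the symmetric-group intersection $M_g' = V_n^+ \cap g(\si_{2i} \times \si_{2j})g^{-1}$.

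For the even-parity case I would reduce to the symmetric-group vanishing. By Proposition~\ref{transfercompatible}, $tr(x \odot y) = tr(x) \odot tr(y)$ is a $\odot$-decomposible in $H^*(BS_{2^n})$, which Theorem~\ref{decomptozeroinVn} kills under restriction to $V_n$. Combining this with Proposition~\ref{restrictions} applied to the pullback of the covering $B\alt_{2^n} \to BS_{2^n}$ along $BV_n \to BS_{2^n}$ (which yields $BV_n^+ \sqcup BV_n^-$) produces the sum identity $res_{V_n^+}(x \odot y) + res_{V_n^-}(x \odot y) = 0$. Applying Proposition~\ref{restrictiontransfer} to $(x + \overline{x}) \odot y = res(tr(x) \odot tr(y))$ further gives the involution identity $res_{V_n^+}(x \odot y) = res_{V_n^+}(\overline{x} \odot y)$.

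The main obstacle will be promoting these paired identities to individual vanishing in characteristic two, where direct projection onto charged components fails. The plan is to leverage the polarized basis $\mathcal{B}_+ \sqcup \mathcal{B}_- \sqcup \mathcal{B}_o$ from Section~\ref{mainstrategy}: for $y$ neutral, write $y = res(t)$ so that $x \odot y = res(t \odot tr(x))$ is pulled back from $\si$ and covered directly by Theorem~\ref{decomptozeroinVn} and Proposition~\ref{restrictions}; the involution identity then reduces the problem to restrictions $res_{V_n^+}(b_+ \odot c_+)$ for basis elements $b_+, c_+ \in \mathcal{B}_+$. These residual cases I would handle by a more delicate application of Lemma~\ref{L:pullpushzero} exploiting the two-sheeted structure relating $\alt_{2i} \times \alt_{2j}$ to $\alt_{2^n} \cap (\si_{2i} \times \si_{2j})$ that appears in the proof of Proposition~\ref{transferneutral}, thereby yielding vanishing via the even-index intermediate subgroup rather than through parity of the individual intersections.
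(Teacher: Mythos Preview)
Your reduction for neutral $y$ via Proposition~\ref{restrictiontransfer} is correct, and your paired identities $res_{V_n^+}(x \odot y) = res_{V_n^-}(x \odot y)$ are valid. But the final step for charged $x$ and $y$ is a genuine gap: the appeal to Lemma~\ref{L:pullpushzero} through the index-two inclusion $\alt_{2i}\times\alt_{2j} \subset \alt_{2^n}\cap(\si_{2i}\times\si_{2j})$ is the mechanism behind Proposition~\ref{transferneutral}, which kills transfer products of \emph{restricted} classes, not arbitrary charged ones. There is no intermediate subgroup here making the relevant pull--push factor through an even-index step, and your sum identity only says the class is fixed by the involution swapping $V_n^\pm$, which in characteristic two does not force it to vanish. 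Your parity observation for odd $i$ is correct but vacuous in the case that matters: once you reduce (as the paper does, using that all charged generators live on $\alt_{2^k}$) to transfer products factoring through $\alt_{2^{n-1}}\times\alt_{2^{n-1}}$, the indices are even and your argument has nothing left.

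The paper's argument is different and bypasses the charged/neutral case analysis entirely. After the reduction to $H=\alt_{2^{n-1}}\times\alt_{2^{n-1}}$, it exhibits an explicit element $\sigma=(1\;2)(\{2^{n-1}{+}1\}\;\{2^{n-1}{+}2\})$ normalizing $H$ and shows that left multiplication by $\sigma$ acts \emph{without fixed points} on the double cosets $H\backslash\alt_{2^n}/V_n^+$. The key computation is that $H\sigma$ cannot meet any conjugate of $V_n^+$: every element of such a conjugate is a full product of $2^{n-1}$ transpositions, and for $(\tau_1\times\tau_2)\sigma$ to have that form each $\tau_i\cdot(\text{2-cycle})$ would itself have to be a full product of transpositions on its half, hence even, forcing $\tau_i$ odd. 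The fixed-point-free involution on the components of the pullback then makes $g^!\circ f^*$ vanish mod two immediately, with no need to separate charged from neutral inputs.
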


\begin{proof}
By Proposition~\ref{restrictiontransfer}, 
transfer products involving cup-monomials in the $\gamma_{1,k;m}$ will be pulled back from the cohomology of symmetric groups,
from which the result follows from Theorem~\ref{decomptozeroinVn}.  
All of our other almost-Hopf semi-ring generators  are in the cohomology of alternating groups
indexed by powers of two, so a non-trivial transfer product in our basis $\mathcal{B}_\pm$, $\mathcal{B}_o$ factors through
$\alt_{2^{n-1}} \times \alt_{2^{n-1}}$.

We apply Proposition~\ref{pullback} for $G = \alt_{2^n}$, $H = \alt_{2^{n-1}} \times \alt_{2^{n-1}}$ and $K = V_n^+$.  In this case,
${B\iota_K}^* \circ {B\iota_H}^!$ is the restriction to $V_n$ of a transfer product.  Let $v$ be the vertical map of the diagram, thus from
the pull-back to $B\alt_{2^{n-1}}$, and let $t$ be the top map, from $BV_n^+$ to the pullback.  Because the composite of
natural and restriction maps commute for pull-backs, this composite coincides with $v^! \circ t^*$, %in the notation of Proposition \ref{pullback}, 
which we show is zero by producing an involution on the pull-back.

Recall in  general  that any $\sigma \in G$ with $\sigma H = H \sigma$ defines a permutation of double-cosets by
$H  g K \mapsto H (\sigma g) K$.  Here we choose $\sigma$ to be the product of 2-cycles $(1 \; 2)(\{2^{n - 1} + 1 \}\{ 2^{n-1} + 2\} )$,
which obviously normalizes $H  = \alt_{2^{n-1}} \times \alt_{2^{n-1}}$.  We  claim that $\sigma$ permutes all double-cosets and thus
components of the pull-back non-trivially.  From the definition, a double coset fixed by $\sigma$ indexed by $g$ would coincide with a non-trivial intersection
between the right-coset $H \sigma$ and the conjugate $g K g^{-1}$.  All non-zero elements of $K = V_n^+$ and thus its conjugates are involutions
of the form  $(i_1 i_2) (i_3 i_4) \cdots (i_{2^n - 1} i_{2^n})$, which we call a full product of transpositions.
Because $\sigma$ preserves the partition into the first $2^{n-1}$ and last $2^{n-1}$ elements,
and by definition $H = \alt_{2^{n-1}} \times \alt_{2^{n-1}}$, $H \sigma$ preserves the partition as well.  So in order for an element of 
$(\tau_1 \times \tau_2) \cdot \sigma$ to be a full product of transpositions, we would have that $\tau_1 \cdot (1 \; 2)$ and 
$\tau_2 \cdot (\{2^{n - 1} + 1 \}\{ 2^{n-1} + 2\} )$ would themselves be full products of transpositions, though of course half as long.
But such permutations are even, so $\tau_1$ and $\tau_2$ could not be even as well.

Because the action of $\sigma$ defines an involution of both covering maps, $t$ and $v$, 
the image of $t^*$ is invariant under the involution defined by 
$\sigma$ with no fixed components, and $v^!$ will send each 
in a pair of cohomology classes to the same image, at the cochain level.  Thus the composite $v^! \circ t^*$ is zero on mod-two cohomology,
proving the result for $V_n^+$.  The result for $V_n^-$ follows by applying the involution, or from similar analysis.
\end{proof}

While this theorem is the direct analogue for alternating groups of 
Theorem~\ref{decomptozeroinVn} for symmetric groups, and $V_n = V_n^+$ is a subgroup
of both, we could not find a unified line of argument.  In particular we could not deduce the  alternating group statement as a corollary
of the symmetric group case using the Gysin sequence and involution.
The original proof for symmetric groups uses facts about homology - that is, the Dyer-Lashof algebra - which are not known for alternating groups.
And the proof we give here for alternating groups does not translate to the symmetric group setting, 
as the normalizer of $H =  \si_{2^{n-1}} \times \si_{2^{n-1}}$
is contained in the identity double-coset of $H \backslash \si_{2^n} / V_n$.

In contrast to this, our knowledge of restrictions for symmetric groups does immediately lead to the following.

\begin{proposition}\label{P:scale_one_vn}
Level one generators $\gamma_{1,k; 2^n}$ with $k < 2^n$ map to zero under restriction to $V_n^\pm$.
\end{proposition}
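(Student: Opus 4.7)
The plan is to reduce the claim to its symmetric group analogue, Theorem~\ref{decomptozeroinVn}, via the naturality diagram of Proposition~\ref{restrictions}. The key observation is that every $\gamma_{1,k;2^n}$ is by definition pulled back along the standard inclusion from a symmetric group class, and the hypothesis $k < 2^n$ makes that preimage a transfer product decomposible.

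First I would unwind the definition of the scale-one generator: $\gamma_{1,k;2^n}$ is the image under restriction of the class $\gamma_{1,k} \odot 1_{2^n-k}$ in the cohomology of the relevant symmetric group. When $k < 2^n$, the cup unit $1_{2^n-k}$ lives on a positive-width component, so $\gamma_{1,k} \odot 1_{2^n-k}$ is a nontrivial transfer product and hence a $\odot$-decomposible. Theorem~\ref{decomptozeroinVn} then supplies that this class restricts to zero in $H^*(BV_n)$ in the symmetric group setting.

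To finish, I would apply the commuting square of Proposition~\ref{restrictions}, which relates the restriction from $H^*(B\si)$ to $H^*(BV_n)$ with the restriction from $H^*(B\alt)$ to $H^*(BV_n^+) \oplus H^*(BV_n^-)$, with the bottom horizontal map given by the diagonal embedding of invariants. Chasing $\gamma_{1,k} \odot 1_{2^n-k}$ around this square, one composition computes the restriction to $V_n^\pm$ of $\gamma_{1,k;2^n}$, while the other factors through the zero restriction to $V_n$. The result follows, the only point to track being that the indexing is arranged so a single $V_n$ controls all three restriction maps; there is no substantive obstacle beyond this bookkeeping.
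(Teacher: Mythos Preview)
Your proposal is correct and follows essentially the same approach as the paper: both observe that $\gamma_{1,k;2^n}$ is by definition the restriction of the $\odot$-decomposible $\gamma_{1,k}\odot 1_{2^n-k}$, invoke Theorem~\ref{decomptozeroinVn} to see this vanishes on $V_n$, and then use the compatibility of restrictions (the square of Proposition~\ref{restrictions}) to conclude vanishing on $V_n^\pm$. The paper's version is simply a terser phrasing of your diagram chase.
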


\begin{proof}
By definition, these generators are the restriction of a $\odot$-decomposible class, namely $\gamma_{1,k} \odot 1_{2^n - k}$.
They thus share
their restriction to $V_n = V_n^+$, which is zero by Theorem~\ref{decomptozeroinVn}.
\end{proof}

We move on to consider the restriction to arbitrary products of alternating groups.  By definition, the coproduct $\Delta$ is the map induced by 
the embeddings of $\alt_n \times \alt_m$ in $\alt_{n+m}$.  

\begin{definition}
 Set $\Delta_I$ to be the restriction map to $\alt_I$.   Define
the pure level one subspace of $\alt_I$ to be the tensor product of level one subspaces of constituent $\alt_{2^i}$ (that is, all tensor factors
are level one).
\end{definition}

\begin{theorem}\label{T:transfer_ai}
The span of transfer product decomposables in $\mathcal{B_\pm}$ and $\mathcal{B}_o$  maps injectively under $\bigoplus \Delta_I$, and
continue to do so after quotienting by the pure level one subpace of the cohomology of the $\alt_I$.
\end{theorem}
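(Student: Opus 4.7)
The plan is to induct on the refinement partial order of $\odot$-partitions. For a Hopf monomial $h = h_1 \odot h_2 \odot \cdots \odot h_q$ in $\mathcal{B}_+ \cup \mathcal{B}_- \cup \mathcal{B}_o$ with $q \geq 2$ that is transfer product decomposable, each constituent $h_i$ is a cup monomial supported on $\alt_{2n_i}$, and we record its $\odot$-partition $I(h) = (n_1, \dots, n_q)$.

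First I would iterate Theorem~\ref{bialg} to compute $\Delta_{I(h)}(h)$. Each $\Delta h_i$ decomposes as $h_i \otimes 1 + 1 \otimes h_i$ plus cross terms from proper splittings of $h_i$; combining these through the polarized almost-bialgebra formula and projecting to the component $\Delta_{I(h)}$ gives a leading contribution
\[
L(h) = \sum_{\sigma} \varepsilon_\sigma \cdot \sigma \cdot (h_1 \otimes h_2 \otimes \cdots \otimes h_q),
\]
where $\sigma$ ranges over permutations of $\{1,\dots,q\}$ preserving the widths $n_i$ and $\varepsilon_\sigma \in \{0,1\}$ records the polarization $\rho^+$. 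The remaining contributions to $\Delta_{I(h)}(h)$ arise from at least one proper splitting of some $\Delta h_i$; each such term has either a tensor factor that is a $\odot$-product of multiple cup monomials (from grouping several $h_i$ together) or a tensor factor supported on a component strictly smaller than $\alt_{2n_i}$, and in either case the term is simpler in form than $L(h)$.

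For the inductive step, suppose $\sum_h c_h h$ lies in the kernel of $\bigoplus \Delta_I$. Choose a coarsest $\odot$-partition $J$ (minimal under refinement) appearing in the support and inspect $\Delta_J$. Monomials $h'$ in the support with $I(h') = J$ contribute their leading terms $L(h')$, which consist of tensor products with exactly one basis cup monomial in each of the $|J|$ tensor slots. By coarseness of $J$, no other monomial in the support has $\odot$-partition coarser than $J$, so remaining contributions to $\Delta_J$ from monomials with incomparable or strictly finer $\odot$-partition involve either $\odot$-decomposable tensor factors (from regrouping) or tensor factors of strictly smaller width than the slots of $J$ (from internal splittings), and are linearly independent from the leading terms. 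Distinct Hopf monomials with partition $J$ have distinct multisets of constituent cup monomials, so their leading terms $L(h')$ are themselves linearly independent. Hence $c_{h'} = 0$ for all $h'$ with $I(h') = J$, and iterating eliminates every coefficient.

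For the scale-one quotient, observe that a nonzero transfer decomposable in $\mathcal{B}_\pm \cup \mathcal{B}_o$ must contain at least one charged constituent cup monomial from $\mathcal{B}_\pm$, because by Proposition~\ref{transferneutral} and Corollary~\ref{C:transfercancel} a transfer product of classes pulled back from symmetric groups vanishes. Thus the leading term $L(h)$ has a tensor factor of scale greater than one and remains nonzero modulo the scale-one subspace of $H^*(B\alt_{I(h)})$. The main obstacle will be bookkeeping of the polarization $\rho^+$ in the iterated almost-bialgebra formula to verify that $L(h)$ does not cancel internally, together with a careful verification that cross terms from proper splittings of the $h_i$ can be cleanly separated from leading terms of other monomials in the support so that the induction proceeds as claimed.
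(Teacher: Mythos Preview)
Your proposal is essentially correct and follows the same approach as the paper, though it carries more machinery than strictly needed. Both arguments rest on the same idea: for a basis element $h = m_1^+ \odot \cdots \odot m_p^+ \odot m_\omega$, the restriction $\Delta_{I_h}(h)$ (for $I_h$ the tuple of widths of the constituent cup monomials) contains the distinguished tensor $m_1^+ \otimes \cdots \otimes m_p^+ \otimes m_\omega$ of cup monomials, and this determines $h$.

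The paper's proof is more direct than yours. Rather than inducting on coarseness of the $\odot$-partition, it simply observes that after repeated application of Theorem~\ref{bialg}, $\Delta_{I_h}(h)$ has exactly one term (up to the permutation symmetry you note) which is a pure tensor of cup monomials, namely $m_1^+ \otimes \cdots \otimes m_\omega$; all other terms have at least one $\odot$-decomposable tensor factor. Since distinct basis Hopf monomials have distinct multisets of constituent cup monomials, these leading terms are linearly independent across all $h$ with the same $I_h$, and taking the direct sum over $I$ finishes. Your induction on coarseness is not needed because the paper separates by the filtration ``pure cup-monomial tensors vs.\ tensors with a $\odot$-decomposable factor'' rather than by refinement of partitions.

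Your stated obstacle---that $L(h)$ might cancel internally under the polarization $\rho^+$---does not occur in practice for genuine basis elements: the gathered form of the Hopf monomial basis ensures that constituent cup monomials are never literally equal (a $\odot$-square $m^+ \odot m^+$ of a single cup monomial is either zero or reduces to a different basis element via the relation $\gamma_{\ell,n} \odot \gamma_{\ell,m} = \binom{n+m}{n}\gamma_{\ell,n+m}$ and Hopf ring distributivity), so the permutation terms in your $L(h)$ are always distinct and survive. For the scale-one quotient statement, both you and the paper argue in the same way: at least one constituent (the paper names it $m_1^+$) is charged and hence of scale greater than one.
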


\begin{proof}
Our basis elements which are $\odot$-decomposible are of the form $h = m_1^+ \odot \cdots \odot m_p^+ \odot m_\omega$, where 
the $m_i$ are cup monomials of our generators and 
$m_\omega$ is level one or could be $1^-$.  Let $w_i$ be the width of $m_i$ (including $i = \omega$), so by assumption $w_1 > 0$,
 and let $I_h = \{ w_i \}$.  

By repeated application of Theorem~\ref{bialg}, $\Delta_{I_h} (h)$ will, in the basis given by tensor products of our standard bases,
contain exactly one term which is a tensor product of cup-monomials, namely $m_1^+ \otimes \cdots \otimes m_p^+ \otimes m_\omega$.  
Thus, all $h$ with $I_h$ equal to a given $I$ map injectively under $\Delta_I$, and that continues to be the case after quotienting by the
level one subspace of $\alt_I$.   Taking direct sum over possible $I_h$ gives the result. 
\end{proof}

%\begin{proposition}\label{P:cup_monos_v1n}
%For $n > 2$, the restriction of cup product monomials to $V_{1,n}$ is zero.
%\end{proposition}
%\begin{proof}
%\end{proof}

We finish proving  our detection result by addressing level-one basis elements.  The next result
is immediate from Proposition~\ref{transfercompatible} and the corresponding fact for symmetric groups, since all level-one
elements are pulled back from symmetric groups.

\begin{proposition}\label{P:scale_one_ai}
Level one basis elements map to tensor product of level one basis elements under the coproduct restriction to $\alt_I$.
\end{proposition}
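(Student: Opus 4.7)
The plan is to reduce the claim to the corresponding statement for symmetric groups, using the naturality of coproduct under restriction. By construction in Theorem~\ref{T:Bo}, every scale-one basis element in $\mathcal{B}_o$ is a cup monomial in the classes $\gamma_{1,k;m}$ together with possibly a unit $1_m$, and by definition each $\gamma_{1,k;m} = \text{res}(\gamma_{1,k} \odot 1_{m-k})$ is the restriction of a scale-one class from $H^*(B\si_{2m})$. Since restriction is a ring homomorphism, any scale-one basis element $\tau$ on $\alt_{2m}$ equals $\text{res}(\sigma)$ for some scale-one Hopf ring monomial $\sigma \in H^*(B\si_{2m})$.

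First I would check the symmetric group analogue: for a scale-one Hopf monomial $\sigma$, the component $\Delta_I^{\si}(\sigma)$ of the coproduct onto $H^*(B\si_{I})$ is a sum of tensor products of scale-one cup monomials. This is immediate from Theorem~\ref{T:symmgenandrel}, because the coproduct formula
\[
\Delta\gamma_{1,m} = \sum_{i+j=m} \gamma_{1,i} \otimes \gamma_{1,j}
\]
produces only scale-one generators, and scale-one classes are closed under cup products, so the bialgebra axiom for $(\cdot, \Delta)$ of Theorem~\ref{intersectionprop} propagates this closure to arbitrary cup monomials.

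Next I would apply Proposition~\ref{transfercompatible}, which says that restriction preserves coproducts, to obtain
\[
\Delta_I(\text{res}(\sigma)) = (\text{res} \otimes \cdots \otimes \text{res}) \circ \Delta_I^{\si}(\sigma).
\]
On each tensor factor, restriction sends a scale-one cup monomial in the $\gamma_{1,k} \odot 1_j$ to the corresponding cup monomial in the $\gamma_{1,k;m}$ classes, which is a scale-one basis element on the appropriate $\alt_{2j}$; the sole exception is that any factor of the form $\gamma_{1,1} \odot 1_j$ restricts to the Euler class and hence to zero on $\alt$, in which case that summand simply drops out. The result is a sum of tensor products of scale-one basis elements on the $\alt_I$, as required.

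There is essentially no serious obstacle, as the entire content is the identification of scale-one classes as pullbacks from $H^*(B\si_{2m})$ together with the verified closure of the scale-one subspace under the symmetric group coproduct; the only minor bookkeeping is handling the vanishing of the Euler-class summands, but this preserves rather than breaks the scale-one conclusion.
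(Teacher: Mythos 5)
Your proposal is correct and is essentially the paper's own argument, which notes that the claim is immediate from Proposition~\ref{transfercompatible} together with the corresponding fact for symmetric groups, precisely because all scale-one elements are pulled back from symmetric groups. Your additional bookkeeping (closure of scale-one classes under the symmetric-group coproduct via Theorem~\ref{T:symmgenandrel}, and the vanishing of the Euler-class factors under restriction) just makes explicit what the paper leaves implicit.
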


Unfortunately, there are level-one classes whose coproduct and restriction to $V_n$ (when applicable) are trivial, in particular the $\gamma_{1,3;m}$.
We detect such classes using the alternating groups versions of subgroups which detect level-one classes
for symmetric groups, as described in Theorem~7.8 of \cite{GSS12}.

\begin{theorem}\label{T:scale_one_v1n} 
Level one classes  define a polynomial subring which restricts injectively to $AV_{1,m}$.
\end{theorem}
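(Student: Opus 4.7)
The approach is to exploit the fact that every scale-one class in $H^*(B\alt_{2m})$ is, by definition, the restriction of a scale-one class from $H^*(B\si_{2m})$; this lets me factor the restriction to $AV_{1,m}$ through the symmetric group case, where the analogous statement is Theorem~7.8 of \cite{GSS12}, and reduces the claim to an elementary computation with symmetric polynomials.

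First I would invoke the commutative square of subgroup inclusions
$$\xymatrix{
AV_{1,m} \ar@{^{(}->}[r] \ar@{^{(}->}[d] & \alt_{2m} \ar@{^{(}->}[d] \\
(V_1)^m \ar@{^{(}->}[r] & \si_{2m}
}$$
which on mod-two cohomology induces a commutative square of restriction maps. Because the generators $\gamma_{1,k;m}$ are by definition the images of $\gamma_{1,k} \odot 1_{m-k}$ under restriction from $H^*(B\si_{2m})$, and restriction is a ring map, the composite $H^*(B\alt_{2m}) \to H^*(BAV_{1,m})$ applied to the scale-one subring agrees with the composition of the restriction $H^*(B\si_{2m}) \to H^*(B(V_1)^m)$ followed by the further restriction to $AV_{1,m}$.

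Next, I would apply Theorem~7.8 of \cite{GSS12}: the restriction from $H^*(B\si_{2m})$ to $H^*(B(V_1)^m) = \F_2[x_1, \ldots, x_m]$ sends the scale-one subring isomorphically onto the ring of symmetric polynomials $\F_2[e_1, \ldots, e_m]$, carrying $\gamma_{1,k} \odot 1_{m-k}$ to the $k$-th elementary symmetric polynomial $e_k$. Since $AV_{1,m}$ is the kernel of the sum homomorphism $(V_1)^m \to C_2$, its mod-two cohomology is the quotient $\F_2[x_1, \ldots, x_m]/(e_1)$ and the restriction from $(V_1)^m$ is the natural projection. The image of the symmetric subring is then $\F_2[\bar e_2, \ldots, \bar e_m]$, where $\bar e_k$ denotes the class of $e_k$ modulo $e_1$. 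Because $\F_2[x_1, \ldots, x_m]$ is a free module over $\F_2[e_1, \ldots, e_m]$, its quotient by $(e_1)$ is free over $\F_2[\bar e_2, \ldots, \bar e_m]$, so these classes are algebraically independent.

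Assembling these pieces, the two-step composite sends each $\gamma_{1,k;m}$ to $\bar e_k$ for $k \geq 2$, consistent with the fact that $\gamma_{1,1} \odot 1_{m-1}$ is the Euler class, which restricts to zero in $H^*(B\alt_{2m})$, matching $\bar e_1 = 0$. The image of the scale-one subring of $H^*(B\alt_{2m})$ in $H^*(BAV_{1,m})$ is therefore the polynomial ring $\F_2[\bar e_2, \ldots, \bar e_m]$; since this composite factors through the scale-one subring, which is generated by the $\gamma_{1,k;m}$ for $k \geq 2$, that subring must itself be polynomial on these generators and must restrict injectively to $H^*(BAV_{1,m})$. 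The only delicate point is confirming that the commutative square really factors the restriction cleanly and that the scale-one subring of $H^*(B\alt_{2m})$ is precisely the image from symmetric groups; once that is in hand, the content reduces to the elementary observation that the higher elementary symmetric polynomials remain algebraically independent after reducing modulo $e_1$.
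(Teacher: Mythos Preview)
Your proposal is correct and follows essentially the same route as the paper: both factor the restriction through $(V_1)^m$, invoke Theorem~7.8 of \cite{GSS12} to identify the scale-one classes with elementary symmetric polynomials, and then analyze the further restriction to $AV_{1,m}$.

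The only difference lies in how the last step is verified. The paper chooses explicit generators $y_1,\dots,y_{m-1}$ for $H^*(BAV_{1,m})$, computes that the restriction sends $x_i\mapsto y_i$ for $i<m$ and $x_m\mapsto\sum y_i$, and then shows directly that $\sigma_i(x)\mapsto\sigma_i(y)+\sigma_{i-1}(y)\sigma_1(y)$, so the images are polynomial generators modulo decomposables. You instead identify $H^*(BAV_{1,m})$ with $\F_2[x_1,\dots,x_m]/(e_1)$ and argue from freeness of $\F_2[x_1,\dots,x_m]$ over the symmetric subring that $\bar e_2,\dots,\bar e_m$ remain algebraically independent. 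Your argument is a bit more conceptual and avoids a choice of coordinates; the paper's is more hands-on and makes the images explicit. Both are short and valid, and neither buys anything the other does not.
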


\begin{proof}
Level-one classes are pulled back from symmetric groups, with $\gamma_{1,k;m}  = res(\gamma_{1,k} \odot 1_{m-k})$.  
The restriction from $\si_{2m}$ to $AV_{1,m}$ factors through $(V_1)^m$, the subgroup of $\si_{2m}$ generated by the two-cycles
$(\{2k - 1\}\{2k\})$.  There, by Theorem~7.8 of \cite{GSS12} 
the restriction of $\gamma_{1,k} \odot 1_{m-k}$ in the cohomology
of $(V_1)^m$ is $\sigma_k$, the $k$th symmetric polynomial.

So we calculate the restriction from $(V_1)^m$ to $AV_{1,m} \cong (C_2)^{m-1}$, whose cohomology rings are polynomials
in $m$ and $m-1$ variables respectively.  Choose generators
of $AV_{1,m}$ as $(\{2k-1\}\{2k\})(\{2m -1\}\{2m\})$, which we call $\tau_{2k} \tau_{2m}$.   
Then let $x_k$ be the generator of cohomology of
$B(V_1)^m$ corresponding to the two-cycle $\tau_{2k}$ and $y_k$ be the generator of the cohomology of 
$BAV_{1,m}$ corresponding to $\tau_{2k} \tau_{2m}$.  The restriction homomorphism is then given by
\begin{equation*}
x_i \mapsto y_i \;\;\;  i < m;   \;\;\;\;\;\;\;  x_m \mapsto \sum_{i=1}^{m-1} y_i.
\end{equation*}

The image of $\gamma_{1,k;m}$ is the image of the $k$th symmetric function in the $x_i$ under this homomorphism.
Because 
$$\sigma_i(x_1, \cdots, x_m) = \sigma_i(x_1, \cdots, x_{m-1}) + \sigma_{i-1}(x_1, \cdots, x_{m-1}) \cdot x_m,$$
this homomorphism sends 
$$\sigma_i(x_1, \cdots, x_m) \mapsto \sigma_i(y_1, \cdots, y_{m-1}) +  \sigma_{i-1}(y_1, \cdots, y_{m-1}) \cdot \sigma_{1}(y_1, \cdots, y_{m-1}).$$
Thus $\gamma_{1,k;m}$  are sent to polynomial ring generators modulo decomposibles, so their image is a polynomial ring.   The subring
generated by $\gamma_{1,k;m}$ must itself be polynomial, mapping isomorphically to its image under restriction.
\end{proof}

Our detection result is essentially proven.

\begin{proof}[Proof of Theorem~\ref{detection}]

We make precise the argument outlined through  table which recorded
restriction results after the statement of Theorem~\ref{detection}.  

\begin{enumerate}

\item Cup monomials in the classes $\gamma_{\ell, 2^k}^\pm$ and $\gamma_{1,2^k; 2^k}$ map injectively to $V_n^\pm$ by Theorem \ref{P:cup_monos_vn}.

\item Classes which are $\odot$-decomposible  map to zero in $V_n^\pm$ by Theorem~\ref{T:transfer_vn}, and map injectively to the cohomology of 
the collection subgroups $\alt_I$ modulo the level-one subspace by Theorem~\ref{T:transfer_ai}.

\item Outside of powers of $\gamma_{1,2^k; 2^k}$ which we included in the first case above, restrictions of level-one classes from the 
cohomology of symmetric groups map to zero in $V_n^\pm$ by Proposition \ref{P:scale_one_vn}.  They map to zero in the quotient of 
the cohomology of 
the collection subgroups $\alt_I$ by the level-one subspace by Proposition \ref{P:scale_one_ai}.  They map injectively to the cohomology  of 
$AV_{1,n}$ by Theorem \ref{T:scale_one_v1n}.

\end{enumerate}

Thus collectively these three types of classes map injectively to the cohomology of the three types of subgroups named.  Because
these three types of classes span the cohomology of alternating groups, the result follows.

\end{proof}

In order to apply our detection result to verify relations, we need one further calculation of a restriction map to $AV_{1,2^n}$.  Restriction
calculations which are coproducts will be completed below.

\begin{theorem}\label{cupmonotoAV}
The restriction of  $\gamma_{\ell,m}^\pm$  to $AV_{1,m2^{\ell-1}}$  is zero for $\ell \geq 2$ other than $\ell=2, m=1$.
\end{theorem}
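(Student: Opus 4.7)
The plan is to apply Theorem~\ref{AV_vanish} directly to the explicit Fox--Neuwirth representatives of $\gamma^\pm_{\ell,m}$ from Definition~\ref{gammaFNcochains}, by verifying that every constituent cochain contains two consecutive non-zero entries (equivalently, a zero-block of length at least two) whenever $\ell \geq 2$ and $(\ell,m) \neq (2,1)$.

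Recall $\gamma_{\ell,m}^\pm = \alpha_{\ell,m}^\pm + \beta_{\ell,m}^o$, where $\alpha_{\ell,m}^\pm$ has $m$ blocks each consisting of $2^\ell-1$ ones, and $\beta_{\ell,m}^o = \sum_{1 \leq i < j \leq m+1} \beta_{\ell,m}(i,j)^o$, where $\beta_{\ell,m}(i,j)^o$ has $m+1$ blocks: the $i$th is the singleton $[2]$, the $j$th is a sequence of $2^\ell - 3$ ones, and the rest are sequences of $2^\ell-1$ ones. First I would handle $\alpha_{\ell,m}^\pm$: since $\ell \geq 2$, each of its blocks has length $2^\ell - 1 \geq 3$, so each contains two adjacent ones. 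Next I would analyze $\beta_{\ell,m}(i,j)^o$ case by case. When $m \geq 2$, among the $m+1$ blocks there is at least one undistinguished block of $2^\ell-1 \geq 3$ ones, which supplies the required consecutive non-zero entries. When $m=1$ the only $(i,j)$ is $(1,2)$ and the cochain has the form $[2,0,\underbrace{1,\ldots,1}_{2^\ell-3}]$; this contains two consecutive ones precisely when $2^\ell - 3 \geq 2$, i.e.\ when $\ell \geq 3$.

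Combining these observations, for every pair $(\ell,m)$ with $\ell \geq 2$ except $(\ell,m) = (2,1)$, every constituent Fox--Neuwirth cochain of $\gamma^\pm_{\ell,m}$ has two consecutive non-zero entries, so Theorem~\ref{AV_vanish} applies and the restriction to $AV_{1,m2^{\ell-1}}$ vanishes. The lone exception $(\ell,m)=(2,1)$ is exactly the case where $\beta_{2,1}(1,2)^o = [2,0,1]^o$ fails the hypothesis of Theorem~\ref{AV_vanish}, consistent with the fact that $\gamma^\pm_{2,1}$ lives in $H^*(B\alt_4)$ where the restriction theory of Theorem~\ref{A4Invariant} is genuinely exceptional.

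There is no serious obstacle; the main care required is bookkeeping of block lengths to isolate the single exceptional pair $(\ell,m)=(2,1)$, and noting that membership in $\beta_{\ell,m}^o$ as a sum of cochains of a fixed combinatorial shape means it suffices to check the hypothesis on each summand individually rather than on the cocycle as a whole.
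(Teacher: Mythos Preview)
Your proposal is correct and follows the same approach as the paper: both apply Theorem~\ref{AV_vanish} to the explicit Fox--Neuwirth representative $\alpha_{\ell,m}^\pm + \beta_{\ell,m}^o$ from Definition~\ref{gammaFNcochains} and verify that each constituent cochain has two consecutive non-zero entries. Your case analysis is in fact more careful than the paper's terse version, which asserts for $\ell>2$ that ``all have at least five $1$'s in each block'' (overlooking the singleton $[2]$ block in $\beta_{\ell,m}(i,j)$) --- your treatment correctly isolates the needed consecutive non-zero entries in one of the other blocks instead.
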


\begin{proof}
We  apply Theorem~\ref{AV_vanish}.
For $\ell > 2$, the Fox-Neuwirth cochain representative of $\gamma_{\ell, 2^k}$ as given in Definition~\ref{gammaFNcochains} all have at least
five $1$'s in each block. %, and thus are disjoint from chains in this composite.  
For $\ell = 2$, $m > 1$ the representative has three consecutive $1$'s in 
some block.  %to obtain the result.
\end{proof}

%%% Local Variables:
%%% TeX-master: "AltGroupsModTwo.V3.tex"
%%% End: % Detection 
 % !TEX root = AltGroupsModTwo.V3.tex

\section{Presentation of the cohomology of alternating groups}\label{presentation}

%\subsection{Statement of presentation}

%Our main result is the presentation of cohomology of alternating groups as follows.

%\pagebreak

\begin{theorem}\label{T:presentation}
$H^*(B\alt_\bullet)$ is the almost-Hopf semi-ring under cup and transfer products generated by classes 
\begin{align*}
\gamma^+_{\ell,m}\in H^{m(2^\ell-1)}(B\alt_{m\cdot 2^\ell})&\;\; 2 \leq \ell, \; 1 \leq m,
\\ \gamma_{1,k;m} \in H^{k}(B\alt_{2m}) & \;\;2 \leq k \leq m,
\\ 1_m \in H^0(B\alt_{2m}) \;\; 1 \leq m, \; &{\text and} \; 1^\pm \in H^0(\widetilde{B\alt_0}), 
\end{align*}
where the $1_m$ are units for cup products on their components and $1^+$ is the unit for transfer product.
%we write $\gamma_{\ell, m}^o \stackrel{\text{def}}{=} \gamma_{\ell, m}^+ + \gamma_{\ell,m}^- $, and. 

Relations between transfer products are 
\begin{align}
  \gamma_{\ell, m}^+ \odot \gamma_{\ell, n}^+ &= {m+n \choose n} \gamma_{\ell, m+n}^+ \label{rel_gamma_transfer}
  \\ 1^- \odot 1^- &= 1^+ \label{rel_neg_transfer_neg}
  \\ (1^+ + 1^-) \odot  \gamma_{1,k;m} &= 0 \;\; {\rm and} \;\; (1^+ + 1^-) \odot 1_m = 0 \label{rel_pos_plus_neg}
  \\ \prod \gamma_{1,k;m} \odot \prod \gamma_{1,\ell; n} &= 0, \label{rel_neutral_transfer_neutral}
\end{align}
where the products of $\gamma_{1,k;m}$ are arbitrary cup products which by convention include the empty product $1_m$.

Cup products between classes on different components are zero, and further cup relations are
\begin{align}
%\gamma_{1,k;m} \odot 1_n  &= 0\\ 
  \gamma_{\ell, m}^+ \cdot \gamma_{k,n}^- &= 0 \text{ unless }k=\ell=2, \label{rel_pos_cup_neg_general}
  \\\gamma_{2, m}^+ \cdot \gamma_{2,m}^- &= 
  \begin{cases}
  \left(\gamma_{2,m}^+ + \gamma_{2,m}^-\right)^2 + \left(\gamma_{2,m-1}^+\right)^2\odot\left(\gamma_{1,2;2}\right)^3 \;\; &\text{ if $m$ is odd} \\
   \left(\gamma_{2,m-1}^+\right)^2\odot\left(\gamma_{1,2;2}\right)^3  &\text{ if $m$ is even}, \label{rel_pos_cup_neg_three_block}
   \end{cases}
  \\\gamma_{\ell, m}^+\cdot \gamma_{1, k; m 2^{\ell-1}} &= 
  \begin{cases}
  (\gamma_{\ell, q}^+ \cdot \gamma_{1, k} ) \odot \gamma_{\ell, m - q}  \;\; &\text{if $k = 2^{\ell-1} \cdot q$}\\
  0 \;\; &\text{if $k$ is not a multiple of $2^{\ell-1}$.}\label{rel_pos_cup_neutral}
  \end{cases}
\end{align}

While $H^*(B\alt_\bullet)$ does not form a Hopf semi-ring, for any $\alpha$ and $\beta$ there is in general the equality
\begin{equation}
  \Delta (\alpha \odot \beta) = \Delta \alpha \odot_{\rho^+} \Delta \beta, \label{rel_coprod_transfer}
  \end{equation}
where $\odot_{\rho^+}$ is transfer product after applying the polarization operator of Theorem~\ref{bialg}.

Let $\gamma_{\ell,m}^-$ denote $1^- \odot \gamma_{\ell,m}^+$, and by convention set $\gamma_{\ell,0}^\pm = 1^\pm$ and
$\gamma_{1,1;m} = 0$.  Coproducts of generators are given by 
\begin{align}
  \Delta \gamma_{\ell, m}^+ &= \sum_{i+j = m} \left(\gamma_{\ell,i}^+ \otimes \gamma_{\ell,j}^+ + \gamma_{\ell,i}^- \otimes \gamma_{\ell,j}^- \right) \label{coprod_charged}
  \\ \Delta \gamma_{1,k;m} &= \sum \gamma_{1,p;i} \otimes \gamma_{1,q,j} , \label{coprod_neutral}
%\Delta \gamma_{1,k;m} &= \sum_{i+j=m}\sum_{\substack{p+q = k\\ 0 \leq p  \leq i, 0 \leq q  \leq j}} \gamma_{1,p;i} \otimes \gamma_{1,q,j}.
\end{align}
where the last sum is over $i,j,p,q$ with $i+j=m$ and $p+q = k$, where  $0 \leq p  \leq i$ and $ 0 \leq q  \leq j$, and $\gamma_{1,0;0}$ is to 
be interpreted as $1_0 = 1^+ + 1^-$.

\end{theorem}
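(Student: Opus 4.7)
The plan is to verify each relation and then argue that they are complete by a Poincar\'e-series comparison with the additive basis built from the Gysin sequence. Generation of the almost-Hopf ring by the listed classes is already in hand: Corollary~\ref{Generators} gives generation by $\gamma_{\ell,2^k}^+$, $\gamma_{1,k;m}$, $1_m$, and $1^-$, and the larger family $\gamma^+_{\ell,m}$ ($m$ arbitrary) is then generated from $\gamma^+_{\ell,2^k}$ by transfer products via the binomial relation (\ref{rel_gamma_transfer}), which I will verify below. So the entire proof reduces to establishing the listed relations and showing no further relations hold. I would first establish the coproduct formulas (\ref{coprod_charged}) and (\ref{coprod_neutral}) by applying Theorem~\ref{T:cellular_coprod} to the explicit Fox-Neuwirth cocycles of Definition~\ref{gammaFNcochains}: for $\gamma^+_{\ell,m}=\alpha^+_{\ell,m}+\beta^o_{\ell,m}$, the cellular coproduct runs over zero entries separating blocks, and the $\beta$-correction contributes only neutral tensor products of the form $\gamma^+_{\ell,i}\otimes\gamma^+_{\ell,j}+\gamma^-_{\ell,i}\otimes\gamma^-_{\ell,j}$ after cancellation, matching (\ref{coprod_charged}). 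Equation (\ref{coprod_neutral}) is immediate since $\gamma_{1,k;m}$ is pulled back from the symmetric group, whose coproduct is known from Theorem~\ref{T:symmgenandrel}.

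Next I would handle the transfer-product relations. Equation (\ref{rel_gamma_transfer}) follows from Theorem~\ref{T:cellular_transfer_product}: the transfer product of $\alpha^\pm_{\ell,m}$ and $\alpha^\pm_{\ell,n}$ counts shuffles of blocks of uniform shape with uniform charge, reproducing the usual binomial pattern, while the $\beta^o$-corrections transfer-pair to zero by Corollary~\ref{C:transfercancel}. Equation (\ref{rel_neg_transfer_neg}) holds because the involution squares to the identity. Equations (\ref{rel_pos_plus_neg}) and (\ref{rel_neutral_transfer_neutral}) are immediate consequences of Proposition~\ref{transferneutral}, since $1^++1^-$ and arbitrary cup monomials in the $\gamma_{1,k;m}$ are restricted from the cohomology of symmetric groups.

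For the cup-product relations I would use the detection theorem (Theorem~\ref{detection}). For (\ref{rel_pos_cup_neg_general}) with $\ell\geq 3$ or $k\geq 3$, Proposition~\ref{P:cup_monos_vn} shows $\gamma^+_{\ell,2^k}$ restricts to zero on $V_n^-$ and $\gamma^-_{\ell,2^k}$ to zero on $V_n^+$, so mixed-charge cup products restrict to zero on both $V_n^\pm$; restriction to the $\alt_I$ is zero since coproducts of the two factors are supported on complementary charged parts (by (\ref{coprod_charged})); and restriction to $AV_{1,m}$ vanishes by Theorem~\ref{cupmonotoAV}. For the exceptional relation (\ref{rel_pos_cup_neg_three_block}) at $\ell=2$, I would begin with Proposition~\ref{first_gamma2_relation} in $\alt_4$ and propagate it to $\alt_{4m}$ using almost-Hopf distributivity with respect to the coproduct: apply $\Delta$ to both sides using (\ref{coprod_charged}), compare via Theorem~\ref{T:transfer_ai} (coproduct decomposables inject after restriction to $\alt_I$), and use the $AV_{1,m}$ restriction and $V_n^\pm$ restrictions as a cross-check. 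Relation (\ref{rel_pos_cup_neutral}) is proved similarly: the restriction of $\gamma_{1,k;m2^{\ell-1}}$ to $V_n^\pm$ is either zero or a power of the bottom Dickson generator, yielding the symmetric-group cup product pattern of Theorem~\ref{T:symmgenandrel} in the charged factor, and the $\alt_I$ coproducts match by (\ref{coprod_charged}) and (\ref{coprod_neutral}). Equation (\ref{rel_coprod_transfer}) is just Theorem~\ref{bialg}.

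The main obstacle is completeness: showing these relations suffice. The strategy is to produce a spanning set of Hopf monomials in the listed generators modulo the listed relations and match it bijectively with the additive basis $\mathcal{B}_+\sqcup\mathcal{B}_-\sqcup\mathcal{B}_o$ constructed from the Gysin sequence in Section~\ref{gysinsection} via Theorems~\ref{T:Ba} and \ref{T:Bo}. Using almost-Hopf distributivity, any monomial can be reduced to a transfer product of cup monomials in the generators. The transfer relations (\ref{rel_pos_plus_neg}) and (\ref{rel_neutral_transfer_neutral}) force at most one constituent cup monomial to be neutral (a product of $\gamma_{1,k;m}$'s or a unit), and (\ref{rel_neg_transfer_neg}) normalizes the number of $1^-$ factors to zero or one. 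Within each charged constituent, the cup relations (\ref{rel_pos_cup_neg_general})--(\ref{rel_pos_cup_neutral}) permit a canonical form as a product of $\gamma^+_{\ell,2^k}$'s and at most one factor of $\gamma^-_{2,m}\cdot\gamma^+_{2,m}$ replaced using (\ref{rel_pos_cup_neg_three_block}), together with at most one scale-one factor when using (\ref{rel_pos_cup_neutral}). Counting Hopf monomials in this canonical form degree-wise gives the same ranks as the basis $\mathcal{B}_+\sqcup\mathcal{B}_-\sqcup\mathcal{B}_o$ predicted by the Gysin sequence (Theorems~\ref{AnnIdeal} and \ref{GysinQuotient}), forcing equality. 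The delicate point, and the main technical hurdle, is managing the $\ell=2$ exceptional behavior together with the non-polynomiality of the cohomology of $\alt_4$ (Proposition~\ref{first_gamma2_relation}) throughout the canonical-form argument; to contain it I would first quotient by the scale-one and $\ell=2$ subalgebra, finish the counting for $\ell\geq 3$ by direct comparison with the symmetric-group Hopf ring modulo the Euler class, and then reintroduce the $\ell=2$ and scale-one factors using coproduct detection via Theorem~\ref{T:transfer_ai} to rule out further relations.
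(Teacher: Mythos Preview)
Your overall strategy matches the paper's: establish the coproducts first, then the transfer relations, then the cup relations via the detection Theorem~\ref{detection}, and finally completeness by reducing to the Hopf-monomial basis built from the Gysin sequence. A few of your tactical claims, however, are incorrect and would not go through as stated.

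First, in your argument for relation~(\ref{rel_gamma_transfer}) you say ``the $\beta^o$-corrections transfer-pair to zero by Corollary~\ref{C:transfercancel}.'' That corollary only kills $\beta^o_{\ell,m}\odot\beta^o_{\ell,n}$; the cross terms $\alpha^+_{\ell,m}\odot\beta^o_{\ell,n}$ and $\beta^o_{\ell,m}\odot\alpha^+_{\ell,n}$ are \emph{not} zero. The paper computes them explicitly: each contributes a binomial multiple of $\beta^o_{\ell,m+n}$, and it is the identity $\binom{m+n-1}{n-1}+\binom{m+n-1}{m-1}=\binom{m+n}{n}$ that makes the $\alpha$ and $\beta$ contributions match up to give $\binom{m+n}{n}\gamma^+_{\ell,m+n}$. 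Without this, your argument produces the wrong $\beta$-part.

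Second, your description of the coproduct computation for $\gamma^+_{\ell,m}$ is too optimistic. Applying Theorem~\ref{T:cellular_coprod} to $\beta^o_{\ell,m}$ produces, besides the expected $\alpha^o\otimes\beta^o+\beta^o\otimes\alpha^o$ terms, genuinely new cochains (tensors of what the paper calls $\sigma_{\ell,i}(p;r)^o$) that are not of the form $\gamma^\pm\otimes\gamma^\pm$. These do not cancel on the nose; the paper must exhibit explicit auxiliary cochains $\tau_{\ell,m+1}(p,q;1,2^\ell-5)^o$ whose coboundaries telescope to kill them. This is the technical heart of establishing (\ref{coprod_charged}) and is not automatic.

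Third, for relations~(\ref{rel_pos_cup_neg_three_block}) and (\ref{rel_pos_cup_neutral}) your detection argument omits a necessary ingredient: to check that the right-hand sides restrict to zero on $AV_{1,m}$, one needs explicit Fox--Neuwirth representatives for the cup products $(\gamma_{2,n}^+)^2$ and $\gamma_{\ell,q}^+\cdot\gamma_{1,k}$. Since Fox--Neuwirth cochains do not model cup product, these must be established separately (the paper does this in Appendix~A via pairings with the Dyer--Lashof basis), and only then can Theorem~\ref{AV_vanish} be invoked. Without this input, the $AV_{1,m}$ check for the right-hand sides is missing.

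Your completeness argument is in the right spirit and mirrors the paper's Section on ``Completeness of relations,'' though the paper carries it out as an explicit reduction procedure rather than a Poincar\'e-series count.
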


%\pagebreak

%Relation~\eqref{rel_coprod_transfer} allows us to compute coproducts inductively, based on 
%Relations~(\ref{coprod_charged}) and (\ref{coprod_neutral}).

The appropriate context for understanding this presentation is  the closely related presentation given in Theorem~\ref{T:symmgenandrel}
of the cohomology of symmetric groups, which as a Hopf semi-ring  is built from polynomial rings generated by the $\gamma_{\ell, 2^k}$ 
with $\ell + k = n$ 
in the cohomology of $B\si_{2^n}$.
For alternating groups,
we build two copies of these polynomial rings on each $\alt_{2^n}$, though with lowest generator shared.  
Most products between these copies are zero, except  for the $\gamma_{2,m}^\pm$ whose unique behavior is
due to the fact that for $\alt_4$ there is only one copy of the transitive maximal elementary abelian $2$-subgroup,
while for higher $\alt_{2^k}$ there are two.  
Finally, the coproducts and transfer products of these two sets of generators behave according to rules governing charge,
with the $\gamma_{1,k;m}$ and $1_m$ being neutral.

The main results of Section~\ref{generators} -- namely Theorems~\ref{T:Ba}~and~\ref{T:Bo} which lead to Corollary~\ref{Generators} -- 
show that the classes listed generate the cohomology of 
$H^*(B\alt_\bullet)$ under cup and transfer products.  We establish the rest of the result presently.

\subsection{Coproducts}

While listed as Relations~(\ref{rel_coprod_transfer})-(\ref{coprod_neutral}), we establish the coproduct calculations first so we can 
use them in the process of verifying the other relations.   Relation~(\ref{rel_coprod_transfer})  is just a restatement of Theorem~\ref{bialg}.
In practice, it means taking only half of the terms of the coproduct of a charged class (there must be one charged class to consider, or else the 
transfer product will be zero), for example only the $+ \otimes +$ terms and not the $- \otimes -$ terms in the coproduct of a positively charged class.

Next using Proposition~\ref{transfercompatible} we see 
that Relation~(\ref{coprod_neutral}) is immediate from the coproduct formula for $\gamma_{1,n}$ in the cohomology of symmetric groups.
For Relation~(\ref{coprod_charged}) on the other hand,
the statement for symmetric groups only implies that for alternating groups modulo neutral classes.

To establish Relation~(\ref{coprod_charged}) we apply Theorem~\ref{T:cellular_coprod} and 
for convenience we let $\overline{\Delta} = \bigoplus_{i,j>0} \Delta_{i,j}.$ 
 %first show that for $\ell > 2$, $\overline{\Delta} \gamma_{\ell, 1}^\pm \in \text{Im}(\delta)$. 
For the $m=1$ case, we consider the cochain level representative for $\gamma_{\ell,1}^\pm$,  given in \refP{cocycle_reps},
  $$\Gamma_{\ell,1}^\pm = [1, 1, \dots, 1]^\pm + [2, 0, 1, 1, \dots, 1]^o,$$
 The first term has trivial coproduct by Theorem~\ref{T:cellular_coprod}, 
 while the second decomposes as $[2]^o \otimes [1,1,\dots 1]^o$, which is the coboundary of $[1]^+ \otimes [1,1,\dots, 1]^o$.

%Building on this we can establish that 
%$\Delta \gamma_{\ell, m}^+ = \sum_{i+j = m} \left(\gamma_{\ell, i}^+ \otimes \gamma_{\ell,j}^+ + \gamma_{\ell, i}^- \otimes \gamma_{\ell,j}^-\right) $.

To establish the general case, we require two new types of FN cochains. Let $\sigma_{\ell, m}(p; r)^o$ and $\tau_{\ell, m}(i,j)^o$ each be the sum of positive 
and negative cochains so that
\begin{itemize}
  \item $\sigma_{\ell, m}(p; r)^\pm$ have $(m+1)$ blocks, the $p$th of which is a sequence of $r$ ones, and
  \item $\tau_{\ell, m}(p, q; r, s)^\pm$ have $(m+2)$ blocks, the $p$th of which is a sequence of $r$ ones, the $q$th of which is a sequence of $s$ ones,
\end{itemize}
and all other blocks in each consisting of sequences of $2^\ell -1$ ones. 
Thus, for example, $\sigma_{2, 3}(1; 2)^o = [1, 1, 0, 1, 1, 1, 0, 1, 1, 1]^o$ and $\tau_{3, 3}(1, 3; 2, 1) = [1, 1, 0, 1, 1, 1, 1, 1, 1, 1, 0, 1]^o$.

We apply Theorem \ref{T:cellular_coprod}. As required, 
$$\overline\Delta(\alpha_{\ell, m}^+) = \sum_{i+j=m} \left(\alpha_{\ell,i}^+ \otimes \alpha_{\ell,j}^+ + \alpha_{\ell,i}^- \otimes \alpha_{\ell,j}^-\right).$$
However $\overline{\Delta}(\beta_{\ell,m}^o)$ contains several additional terms, as follows
\begin{multline*}
%  \begin{split}
    \overline{\Delta} (\beta_{\ell,m}(p,q)^o) = \sum_{\substack{i+j = m\\1\leq i < p}} \alpha_{\ell,i}^o \otimes \beta_{\ell,j}(p-i, q-i)^o  
    + \sum_{\substack{i+j = m\\q \leq i \leq m}} \beta_{\ell,i}(p,q)^o \otimes \alpha_{\ell,j}^o\\
    + \sum_{\substack{i+j = m\\p \leq i < q}} \sigma_{\ell,i}(p; 1)\langle(p-1)2^\ell + 1\rangle^o \otimes \sigma_{\ell,j}(q-i; 2^\ell-3)^o.
 % \end{split}
\end{multline*}
%\begin{equation*}
 % \begin{split}
 %   \overline{\Delta} (\beta_{\ell,m}(p,q)^o) &= \sum_{\substack{i+j = m\\1\leq i < p}} \alpha_{\ell,i}^o \otimes \beta_{\ell,j}(p-i, q-i)^o  + \sum_{\substack{i+j = m\\q \leq i \leq m}} \beta_{\ell,i}(p,q)^o \otimes \alpha_{\ell,j}^o\\&+ \sum_{\substack{i+j = m\\p \leq i < q}} \sigma_{\ell,i}(p; 1)\langle(p-1)2^\ell + 1\rangle^o \otimes \sigma_{\ell,j}(q-i; 2^\ell-3)^o.
 % \end{split}
%\end{equation*}
For example, 
\begin{equation*}
  \begin{split}
    \overline\Delta(\beta_{2, 4}(2, 4)^o) &= [1,1,1]^o\otimes [2, 0, 1, 1, 1, 0, 1]^o + [1,1,1,0,2]^o \otimes [1, 1, 1, 0, 1]^o + [1,1,1,0,2,0, 1, 1, 1]^o \otimes [1]^o \\
    &=\alpha_{2,1}^o\otimes \beta_{2,3}(1,3)^o + \sigma_{2,2}(2;1)\langle 2 \rangle^o \otimes \sigma_{2,2}(2;1)^o + \sigma_{2,3}(2;1)\langle 2 \rangle^o \otimes \sigma_{2,1}(1;1)^o.
  \end{split}
\end{equation*}

To obtain the desired relation on cohomology, we produce cochains whose coboundary are precisely these $\sigma$ terms.

We first compute $\delta(\tau_{\ell, m+1}(p, q; 1, 2^\ell-5)^o)$ for $p < q$.  Let $\kappa_{a,b}$ be the Kronecker delta function and 
$\overline{\kappa_{a,b}} = 1 - \kappa_{a,b}$.
Applying Lemmas~\ref{FNlemma} and \ref{FNlemma2}, and using the fact that when $\ell > 2$, $|\text{Sh}_+(2^\ell-4, 2)|$ is odd and 
$|\text{Sh}_-(2^\ell-4, 2)|$ is even,  we have that the coboundary of $\tau_{\ell, m+1}(p,q; 1, 2^\ell-5)^o$ is equal to 
\begin{equation*}
 \overline{\kappa_{p,1}} \tau_{\ell, m}(p-1, q-1; 2^\ell+1, 2^\ell-5)^o + \sigma_{\ell, m}(p; 2^\ell-3)^o 
  				+ \overline{\kappa_{q,m}} \tau_{\ell, m}(p, q; 1, 2^{\ell+1}-5)^o,
\end{equation*}
if $q = p+1$.  Otherwise if $q < p+1$, it is equal to 
\begin{multline*}	
     \overline{\kappa_{p,1}} \tau_{\ell, m}(p-1, q-1; 2^\ell+1, 2^\ell-5)^o  + \tau_{\ell, m}(p, q-1; 2^\ell+1, 2^\ell-5)^o 
      			\\ + \tau_{\ell, m}(p, q-1; 1, 2^{\ell+1}-5)^o  + \overline{\kappa_{q,m}} \tau_{\ell, m-1}(p, q; 1, 2^{\ell+1}-5)^o.
\end{multline*}
Summing over $p$ and $q$ the resulting $\tau$ terms in the boundary telescope, and we have
$$\delta\left(\sum_{1 \leq p < q \leq m+1}\tau_{\ell, m+1}(p,q; 1, 2^\ell-5)^o\right) = \sum_{p=1}^m\sigma_{\ell, m}(p; 2^\ell-3)^o.$$
Using the same techniques,
\begin{equation*}
  \begin{split}
    \delta\left(\sigma_{\ell, m}(p; 1)\langle (p-1)2^\ell + 1 \rangle^o\right)  &=  
    		\overline{\kappa_{p,1}} \sum_{k=1}^{2^\ell+1}\sigma_{\ell, m-1}(p-1;2^\ell+1)\langle (p-2)2^\ell+k \rangle\\& 
		+ \overline{\kappa_{p,m}} \sum_{k=1}^{2^\ell+1}\sigma_{\ell, m-1}(p;2^\ell+1)\langle (p-1)2^\ell+k \rangle.
  \end{split}
  \end{equation*}
  These terms again telescope as we sum over the index $p$, so $\delta(\sum_{p=1}^m\sigma_{\ell, m}(p;1)\langle(p-1)2^\ell +1\rangle^o) = 0.$
  
  Finally, let 
  $$ \theta = \sum_{1 \leq p \leq i} \sigma_{\ell, i}(p;1)\langle(p-1) 2^\ell +1\rangle^o \otimes 
    									 \sum_{i+1 \leq s < q \leq j+1}\tau_{\ell, j+1}(s-i,q-i; 1, 2^\ell-5)^o.$$
									
Fixing indices $i$ and $j$ with $i + j = m$ and summing over pairs $p$ and $q$ which produce 
  $\sigma$ terms in the the coproduct computation for $\beta_{\ell, m}^o$, we have by the Leibniz rule that 
  \begin{equation*}\label{coboundingcochain}
%    \begin{aligned}
    \delta \theta 
  %  &=\delta\left(\sum_{1 \leq p \leq i} \sigma_{\ell, i}(p;1)\langle(p-1)2^\ell +1\rangle^o\right) \otimes \sum_{i+1 \leq s < q \leq j+1}\tau_{\ell, j+1}(s-i,q-i; 1, 2^\ell-5)^o  \\
  %  &\;\;\;\;+\sum_{1 \leq p \leq i} \sigma_{\ell, i}(p;1)\langle(p-1)2^\ell +1\rangle^o \otimes \delta\left(\sum_{i+1 \leq s < q \leq j+1}\tau_{\ell, j+1}(s-i,q-i; 1, 2^\ell-5)^o\right) \\
    = \sum_{1 \leq p \leq i} \sigma_{\ell, i}(p;1)\langle(p-1)2^\ell +1\rangle^o \otimes \sum_{1 \leq  q  \leq j}\sigma_{\ell, j}(q-i; 2^\ell-3)^o. \\
%    \end{aligned}
  \end{equation*}

We conclude that
\begin{equation*}
  \begin{split}
    \overline{\Delta} (\beta_{\ell,m}^o) &= \sum_{i+j = m} \alpha_{\ell,i}^o \otimes \beta_{\ell,j}^o  + \beta_{\ell,i}^o \otimes \alpha_{\ell,j}^o + \delta\theta,
  \end{split}
\end{equation*}

and thus
\begin{equation*}
  \begin{split}
    \overline{\Delta} \Gamma_{\ell, m}^+ %&= \overline{\Delta}(\alpha_{\ell, m}^+ + \beta_{\ell, m}^o)\\
    & = \sum_{i+j=m} \left(\alpha_{\ell,i}^+ \otimes \alpha_{\ell,j}^+ + \alpha_{\ell,i}^- \otimes \alpha_{\ell,j}^-\right) + \left(\alpha_{\ell,i}^o \otimes \beta_{\ell,j}^o  + \beta_{\ell,i}^o \otimes \alpha_{\ell,j}^o\right) + \delta\theta\\
%    & = \sum_{i+j=m} \left(\alpha_{\ell,i}^+ \otimes \alpha_{\ell,j}^+ + \alpha_{\ell,i}^- \otimes \alpha_{\ell,j}^- + \alpha_{\ell,i}^o \otimes \beta_{\ell,j}^o  + \beta_{\ell,i}^o \otimes \alpha_{\ell,j}^o + 2 \cdot \beta_{\ell,i}^o \otimes \beta_{\ell,j}^o\right) + \delta\omega \\
%    & = \sum_{i+j=m}\left(\alpha_{\ell,i}^+ \otimes \alpha_{\ell,j}^+ + \alpha_{\ell,i}^+ \otimes \beta_{\ell,j}^o + \beta_{\ell,i}^o \otimes \alpha_{\ell,j}^+ + \beta_{\ell,i}^o \otimes \beta_{\ell,j}^o\right.\\
%    & \qquad\qquad\qquad\left.+ \alpha_{\ell,j}^- \otimes \alpha_{\ell,j}^- + \alpha_{\ell,i}^- \otimes \beta_{\ell,j}^o + \beta_{\ell,i}^o \otimes \alpha_{\ell,j}^- + \beta_{\ell,i}^o \otimes \beta_{\ell,j}^o\right) + \delta\omega\\
    & = \sum_{i+j=m}\left((\alpha_{\ell,i}^+ + \beta_{\ell,i}^o) \otimes (\alpha_{\ell,j}^+ + \beta_{\ell,j}^o) + (\alpha_{\ell,i}^- + \beta_{\ell,i}^o) \otimes (\alpha_{\ell,j}^- + \beta_{\ell,j}^o)\right) + \delta\theta\\
    & = \sum_{i+j = m} \left(\Gamma_{\ell, i}^+ \otimes \Gamma_{\ell,j}^+ + \Gamma_{\ell, i}^- \otimes \Gamma_{\ell,j}^-\right) + \delta\theta.
    \end{split}
  \end{equation*}
  
\subsection{Transfer product relations}  

Most transfer product relations are immediate from results in Sections~\ref{almosthopf}~and~\ref{SymmGroupSection}.
Recall that by Definition~\ref{extended}
the transfer product with $1^-$ is the conjugation on the cohomology of $B\alt_n$ as a two-sheeted over over $B\si_n$.
Relation \eqref{rel_neg_transfer_neg} is just a rephrasing of the fact that this conjugation is an involution.
Relation \eqref{rel_pos_plus_neg} expresses the fact that $\gamma_{1,k;m}$ is fixed under conjugation, as it is
pulled back from the cohomology of symmetric groups.   
Relation \eqref{rel_neutral_transfer_neutral}   follows from   Proposition~\ref{transferneutral}.

 What requires further  argument is Relation \eqref{rel_gamma_transfer}, which
we recall is that   $\gamma_{\ell, m}^+ \odot \gamma_{\ell, n}^+ = {m+n \choose n} \gamma_{\ell, m+n}^+.$
We explicitly compute with  Fox-Neuwirth cochains from Definition \ref{gammaFNcochains}, with 
\begin{equation*}
\begin{split}
\Gamma_{\ell, m}^+ \odot \Gamma_{\ell,n}^+&= \left(\alpha^+_{\ell, m} + \beta_{\ell, m}^o\right) \odot \left(\alpha^+_{\ell, n} + \beta_{\ell, n}^o\right)\\ 
&=\alpha^+_{\ell, m} \odot \alpha^+_{\ell,n}
\;+\; \alpha^+_{\ell,m} \odot \beta_{\ell, n}^o
\;+\; \beta_{\ell, m}^o \odot \alpha^+_{\ell,n}
\;+ \; \beta_{\ell, m}^o\odot \beta_{\ell, n}^o
\end{split}
\end{equation*}

Applying Theorem~\ref{T:cellular_transfer_product},
the first term is given by the sum of the shuffles of the zero-blocks of 
$\alpha_{\ell,m}^+$ and $\alpha_{\ell,n}^+$, each of which results in a copy of $\alpha_{\ell, m+n}^+$, 
so $\alpha^+_{\ell, m} \odot \alpha^+_{\ell,n} = {m + n \choose n}\alpha_{\ell m+n}^+.$

The second and third terms consist of cochains obtained by shuffling an additional $m$ blocks of $2^\ell-1$ 
ones into the blocks of cochains of the form $\beta_{\ell, n}(i,j)^o$. These shuffles preserve  charge, 
as the associated permutation at the level of labeled configurations is even, so the transfer product produces 
cochains of the form $\beta_{\ell, m+n}(i', j')^o$. To compute  coefficients, consider  
the entire product $\alpha_{\ell, m}^+ \odot \beta_{\ell, n}^o$ at once. 
%The terms of the form $\beta_{\ell, m+n}(i', j')^o$ that appear in this product correspond to shuffles of the $m$ 
%blocks from $\alpha_{\ell, m}^+$ with the $(n-1)$ identical blocks from some $\beta_{\ell, n}(i,j)^o$.
For any $i'< j'$ and $(m, n-1)$-shuffle there is a choice of $i < j$ so that the resulting term in the product is $\beta_{\ell, m+n}(i', j')$: 
simply ``unshuffle'' to determine what $i$ and $j$ must be. 
Thus, $\alpha^+_{\ell,m} \odot \beta_{\ell, n}^o = {m+(n-1) \choose n-1} \beta_{\ell, m+n}^o$, and similarly for the third term.

%The third term proceeds analogously, with $\beta_{\ell, m}^o \odot \alpha^+_{\ell,n} = {(m-1)+n \choose m-1} \beta_{\ell, m+n}^o$. 

The final term $\beta_{\ell, m}^o\odot \beta_{\ell, n}^o$ is the transfer product of two neutral cochains, and thus is zero by Corollary~\ref{C:transfercancel}.

Combining these, we have
\begin{equation*}
\begin{split}
\Gamma_{\ell, m}^+ \odot \Gamma_{\ell,n}^+&= {m+n \choose n} \alpha^+_{\ell, m+n} 
\;+\; {m+(n-1) \choose n-1} \beta_{\ell, m+n}^o
\;+\; {(m-1)+n \choose m-1} \beta_{\ell, m+n}^o
%&= {m+n \choose n} \alpha^+_{\ell, m+n} \;+\; {m+n \choose n} \beta_{\ell, m+n}^o 
= {m+n \choose n} \Gamma^+_{\ell, m+n},
\end{split}
\end{equation*}
which implies the result in cohomology.

\subsection{Cup product relations}\label{cuprelations}

To establish cup product relations we apply our main detection result, with coproducts are among the detection homomorphisms.  
We start with Relation~(\ref{rel_pos_cup_neg_general}) and the case of the product $\gamma_{n,1}^+ \cdot \gamma_{k, 2^{n-k}}^-$ for $n>2$.
In the proof of Proposition~\ref{P:cup_monos_vn}, we showed that $\gamma_{n,1}^+$ mapped to zero on $V_{n}^-$ and that $\gamma_{k, 2^{n-k}}^-$
mapped to zero on $V_{n}^+$.  Thus their product maps to zero on both of these subgroups.  By Relation~(\ref{coprod_charged}),
the  coproduct of $\gamma_{n,1}^+$ is trivial, and as $n>2$  
the restriction of both classes to $AV_{1,2^{n-1}}$ is zero by Theorem~\ref{cupmonotoAV}.  Because $\gamma_{n,1}^+ \gamma_{k,2^{n-k}}^-$ 
restricts to zero on $V_n^\pm$, $\alt_I$ and $AV_{1,2^{n-1}}$, it 
is zero by Theorem~\ref{detection}.  

The argument extends inductively
for $\gamma_{\ell,m}^+ \cdot \gamma_{k, n}^-$ more generally, where the restrictions to $\alt_I$ are 
calculated by repeated application of 
Relation~(\ref{coprod_charged}).  As $(\cdot, \Delta)$ form a bialgebra, this will be zero by induction.   The rest of the argument applies to
see restrictions to $AV_{1,m2^{\ell-1}}$ and, if applicable, $V_{\ell+p}^\pm$ (where $p = \log_2(m)$) are zero in order to apply Theorem~\ref{detection}.

\bigskip

The first case, $m = 1$, of Relation~(\ref{rel_pos_cup_neg_three_block}) is established in Proposition~\ref{first_gamma2_relation}. 
We prove the other cases by induction. %using our coproduct and detection results.  
%The coproduct of $\gamma_{2,m}^+$ is   
%$\sum \gamma_{2,i}^+ \otimes \gamma_{2,m-i}^+  \; + \; \gamma_{2,i}^- \otimes \gamma_{2,m-i}^-$,  and the coproduct of $\gamma_{2,m}^-$ is
%  $\sum \gamma_{2,i}^+ \otimes \gamma_{2,m-i}^-  \; + \; \gamma_{2,i}^- \otimes \gamma_{2,m-i}^+$.
Let $\gamma_{2,i}^o = \gamma_{2,i}^+ + \gamma_{2,i}^-$.
Multiplying the coproduct of $\gamma_{2,m}^+$  with that of $\gamma_{2,m}^-$ and 
using Relation~(\ref{rel_pos_cup_neg_three_block}) inductively, when $m$ is odd we get the sum  
  $$(\gamma_{2,i}^o)^2 \otimes \left({\gamma_{2,m-i}^o}^2 + {\gamma_{2,m-i-1}^+}^2 \odot {\gamma_{1,2}}^3 \right) + 
    \left({\gamma_{2,i}^o}^2 + {\gamma_{2,i-1}^+}^2 \odot {\gamma_{1,2}}^3 \right) \otimes (\gamma_{2,m-i}^o)^2.$$
And when $m$ is even we get 
  $$(\gamma_{2,i}^o)^2 \otimes \left({\gamma_{2,m-i-1}^+}^2 \odot {\gamma_{1,2}}^3 \right) + 
     \left({\gamma_{2,i-1}^+}^2 \odot {\gamma_{1,2}}^3 \right) \otimes (\gamma_{2,m-i}^o)^2,$$
a key point being that terms $(\gamma_{2,i}^o)^2 \otimes (\gamma_{2,m-i}^o)^2$ cancel when both $i$ and $m-i$ are odd.
These agree with the corresponding coproducts of the right hand side of Relation~(\ref{rel_pos_cup_neg_three_block}).  
 
 We next show that the restriction to $AV_{1,2m}$ of both sides of Relation~(\ref{rel_pos_cup_neg_three_block}) are zero when $m \geq 2$.  
 The vanishing of the restriction of the left-hand side is immediate from Theorem~\ref{cupmonotoAV}.   For 
 $\left({\gamma_{2,m-i-1}^+}^2 \odot {\gamma_{1,2}^o}^3 \right)$, we  first show that 
 in general $(\gamma_{2,n}^+)^2$
 has Fox-Neuwirth representative which is the sum of $\alpha_{2,m}(2)$, which has $m$ blocks of 
three repeated $2$'s, with $\sum_{i=1...m} [2,2,2,0,...,0,3,1,2,0,...,0,2,2,2]^o,$ 
 where the $i$th block of the $i$th therm is  $[3,1,2]$, modulo further potential neutral terms.
 
 That these are cocycles is straightforward. We show in Proposition~\ref{firstcup} that ${\gamma_{2,m}}^2$ is represented by $\alpha_{2,m}(2)$.
 We deduce from the Gysin sequence ${\gamma_{2,n}^+}^2 + {\gamma_{2,n}^-}^2$ is represented by $\alpha_{2,m}(2)^o$.  
 Since ${\gamma_{2,n}^+}^2$ and ${\gamma_{2,n}^-}^2$
 are related by conjugation, they must be of the form $\alpha_{2,m}(2)^\pm$ plus neutral terms.  These terms are as given, but their form
 is immaterial because we are taking the transfer product with ${\gamma_{1,2;2}}^3$,  so all neutral terms will contribute zero to
 the transfer product by Corollary~\ref{C:transfercancel}.   Now applying Theorems~\ref{T:cellular_transfer_product}   we see that the
 remaining terms are shuffles of $[2,2,2]$ blocks and $[3]$ blocks, which  will restrict trivially to $AV_{1,2m}$ by Theorem~\ref{AV_vanish}
 because of the consecutive $2$'s.
 
 Finally, when $2m$ is a power of two which is greater than four we claim that
 both sides of Relation~(\ref{rel_pos_cup_neg_three_block}) restrict to zero on $V_n^\pm$. 
 Each factor in the left hand side is zero on one of these subgroups as in our proof of Relation~(\ref{rel_pos_cup_neg_general}), 
 and the right-hand-side is a non-trivial transfer product so  Theorem~\ref{T:transfer_vn} applies.
 
 \bigskip
 
Relation~\eqref{rel_pos_cup_neutral} is akin to a relation in the symmetric groups setting which follows from Hopf semi-ring distributivity, but
needs to be addressed on its own here because the $\gamma_{1,k;n}$ are $\odot$-indecomposible. The proof is similar to that of Relation~(\ref{rel_pos_cup_neg_three_block}). 
%showing agreement under restriction to our detecting subgroups.  
The coproduct calculation is 
straightforward, and the restriction of both sides to $V_n^\pm$ is zero by Theorems~\ref{decomptozeroinVn}~and~\ref{T:transfer_vn}.
The  left-hand side of the relation restricts to zero on $AV_{1,m}$, by Theorem~\ref{cupmonotoAV}.  For the right-hand-side, we  compute a Fox-Neuwirth 
representative of $ (\gamma_{\ell, q}^+ \cdot \gamma_{1, k} ) \odot \gamma_{\ell, m - q} $.
We show in Proposition~\ref{firstcup} that a representative of $\gamma_{\ell, q}^+ \cdot \gamma_{1, k}$ is $\alpha_{\ell,m}(1.5)$, to be defined
in the Appendix,
which has $m$ blocks of the form 
$[2,1,2,...,1,2]$, each of length $2^\ell - 1$.
Thus a representative for 
$\gamma_{\ell,q}^+ \cdot \gamma_{1,k}$ is 
$$\alpha_{\ell,m}(1.5)^+ +  \sum_{1 \leq i < j < m+1} B_m(i,j)^o,$$ where $B_m(i,j)$ also has $[2,1,2,...,2]$ blocks along with a $[3]$ block 
and a $[2]$ block.  The resulting transfer products will have  $[2,1,2,...,2]$ or  $[1,1,...,1]$ blocks (or both), of length at least three, and thus
restrict trivially to $AV_{1,2m}$ by Theorem~\ref{AV_vanish}.

\subsection{Completeness of relations}

In the course of proving Theorems~\ref{T:Ba}~and~\ref{T:Bo}, we found an additive basis of Hopf semi-ring monomials,
namely those of the form $1^\pm \odot m_1 \odot \cdots \odot m_p \odot m_\nu$ where each $m_i$ is a 
monomial in the $\gamma_{\ell, 2^k}^+$ and $\gamma_{1,2^{k+\ell}}^o$, or a chosen representative monomial in the $\gamma_{2,1}^\pm$
and $\gamma_{1,2}^o$ (see Remark~\ref{NoGoodBasis}), and where $m_\nu$ is a monomial in the of $\gamma_{1,k;m}^o$, possibly empty or 
with $m = 0$.  If $m_\nu$ has $m>0$ (including if empty which means equal to  $1_m^o$) then $1^\pm$ is chosen to be $1^+$.

Hopf semi-ring distributivity provides all that is needed to reduce to the chosen Hopf monomial basis in the symmetric groups setting, since 
the ``building block'' cup product algebras are polynomial.  Here we must show that an arbitrary Hopf monomial can be further
reduced.  By Relation~(\ref{rel_gamma_transfer}), we can focus on cup monomials of
width powers of two, except for cup monomials in only the $\gamma_{1,k;m}^o$, of which there can be at most one by 
Relation~(\ref{rel_neutral_transfer_neutral}).  

Relation~(\ref{rel_pos_cup_neutral}) can be applied so that in any monomials with both $\gamma_{1,m;2^{k+\ell}}^o$ (with $m < 2^{k + \ell}$) 
and $\gamma_{\ell, 2^k}^+$ can be reduced to transfer products of monomials where only $\gamma_{1,2^p; 2^p}$ occur, applying
Relation~(\ref{rel_gamma_transfer}) as needed.  Relations~(\ref{rel_pos_cup_neg_general})~and~(\ref{rel_pos_cup_neg_three_block})
can be applied to reduce to  monomials with all generators  of width greater than two purely positive or negative.  Here we are applying 
Relation~(\ref{rel_pos_cup_neg_three_block}) only in the $m$ even setting where additional ``mixed'' terms cannot arise,
except ultimately for $m=1$.   By Proposition~\ref{first_gamma2_relation},  Relation~(\ref{rel_pos_cup_neg_three_block}) when $m=1$ suffices to 
reduce any width-two monomial. 
We can ``factor out'' the $1^- \odot$ from monomials in negative generators by Hopf semi-ring 
distributivity, and then by Relation~(\ref{rel_neg_transfer_neg}) there will ultimately be either  $1^+$ or $1^-$ multiplying monomials
in only positive or neutral generators.  By Relation~(\ref{rel_pos_plus_neg}), we may assume this is $1^+$ if there is a cup product
monomial in the $\gamma_{1,k;m}$.

%%% Local Variables:
%%% TeX-master: "AltGroupsModTwo.V3.tex"
%%% End: % Main Result
% !TEX root = AltGroupsModTwo.V3.tex

\section{Steenrod action}

The action of the Steenrod algebra on the mod-two cohomology of alternating groups also parallels that of symmetric groups.
Because transfers are stable maps, they preserve Steenrod squares, so the external Cartan formula for Steenrod operations
yields one for the $\odot$-product.  
The Steenrod algebra structure on
$H^*(B\alt_\bullet)$ is thus determined by 
the action on Hopf semi-ring generators. 

The description of Steenrod operations on most of our Hopf semi-ring generators parallels that given in Defintion~8.2 and Theorem~8.3 of \cite{GSS12}, but
we translate that language of outgrowth monomials into more explicit formulae using partitions, which  better serve in accounting for  irregularities. 
We also elaborate a needed restriction on level which we neglected to explicitly list in \cite{GSS12}.

First recall from Chapter~6 of \cite{AdMi94} that restriction to $V_n$ takes image in the classical Dickson invariants,
$D_n = \F_2[x_1, \ldots, x_n]^{GL_n(\F_2)}$.  These rings are polynomial, and starting with the cohomology of $\R P^\infty$ when $n=1$ they 
support rich Steenrod structure.  We reformulate Hu'ng's calculation 
 \cite{Hung91} of Steenrod action, starting with notation for Dickson invariants which are not standard in the literature
namely letting $d_{\ell} \in D_n$ (or alternatively, when needing to resolve ambiguity, $d_{\ell,n}$)
be the generator in degree $2^{k}(2^\ell - 1)$. With this convention, $d_{\ell}$  is the image of $\gamma_{\ell, 2^k}$  with $k + \ell = n$
under restriction to $V_n$.
We allow $\ell = 0$
in which case $d_ {0} = 1$, the unit class.  

With $n$ fixed, given $\ell$ let $p_\ell$ be the set of inferior and superior pairs  $(\ell_i, \ell_s)$,  with $0 \leq \ell_i \leq \ell \leq \ell_s \leq n$,
and if $\ell_i = \ell$ then $\ell_s$ must equal $\ell$ as well.
Steenrod action is given by the total square
$$\sq( d_{\ell}) = \sum_{(\ell_i, \ell_s) \in p_\ell} d_{\ell_s} d_{\ell_i}.$$
  Thus  Steenrod operations  raise algebraic degree by one,
except in cases with $\ell_i = 0$, in which case we see Steenrod operations binding all of the Dickson generators.
 We  track  this data and resulting degrees through the following.
 
\begin{definition}
An SD (Steenrod-Dickson) sequence is a sequence of four non-negative integers 
$ {\sigma} = (\ell_i, \ell, \ell_s, n)$, so that either $\ell_i < \ell < \ell_s \leq n $ or $\ell_i \leq \ell = \ell_s \leq n$.

Define the degree of such a sequence to be 
% Old: $s( {\sigma}) = (2^{\ell_s} + 2^{\ell_2-\ell}) - (2^{\ell_s - \ell_i} + 1)$ and the width to be $w(\sigma) = 2^{\ell_i - \ell}$.  
$s( {\sigma}) = (2^{n} + 2^{n-\ell}) - (2^{n - \ell_i} + 2^{n - \ell_s})$ and the width to be $w(\sigma) = 2^{n - 1}$.  
The number $\ell$ is the level of the sequence.
\end{definition}

 We rephrase the Steenrod action on Dickson generators as $\sq^j d_\ell = d_{\ell_s} d_{\ell_i}$ in $D_n$ if $j$ is
 equal to $s( {\sigma})$ for some SD sequence $ {\sigma} = (\ell_i, \ell, \ell_s, n)$, and $\sq^j d_\ell$ is zero otherwise.

\begin{definition}\label{bipartition}
For $\ell \geq 3$, a level-$\ell$ bi-partition $\pi$ of $(j,m)$ is a set $S$ of distinct level-$\ell$ SD sequences
such that % equality in $\N \oplus \N$ 
$$(j,m 2^{\ell-1}) =   \sum_{ {\sigma} \in S}   \left(s( {\sigma}), w(\sigma) \right).$$ 
%where 
%the sum is over a set of distinct
%SD sequences with $\ell$ the given level. % and the  $c_{ {\sigma}}$ are nonnegative integers.
%For a level-$k$ bi-partition $\Pi$, we let $c_{p, \ell'}(\Pi)$, or just $c_{p, \ell'}$ when $\Pi$ is understood, be the coefficient of $ {x}_{p,\ell'}$.
\end{definition}

%Compatibility of Steenrod operations with coproduct means that there is a variant of the Cartan formula over
%the ``$m$ copies of $\gamma_{\ell,1}^+$ in $\gamma_{\ell,m}^+$''.
In the following,  let 
$\gamma_{0,2m}^+$ be the unit class in the cohomology of $\alt_{2m}$ and any
$\gamma_{\ell, 0}^+$ be $1^+$.  By abuse interpret $\gamma_{1,2^n}^+$ as $\gamma_{1,2^n; 2^n}$.

\begin{theorem}\label{T:steen}
For $\ell \geq 3$, $\sq^{j} \gamma_{\ell,m}^+$ is the sum over all level-$\ell$ bi-partitions $\pi$ of $(j,m)$ of Hopf ring monomials $h_\pi$,
where $h_\pi$ is the transfer product over all $\sigma$ of  
$ \gamma^+_{\ell_s,  2^{n-\ell_s}} \gamma^+_{\ell_i, 2^{n-\ell_i}}$.   
\end{theorem}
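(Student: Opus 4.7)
\textbf{The plan is} to mirror the strategy used for the symmetric-group version in Theorem~8.3 of \cite{GSS12}: combine the fact that transfers commute with Steenrod squares with detection via \refT{detection}. First I would apply transfer. Since $\text{tr}:H^*(B\alt_\bullet)\to H^*(B\si_\bullet)$ is a stable map, $\text{tr}(\sq^j\gamma_{\ell,m}^+)=\sq^j\gamma_{\ell,m}$, which by Theorem~8.3 of \cite{GSS12} equals $\sum_\pi \bar h_\pi$, where $\bar h_\pi$ is the symmetric-group Hopf monomial obtained from $h_\pi$ by deleting charges. By the construction in the proof of \refT{Ba}, $\text{tr}(h_\pi)=\bar h_\pi$ modulo transfer-decomposables with strictly finer $\odot$-partition, and an induction on partition refinement yields $\text{tr}(\sum_\pi h_\pi)=\sum_\pi \bar h_\pi$. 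Hence $\sq^j\gamma_{\ell,m}^+ - \sum_\pi h_\pi$ lies in $\ker(\text{tr})$, which by the Gysin sequence equals the image of restriction from $H^*(B\si_{m 2^\ell})$; in particular the difference is a neutral class.

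Next I would apply the detection result to the neutral difference, checking vanishing on each subgroup. On $V_n^-$ (when $m 2^\ell=2^n$): by \refP{cup_monos_vn} the factors $\gamma^+_{\ell+p,\cdot}$ in each $h_\pi$ restrict to zero (using $\ell+p\geq 3$), as does $\gamma_{\ell,m}^+$, so $\sq^j\gamma_{\ell,m}^+$ and each $h_\pi$ vanish on $V_n^-$; neutrality together with the involution swapping $V_n^+\leftrightarrow V_n^-$ then forces vanishing on $V_n^+$ as well. On $AV_{1,m 2^{\ell-1}}$: since $\ell\geq 3$, the Fox–Neuwirth blocks of each generator appearing in $h_\pi$ are long enough for \refT{cupmonotoAV} to apply, and $\sq^j\gamma_{\ell,m}^+$ restricts to zero for the same reason, because restriction commutes with $\sq^j$. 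On each $\alt_I$: I would apply the Cartan formula $\Delta\circ\sq^j=\sum_{i+k=j}(\sq^i\otimes\sq^k)\circ\Delta$ together with Relation~\eqref{coprod_charged} for $\Delta\gamma_{\ell,m}^+$, reducing to smaller components by induction on $m$.

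The base case $m=1$ reduces, via the restriction $\gamma_{\ell,1}^+\mapsto d_{0,\ell}$ on $V_\ell^+$ from \refP{cup_monos_vn}, to the classical Wu-type formula for $\sq^j$ on the Dickson generator $d_{0,\ell}$. The main obstacle will be the inductive step on the coproduct: confirming that the bipartition sum $\sum_\pi h_\pi$ matches the output of the Cartan formula after polarization by $\rho^+$. This amounts to the charged analogue of the combinatorial identity underlying Theorem~8.3 of \cite{GSS12}, namely that bipartitions $\pi$ of $(j,m)$ split coherently along the sum decompositions arising from $\Delta\gamma_{\ell,m}^+$ and the Cartan formula for $\sq^j$. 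Crucially, Relation~\eqref{coprod_charged} produces only same-charge tensor pairings $+\otimes+$ and $-\otimes-$, so after applying the polarization operator of \refT{bialg} the combinatorics factor through exactly the uncharged symmetric-group identity, and the verification transplants from \cite{GSS12} with only clerical modifications to track the charges on each transfer-product factor.
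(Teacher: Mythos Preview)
Your approach is structurally the same as the paper's: both prove the formula by induction on $m$ using the detection result Theorem~\ref{detection}, with the coproduct check handled via the Cartan formula together with Relation~\eqref{coprod_charged}, the $AV$ check via the block-length argument underlying Theorem~\ref{cupmonotoAV}, and the base case $m=1$ via the Steenrod action on Dickson generators. The paper cites Hung~\cite{Hung91} for this last input (not a Wu-type formula), and invokes it again at every stage $m=2^k$ to verify the restriction to $V_n^+$ directly, after discarding the transfer-decomposable summands of $\sum_\pi h_\pi$ via Theorem~\ref{T:transfer_vn}.

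The one place you diverge is your attempt to avoid the direct $V_n^+$ check for $m>1$ by first establishing that the difference $\sq^j\gamma_{\ell,m}^+-\sum_\pi h_\pi$ is neutral and then combining vanishing on $V_n^-$ with the involution. This shortcut has a gap: the claim $\text{tr}(\sum_\pi h_\pi)=\sum_\pi\bar h_\pi$ is not what the proof of Theorem~\ref{T:Ba} provides. That argument yields $\text{tr}(m^+)=m$ only modulo $\odot$-decomposables of strictly finer $\odot$-partition, and your ``induction on partition refinement'' does not force the correction terms arising from different bipartitions $\pi$ to cancel. The remark after Theorem~\ref{T:Ba}, that cup monomials in generators with $\ell>2$ transfer exactly, does not cover $h_\pi$ with level-$2$ factors, which appear whenever $\ell'=\ell-2$. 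Since you already invoke Hung's Dickson computation for $m=1$ and the same computation (after passing to the quotient by transfer-decomposables via Theorem~\ref{T:transfer_vn}) handles $V_n^+$ for every $m=2^k$, the detour is both unnecessary and under-justified. A minor related point: your $V_n^-$ argument that ``the factors $\gamma^+_{\ell+p,\cdot}$ restrict to zero'' conflates cup and transfer structure; for genuinely $\odot$-decomposable $h_\pi$ you need Theorem~\ref{T:transfer_vn} rather than Proposition~\ref{P:cup_monos_vn}.
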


In short, by a variant of the Cartan formula arising from compatibility of Steenrod operations with coproduct,
each $\gamma_{\ell, 2^{n-\ell}}^+$ ``portion'' of $\gamma_{\ell, m}^+$ can be replaced by a product of two Hopf semi-ring generators
in $\alt_{2^n}$. The first can either be of level greater than $\ell$, in which case the second must have level smaller than $\ell$,
 or the first can have level $\ell$, in which case the second has level less than or equal to $\ell.$ 
In both cases the second generator could have level zero so that portion is  
replaced by a single generator.  In the formula above each $\odot$-summand, which is associated
to an SD sequence $\sigma$, is of width $w(\sigma)$ and has degree increased by $s(\sigma)$ relative to the 
corresponding $\gamma_{\ell, 2^{n-\ell}}^+$ ``portion'' of $\gamma_{\ell, m}^+$. %, with  total degree increase across all portions of $j$.one with the same or greater level 

Consider the example presented in Figure~\ref{F:steenrod}.  While this is an $\ell = 2$ example,
the formula for Steenrod operations given in Theorem~\ref{T:steen} agrees with the $\ell > 2$ case except 
for  the last term listed.   In the first term, the $\odot$-factor
of $ \gamma_{3,1}^+ \gamma_{1,4;4} $ arises, through coproduct and restriction to $V_3$, 
because of the Steenrod operation $\sq^5 d_2 = d_3 d_1$ in $D_3$.
%, whose corresponding the  SD sequence would be  $(1,2,3,3)$.  
In the second term, the $\sq^5$ is ``Cartan distributed'' as an $\sq^3$ on the first portion, corresponding to 
$\sq^3 d_2 = {d_2}^2 \in D_2$ %so the SD sequence is $(2,2,2,2)$, 
and an $\sq^2$ on the second portion, corresponding to 
$\sq^2 d_2 = {d_2} d_1 \in D_2$. %, whose SD sequence is $(1,2,2,2)$.  
For skyline diagrams, the area added is the degree of the Steenrod operation being applied, and that added
area is distributed according to the first coordinate of the bi-partition.

After equating these Hopf semi-ring monomials coming from level-$\ell$ bi-partitions with outgrowth monomials, this
statement implies a corrected version Theorem~8.3 of \cite{GSS12} -- clarifying restrictions on the levels of pairs 
of Hopf ring generators which occur
-- by applying the transfer map.  
Conversely, the symmetric group
statement implies this up to the kernel of the transfer map, which are neutral classes.

\begin{proof}
The proof builds inductively from the Steenrod operations on Dickson algebras using the coproduct, which is  the 
proof  for symmetric groups given of Theorem~8.3 in \cite{GSS12}.  There we use a detection system established
by Madsen and Milgram \cite{MaMi79}, while here we use our detection Theorem~\ref{detection}.  

We start with $m = 1$, in which case the coproducts are trivial and 
the restriction to $AV_{1,2^n}$ of both sides of the equality are zero, as $\ell \geq 3$.  The restriction of $\gamma_{\ell,1}^+$ to $V_n$ is 
the corresponding Dickson class, and the sum given by level-$\ell$ bipartitions yields a single possibility,  which 
by construction coincides with the Steenrod action on the Dickson class  to which it maps.

For $m > 1$, we apply Relation~(\ref{coprod_charged}) of Theorem~\ref{T:presentation}. 
%$$ \Delta \gamma_{\ell, m}^+ = \sum_{i+k = m} \left(\gamma_{\ell,i}^+ \otimes \gamma_{\ell,k}^+ + \gamma_{\ell,i}^- \otimes \gamma_{\ell,k}^- \right).$$
Since coproduct also has a Cartan formula, we can inductively apply our present theorem.
The coproduct our formula for $\sq^{j} \gamma_{\ell,m}^+$ agrees with $\sq^j$ on the coproduct, as 
every level-$\ell$ bi-partition of $(j',i)$ and $(j'',k)$ with $j' + j'' = j$ and $i + k = m$ gives rise to a a level-$\ell$ bipartition
of $(j,m)$, whose corresponding term has the correct coproduct, and conversely.  
The restrictions to $AV_{1,2m}$ are trivial since we are taking transfer products
 of Hopf semi-ring monomials where each cup monomial has at least one Hopf semi-ring generator with $\ell \geq 3$.  
 For $m= 2^k$, the restriction to $V_n$ (where $n = \ell + k$) 
 maps transfer decomposibles to zero, so the restriction only depends on the bipartitions with a single term $(j,2^k)$, and the equality again follows
 from compatibility with Hung's calculation \cite{Hung91}.
   Theorem~\ref{detection} now establishes the induction step.
\end{proof}

While a corresponding statement holds for symmetric groups at all levels, for alternating groups we need modifications at the first two levels.

\begin{definition}
Add to the set of level-2 SD sequences the exceptional sequence $(1,2,1,2)$.    
A level-$2$ bi-partition of $(j,m)$ is  set $S$ of distinct level-2 SD sequences such that %an equality in $\N \oplus \N$ 
$$(j, 2 m) =   \sum_{ {\sigma} \in S}   (s( {\sigma}), w(\sigma)).$$ 
%an equality of $(j,m)$ with a sum of the $\vec{x}_{p, \ell'}$ from Definition~\ref{bipartition} for $\ell=2$,
%with arbitrary non-negative coefficients,  as well as $(1,1)$ with coefficient zero or one.
\end{definition}

\begin{theorem}
 $\sq^{j} \gamma_{2,m}^+$ is the sum over all level-$2$ bi-partitions $\pi$ of $(j,m)$ of Hopf ring monomials $h_\pi$,
where $h_\pi$ is the transfer product over all $\sigma$ of  
% Old: $\gamma^+_{\ell_i,c_{{\sigma}} \cdot 2^{\ell_s-\ell_i}} \gamma^+_{\ell_s, c_{{\sigma}}}$ 
$ \gamma^+_{\ell_s,  2^{n-\ell_s}} \gamma^+_{\ell_i,2^{n-\ell_i}}$ for standard SD sequences
along with a 
transfer product factor of ${\gamma_{1,2; 2}}^2$ if the exceptional sequence $(1,2,1,2)$ occurs.
\end{theorem}

\begin{figure}[tp]
\begin{center}
  \begin{tikzpicture}[line cap=round,line join=round,x=1.0cm,y=1.0cm, scale=0.5]

\draw [line width=1pt, color=black] (0,0) -- (4,0) -- (4,1.75) -- (0,1.75) -- cycle;
\draw [line width=1pt, color=black] (0,1.75) -- (4,1.75) -- (4,2.75) -- (0,2.75) -- cycle;
\draw [line width=1pt, dash pattern = on 3pt off 3pt, color=black] (1,1.75) -- (1,2.75);
\draw [line width=1pt, dash pattern = on 3pt off 3pt, color=black] (2,1.75) -- (2,2.75);
\draw [line width=1pt, dash pattern = on 3pt off 3pt, color=black] (3,1.75) -- (3,2.75);
\draw [line width=1pt, color=black] (4,0) -- (8,0) -- (8,1.5) -- (4,1.5) -- cycle;
\draw [line width=1pt, dash pattern = on 3pt off 3pt, color=black] (6,0) -- (6,1.5);

\node[align=center, scale=1] at (4,3.75) {$\gamma_{3,1}^+ \gamma_{1,4:4} \odot \gamma_{2,2}^+$};

\node[align=center, scale=0.7] at (0,-1) {$\sigma$ \\ ($s(\sigma), w(\sigma)$)};
\node[align=center, scale=0.7] at (2.5,-1) {(1,2,3,3) \\ (5, 4)};
\node[align=center, scale=0.7] at (6,-1) {(0,2,2,3) \\ (0, 4)};

\draw [line width=1pt, color=black] (10,0) -- (12,0) -- (12,3) -- (10,3) -- cycle;
\draw [line width=1pt, color=black] (10,1.5) -- (12,1.5);
\draw [line width=1pt, color=black] (12,0) -- (14,0) -- (14,1.5) -- (12,1.5) -- cycle;
\draw [line width=1pt, color=black] (12,1.5) -- (14,1.5) -- (14,2.5) -- (12,2.5) -- cycle;
\draw [line width=1pt, dash pattern = on 3pt off 3pt, color=black] (13,1.5) -- (13,2.5);
\draw [line width=1pt, color=black] (14,0) -- (18,0) -- (18,1.5) -- (14,1.5) -- cycle;
\draw [line width=1pt, dash pattern = on 3pt off 3pt, color=black] (16,0) -- (16,1.5);

\node[align=center, scale=1] at (14,3.75) {$\left(\gamma_{2,1}^+\right)^2\odot \gamma_{2,1}^+ \gamma_{1,2:2} \odot \gamma_{2,2}^+$};

\node[align=center, scale=0.7] at (11,-1) {(2,2,2,2)\\(3, 2)};
\node[align=center, scale=0.7] at (13,-1) {(1,2,2,2)\\ (2, 2)};
\node[align=center, scale=0.7] at (16,-1) {(0,2,2,3) \\(0, 4)};

\draw [line width=1pt, color=black] (20,0) -- (22,0) -- (22,3) -- (20,3) -- cycle;
\draw [line width=1pt, color=black] (20,1.5) -- (22,1.5);
\draw [line width=1pt, color=black] (22,0) -- (26,0) -- (26,1.75) -- (22,1.75) -- cycle;
\draw [line width=1pt, color=black] (26,0) -- (28,0) -- (28,1) -- (26,1) -- cycle;
\draw [line width=1pt, color=black] (26,1) -- (28,1) -- (28,2) -- (26,2) -- cycle;
\draw [line width=1pt, dash pattern = on 3pt off 3pt, color=black] (27,0) -- (27,2);

\node[align=center, scale=1] at (24,3.75) {$\left(\gamma_{2,1}^+\right)^2\odot \gamma_{3,1}^+ \odot \gamma_{1,2:2}^2$};

\node[align=center, scale=0.7] at (21,-1) {(2,2,2,2) \\(3, 2)};
\node[align=center, scale=0.7] at (24,-1) {(0,2,3,3) \\ (1, 4)};
\node[align=center, scale=0.7] at (27,-1.06) {(1,2,1,2) \\ (1, 2)};

\end{tikzpicture}
\caption{Skyline diagrams for the three summands of $Sq^5\gamma_{2,4}^+$. \\
For each diagram, each transfer product factor is labeled below by its corresponding 
SD sequence $\sigma = (\ell_i, \ell, \ell_s,n)$ and 
it contribution $(s(\sigma), w(\sigma))$ in the level-2 bipartition of $(5,4)$.%, except for  the $\gamma_{1,2:2}^2$ term. %, which is labeled only by its contribution to the bipartition.\\
}
\label{F:steenrod}
\end{center}
\end{figure}

\begin{proof}
Once again we use our detection theorem and  analysis of coproducts.  
Some exceptional behavior occurs, but propagates in a limited
way for similar reasons as in the behavior for cup products given in Relation~(\ref{rel_pos_cup_neg_three_block}) of Theorem~\ref{T:presentation}.

We treat $\gamma_{2,1}^+$  as in Theorem~\ref{A4Invariant} and 
Proposition~\ref{first_gamma2_relation}, through its restriction to the cohomology of $BV_2$, namely 
${x_1}^3 + {x_1}^2 x_2 + {x_2}^3 \in \F_2[x_1, x_2]^{C_3}$.    While $\sq^3$ is forced and $\sq^2$ is $\gamma_{2,1}^+
\gamma_{1,2}$ as expected, $\sq^1(\gamma_{2,1}^+)$ restricts to $x_1^4 + x_1^2 x_2^2 + x_2^4$, which is the image of $(\gamma_{1,2; 2})^2$.

As in the proof of Theorem \ref{T:steen}, we complete the proof by inductively showing that the sum indicated restricts appropriately to
the subgroups named in Theorem~\ref{detection}.  For $m > 1$ the restriction to $AV_{1,m \cdot 2^{\ell - 1}}$ of both $\gamma_{2,m}$ and
all of the terms which occur in the named sum  will be zero by Theorem~\ref{cupmonotoAV}.
Restriction to $V_n$ when $m = 2^k$ works as for $\ell > 2$, with transfer decomposibles restricting to zero and single-term bipartitions
giving rise to cup products of $\gamma_{\ell', 2^{k'}}$ which restrict to Hung's calculaltions in the corresponding Dickson algebras.

Induction is needed to check that the coproducts of the formula given for $\sq^j \gamma_{2,m}$ agree, after applying the Cartan formula
for coproduct, with the tensor product
of that for $\sq^i \gamma_{2,r}$ and $\sq^{i'} \gamma_{2,s}$ with $i + i' = j$ and $r + s = m$.  This is straightforward  except in seeing
 that terms with a transfer-factor of ${\gamma_{1,2}}^2$ on both tensor factors cancel in pairs.  This cancellation
 arises from the conjugation action on the coproduct,
as this term in $\sq^1 \gamma_{2,1}^+$ will also occur in $\sq^1 \gamma_{2,1}^-$.
Thus there can be at most one transfer product factor of ${\gamma_{1,2}}^2$ on both sides of the reduced form of the coproduct, 
inductively implying at most one transfer product factor of ${\gamma_{1,2}}^2$ for any terms in $\sq^j \gamma_{2,m}$.
\end{proof}

The Hopf semi-ring generators $\gamma_{1,k; m}$ are in a sense more regular than the $\gamma_{2,m}^+$ classes,
 as they are all restrictions from the cohomology of symmetric groups.  But our formula using level-$\ell$ bi-partitions or 
equivalently outgrowth monomials depends on transfer products, which are not preserved under the restriction map.  
To calculate Steenrod operations, we translate
between Hopf semi-ring presentation and presentation by cup product alone.  

For example, $\sq^1(\gamma_{1,2} \odot 1_n) = \gamma_{1,1}^2 \odot \gamma_{1,1} \odot 1_n + \gamma_{2,1} \odot 1_{n}$.
The  term $\gamma_{1,1}^2 \odot \gamma_{1,1} \odot 1_n$
is then equal to $(\gamma_{1,1} \odot 1_{n+1}) \cdot (\gamma_{1,2} \odot 1_n) + \gamma_{1,3} \odot 1_{n-1}$.  Thus for $m \geq 3$
 $$\sq^1(\gamma_{1,2;m}^o) =  \gamma_{1,3;m} + \gamma_{2,1}^+ \odot 1_{m-2}.$$

 The general case follows from classical work on the cohomology of $BO(n)$.  Note that
 $B(C_2)^n$ maps to both $BO(n)$ and $B\si_{2n}$ with Weyl group $\si_n$.  Thus both the cohomology of $BO(n)$ and $B\si_{2n}$ map to the ring
 of classical symmetric polynomials in $n$ variables, the former isomorphically.  Using the Cartan formula to compute Steenrod operations on $B(C_2)^n$
 yields symmetric monomials which can then be translated to elementary polynomials.  For example, $\sq^1 (\sigma_2)$ is the symmetrization of $x_1^2 x_2$,
 which if there are three or more variables is equal to $\sigma_1 \sigma_2 + \sigma_3$.  This shows that $\sq^1(w_2) = w_1 w_2 + w_3$, and 
 this formula for $\sq^1(\sigma_2)$ is also
 the image under restriction of the calculation of $\sq^1(\gamma_{1,2} \odot 1_n)$ above.   
 
 In general  Steenrod squares on $w_i \in H^*(BO(m))$, as first studied by Wu \cite{Wu50} and more recently Pengelley-Williams \cite{PeWi03},
  will be the image under scale-one quotient of the corresponding Steenrod squares
 on $\gamma_{1,i} \odot 1_{m-i}$, namely $\gamma_{1,j}^2 \odot \gamma_{1,i-j} \odot 1_{m-i}$.  This can be expressed as a polynomial in the
 $\gamma_{1,n} \odot 1_{m-n}$, which we then translate to the alternating groups setting.
 
  \begin{definition}
  Let $W(0,0;0) = 1^+ + 1^-$  and otherwise let $$W(i,j;m) = \sum_{l = 0}^{{\rm min}(j,m-i)}  \binom{i-j+l-1}{l} \gamma_{1,j-l;m} \gamma_{1,i+l;m}.$$
\end{definition}

If we replace all of the $\gamma_{1,k;m}$ by corresponding $w_k$ in $W(i,j)$, we obtain Wu's formula for $\sq^j w_i$ in the cohomology of $BO(m)$.

%Because the scale-one subset of the cohomology of symmetric groups maps isomorphically to the same symmetric algebra as that of $BO(m)$
%we obtain the following.

We take a ``gathered'' rather than binary approach to level-$1$ bi-partitions.

\begin{definition}
A level-$1$ bi-partition of $(j,k)$ is  a collection of non-negative integers $a, b, c_\ell$ such that 
$$(j, k) =   a \cdot (1,1) + b \cdot (0,1) + \sum_{\ell > 1} c_\ell (2^\ell - 1, 2^{\ell-1}).$$ 
%an equality of $(j,m)$ with a sum of the $\vec{x}_{p, \ell'}$ from Definition~\ref{bipartition} for $\ell=2$,
%with arbitrary non-negative coefficients,  as well as $(1,1)$ with coefficient zero or one.
\end{definition}

\begin{theorem}
$\sq^{j} \gamma_{1,k;m}^+$ is the sum over all  level-$1$ bi-partitions $(j,k)$ %= a \cdot (1,1) + b \cdot (0,1) + \sum c_\ell (2^\ell - 1, 2^{\ell-1})$
of the transfer product of $W(a+b,a;a+b+m-k)$ with the transfer product of all $\gamma^+_{\ell, c_\ell}$.
%In that case, $\sq^{j} \gamma_{1,k;m}^+$ is the  transfer product of all $\gamma^+_{1 + p, c_p}$ and $1^+ + 1^-$, which also yields
%a neutral class.
\end{theorem}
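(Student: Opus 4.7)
The plan is to reduce this calculation to the corresponding statement in $H^*(B\si_\bullet)$, where it is a restatement of Theorem~8.3 of \cite{GSS12}, and then translate across the restriction map $\iota_{2m}^*$. Since $\gamma_{1,k;m} = \mathrm{res}(\gamma_{1,k} \odot 1_{m-k})$ by definition, and restriction commutes with Steenrod operations, I first compute $\sq^j(\gamma_{1,k} \odot 1_{m-k})$ using the symmetric-group formula, obtaining a sum indexed by level-one bi-partitions of $(j,k)$ of Hopf ring monomials of the shape $\gamma_{1,1}^{2a} \odot \gamma_{1,1}^{a+b-a}\odot (\text{stuff with }\gamma_{1+p,c_p})\odot 1_{\ast}$. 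The charged factors $\gamma_{1+p,c_p}$ have $p\geq 1$, so applying $\mathrm{res}$ to them produces the $\gamma^+_{1+p,c_p}$ (whose images generate $\mathcal{B}_+$ modulo decomposibles), and the transfer products are preserved up to the scale-one portion.

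The main content is then to identify the scale-one portion. By construction, the scale-one subring of $H^*(B\si_{2m})$ restricts isomorphically (via the chain $(V_1)^m \hookrightarrow \si_{2m}$ of Theorem~7.8 of \cite{GSS12}) to the algebra of elementary symmetric polynomials in $m$ variables, which is also the image of $H^*(BO(m))$. Under this identification $\gamma_{1,k}\odot 1_{m-k}\mapsto \sigma_k \leftrightarrow w_k$, and the external Cartan formula combined with the fact that $\sq^j$ on $H^*(B\si_2^{m}) = \F_2[x_1,\ldots,x_m]$ is the standard one, gives Wu's classical formula $\sq^j w_k = \sum_{l} \binom{k-j+l-1}{l} w_{j-l} w_{k+l}$ for the scale-one Steenrod operation, expressed as a polynomial in the $\gamma_{1,\ast}\odot 1_\ast$. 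Thus upon restriction to $\alt_{2m}$ the scale-one contribution to $\sq^j \gamma_{1,k;m}^+$ equals $W(a+b,a;a+b+m-k)$ (with the width shifted appropriately because the $2a$ copies of $\gamma_{1,1}^2$ coming from the $(1,1)$ summands occupy $a$ units of width that would otherwise be $1_m$-width).

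To assemble the final formula and verify it on the nose I would invoke Theorem~\ref{detection}. The restriction to $V_n^\pm$ (when $2m = 2^n$) is trivial on both sides because $\gamma_{1,k;m}$ with $k<2^n$ restricts to zero on $V_n$ by Proposition~\ref{P:scale_one_vn}, and each term in the putative formula is either $\odot$-decomposible (hence zero on $V_n^\pm$ by Theorem~\ref{T:transfer_vn}) or a scale-one class with the same vanishing. The restriction to each $\alt_I$ under $\bigoplus\Delta_I$ agrees inductively because $\Delta$ is a map of Steenrod algebras and coproducts of $W(a+b,a;m)$ and of $\gamma_{1+p,c_p}^+$ obey the expected bialgebra-like identities of Relations~\eqref{coprod_neutral} and \eqref{coprod_charged}. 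The critical and most delicate check is restriction to $AV_{1,m}$: since the $\gamma_{1+p,c_p}^+$ with $p\geq 1$ vanish there by Theorem~\ref{cupmonotoAV}, only bi-partitions with all $c_p=0$ survive, and the surviving $W(a+b,a;m)$-terms must match the Wu expansion computed directly in the polynomial algebra $H^*(BAV_{1,m})$ via the restriction computation of Theorem~\ref{T:scale_one_v1n}.

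The hard part will be pinning down the binomial coefficient $\binom{i-j+l-1}{l}$ and the width-shift $a+b+m-k$ from the translation between the Hopf ring presentation (with its many $\odot$-products and unit classes) and the cup-only presentation via the $\gamma_{1,k;m}$. This is essentially bookkeeping — one must carefully track how a monomial $\gamma_{1,j_1}^{2}\odot \gamma_{1,j_2}\odot 1_{m-j_1-j_2}$ reexpresses as a cup product of $\gamma_{1,\ast;m'}$'s using Relation~\eqref{rel_pos_cup_neutral} recursively — but since the final formula must match Wu's by the isomorphism of scale-one subrings with the symmetric polynomial algebra, the binomial coefficients are forced and the computation will succeed.
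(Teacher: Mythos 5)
Your first two paragraphs are essentially the paper's proof: one applies $\mathrm{res}$ to the symmetric-group formula for $\sq^j(\gamma_{1,k}\odot 1_{m-k})$, identifies the scale-one factor $\gamma_{1,a}^2\odot\gamma_{1,b}\odot 1_{m-k}$ with the Wu expression $W(a+b,a;a+b+m-k)$ via the isomorphism of the scale-one subring with symmetric polynomials, and carries the charged factors across. One correction: it is not true that $\mathrm{res}(\gamma_{1+p,c_p})=\gamma^+_{1+p,c_p}$ (the restriction is $\gamma^+_{1+p,c_p}+\gamma^-_{1+p,c_p}$); the mechanism that actually replaces each $\gamma_{1+p,c_p}$ by $\gamma^+_{1+p,c_p}$ inside the transfer product is Proposition~\ref{restrictiontransfer}, $\mathrm{res}(\alpha\odot \mathrm{tr}(\beta))=\mathrm{res}(\alpha)\odot\beta$, applied with $\beta=\gamma^+_{1+p,c_p}$. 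With that in hand the formula is literally the restriction of the symmetric-group one, so your third paragraph's detection argument (restricting to $V_n^\pm$, $\alt_I$, $AV_{1,m}$) is not needed here, in contrast to the charged cases $\gamma^+_{\ell,m}$ where detection genuinely is the engine of the proof.
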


\begin{proof}
We apply the restriction map to $\sq^j (\gamma_{1,k} \odot 1_{m-k})$ in the cohomology of symmetric groups, which is the sum
over level-$1$ bi-partitions as indicated of the transfer product of $\gamma_{1,a}^2$,  $\gamma_{1,b}$, $1_{m-k}$ and all $\gamma_{\ell, c_\ell}$.
The transfer product of the first three is the image of $\sq^a (\gamma_{1,b} \odot 1_{m-k})$ which is then given by the Wu formula and
thus restricts to $W(a+b,a;a+b+m-k)$, including when all are zero, in which case $1_0$ restricts to $1^+ + 1^-$.  Applying 
Proposition~\ref{restrictiontransfer}, we replace any $\gamma_{\ell, c_\ell}$ for symmetric groups with a class that transfers to it, namely 
$\gamma_{\ell, c_\ell}^+$, to obtain the restriction of $\sq^j (\gamma_{1,k} \odot 1_{m-k})$ as needed.
\end{proof}

%%% Local Variables:
%%% TeX-master: "AltGroupsModTwo.V3.tex"
%%% End: % Examples
% !TEX root = AltGroupsModTwo.V3.tex

\section{Component Rings}

Our Hopf semi-ring presentation reduces the computationally imposing question of calculating ${\rm Ext}$ rings over
alternating groups to one which can be analyzed in a straightforward manner.  Namely, our Hopf semi-ring monomial basis
is enumerable, and we can simply search for indecomposible representatives by degree, accounting for our relations,
and using the Poincar\'e polynomial from our additive bases used to  terminate the process.  

In this section we give a more structured approach to the small example of $\alt_8$.  As a group of order 20160, it is at the limit of current 
computer-based techniques.  We then discuss why techniques that Feschbach developed for component rings of 
symmetric groups fail for alternating groups.  We first develop some shorter notation, relying on the fact that the Hopf semi-ring generators have distinct
degrees once we separate  the $\gamma_{1,k;m}$ classes.

\begin{definition}
\begin{itemize}
\item Let $\sigma_{k,m}$, or just $\sigma_k$ when the component is understood, denote $\gamma_{1,k;m}$.
\item Let $d_i^\pm$, where $i = m \cdot (2^\ell - 1)$, denote $\gamma_{\ell,m}^\pm$.  
\item When referring to a fixed $\alt_m$, any Hopf semi-ring monomial in $\sigma_k$ and $d_i^\pm$ is 
understood to define a class in its cohomology by assuming $\sigma_k = \sigma_{k,q}$ to result in full width, or taking a transfer product with a $1_q$ class.  
We drop signs from all notation
for neutral classes.  
\end{itemize}
\end{definition}

For example, for $\alt_8$, $\sigma_2 = \gamma_{1,2;4}$ and $d_3 = \gamma_{2,1}^+ \odot 1_4$.
Our generator names are similar to previous choices, which are natural  
because the $\sigma_k$ restrict to elementary symmetric polynomials in the cohomology of ${V_1}^n$ or $AV_{1,n}$
and the $d_i$ (for $i>3$) restrict to Dickson invariants on $V_n$ (with different indices than in the previous section).

\begin{theorem}\label{A8}
The mod-two cohomology of $\alt_8$ is generated as a ring under cup product by the following classes.
\begin{center}
\begin{tabular}{ l c c c c c c }
Degree & 2 & 3 & 4 & 5 & 6 & 7 \\  \hline
Classes & $\sigma_2 $ & $\sigma_{3 }$ &  $\sigma_{4 }$ &    & $d_6^+$  & $d_7^+$ \\
	       &                             & 	$d_3$	&    & $d_3 \odot \sigma_{2 }$ & $d_6^-$ & $d_7^-$
\end{tabular}	       
\end{center}
Relations are:
\begin{itemize}
\item Products of $\sigma_{2 }$, $d_3 $, or  $d_3 \odot \sigma_{2 } $ with 
$d_7^\pm$ are zero (six relations).
\item Products of  $\sigma_{3 }$  with $d_3 $, $d_3 \odot \sigma_{2 } $,
$d_6^\pm$,  or  $d_7^\pm$ are zero (six relations).
\item $d_6^+ \cdot d_7^- = d_6^- \cdot d_7^+ = 0$.
\item $d_7^+ \cdot d_7^- = 0$.
\item $(d_3 \odot \sigma_{2 } )^2 = \sigma_{2 } \cdot d_3 \cdot (d_3 \odot \sigma_{2 } ) \;  + \;
 {\sigma_{2 }}^2\cdot (d_6^+ + d_6^-) \; + \; (d_3)^2 \cdot \sigma_{4 }.$
\Item  $d_6^+ \cdot d_6^- = \sigma_{2 } \cdot d_3  \cdot (d_3 \odot \sigma_{2 } )
+ (\sigma_{2 })^3 \cdot (d_6^+ + d_6^-)  
 + \; d_3  \cdot \sigma_{4 } \cdot (d_3 \odot \sigma_{2 } ) + 
\sigma_{2 } \cdot \sigma_{4 } \cdot \left( (d_3 \odot \sigma_{2 } )^2 + d_6^+ + d_6^- \right).$\\
%\begin{align*}
% d_6^+ \cdot d_6^- = \sigma_{2 } \cdot d_3  \cdot (d_3 \odot \sigma_{2 } )
%\; &+ \; (\sigma_{2 })^3 \cdot (d_6^+ + d_6^-)  \\
% &+ \; d_3  \cdot \sigma_{4 } \cdot (d_3 \odot \sigma_{2 } ) \; + \; 
%\sigma_{2 } \cdot \sigma_{4 } \cdot \left( (d_3 \odot \sigma_{2 } )^2 + d_6^+ + d_6^- \right).
%\end{align*}
\end{itemize}

\noindent Steenrod squares on generators are:
\begin{center}
\begin{tabular}{ c | c c c c c c }
 & $\sq^1$ & $\sq^2$ & $\sq^3$ & $\sq^4$ & $\sq^5$ & $\sq^6$ \\  \hline
$\sigma_2$     & $d_3 $ & ${\sigma_2}^2$   &   &    &  &  \\ \hline
$\sigma_3$    &              & 	$\sigma_2 \sigma_3$& ${\sigma_3}^2$   &  &  & \\ \hline
$d_3$    &       ${\sigma_2}^2$       & 	$\sigma_2 d_3+ d_3 \odot \sigma_2 $	&  ${d_3}^2$   &  &  & \\ \hline
$\sigma_4$    &       $\sigma_2 \odot d_3$       & 	$\sigma_2 \sigma_4 + d_6^+ + d_6^-$	&  $\sigma_3 \sigma_4 + d_7^+ + d_7^-$  
          	 & ${\sigma_4}^2$         &  & \\ \hline
$d_3 \odot \sigma_2$    &              & 	$\sigma_2 (d_3 \odot  \sigma_2) $	& $\sigma_2 (d_6^+ + d_6^-)$   
		&  $\sigma_4 (d_3 \odot \sigma_2)$ & $(d_3 \odot \sigma_{2 } )^2 $  & \\ \hline
$d_6^\pm$     & $d_7^\pm + 	 d_3 \sigma_4$ & $\sigma_2 d_6^\pm$ &  $\sigma_4 (d_3 \odot \sigma_2)$ 
                 & $\sigma_4 d_6^\pm$    & $(d_3 \odot \sigma_2) d_6^\pm$  &  ${d_6^\pm}^2$ \\
     &  $+ \sigma_2 (d_3 \odot  \sigma_2)$ &  & $+d_3 d_6^\pm$  &  $+ (d_3 \odot \sigma_2)^2$  & $+ \sigma_4 d_7^\pm$ &  \\ \hline
$d_7^\pm$     &  &  &   &  $\sigma_4 d_7^\pm $&  & $d_6^\pm d_7^\pm$.  

\end{tabular}	       
\end{center}
\end{theorem}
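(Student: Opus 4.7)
The plan is to leverage the Hopf ring presentation of Theorem~\ref{T:presentation} and the detection result of Theorem~\ref{detection} to cut an individual-component calculation out of the global picture. The strategy divides into four steps: enumerate the Hopf monomial basis and extract cup-indecomposable generators, verify the vanishing relations, verify the two quadratic relations (the main obstacle), and read off the Steenrod squares.

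First I would list the Hopf ring generators of $H^*(B\alt_\bullet)$ whose components are contained in $\alt_8$: the scale-one neutrals $\sigma_{2,m}, \sigma_{3,m}, \sigma_{4,4}$, the charged $d_3 = \gamma_{2,1}^\pm$ on $\alt_4$, and $d_6^\pm = \gamma_{2,2}^\pm$, $d_7^\pm = \gamma_{3,1}^\pm$ on $\alt_8$. Enumerating all admissible Hopf monomials (modulo the relations of Theorem~\ref{T:presentation}) yields an additive basis and a Poincaré series. Degree by degree, I would check which basis elements are not cup products of lower-degree classes; the candidates coincide with the eight classes listed. In particular, $d_3 \odot \sigma_{2,2}$ is the one transfer-decomposable monomial that is cup-indecomposable on $\alt_8$, because Hopf distributivity together with Relation~(\ref{rel_pos_cup_neutral}) never produces a charged $\ell = 2$ factor paired with a neutral width-two factor from cup products of the smaller generators.

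The vanishing relations follow essentially from Theorem~\ref{T:presentation}. Relation~(\ref{rel_pos_cup_neutral}) gives all vanishings involving $\sigma_3$ (since $3$ is not divisible by the required $2^{\ell-1}$) as well as $\sigma_2 \cdot d_7^\pm = 0$. Relation~(\ref{rel_pos_cup_neg_general}) delivers $d_6^+ \cdot d_7^- = d_6^- \cdot d_7^+ = 0$ and $d_7^+ \cdot d_7^- = 0$. The products $(d_3 \odot \sigma_2)\cdot d_7^\pm$ reduce by Hopf distributivity applied via coproduct~(\ref{coprod_charged}) to cup products already shown to vanish. The remaining products of $\sigma_3$ or $\sigma_2$ with mixed classes are handled by the same distributivity together with the fact that such Hopf monomials decompose through widths on which the relevant generator is either absent or already killed.

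The hard part will be the two quadratic relations, namely $(d_3 \odot \sigma_2)^2$ and $d_6^+ \cdot d_6^-$. Each side is charged times conjugate-charged, hence neutral, so both sides restrict nontrivially to several detection subgroups and one must verify agreement on each. I would apply the detection Theorem~\ref{detection} by checking: restrictions to $V_3^+$ and $V_3^-$ using Proposition~\ref{P:cup_monos_vn}, which converts both sides into Dickson polynomials in $\F_2[x_1, x_2, x_3]^{GL_3(\F_2)}$ where the relations become classical identities; restrictions to $AV_{1,4}$, where Theorem~\ref{cupmonotoAV} kills most terms and the surviving scale-one contributions are matched using Theorem~\ref{T:scale_one_v1n}; and coproducts $\Delta_I$ for $I = (1,3)$ and $I = (2,2)$, computed via Relations~(\ref{rel_coprod_transfer})--(\ref{coprod_neutral}) together with the polarization rule of Theorem~\ref{bialg}. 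On the $(2,2)$ coproduct the computation bottoms out in the $\alt_4$ identity of Proposition~\ref{first_gamma2_relation}, which supplies the required $\gamma_{2,1}^+ \cdot \gamma_{2,1}^-$ expression on each tensor factor; this is exactly where the exceptional $\alt_4$-behavior propagates into the formulas on the right-hand sides.

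Finally, the Steenrod action follows by direct application of Theorem~\ref{T:steen} to $d_7^\pm$, its $\ell = 2$ variant to $d_3$, $d_6^\pm$, and the $\ell = 1$ formula to $\sigma_2, \sigma_3, \sigma_4$. The composite generator $d_3 \odot \sigma_2$ is handled through the external Cartan formula for $\odot$, using that transfers commute with Steenrod squares. Each resulting expression is naturally a Hopf monomial, which I would translate to the cup-product presentation using the relations from the preceding steps. A final dimension count against the Poincaré series computed in Step~1 confirms that no generator or relation has been missed.
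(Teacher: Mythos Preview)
Your overall strategy is sound and would succeed, but your route for the two quadratic relations diverges from the paper's and is more laborious than necessary. The paper does not re-invoke detection for these; instead it observes that both relations have simple closed forms in the Hopf monomial basis, namely $(d_3 \odot \sigma_2)^2 = (d_3^+)^2 \odot (\sigma_2)^2$ and $d_6^+ \cdot d_6^- = (d_3)^2 \odot (\sigma_2)^3$. The second of these is literally Relation~(\ref{rel_pos_cup_neg_three_block}) of Theorem~\ref{T:presentation} for $m=2$, so no new verification is needed at all --- the work is entirely in expanding the complicated cup-product expression on the right-hand side via Hopf distributivity and checking it collapses to the same Hopf monomial. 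The first follows the same way once one computes the polarized coproduct of $d_3 \odot \sigma_2$. Your detection approach would certainly establish the relations, but it re-derives facts already encoded in the global presentation, and the restriction to $V_3^\pm$ of classes like $d_3 \odot \sigma_2$ actually vanishes by Theorem~\ref{T:transfer_vn}, so that branch of your check is less informative than you suggest. The paper's reduction-to-Hopf-monomials approach also makes the completeness argument cleaner: rather than a dimension count, one observes directly which cup monomials in the generators survive after applying the relations and checks their linear independence by looking at the leading Hopf monomial term.
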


\begin{proof}%[Proof of Theorem~\ref{A8}]
The Hopf monomial basis consists of polynomials in the $\sigma_4, d_6^+$ and $d_7^+$, those in $\sigma_4, d_6^-$ and $d_7^-$,
those in $\sigma_2$, $\sigma_3$ and $\sigma_4$, and then transfer products $f_1 \odot f_2$ where the $f_i$ are distinct polynomials
in $d_3^\pm$ and $\sigma_2$ chosen among some preferred but unnamed basis. For example $\sigma_4 d_3 = (\sigma_2 \cdot d_3^+) \odot \sigma_2$.
%, accounting for the relation in $H^*(B\alt_8)$.   
Only the last requires an argument that it is generated by the named classes.  

If $f_1$ and $f_2$ both contain a $d_3^\pm$ then $f_1 \odot f_2 = d_6^\pm \cdot (f_1' \odot f_2')$ where $f_i'$ are obtained by dividing by the 
$d_3^\pm$.  Inductively, we need to only generate Hopf semi-ring monomials where $f_2$ is $\sigma_2$ to some power.  Similarly, if both $f_1$ and $f_2$
have factors of $\sigma_2$ then $f_1 \odot f_2 = \sigma_4 \cdot (f_1' \odot f_2')$ where the $f_i'$ have one fewer factor of $\sigma_2$.  
Thus it suffices to generate Hopf semi-ring monomials of the form $d_3^i \sigma_2^j \odot 1_2$ or $d_3^i \odot \sigma_2^j$.  If $i $ is greater than one, 
such a monomial is a product of $d_3$ with the monomial with $i$ replaced by $i-1$ plus a term which will have a factor of $d_3$ on both sides
and thus can be reduced.  A similar argument reduces those with $j > 1$.  Finally, $d_3 \sigma_2 \odot 1_2 = d_3 \cdot \sigma_2 + 
d_3 \odot \sigma_2$, establishing our generating set.

Relations all can be verified through reducing to our Hopf ring monomial basis.  That they are complete is largely a matter of observing that all
of the zero products as well as the very last relation for $d_6^+ \cdot d_6^-$ 
leave only products of the $\sigma_i$, or those in $\sigma_4, d_6^+$ and $d_7^+$ or those in 
$\sigma_4, d_6^-$ and $d_7^-$ or those in $\sigma_2$, $d_3$, $d_3 \odot \sigma_2$ and $d_6^\pm$.   
The next-to-last relation allows us to reduce terms with $\sigma_2 \cdot d_3 \cdot (d_3 \odot \sigma_2)$ to terms where all three do not occur.
Such products of $\sigma_2$, $d_3$, $d_3 \odot \sigma_2$ and $d_6^\pm$ in which $\sigma_2 \cdot d_3 \cdot (d_3 \odot \sigma_2)$ do not occur are 
linearly independent.  This can be seen by reducing to the Hopf monomial basis and observing that the Hopf monomial which occurs in a product
with the largest constituent algebraic degree uniquely determines such a product.

The Steenrod operations are immediately verified by taking the calculations of the previous section and checking that the classes given by
 cup products of generators here reduce to them.
\end{proof}

We  compare this to the computer-generated minimal presentation on Simon King's group cohomology web-page \cite{King10}.  
Up to isomorphism,
our choice of generators is the same through degree five; his $b_{6,0}$ and $b_{6,2}$ are our $d_6^\pm + \sigma_{3 }^2$;
and his $b_{7,6}$ and $b_{7,4}$ are our $d_7^\pm + \sigma_{3 } \cdot \sigma_{4 }$.   In our presentation
fifteen relations -- all but two -- are  products of two elements which vanish, while only six of the relations in the computer presentation are of this form.
This cohomology was also considered by Adem, Maginnis and Milgram in \cite{AMM90} -- see Corollary~6.5 of \cite{AdMi94}.  
We have not been able to find an abstract isomorphism
of their presentation with ours or King's, and suspect errors in relations between the generators in degrees $6$ and $7$ 
stemming from the $V_3^\pm$ detection work (which inspired our own).

The last two cup product relations  are fairly complicated, but are in fact simple to state in the Hopf semi-ring monomial basis.  They read as
$(d_3 \odot \sigma_{2 } )^2 = (d_3^+)^2 \odot (\sigma_{2 } )^2$
and
$d_6^+ \cdot d_6^- = (d_3)^2\odot (\sigma_{2} )^3$ respectively.
The Steenrod operations as given in the Hopf monomial basis instead of using cup product alone would also be simpler,  especially for $d_6^\pm$.

\bigskip

Presentations using cup product alone 
look to be substantially more complicated starting for $\alt_{32}$.   To see why, we recall Feshbach's techniques 
for symmetric groups \cite{Fesh02}.  While he did not utilize a Hopf semi-ring structure, his approach anticipated such.  
For symmetric groups, a 
Hopf monomial $m_1 \odot \cdots \odot m_p$ is equal to the cup product of  $m_p$ and $m_1 \odot \cdots \odot m_{p-1} \odot 1_n$ 
plus terms with fewer non-trivial cup monomials (or columns, in the skyline basis).
Inductively applying this column-reduction process,
pure cup monomials transferred with unit classes generate the cohomology of any symmetric group.  Which cup monomials are  
decomposible, along with relations, are determined by column reduction as well.  Here we can find 
classes on larger symmetric groups which reduce to the single column or relation under 
the column reduction process, and thus agree in the indecomposibles.  But classes on larger symmetric groups which are too wide (that
is, with more non-trivial cup monomials than can be supported on the symmetric group in question) must restrict to zero.

This basic column reduction process does not work for alternating groups and in particular for the charged classes because any class less
than full width must be neutral, and the product of such must be neutral as well.  Thus at the moment we only have the naive method alluded to at
the beginning of this section for building
from our additive basis of Hopf monomials with multiplication rules to produce generators and relations.   Better understanding ring structure
in general remains an interesting open question.

\medskip 

While finding generators and relations remains a question of interest, Hopf semi-ring presentation has been more fruitful for applications.

For symmetric groups, the Hopf semi-ring approach 
along with formulae for Steenrod operations have allowed us to calculate quotient rings and annihilator ideals for the Euler
classes in the Gysin sequence, to more fully capture the relationship between Dickson algebras and cohomology of symmetric groups (which are as far
from split as one could imagine, as algebras over the Steenrod algebra), to better understand the cohomology of $QS^0$ as an algebra 
over the Steenrod algebra, and to start computing Margolis homology of symmetric groups and $QS^0$.  The approach also was  extendable to odd primes by Guerra \cite{Guer17}, who is extending calculations
to the $B$ and $D$ series of Coxeter groups in his PhD thesis.  
%In ongoing work with Pengelley we are finding a presentation for the cohomology of $BS_\infty$ as an unstable algebra over the Steenrod algebra, 
%and with Guerra and Salvatore we are treating the divided powers and $\Omega^\infty S^\infty$ functors.

For alternating groups, our presentation gives a computational proof of the lack of nilpotent elements, as the powers of an element will always yield a nonzero 
``tallest'' Hopf semi-ring monomial, and clearly indicates how the cohomology of the $B\alt_{4n}$ contain interesting, manifestly unstable charged ideals. 
We suspect this presentation would be useful in for example applying the theory of support varieties for both symmetric and alternating groups.

%%% Local Variables:
%%% TeX-master: "AltGroupsModTwo.V3.tex"
%%% End:

%\item $(\gamma_{2,1}^+ \odot 1_2^o) \cdot \gamma_{2,2}^+ = (\gamma_{2,1}^+ \odot 1_2^o) \cdot \gamma_{2,2}^-$
 % Steenrod action
% !TEX root = AltGroupsModTwo.V3.tex

\appendix
\section{Cup product input}\label{appendixa}

%Our calculations rest on four techniques, namely almost Hopf semi-ring structure, relationships with cohomology of symmetric groups, Fox-Neuwirth
%cochains and restriction to subgroups.  

%Our calculation of the cohomology of symmetric groups as a
%Hopf semi-ring has been foundational for our work on alternating groups.   Our work on symmetric groups in turn built on the additive basis
%and transfer coproduct structure of the homology of symmetric groups as described in \cite{Naka61}, \cite{CLM76} and \cite{BMMS86}.  
For the study of symmetric groups in \cite{GSS12} we 
did not use the previously calculated cup coproduct in homology  \cite{CLM76}, which can be difficult to apply because of the need to account for Adem relations.  
Because Fox-Neuwirth cochains do not model cup product, contrary to what is sketched in \cite{GiSi12}, we use two such cup coproduct calculations 
for our study of alternating groups.

To set notation, we denote the product associated to the inclusion $\si_n \times \si_m \to \si_{n+m}$ by $*$, which is thus dual to our coproduct in cohomology.
There are ``wreath product'' operations $q_i: H_k (B\si_n) \to H_{2k+i} (B\si_{2n})$ which satisfy Adem relations:
$$
{\rm For} \;\;  m > n, \;\; 
q_{m} \circ q_{n} = \sum_{i} \binom{i - n - 1}{2i - m - n} q_{m + 2n - 2i} \circ q_{i}.
$$
(We prefer ``lower index'' notation.)
Given a sequence $I = i_{1}, \cdots, i_{k}$ of non-negative integers, let $q_{I} = q_{i_{1}} \circ \cdots
\circ q_{i_{k}}$.  Using the Adem relations, these relations are spanned by $q_{I}$ whose entries are  non-decreasing.  We call such an $I$ {\em admissible}.   
If such an $I$ has no zeros we call it {\em strongly admissible}.  Let $\iota \in H_0(B\si_1)$ be the non-zero class, and by abuse let $q_I$ denote 
$q_I(\iota)$ 

Calculations of Nakaoka \cite{Naka61} imply that the homology of symmetric groups is a polynomial
algebra over the product $*$ generated by strongly admissible $q_I$.  Cohen-Lada-May \cite{CLM76} develop the theory much further, including 
showing that the cup coproduct is given by $\Delta_{\cup} q_I = \sum_{J+K = I} q_J \otimes q_K$.  %We build on the additive calculation 
In \cite{GSS12} we define $\gamma_{\ell,m}$ to be the linear dual of $q_{I}^{*m}$, where $I$ consists of $\ell$ ones, in the Nakaoka basis.
In Theorem~4.9 of \cite{GSS12} we essentially show that $\gamma_{\ell,m}$ is represented by the Fox-Neuwirth cocycle with $m$ blocks of 
$2^\ell - 1$ repeated ones.  To verify cup product relations, we establish Fox-Neuwirth representatives of two types of products of these.
 
\begin{proposition}\label{firstcup}
The product ${\gamma_{2,m}}^2$ is represented by the Fox-Neuwirth cochain $\alpha_{2,m}(2)$ with $m$ blocks of 
three repeated $2$'s.  

The product $\gamma_{\ell,m} \cdot \gamma_{1,m 2^{\ell-1}}$ is represented by the Fox-Neuwirth cochain $\alpha_{\ell,m}(1.5)$, which has $m$ blocks of the form 
$[2,1,2,...,1,2]$, each of length $2^\ell - 1$.
\end{proposition}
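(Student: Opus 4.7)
The plan is to induct on $m$, using that Fox-Neuwirth cochains model coproducts (Theorem~\ref{T:cellular_coprod}) together with detection of symmetric group cohomology by restriction to elementary abelian subgroups, the symmetric group analog of Theorem~\ref{detection}.

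First I would verify that both $\alpha_{2,m}(2)$ and $\alpha_{\ell, m}(1.5)$ are cocycles via direct application of Lemma~\ref{FNlemma}. In each case the entries within each block form a rigid pattern, either constant at $2$ or alternating $2, 1, 2, \ldots, 1, 2$, and the blocks are separated by zeros. Raising an entry inside a block either occurs in a position symmetric enough that the shuffle count is even, or the resulting terms cancel in pairs across equivalent positions within the block.

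Next I would apply Theorem~\ref{T:cellular_coprod}, which splits each cochain cleanly at the zeros between blocks, yielding
\[ \Delta \alpha_{2,m}(2) = \sum_{i+j=m} \alpha_{2,i}(2) \otimes \alpha_{2,j}(2), \qquad \Delta \alpha_{\ell, m}(1.5) = \sum_{i+j=m} \alpha_{\ell, i}(1.5) \otimes \alpha_{\ell, j}(1.5). \]
On the cohomology side, the Cartan formula for coproduct, combined with $\Delta \gamma_{\ell, r} = \sum_{i+j=r} \gamma_{\ell, i} \otimes \gamma_{\ell, j}$ and the vanishing of cross-component cup products, gives
\[ \Delta \gamma_{2,m}^2 = \sum_{i+j=m} \gamma_{2,i}^2 \otimes \gamma_{2,j}^2, \]
and for the second claim the analogous sum over $(\gamma_{\ell, i} \cdot \gamma_{1, i 2^{\ell-1}}) \otimes (\gamma_{\ell, j} \cdot \gamma_{1, j 2^{\ell-1}})$, where the component-matching forces $p = i 2^{\ell-1}$ in the product of coproducts. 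By the inductive hypothesis these match the cochain coproducts term-by-term, so the difference between the class represented by each cochain and the asserted cohomology class has trivial reduced coproduct.

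To identify these differences with zero, I would invoke the Madsen-Milgram detection theorem for $H^*(B\si_n)$: the class is determined by its coproducts together with its restrictions to the maximal transitive elementary abelian $V_n \subset \si_{2^n}$. Coproduct detection has been handled inductively, so it remains to compute the $V_n$-restrictions in the base cases where $4m$ (for the first claim) or $m 2^\ell$ (for the second) is a power of two. Using the Fox-Neuwirth models for $V_n$-inclusions from Section~7 of \cite{GSS12}, both $\gamma_{2,m}^2$ and $\gamma_{\ell, m} \cdot \gamma_{1, m 2^{\ell-1}}$ restrict to explicit products of Dickson generators, and one verifies that the cochains $\alpha_{2,m}(2)$ and $\alpha_{\ell, m}(1.5)$ restrict to those same polynomials.

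The hard part will be this final $V_n$-restriction computation. Because the entries of $\alpha_{2,m}(2)$ and $\alpha_{\ell, m}(1.5)$ exceed $1$, the relevant cells impose sharing of multiple coordinates among consecutive points, and tracking how the standard geometric model for the $V_n$-inclusion meets such cells requires extending the computations of Section~7 of \cite{GSS12} beyond the all-ones cocycles that directly represent the Hopf ring generators.
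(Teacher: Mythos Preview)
Your inductive framework via Madsen--Milgram detection is exactly the paper's, and your coproduct computations for both the cochains and the cohomology classes are correct. The divergence is in how the detection is completed, and there the paper does something genuinely different from what you propose.

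For the base case $m=1$ you propose to compute the restriction of $[2,2,2]$ (respectively $[2,1,2,\dots,1,2]$) to $V_n$ at the Fox--Neuwirth level and compare with the known Dickson image of $\gamma_{2,1}^2$ (respectively $\gamma_{\ell,1}\cdot\gamma_{1,2^{\ell-1}}$). But Section~7 of \cite{GSS12} does not furnish a cochain-level model for the $V_n$-inclusion that lets you evaluate arbitrary Fox--Neuwirth cocycles; it computes restrictions of the specific generators $\gamma_{\ell,2^k}$, which are represented by all-ones cochains. You correctly flag that extending this to cochains with entries exceeding $1$ is the hard part, but this is not merely a technical extension---no such model is developed, and building one would amount to a separate project. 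The paper instead handles $m=1$ by a Kronecker pairing argument: it shows both $[2,2,2]$ and $\gamma_{2,1}^2$ are dual to $q_{2,2}$ in the Nakaoka basis, the former by a direct transversal-intersection count using Proposition~\ref{intersection} against an explicit geometric representative of $q_{2,2}$, and the latter via the Cohen--Lada--May cup coproduct $\Delta_\cup q_I = \sum_{J+K=I} q_J\otimes q_K$. This is precisely the external cup-product input the appendix exists to supply.

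For the inductive step when $m2^\ell$ is a power of two, the paper again avoids any direct $V_n$-restriction of the Fox--Neuwirth cocycle: after matching coproducts, the difference is primitive, and a degree count shows there is no nonzero multiple of $\gamma_{n,1}$ in the relevant degree. Your proposed direct restriction computation would run into the same missing tool as the base case.
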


 \begin{proof}
 We perform the arguments in parallel.  
 We start with $m=1$, showing that both ${\gamma_{2,1}}^2$ and $[2,2,2]$ are linear dual in the Dyer-Lashof basis of ${q_{2,2}}$
(respectively,  both $\gamma_{\ell, 1} \cdot \gamma_{1,2^{\ell - 1}}$ and $[2,1,2,...,1,2]$ %(of length $2^\ell - 1$)
are the linear dual of $q_{1,\dots,1,2}$).
 We  calculate pairings, starting with ${\gamma_{2,1}}^2$, whose value on some $x$ is equal to that of $\gamma_{2,1} \otimes \gamma_{2,1}$ 
 (respectively $\gamma_{\ell, 1} \otimes \gamma_{1,2^{\ell - 1}}$) on $\Delta x$.
 For $x = q_{2,2}$ (respectively $q_{1,\dots,1,2}$) 
 we see that the only term in $\Delta q_{2,2}$ in the correct pair of degrees is $q_{1,1} \otimes q_{1,1}$ (respectively $q_{1,\dots,1}  \otimes q_{0, \dots,0, 1}$), 
 which pairs to one by definition.
  Any *-decomposible will have decomposible coproduct, and these will evaluate to zero on a tensor factor of  $\gamma_{\ell,1}$ by definition.
  
 For pairings with $[2,2,2]$ (respectively $[2,1,2,...,1,2]$), $*$-decomposible classes also evaluate to zero, by the analogue of  
 Theorem~\ref{T:cellular_coprod}.   We then apply the symmetric groups version of Proposition~\ref{intersection}
 to evaluate $[2,2,2]$ on $q_{2,2}$, which by definition is represented by a map from $S^2 \times_{\si_2} (\R P^2 \times \R P^2)$ to $\UConf{4}{\infty}$ sending
 $(u, v, w)/\sim$ to the configuration with points at $u \pm \varepsilon v$ and $-u \pm \varepsilon w$.  We get a single transversal intersection when $u$, $v$ and
 $w$ are all the equivalence class of $(0,0,1)$.  To evaluate $[2,1,2,...,1,2]$ on $q_{1,\dots,1,2}$ we represent the latter by a standard map of the iterated
 wreath product $S^1 \int \cdots \int \R P^2$, where $S^1 \int X = S^1 \times_{\si_2} (X \times X)$, to $\UConf{2^\ell}{\infty}$.  Once again, we will get a single
 transversal intersection when each coordinate entry of $S^1$ is $(0,\pm 1)$ and each of $\R P^2$ is the equivalence class of $(0,0,1)$.
 
To pass to higher $m$, we perform an induction based on the detection
 result of Madsen and Milgram \cite{MaMi79}, that the cohomology of symmetric groups is detected by coproduct, along with restriction to $V_n$ 
 for $\si_{2^n}$.  The formulae for coproducts of ${\gamma_{2,m}}^2$ and $\alpha_{2,m}(2)$ (respectively $\gamma_{\ell,m} \cdot \gamma_{1,m 2^{\ell-1}}$
 and $\alpha_{\ell,m}(1.5)$), the former given by Theorem~\ref{T:symmgenandrel} and Hopf semi-ring distributivity and the latter given by 
 Theorem~\ref{T:cellular_coprod},  will be equal by inductive
 assumption, and application of detection establishes the induction step when $m \neq 2^n$.  When $m = 2^n$ we note that by having the same coproducts
 they must agree up to the kernel of restriction to $V_n$, which is generated by $\gamma_{n,1}$.  But there are no non-zero multiples of $\gamma_{n,1}$
 in the relevant degrees. 
 \end{proof}

The cup product representatives thus far have be obtained simply by adding Fox-Neuwirth entries of the factors.  
In \cite{GiSi12} we define a Hopf semi-ring structure on 
Fox-Neuwirth cochains in this way, the homology of which agrees with the cohomology of symmetric groups.  But, contrary to what is claimed and sketched in
\cite{GiSi12}, this abstract isomorphism is not induced by the map between then given by Alexander duality as in the proof of Theorem~\ref{T:fnagood}.
  For example, while $\gamma_{1,2}$ is represented by $[1,0,1]$,
its cube ${\gamma_{1,2}}^3$ cannot be represented by $[3,0,3]$.  Its cube evaluates non-trivially with the class $q_{2,2}$ in homology, whose
coproduct includes $q_{2,0} \otimes q_{0,2} = q_{0,1} \otimes q_{0,2}$ by the first Adem relation.  But the image of $q_{2,2}$ can be embedded
in configurations in $\R^3$ and so cannot pair with $[3,0,3]$.  Indeed, ${\gamma_{1,2}}^3$ must be represented by $[3,0,3] + [2,2,2]$.  
There are  
variants of Fox-Neuwirth cochains which should result in cup product models as well.  Thankfully, our need for cup product input at the cochain level
was limited in the present work.

%%% Local Variables:
%%% TeX-master: "AltGroupsModTwo.V3.tex"
%%% End: % Appendix on cup product calculations

\bibliographystyle{alpha}
\bibliography{altgroups}

\def\cftil#1{\ifmmode\setbox7\hbox{$\accent"5E#1$}\else
  \setbox7\hbox{\accent"5E#1}\penalty 10000\relax\fi\raise 1\ht7
  \hbox{\lower1.15ex\hbox to 1\wd7{\hss\accent"7E\hss}}\penalty 10000
  \hskip-1\wd7\penalty 10000\box7}
  \def\cfudot#1{\ifmmode\setbox7\hbox{$\accent"5E#1$}\else
  \setbox7\hbox{\accent"5E#1}\penalty 10000\relax\fi\raise 1\ht7
  \hbox{\raise.1ex\hbox to 1\wd7{\hss.\hss}}\penalty 10000 \hskip-1\wd7\penalty
  10000\box7}
\begin{thebibliography}{BMMS86}

\bibitem[AM04]{AdMi94}
Alejandro Adem and R.~James Milgram.
\newblock {\em Cohomology of finite groups}, volume 309 of {\em Grundlehren der
  Mathematischen Wissenschaften [Fundamental Principles of Mathematical
  Sciences]}.
\newblock Springer-Verlag, Berlin, second edition, 2004.

\bibitem[AMM90]{AMM90}
Alejandro Adem, John Maginnis, and R.~James Milgram.
\newblock Symmetric invariants and cohomology of groups.
\newblock {\em Math. Ann.}, 287(3):391--411, 1990.

\bibitem[Ati66]{Atiy66}
M.~F. Atiyah.
\newblock Power operations in {$K$}-theory.
\newblock {\em Quart. J. Math. Oxford Ser. (2)}, 17:165--193, 1966.

\bibitem[BMMS86]{BMMS86}
R.~R. Bruner, J.~P. May, J.~E. McClure, and M.~Steinberger.
\newblock {\em $H\sb \infty$ring spectra and their applications}, volume 1176
  of {\em Lecture Notes in Mathematics}.
\newblock Springer-Verlag, Berlin, 1986.

\bibitem[Chu12]{Chur12}
Thomas Church.
\newblock Homological stability for configuration spaces of manifolds.
\newblock {\em Invent. Math.}, 188(2):465--504, 2012.

\bibitem[CLM76]{CLM76}
Frederick~R. Cohen, Thomas~J. Lada, and J.~Peter May.
\newblock {\em The homology of iterated loop spaces}.
\newblock Lecture Notes in Mathematics, Vol. 533. Springer-Verlag, Berlin-New
  York, 1976.

\bibitem[Fes02]{Fesh02}
Mark Feshbach.
\newblock The mod 2 cohomology rings of the symmetric groups and invariants.
\newblock {\em Topology}, 41(1):57--84, 2002.

\bibitem[FMS]{FMS17}
Greg Friedman, Anibal Medina, and Dev Sinha.
\newblock Manifold-theoretic $e_\infty$ cochain models.
\newblock In preparation.

\bibitem[FN62]{FoNe62}
R.~Fox and L.~Neuwirth.
\newblock The braid groups.
\newblock {\em Math. Scand.}, 10:119--126, 1962.

\bibitem[GS12]{GiSi12}
Chad Giusti and Dev Sinha.
\newblock Fox-{N}euwirth cell structures and the cohomology of symmetric
  groups.
\newblock In {\em Configuration spaces}, volume~14 of {\em CRM Series}, pages
  273--298. Ed. Norm., Pisa, 2012.

\bibitem[GSS]{GSS19}
Lorenzo Guerra, Paolo Salvatore, and Dev Sinha.
\newblock Cohomology of extended powers and free infinite loop spaces.

\bibitem[GSS12]{GSS12}
Chad Giusti, Paolo Salvatore, and Dev Sinha.
\newblock The mod-2 cohomology rings of symmetric groups.
\newblock {\em J. Topol.}, 5(1):169--198, 2012.

\bibitem[Gue17]{Guer17}
Lorenzo Guerra.
\newblock Hopf ring structure on the mod p cohomology of symmetric groups.
\newblock {\em Algebr. Geom. Topol.}, 17(2):957--982, 2017.

\bibitem[Hun91]{Hung91}
Nguy\^en H.~V. Hung.
\newblock The action of the {S}teenrod squares on the modular invariants of
  linear groups.
\newblock {\em Proc. Amer. Math. Soc.}, 113(4):1097--1104, 1991.

\bibitem[Kin]{King10}
Simon King.
\newblock Mod-2-cohomology of alternatinggroup(8), a group of order 20160.

\bibitem[Knu16]{Knud16}
Benjamin Knudsen.
\newblock {\em Higher {E}nveloping {A}lgebras and {C}onfiguration {S}paces of
  {M}anifolds}.
\newblock ProQuest LLC, Ann Arbor, MI, 2016.
\newblock Thesis (Ph.D.)--Northwestern University.

\bibitem[LS01]{LeSo01}
Manfred Lehn and Christoph Sorger.
\newblock Symmetric groups and the cup product on the cohomology of {H}ilbert
  schemes.
\newblock {\em Duke Math. J.}, 110(2):345--357, 2001.

\bibitem[MM79]{MaMi79}
Ib~Madsen and R.~James Milgram.
\newblock {\em The classifying spaces for surgery and cobordism of manifolds},
  volume~92 of {\em Annals of Mathematics Studies}.
\newblock Princeton University Press, Princeton, N.J.; University of Tokyo
  Press, Tokyo, 1979.

\bibitem[Nak61]{Naka61}
Minoru Nakaoka.
\newblock Homology of the infinite symmetric group.
\newblock {\em Ann. of Math. (2)}, 73:229--257, 1961.

\bibitem[PW03]{PeWi03}
David~J. Pengelley and Frank Williams.
\newblock Global structure of the mod two symmetric algebra, {$H^*(B{\rm
  O};{\Bbb F}_2)$}, over the {S}teenrod algebra.
\newblock {\em Algebr. Geom. Topol.}, 3:1119--1138, 2003.

\bibitem[Qua03]{Quan03}
Hoang~M. Quang.
\newblock Classification of maximal elementary abelian 2-subgroups of
  alternating groups.
\newblock Master's thesis, Vietnam National University, 2003.

\bibitem[ST97]{StTu97}
Neil~P. Strickland and Paul~R. Turner.
\newblock Rational {M}orava {$E$}-theory and {$DS\sp 0$}.
\newblock {\em Topology}, 36(1):137--151, 1997.

\bibitem[Wu50]{Wu50}
Wen-ts\"un Wu.
\newblock Les {$i$}-carr\'es dans une vari\'et\'e grassmannienne.
\newblock {\em C. R. Acad. Sci. Paris}, 230:918--920, 1950.

\bibitem[Zel81]{Zele81}
Andrey~V. Zelevinsky.
\newblock {\em Representations of finite classical groups}, volume 869 of {\em
  Lecture Notes in Mathematics}.
\newblock Springer-Verlag, Berlin-New York, 1981.
\newblock A Hopf algebra approach.

\end{thebibliography}
\end{document}